\documentclass[11pt, oneside]{article}   	
\usepackage{geometry}                		
\geometry{letterpaper}                   		
\usepackage{graphicx}				
								
\usepackage[all]{xy}
\CompileMatrices

\usepackage{amssymb}
\usepackage{amsmath}
\usepackage{stmaryrd}
\usepackage{amsthm}
\usepackage{tikz-cd}
\usepackage{extarrows}
\usepackage[mathscr]{euscript}
\usepackage{mathrsfs} 
\usepackage{verbatim}
\usepackage[OT2,T1]{fontenc}
\DeclareSymbolFont{cyrletters}{OT2}{wncyr}{m}{n}
\DeclareMathSymbol{\Sha}{\mathalpha}{cyrletters}{"58}
\DeclareMathSymbol{\Che}{\mathalpha}{cyrletters}{"51}

\usepackage{calligra}
\usepackage{mathrsfs}
\newcommand{\calHom}{\mathscr{H}\mathit{om}}
\newcommand{\calMor}{\mathscr{M}\mathit{or}}

\newcommand{\Ga}{{\mathbf{G}}_{\rm{a}}}
\newcommand{\Gm}{{\mathbf{G}}_{\rm{m}}}
\DeclareMathOperator{\red}{red}

\DeclareMathOperator{\imp}{Imp}
\DeclareMathOperator{\Gal}{Gal}

\DeclareMathOperator{\Pic}{Pic}
\DeclareMathOperator{\Jac}{Jac}

\DeclareMathOperator{\Ext}{Ext}
\DeclareMathOperator{\Split}{split}
\DeclareMathOperator{\Hom}{Hom}
\DeclareMathOperator{\End}{End}

\DeclareMathOperator{\R}{R}
\DeclareMathOperator{\sm}{sm}

\DeclareMathOperator{\im}{im}

\newcommand*{\Z}{\ensuremath{\mathbf{Z}}}                        
\newcommand*{\F}{\ensuremath{\mathbf{F}}}                        
\newcommand*{\A}{\ensuremath{\mathbf{A}}}                        
\renewcommand*{\P}{\ensuremath{\mathbf{P}}}                        
\newcommand*{\calO}{\mathcal{O}}                                  
\newcommand*{\address}{Einstein Institute of Mathematics, The Hebrew University of Jerusalem, Edmond J. Safra Campus, 91904, Jerusalem, Israel}
\newcommand*{\email}{zevrosengarten@gmail.com}

\usepackage{rotating}

\usepackage{bm}

\numberwithin{equation}{section}

\newtheorem{theorem}{Theorem}[section]

\newtheorem{lemma}[theorem]{Lemma}
\newtheorem{proposition}[theorem]{Proposition}
\newtheorem{corollary}[theorem]{Corollary}

\theoremstyle{definition}
  \newtheorem{definition}[theorem]{Definition}

\theoremstyle{remark}
  \newtheorem{remark}[theorem]{Remark}

\usepackage[OT2,T1]{fontenc}

\tikzset{commutative diagrams/.cd,
mysymbol/.style={start anchor=center,end anchor=center,draw=none}
}

\usepackage{stmaryrd}
\usepackage{hyperref}

\title{\textbf{RIGIDITY AND UNIRATIONAL GROUPS}}
\author{Zev Rosengarten \thanks{While completing this work, the author was supported by a Zuckerman Postdoctoral Scholarship. \newline
MSC 2010: 14L10, 14L15, 14L17, 20G07, 20G15. \newline
Keywords: Unirational, Linear Algebraic Groups.  \newline
While completing this work, the author was supported by Israel Science Foundation Grant No.\,2083/24.
}}

\date{}
\begin{document}
\maketitle

\begin{abstract}
We prove a rigidity theorem for morphisms from products of open subschemes of the projective line into solvable groups  not containing a copy of $\Ga$ (for example, wound unipotent groups). As a consequence, we deduce several structural results about unirational group schemes, including that unirationality for group schemes descends through separable extensions. We also apply the main result to prove that permawound unipotent groups are unirational and -- when wound -- commutative.
\end{abstract}

\setcounter{tocdepth}{1}
\tableofcontents{}

\section{Introduction}

One of the foundational results in the theory of abelian varieties is the following ``rigidity lemma.''

\begin{lemma}[{\cite[Ch.\,II, \S4, Rigidity Lemma]{mumford}}]
\label{rigidityproper}
Let $K$ be an algebraically closed field, $X$, $Y$, and $Z$ integral, separated $K$-schemes of finite type with $X$ proper. Let $f \colon X \times Y \rightarrow Z$ be a $K$-morphism such that, for some $y_0 \in Y$, $f(X \times \{y_0\})$ is a single point of $Z$. Then there is a morphism $g \colon Y \rightarrow Z$ such that $f = g \circ \pi_2$, where $\pi_2\colon X \times Y \rightarrow Y$ is the projection.
\end{lemma}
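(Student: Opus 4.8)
The plan is to exploit the three hypotheses in turn: properness of $X$ to turn $\pi_2$ into a closed map, the existence of an affine neighbourhood of the point $z_0 := f(X \times \{y_0\})$ to force constancy on complete connected fibers, and separatedness of $Z$ (together with reducedness of $X \times Y$) to let an equality of morphisms propagate from a dense open set to everything.

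First I would choose an affine open $U \subseteq Z$ containing $z_0$ and consider the closed subset $W := f^{-1}(Z \setminus U)$ of $X \times Y$. Since $X$ is proper over $K$, its base change $\pi_2 \colon X \times Y \to Y$ is proper, hence closed, so $T := \pi_2(W)$ is closed in $Y$. The fiber $X \times \{y_0\}$ is disjoint from $W$ because $f$ sends it to $\{z_0\} \subseteq U$, so $y_0 \notin T$, and $Y' := Y \setminus T$ is a non-empty --- hence, $Y$ being integral, dense --- open neighbourhood of $y_0$ with the property that $f(X \times \{y\}) \subseteq U$ for every $y \in Y'$.

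Next I would show that $f$ is ``fiberwise constant'' over $Y'$. Fix a point $x_0 \in X(K)$ (available since $X$ is a non-empty proper scheme over the algebraically closed field $K$), and define $g \colon Y \to Z$ to be the composite of $(x_0, \mathrm{id}_Y) \colon Y \to X \times Y$ with $f$; this is a morphism defined on all of $Y$. For a closed point $y \in Y'$ one has $\kappa(y) = K$, so $X \times \{y\} \cong X$ is proper and connected over $K$ (connectedness is automatic as $K = \overline{K}$), and its image under $f$ is a connected, affine (being closed in $U$), proper $K$-subscheme of $Z$, necessarily a single closed point, which is $K$-rational since $K$ is algebraically closed. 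Hence $f$ restricted to $X \times \{y\}$ is the constant morphism through that point, and this equals $(g \circ \pi_2)|_{X \times \{y\}}$ because $g(y) = f(x_0, y)$ is that same point. So $f$ and $g \circ \pi_2$ agree on $X \times \{y\}$ for every closed point $y$ of $Y'$.

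Finally I would globalize. The equalizer $E$ of the two morphisms $f$ and $g \circ \pi_2 \colon X \times Y \to Z$ is a closed subscheme of $X \times Y$ because $Z$ is separated; by the previous step $E$ contains $X \times \{y\}$ for every closed point $y \in Y'$, and those points are dense in $X \times Y$, since $Y'$ is dense in $Y$ and $X \times Y$ is irreducible --- here one uses that $K$ is algebraically closed, so that the product of the integral $K$-schemes $X$ and $Y$ is again integral. A closed subscheme of the reduced scheme $X \times Y$ whose underlying topological space is everything must be the whole scheme, so $E = X \times Y$ and $f = g \circ \pi_2$, as required. The main obstacle is precisely this globalization: one must resist trying to extend an a priori partially-defined $g$ from $Y'$ to $Y$ (there is no reason such an extension exists), and instead define $g$ at the outset via a rational point of $X$, thereby reducing the whole statement to an equality of morphisms into a separated target that can be verified on a dense set.
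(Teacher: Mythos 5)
The paper does not prove this lemma itself; it simply cites Mumford, and your argument is precisely the standard proof given there: pass to an affine open $U$ around $z_0$, use properness of $X$ to make $\pi_2$ closed and produce the dense open $Y'$ over which fibers land in $U$, observe that a connected proper reduced scheme mapping into an affine has image a single rational point, and then use separatedness of $Z$ together with reducedness and irreducibility of $X\times Y$ to spread the identity $f=g\circ\pi_2$ from the dense set $X\times Y'(K)$ to all of $X\times Y$. The proof is correct and complete.
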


(For a slightly more general version, see \cite[Th.\,1.7.1]{conradcoursenotes}.) This rigidity theorem has many important consequences in the theory of abelian varieties, such as the fact that any $K$-scheme morphism between abelian varieties which preserves identities is a $K$-group scheme homomorphism, commutativity of abelian varieties, etc. The main result of the present paper is a somewhat analogous rigidity result for maps from products of open subschemes of the projective line into solvable groups not containing a copy of $\Ga$ (such as wound unipotent groups). As with the rigidity lemma \ref{rigidityproper}, we will show that this result has several interesting applications -- in this case, to the study of unirational groups.

Before we state the main result, we require a definition. For a finite extension of fields $L/K$ of characteristic $p > 0$, we define the {\em degree of imperfection} of $L/K$, denoted $\imp(L/K)$, to be the nonnegative integer $r$ defined by the equality $[L: KL^p] = p^r$, where $KL^p$ denotes the compositum inside $L$ of the subfields $K$ and $L^p$. (The degree in question is a power of $p$ because the extension $L/KL^p$ is purely inseparable.) As we shall see (Proposition \ref{degofprimminnumofgens}), the degree of imperfection is usually the minimum number of elements required to generate $L$ over $K$. This is always true if $L/K$ is purely inseparable \cite[Th.\,6]{beckermaclane}. An important point for us is that there is a natural way of extending this definition from finite field extensions $L$ of a given field $K$ of characteristic $p$ to finite reduced $K$-algebras. Indeed, if $A$ is a finite reduced $K$-algebra, then we have a $K$-algebra isomorphism $A \simeq \prod_{i=1}^n L_i$ for some finite field extensions $L_i/K$. Choose a tuple $(F, \sigma_1, \dots, \sigma_n)$ consisting of a field extension $F/K$ and $K$-embeddings $\sigma_i\colon L_i \hookrightarrow F$. Then we may form the compositum $L := \sigma_1(L_1)\dots\sigma_n(L_n)$ inside $F$ of the fields $\sigma_i(L_i)$, and we then define the degree of imperfection $\imp(A/K)$ to be $\imp(L/K)$. Of course, we must show that this definition is independent of all choices (see Proposition \ref{degofprimindofembeddings}). This is relatively straightforward if $K$ is separably closed, but less clear in general. The main result of the present paper is the following rigidity statement.

\begin{theorem}[Rigidity Theorem]
\label{rigiditytheorem}
Let $K$ be a field of characteristic $p > 0$, let $\overline{X}_1, \dots, \overline{X}_n$ be smooth proper geometrically connected curves over $K$, and let $X_i \subset \overline{X}_i$ be dense open subschemes for $1 \leq i \leq n$ with closed complement $D_i := \overline{X}_i \backslash X_i$. Let $D := D_1 \sqcup \dots \sqcup D_n$, and $r := \imp(\Gamma(D, \calO_D)_{\red}/K)$, the degree of imperfection of $D/K$. For each $1 \leq i \leq n$, let $x_i \in X_i(K)$. Finally, let $G$ be a solvable $K$-group scheme of finite type not containing a $K$-subgroup scheme $K$-isomorphic to $\Ga$. If $n > r$, then the only $K$-morphism $f\colon X_1 \times \dots \times X_n \rightarrow G$ such that, for each $1 \leq i \leq n$, $f|X_1 \times \dots X_{i-1} \times \{x_i\} \times X_{i+1} \times \dots \times X_n = 1_G$ is the constant map to the identity $1_G$.
\end{theorem}

As we shall see, the key case is when all of the $\overline{X}_i$ are $\P^1$, so that the image of the map is unirational. (Recall that a finite type $K$-scheme $X$ is said to be {\em unirational} if there is a dominant (that is, not factoring through any smaller closed subscheme) rational map $\P^n \dashrightarrow X$ for some integer $n > 0$. In particular, $X$ is geometrically integral.)

The rigidity theorem \ref{rigiditytheorem} has many applications. We first use it to prove another rigidity theorem over fields of finite degree of imperfection. We will see (Proposition \ref{degimpleqdegimperf}) that any finite reduced algebra $A$ over a field of degree of imperfection $r$ satisfies $\imp(A/K) \leq r$. Combining this with the rigidity theorem \ref{rigiditytheorem}, we will prove the following result when $K$ has finite degree of imperfection.

\begin{theorem}
\label{rigidityfindegofimp}
Let $K$ be a field of degree of imperfection $r$, and let $X_1, \dots, X_n$ be unirational $K$-schemes. For each $X_i$, let $x_i \in X_i^{\sm}(k)$ be a $k$-point in the smooth locus of $X_i$. Finally, let $G$ be a solvable affine $K$-group scheme of finite type not containing a $K$-subgroup scheme $K$-isomorphic to $\Ga$. If $n > r$, then the only $K$-morphism $f\colon X_1 \times \dots \times X_n \rightarrow G$ such that, for each $1 \leq i \leq n$, $f|X_1 \times \dots X_{i-1} \times \{x_i\} \times X_{i+1} \times \dots \times X_n = 1_G$ is the constant map to the identity $1_G$.
\end{theorem}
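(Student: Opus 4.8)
The plan is to deduce Theorem~\ref{rigidityfindegofimp} from the Rigidity Theorem~\ref{rigiditytheorem} together with Proposition~\ref{degimpleqdegimperf}, by reducing each unirational factor $X_i$ to a dense open subscheme of $\P^1_K$. First I would make some harmless reductions. Since each $X_i$ is unirational it is geometrically integral, in particular geometrically reduced, so its smooth locus $X_i^{\sm}$ is a dense open subscheme, containing $x_i$; because $X_1 \times \dots \times X_n$ is integral, $G$ is separated, and the hypotheses on $f$ restrict to smaller slices, I may replace each $X_i$ by a dense open \emph{affine} subscheme of $X_i^{\sm}$ through $x_i$. Thus I may assume every $X_i$ is smooth, affine and unirational; and, after discarding any factor of dimension $0$ (for which the corresponding slice hypothesis already forces $f = 1_G$), that $\dim X_i \geq 1$ for all $i$.

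The heart of the argument is a reduction, by induction on $\sum_i \dim X_i$, to the case in which every $X_i$ is a dense open subscheme of $\P^1_K$. If all $\dim X_i = 1$, then each $X_i$ -- a smooth unirational curve with a rational point -- embeds as a dense open subscheme of its smooth proper model, which is a genus-$0$ curve with a rational point, hence $\P^1_K$; this is the base case, settled below. Otherwise, reordering, assume $\dim X_1 \geq 2$. Here I would use the key geometric input: a unirational variety admits a unirational parametrization through any prescribed smooth rational point whose source is a product of a copy of $\P^1$ with a projective space. Precisely: there are $N \geq 0$, a point $o \in \P^1(K)$, a dense open $U \subseteq \P^1_K \times \P^N_K$ with $\{o\} \times \P^N \subseteq U$, and a dominant $K$-morphism $\psi\colon U \to X_1$ with $\psi(\{o\} \times \P^N) = \{x_1\}$. (One obtains $\psi$ from a dominant rational map $\P^m \dashrightarrow X_1$ once that map has been arranged to be defined at some $K$-rational point $q$ with value $x_1$: precompose with the birational map $\P^1_K \times \P^{m-1}_K \dashrightarrow \operatorname{Bl}_q\P^m \to \P^m$ which collapses the fibre $\{o\} \times \P^{m-1}$, the exceptional divisor over $q$, onto $q$.)

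Given such a $\psi$, put $\widetilde f := f \circ (\psi \times \mathrm{id})\colon U \times X_2 \times \dots \times X_n \to G$, and let $A \subseteq \P^1$ be the set of $a$ with $\{a\} \times \P^N \subseteq U$; this is open because $\P^N$ is proper, is nonempty because it contains $o$, and satisfies $A \times \P^N \subseteq U$. Since $G$ is affine and $\Gamma(\P^N, \calO_{\P^N}) = K$, the restriction of $\widetilde f$ to $A \times \P^N \times X_2 \times \dots \times X_n$ factors through the projection onto $A \times X_2 \times \dots \times X_n$, say as a morphism $\overline f$. I would then verify that $\overline f$ vanishes on each of the $n$ coordinate slices -- on $\{o\} \times X_2 \times \dots \times X_n$ because $\psi(\{o\} \times \P^N) = \{x_1\}$ and $f$ vanishes on $\{x_1\} \times X_2 \times \dots \times X_n$, and on the other slices by restricting the corresponding vanishing of $f$ -- and, conversely, that $\overline f = 1_G$ forces $\widetilde f = 1_G$ on the dense open $A \times \P^N \times X_2 \times \dots \times X_n$, hence (source integral, target separated) identically, hence $f = 1_G$ since $\psi \times \mathrm{id}$ is dominant onto the integral scheme $X_1 \times \dots \times X_n$. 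As $A$ is a dense open subscheme of $\P^1_K$ with $o \in A(K)$ and $\sum_i \dim X_i$ has strictly decreased, the inductive hypothesis applied to $\overline f$ gives $\overline f = 1_G$, whence $f = 1_G$. For the base case one takes $\overline X_i := \P^1_K$ in Theorem~\ref{rigiditytheorem}: then the boundary $D$ is a finite closed subscheme of $\bigsqcup_i \P^1_K$, so $\Gamma(D,\calO_D)_{\red}$ is a finite reduced $K$-algebra, and Proposition~\ref{degimpleqdegimperf} yields $\imp(\Gamma(D,\calO_D)_{\red}/K) \leq r < n$; Theorem~\ref{rigiditytheorem} then gives $f = 1_G$.

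The step I expect to be the main obstacle is the geometric input invoked in the induction: that a unirational variety has a unirational parametrization, with source $\P^1 \times \P^{m-1}$, passing through a prescribed smooth rational point -- equivalently, that a dominant rational map $\P^m \dashrightarrow X_1$ may be arranged to be defined at a $K$-rational point of $\P^m$ lying over $x_1$. Over an infinite field this is classical; in general a little care is needed, e.g. chasing a $K$-rational point through a resolution of the indeterminacy of a rational section of the parametrization (using that $X_1$ is smooth at $x_1$), and the case of a finite -- hence perfect, so $r = 0$ -- base field should be treated separately. The remaining ingredients (the collapse of the proper factor $\P^N$ against the affine $G$, and the density and dominance bookkeeping) are routine.
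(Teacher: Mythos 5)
Your endgame (each $X_i$ a dense open subscheme of $\P^1$, the boundary a finite reduced $K$-algebra, Proposition \ref{degimpleqdegimperf} giving $\imp \leq r < n$, then Theorem \ref{rigiditytheorem}) is exactly the paper's, and your preliminary reductions and the ``collapse a proper factor against the affine target'' mechanism are correct as far as they go. But the geometric input on which your induction rests is not merely delicate to prove --- it is false. After your reductions $X_1$ is affine of dimension $\geq 2$, and there is \emph{no} dominant rational map $\psi\colon \P^1\times\P^N \dashrightarrow X_1$ defined at every point of $\{o\}\times\P^N$ and contracting that fibre to a point. Indeed (the case $N=0$ being vacuous since then the source is a curve), embed $X_1$ in $\A^d$ and consider a coordinate $\psi_j$ of $\psi$: it is a rational function on $\P^1\times\P^N$, regular along all of $\{o\}\times\P^N$ and vanishing there, so its polar divisor is an effective divisor disjoint from $\{o\}\times\P^N$. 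Any effective divisor of class $(c,d)$ with $d\geq 1$ in $\Pic(\P^1\times\P^N)=\Z^2$ meets $\{o\}\times\P^N$, since its defining bihomogeneous form restricted to that fibre is a degree-$d$ form on $\P^N$ which either vanishes identically or has a zero over $\overline{K}$. Hence the polar divisor of $\psi_j$ has class $(c,0)$, so the zero divisor does too, so both are pulled back from $\P^1$ and $\psi_j$ factors through the first projection; then $\psi$ factors through $\P^1$ and cannot dominate anything of dimension $\geq 2$. The same obstruction explains why your blow-up construction cannot be carried out: $\mathrm{Bl}_q\P^m\to\P^{m-1}$ is a non-trivial $\P^1$-bundle, and any birational map from $\P^1\times\P^{m-1}$ matching $\{o\}\times\P^{m-1}$ with the exceptional divisor is forced to have indeterminacy on that fibre --- one can only trivialize over an affine chart of $\P^{m-1}$, and an affine chart is not proper, which is precisely what your collapse step requires. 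You flagged this step as the main obstacle, but located the difficulty in finding a $K$-rational point over $x_1$; the real problem is the global product structure.

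The repair is essentially the paper's proof: instead of a single parametrization with a proper fibre to collapse, use the \emph{family} of rational curves through $x_1$ --- for instance the fibres of $\mathrm{Bl}_q\P^m\to\P^{m-1}$ over $K$-points of $\P^{m-1}$, each of which gives an open subscheme of $\P^1$ with a $K$-point mapping to $x_1$. Restricting $f$ to $C\times X_2\times\cdots\times X_n$ for one such curve $C$ at a time preserves all hypotheses with the first factor now a curve, and since the union of these curves is Zariski dense in $X_1$ (this is where the paper invokes a form of Bertini, and where some care over finite or small fields is needed), killing $f$ on every such slice kills $f$. With that replacement your induction closes and coincides with the argument in the paper.
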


Consider the case $r = 1$ of the above theorem, and suppose given a $K$-scheme morphism $f \colon H \rightarrow G$ between $K$-group schemes such that $f(1_G) = 1_H$, with $G$ as in the theorem and $H$ unirational. Applying Theorem \ref{rigidityfindegofimp} to the map $g\colon H \times H \rightarrow G$ given by the formula $g(h_1, h_2) := f(h_1h_2)f(h_2)^{-1}f(h_1)^{-1}$ shows that $f$ is a homomorphism. That is, we obtain the following theorem.

\begin{theorem}
\label{homdegofimp1}
Let $K$ be a field of degree of imperfection $1$. If $G, H$ are finite type $K$-group schemes with $H$ unirational and $G$ solvable and not containing a $K$-subgroup scheme $K$-isomorphic to $\Ga$, then any $K$-scheme morphism $f\colon H \rightarrow G$ such that $f(1_H) = 1_G$ is a homomorphism.
\end{theorem}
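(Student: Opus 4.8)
\medskip
\noindent\textbf{Proof proposal.} The plan is to derive the statement from Theorem~\ref{rigidityfindegofimp} via the ``defect of multiplicativity'' device already sketched just above. Given $f\colon H\to G$ with $f(1_H)=1_G$, I would consider the $K$-morphism
\[
g\colon H\times H\longrightarrow G,\qquad g(h_1,h_2):=f(h_1h_2)\,f(h_2)^{-1}\,f(h_1)^{-1}.
\]
It is enough to prove that $g$ is the constant morphism $1_G$: this says exactly that $f(h_1h_2)=f(h_1)f(h_2)$ as morphisms $H\times H\to G$, i.e.\ that $f$ is a homomorphism (the identity and inverse axioms then being automatic). Now $f(1_H)=1_G$ gives, by a one-line computation, $g|_{H\times\{1_H\}}=1_G$ and $g|_{\{1_H\}\times H}=1_G$; so it will suffice to apply Theorem~\ref{rigidityfindegofimp} with $n=2$, $r=1$, $X_1=X_2=H$, $x_1=x_2=1_H$, since then $n=2>1=r$ forces $g=1_G$.

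Two hypotheses need to be secured. For the requirement $1_H\in H^{\sm}(K)$: since $H$ is unirational it is geometrically reduced, hence generically $K$-smooth, and since the smooth locus of a $K$-group scheme is translation-stable it is therefore all of $H$; thus $1_H\in H(K)=H^{\sm}(K)$. The second and only substantive point is that Theorem~\ref{rigidityfindegofimp} assumes $G$ \emph{affine}, whereas here $G$ is merely solvable. I would deal with this by passing to the subgroup $G'\subseteq G$ generated by the image of $f$. By the standard structure theory of subgroups generated by a geometrically irreducible subvariety through the identity, $G'$ is a closed subgroup equal to the image of some morphism $H^{\times k}\to G$ of the form $(h_1,\dots,h_k)\mapsto f(h_1)^{\pm1}\cdots f(h_k)^{\pm1}$; as $H^{\times k}$ is unirational, $G'$ is the closed image of a unirational $K$-scheme under a dominant morphism, hence is itself unirational. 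In particular $G'$ is geometrically integral, so (by the same reasoning used for $H$) it is smooth and connected; and a smooth connected unirational $K$-group is affine, because after base change to $\overline{K}$ Chevalley's structure theorem exhibits an abelian variety quotient which, being unirational --- a dominant rational map from a projective space to an abelian variety is a constant morphism --- must be trivial. Finally $G'$ is solvable, being a smooth connected subgroup of the solvable $G$, and contains no copy of $\Ga$, being a subgroup of $G$; and both $f$ and $g$ factor through $G'$, since $G'$ is a subgroup containing the image of $f$.

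With these reductions in hand, Theorem~\ref{rigidityfindegofimp} applies to $g\colon H\times H\to G'$ and yields $g=1_{G'}$, finishing the proof. I expect the affineness reduction to be the only step requiring thought: should the paper already have recorded that a $K$-morphism from a unirational scheme into a group scheme factors through a unirational --- hence affine, solvable, and $\Ga$-free --- subgroup, this step collapses to a citation, and the rest is formal once Theorem~\ref{rigidityfindegofimp} is available.
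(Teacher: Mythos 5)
Your proposal is correct and is essentially the paper's own argument: the paper proves this theorem (in the paragraph preceding its statement) by applying Theorem~\ref{rigidityfindegofimp} with $n=2$ to exactly the map $g(h_1,h_2)=f(h_1h_2)f(h_2)^{-1}f(h_1)^{-1}$. Your extra step of replacing $G$ by the (unirational, hence affine) subgroup generated by the image of $f$ is a legitimate and welcome patch for the affineness hypothesis of Theorem~\ref{rigidityfindegofimp}, which the paper glosses over; the affineness-of-unirational-groups argument you give is the same one the paper records later in the proof of Theorem~\ref{unirationalitydescendsbody}.
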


Here is another application. For a smooth $K$-group $G$, consider the descending central series $\mathscr{D}_nG \trianglelefteq G$ defined inductively by the formulas $\mathscr{D}_0G := G$ and $\mathscr{D}_{n+1}G := [G, \mathscr{D}_nG]$ for any $n \geq 0$. A group is called nilpotent if $\mathscr{D}_n G = 1$ for sufficiently large $n$. Any smooth unipotent group $U$ is nilpotent, but one cannot in general bound the nilpotency class of $U$ (the minimum integer $n \geq 0$ such that $\mathscr{D}_nG = 1$). For example, the group of upper triangular $n\times n$ unipotent matrices has nilpotency class $n - 1$. On the other hand, using the rigidity theorem \ref{rigidityfindegofimp} over fields of finite degree of imperfection, we may show that, over fields of degree of imperfection $r$, every solvable unirational group not containing $\Ga$ has nilpotency class at most $r$:

\begin{theorem}
\label{nilpotclass}
Let $K$ be a field of degree of imperfection $r$, and let $G$ be a solvable unirational $K$-group scheme not containing a $K$-subgroup scheme $K$-isomorphic to $\Ga$. Then $\mathscr{D}_rG = 1$.
\end{theorem}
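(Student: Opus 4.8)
The plan is to apply the rigidity theorem \ref{rigidityfindegofimp} with $n = r+1$ to the iterated commutator morphism $G^{r+1} \to G$ and conclude that it is constant; this is precisely the assertion $\mathscr{D}_r G = 1$.

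First, some reductions. Since $G$ is unirational it is geometrically integral, hence geometrically reduced, hence smooth, and also connected. Moreover $G$ is affine: Chevalley's theorem presents $G_{\overline{K}}$ as an extension of an abelian variety $A$ by a linear group, and $A$, being a quotient of the unirational group $G_{\overline{K}}$, would itself be unirational; but a unirational abelian variety is trivial, so $A = 0$ and $G$ is affine. Thus $G$ satisfies the hypotheses of Theorem \ref{rigidityfindegofimp}, and, $G$ being smooth, $1_G \in G(K) = G^{\sm}(K)$. Recall also that each $\mathscr{D}_j G$ is a smooth connected $K$-subgroup of $G$.

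Second, I reduce the theorem to the triviality of one morphism. Define the iterated commutator morphism
\[
c \colon G^{r+1} \longrightarrow G, \qquad c(g_1, \dots, g_{r+1}) := \bigl[g_1, [g_2, [\dots, [g_r, g_{r+1}]\dots]]\bigr].
\]
It is a standard fact of commutator calculus that $\mathscr{D}_r G$ is the smooth connected $K$-subgroup of $G$ generated by the (scheme-theoretic) image of $c$: one reduces to this single ``right-nested'' shape by induction from $\mathscr{D}_j G = [G, \mathscr{D}_{j-1}G]$, using the identity $[a,bc] = [a,b]\cdot b[a,c]b^{-1}$ together with the observation that the image of a weight-$j$ iterated commutator morphism is stable under conjugation by $G$ (because conjugation is a group automorphism, hence commutes with forming iterated commutators). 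Alternatively, $\mathscr{D}_r G$ is generated by the images of all weight-$(r+1)$ iterated commutator morphisms, every one of which is handled identically in the next step. Hence it suffices to show $c$ is the constant morphism $1_G$.

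Third, I apply the rigidity theorem, taking $n := r+1$, $X_i := G$, $x_i := 1_G \in G^{\sm}(K)$ for $1 \le i \le n$, and $f := c$; the $X_i$ are unirational by hypothesis. For each $i$, restricting $c$ to the slice $X_1 \times \dots \times \{x_i\} \times \dots \times X_n$ gives the constant $1_G$: the innermost bracket of the right-nested expression that involves the $i$-th variable has $1_G$ as one of its two entries and so equals $1_G$, and whenever a bracket in the expression equals $1_G$ so does the bracket immediately enclosing it, since $[1_G, z] = [z, 1_G] = 1_G$; proceeding outward, $c$ is $1_G$ on that slice. Since $n = r + 1 > r$, Theorem \ref{rigidityfindegofimp} yields $c = 1_G$, and therefore $\mathscr{D}_r G = 1$. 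The one genuinely delicate point is the second step --- pinning $\mathscr{D}_r G$ down as the subgroup generated by a single iterated commutator morphism of prescribed shape --- which is routine but fiddly, and is in any event bypassed by the ``all bracketings'' variant.
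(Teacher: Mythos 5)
Your proposal is correct and follows essentially the same route as the paper: reduce $\mathscr{D}_rG$ to the image of the single right-nested $(r+1)$-fold commutator map via standard commutator identities, observe that this map dies on each slice through $1_G$, and invoke Theorem \ref{rigidityfindegofimp} with $n = r+1 > r$. Your extra verification that $G$ is automatically smooth, connected, and affine (so that the hypotheses of Theorem \ref{rigidityfindegofimp} apply) is a point the paper leaves implicit here but establishes elsewhere, and is a welcome addition.
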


\begin{proof}
An easy induction using the identity
\[
[g, xy] = [g, x][xgx^{-1}, xyx^{-1}]
\]
shows that $\mathscr{D}_rG$ is generated (as a $K$-group) by the $(r+1)$-fold commutator map $G^{r+1} \rightarrow G$ defined by $$(g_1, \dots, g_{r+1}) \mapsto [g_1, [g_2, \dots, g_{r+1}]]\dots].$$ This map has the property that if any of the components is restricted to $1 \in G$, then the map becomes the constant map to $1 \in G$. Theorem \ref{rigidityfindegofimp} then completes the proof.
\end{proof}

The next application will be to show that unirationality descends through separable extensions for algebraic groups. Achet, using geometric class field theory, showed that, for a {\em commutative} $K$-group scheme $G$ of finite type, if $L/K$ is a separable field extension such that $G$ becomes unirational over $L$, then in fact $G$ is already unirational over $K$ \cite[Th.\,2.3]{achet}. Scavia, following Colliot-Th\'el\`ene, gave a more elementary proof of the same fact \cite[Lem.\,2.1]{scavia}. We extend this result to arbitrary finite type $K$-groups (no commutativity hypotheses).

\begin{theorem}
\label{unirationalitydescends}
Let $L/K$ be a $($not necessarily algebraic$)$ separable extension of fields, and let $G$ be a finite type $K$-group scheme such that $G_L$ is unirational over $L$. Then $G$ is unirational over $K$.
\end{theorem}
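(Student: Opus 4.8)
The plan is to reduce, by routine manipulations, to the case of a finite Galois extension $L/K$, and then to deduce the unirationality of $G$ from that of the Weil restriction $\R_{L/K}(G_L)$; the Rigidity Theorem enters at the single point where the argument is not formal.

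\emph{Reductions.} Since $G_L$ is unirational it is geometrically integral, so $G$ is geometrically integral over $K$ (hence smooth and connected); moreover $G_L$ is affine (unirational groups have no abelian variety quotient, hence are affine — pass to $\overline{L}$ and apply Chevalley), and affineness descends along $\Spec L\to\Spec K$, so $G$ is affine. A dominant rational map $\P^N_L\dashrightarrow G_L$ is defined over, and remains dominant over, some finitely generated subextension of $L/K$, so we may assume $L/K$ is finitely generated. Being separable and finitely generated, $L/K$ has a separating transcendence basis, say $L\supseteq F:=K(t_1,\dots,t_d)$ with $L/F$ finite separable; replacing $L$ by the Galois closure of $L/F$ — which only strengthens the hypothesis, as $G_L$ stays unirational under base change — we may assume $L/F$ is finite Galois. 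Finally, if $G_F$ is unirational over $F$, then spreading a dominant rational map $\P^N_F\dashrightarrow G_F$ out over a dense open subscheme of $\A^d_K$ produces a rational map $\P^{N+d}_K\dashrightarrow G$ with dense image (it contains the image of the generic fibre), so $G$ is unirational over $K$. Hence it suffices to treat a finite Galois extension, and we now assume $L/K$ is finite Galois with group $\Gamma$.

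\emph{Reduction to splitting a torsor.} Put $R:=\R_{L/K}(G_L)$. Weil restriction along the finite locally free morphism $\Spec L\to\Spec K$ carries open immersions to open immersions and dense opens to dense opens, so applying $\R_{L/K}$ to a dominant morphism $W\to G_L$ defined on a dense open $W\subseteq\P^N_L$ yields a dominant morphism $\R_{L/K}(W)\to R$ from a dense open subscheme of $\R_{L/K}(\P^N_L)$; since $\R_{L/K}(\P^N_L)$ contains $\R_{L/K}(\A^N_L)=\A^{N[L:K]}_K$ as a dense open, $\R_{L/K}(W)$ is a rational $K$-variety, and hence $R$ is unirational over $K$. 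The adjunction unit $\eta\colon G\hookrightarrow R$ is a closed $K$-subgroup immersion, and the quotient $Y:=R/\eta(G)$, being a dominant image of $R$, is unirational over $K$ as well. Now $R\to Y$ is a torsor under $\eta(G)\cong G$; a rational section $s\colon Y\dashrightarrow R$ of it would give a birational isomorphism $Y\times G\xrightarrow{\ \sim\ }R$, $(y,g)\mapsto s(y)\eta(g)$, exhibiting $G$ as a birational retract of the unirational variety $R$, hence as unirational over $K$. So everything comes down to producing a rational section of $R\to Y$.

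\emph{The main obstacle: the rigidity input.} Over $L$ this torsor is split: since $L/K$ is Galois, $R_L\cong\prod_{\gamma\in\Gamma}G_L$ with $\eta(G)_L$ the diagonal, and $R_L\to Y_L$ has the obvious section (represent each coset of the diagonal by the tuple whose component in a fixed factor is trivial). Thus $R\to Y$ is trivialized by the finite Galois cover $Y_L\to Y$, and one computes its descent $1$-cocycle explicitly: it is given by $L$-morphisms $Y_L\to G_L$ from the unirational variety $Y_L$, each vanishing at a distinguished $L$-point. It remains to prove that this cocycle is a coboundary, i.e. that the class of $R\to Y$ in $H^1\!\big(K(Y),G\big)$ — a priori only known to die over the separable extension $L\otimes_K K(Y)$ — in fact vanishes. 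When $G$ is pseudo-reductive this is automatic, as pseudo-reductive groups over infinite fields are already unirational; the commutative case is likewise known \cite{achet,scavia}. The essential remaining case is that of a solvable $G$ containing no copy of $\Ga$ (e.g. a wound unipotent group), and it is exactly here that the Rigidity Theorem — in the guise of Theorem \ref{rigidityfindegofimp} — is to be applied to the $L$-morphisms $Y_L\to G_L$ constituting the cocycle, forcing it to be a coboundary. Isolating this solvable, $\Ga$-free case by dévissage and then running the cocycle argument on the strength of the Rigidity Theorem is where the real work lies, and is the step I expect to be the main obstacle; everything preceding it is formal.
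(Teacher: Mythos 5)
Your reductions (geometric integrality, affineness, spreading out to a finitely generated extension, separating transcendence basis, specializing/generizing to reach the finite Galois case) are sound and broadly parallel the paper's own preliminary steps. But the proof has a genuine gap exactly where you flag it: the entire descent argument — showing that the class of the $G$-torsor $R\to Y$ in $H^1(K(Y),G)$, which dies over $L\otimes_K K(Y)$, actually vanishes — is not carried out, and the framework you set up does not obviously support it. Two concrete problems. First, your proposed d\'evissage rests on the assertion that ``pseudo-reductive groups over infinite fields are already unirational''; this is false (it holds for reductive groups and for perfect pseudo-reductive groups, but commutative pseudo-reductive groups, e.g.\ Cartan subgroups, need not be unirational), so the non-solvable part of the class is not disposed of. Second, the Rigidity Theorem \ref{rigidityfindegofimp} takes as input a map from a product of $n>r$ unirational schemes vanishing on each slice; a single cocycle component $c_\sigma\colon Y_L\to G_L$ vanishing at one point is not of this shape, and the cocycle identity $c_{\sigma\tau}=c_\sigma\cdot{}^{\sigma}c_\tau$ does not by itself produce such a product structure. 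You would need a new idea to make rigidity bite here, and none is supplied.

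For comparison, the paper's route is structurally different and shows where rigidity genuinely enters. The d\'evissage is via maximal tori, not via a composition series of the torsor class: $G$ is generated by its tori together with $Z_G(T)$ for a maximal $K$-torus $T$ (Proposition \ref{genbycenttori}); the open-cell decomposition shows $Z_G(T)_L$ is unirational (Lemma \ref{GunirimplZGunir}); and $Z_G(T)$ is an extension of a unipotent group by $T$, which Lemma \ref{unirdescextuniptori} reduces to the wound unipotent case. In that case (Theorem \ref{unirationalitydescendsunip}), one first applies the partial fraction decomposition (Lemma \ref{partialfrac}) to write $U_L$ as generated by maps from $\P^1_L$ minus a \emph{single} closed point; the crucial observation is then that for such a map $f$ and two Galois conjugates $f^{\sigma_1},f^{\sigma_2}$, the commutator map $X^{\sigma_1}\times X^{\sigma_2}\to U_L$ vanishes by the $r=1$ case of rigidity, because the union of the two conjugate punctures still has degree of imprimitivity $\le 1$. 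Hence the subgroup generated by $f$ and its conjugates is commutative and Galois-stable; it descends to a commutative $K$-subgroup, whose unirationality over $K$ follows from the already-known commutative case of the theorem. In short, rigidity is used to manufacture \emph{commutative} generating subgroups so that the known commutative descent applies — not to kill a torsor class — and the degree-of-imprimitivity bookkeeping for a single puncture and its conjugates is the step your proposal has no analogue of.
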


Using Theorem \ref{unirationalitydescends}, it is not hard to deduce that the formation of the maximal unirational subgroup scheme of a finite type group scheme $G$ commutes with separable extension (Corollary \ref{maxunirinvsepext}), and that it is a normal subgroup if $G$ is smooth (Corollary \ref{maxunirrnomal}).

We also prove the following theorem, which says something nontrivial about the structure of unirational wound groups.

\begin{theorem}
\label{genbycommunirsubgps}
If $U$ is a unirational wound unipotent $K$-group scheme, then $U$ is generated by its commutative unirational $K$-subgroups.
\end{theorem}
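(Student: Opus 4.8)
The plan is to exhibit $U$ as generated by $K$-subgroups of the form $\overline{\langle \psi(C)\rangle}$ — the smallest $K$-subgroup scheme of $U$ containing a curve $\psi(C)$ through $1_U$ — where $C$ is a dense open subscheme of $\mathbf{P}^1_K$ whose boundary divisor is \emph{\'etale} over $K$ and $\psi\colon C\to U$ is a $K$-morphism with $\psi(x_0)=1_U$ for some $x_0\in C(K)$; the Rigidity Theorem \ref{rigiditytheorem}, applied with $n=2$, will then force each such subgroup to be commutative.

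First, two reductions. If $K$ is perfect then every unipotent $K$-group is split, so a wound $U$ is trivial and there is nothing to prove; hence we may assume $K$ imperfect, in particular infinite, which is what lets the genericity arguments below produce $K$-rational objects (a dense open subscheme of a rational $K$-variety has a $K$-point over an infinite field). Second, the $K$-subgroup $\overline{\langle Y_1,\dots,Y_m\rangle}$ generated by finitely many unirational $K$-subvarieties $Y_i\subseteq U$ containing $1_U$ is again unirational: by the standard theory of groups generated by irreducible varieties through the identity, it is a smooth connected closed $K$-subgroup equal to the image under multiplication of some product $Y_{i_1}\times\cdots\times Y_{i_\ell}$ (possibly using inverses), and the image of a unirational $K$-variety under a $K$-morphism to a $K$-variety is unirational. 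Passing to the increasing union over words, this shows $\overline{\langle \psi(C)\rangle}$ is unirational whenever $\psi(C)$ is, which is automatic here since $C$ is rational.

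Now fix a dominant rational map $h\colon\mathbf{P}^N_K\dashrightarrow U$ and, after translating by an element of $U(K)$, arrange that $h$ is defined at a $K$-point $p_0$ with $h(p_0)=1_U$; let $Z\subsetneq\mathbf{P}^N_K$ be its reduced indeterminacy locus, which we may take to be a hypersurface defined by a form $F$ (if $\operatorname{codim}Z\ge 2$ a generic curve avoids $Z$ entirely). Fix $x_0\in\mathbf{P}^1(K)$ and consider degree-$d$ $K$-morphisms $\gamma\colon\mathbf{P}^1_K\to\mathbf{P}^N_K$ with $\gamma(x_0)=p_0$; these are parametrized by a rational $K$-variety, and for such $\gamma$ the composite $\psi_\gamma:=h\circ\gamma$ is a $K$-morphism $C_\gamma\to U$ with $\psi_\gamma(x_0)=1_U$, where $C_\gamma:=\mathbf{P}^1_K\setminus\gamma^{-1}(Z)$. \textbf{The crux, and the step I expect to be the main obstacle, is that for $\gamma$ in a dense open locus the divisor $\gamma^{-1}(Z)=\{F\circ\gamma=0\}$ is \'etale over $K$}: indeed $F\circ\gamma$ is a binary form of degree $d\cdot\deg Z$, being separable is a dense open condition on $\gamma$ that is nonempty (a generic line already meets $Z$ transversally) hence holds generically, and a reduced effective divisor on $\mathbf{P}^1_K$ is automatically \'etale over $K$ since its residue fields have no repeated geometric points and so are separable over $K$. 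For such a generic $\gamma$ — which we may take $K$-rational, $K$ being infinite — the boundary divisor $D_\gamma$ of $C_\gamma\subseteq\mathbf{P}^1_K$ is a closed subscheme of the \'etale $K$-scheme $\gamma^{-1}(Z)$, hence \'etale over $K$; therefore $\Gamma(D_\gamma\sqcup D_\gamma,\calO)_{\red}$ is a finite product of finite separable extensions of $K$, and since a compositum of finitely many such is separable over $K$, we get $\imp(\Gamma(D_\gamma\sqcup D_\gamma,\calO)_{\red}/K)=0$.

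Put $H_\gamma:=\overline{\langle\psi_\gamma(C_\gamma)\rangle}\subseteq U$; it is unirational by the second paragraph, and wound and solvable as a subgroup of $U$. Apply Theorem \ref{rigiditytheorem} with $n=2$, $\overline{X}_1=\overline{X}_2=\mathbf{P}^1_K$, $X_1=X_2=C_\gamma$, $x_1=x_2=x_0$, $G=H_\gamma$, to the commutator morphism $c\colon C_\gamma\times C_\gamma\to H_\gamma$, $c(s,t)=[\psi_\gamma(s),\psi_\gamma(t)]$: the hypothesis $n=2>0=r$ holds by the computation above, and $c$ is trivial on $\{x_0\}\times C_\gamma$ and on $C_\gamma\times\{x_0\}$ since $\psi_\gamma(x_0)=1_U$, so $c\equiv 1_{H_\gamma}$. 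Thus the generating subvariety $\psi_\gamma(C_\gamma)$ consists of pairwise commuting elements, and since $H_\gamma$ is generated by it, $H_\gamma$ is commutative. It remains to see that the $H_\gamma$ generate $U$. Let $\mathcal H\subseteq U$ be the closed connected $K$-subgroup they generate; if $\mathcal H\neq U$, then $W:=h^{-1}(U\setminus\mathcal H)$ is a dense open subscheme of $\mathbf{P}^N_K$, and for a generic — hence $K$-rational — good $\gamma$ the curve $\gamma(\mathbf{P}^1_K)$ meets $W$ (already a generic $K$-line through $p_0$ does, since lines through $p_0$ cover $\mathbf{P}^N_K$), so $\psi_\gamma(C_\gamma)=h(\gamma(C_\gamma))$ contains a point of $U\setminus\mathcal H$ and $H_\gamma\not\subseteq\mathcal H$, a contradiction. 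Therefore $\mathcal H=U$, completing the proof.
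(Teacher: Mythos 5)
Your argument breaks down at exactly the step you flagged as the crux, and unfortunately it cannot be repaired. The claim that a generic $\gamma$ makes $\gamma^{-1}(Z)$ \'etale over $K$ is false in the situation at hand: over an imperfect field the reduced indeterminacy locus $Z$ of a parametrization of a wound unipotent group typically has components that are reduced over $K$ but geometrically non-reduced, e.g.\ $\{x_0^p = \lambda x_1^p\}$ with $\lambda \in K \setminus K^p$ (this is $\Spec$ of the field $K(\lambda^{1/p})$ times an affine space, hence reduced, but over $\overline{K}$ it is a $p$-fold hyperplane). Every curve meets such a component in a divisor whose residue fields contain $\lambda^{1/p}$ and are therefore inseparable over $K$; no genericity argument can avoid this, and your Bertini-style dimension count only controls the geometrically reduced part of $Z$. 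Worse, the strategy is doomed even in principle: if $D := \P^1_K \setminus C$ is \'etale over $K$, then $\Jac_D(\P^1_K) \simeq \R_{D/K}(\Gm)/\Gm$ is a \emph{torus}, so by the Albanese property (Theorem \ref{albanese}) every pointed $K$-morphism $(C, x_0) \rightarrow (U, 1_U)$ into a wound unipotent $U$ factors through a homomorphism from a torus to a unipotent group and is therefore constant. (This is also what the $r=0$, $n=1$ case of Proposition \ref{rigalmostcompleteratl} says.) So all of your maps $\psi_\gamma$ are constant, all the $H_\gamma$ are trivial, and they generate nothing. The inseparability of the boundary is not a technical nuisance to be engineered away; it is the essential feature of unirational parametrizations of wound groups, and it is the reason the paper develops the degree of imprimitivity at all.

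The paper's proof gets around this by aiming for $r \leq 1$ rather than $r = 0$: starting from arbitrary pointed maps from open subschemes of $\P^1_K$ that generate $U$, it applies the partial fraction decomposition (Lemma \ref{partialfrac}) to reduce to maps whose boundary is a \emph{single closed point}; by Lemma \ref{resfldclpt} and Proposition \ref{degofprimminnumofgens} such a point has degree of imprimitivity at most $1$, and the union of two such boundaries still has degree of imprimitivity at most $1$ for a suitable choice of embeddings, so the Rigidity Theorem applies to the commutator map with $n = 2 > 1 \geq r$. If you want to salvage your write-up, replace the ``make the boundary \'etale'' step with this partial-fraction reduction; the rest of your skeleton (commutator map, rigidity with $n=2$, generation) then matches the paper's argument.
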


The unirationality assumption in Theorem \ref{genbycommunirsubgps} is crucial in order to conclude that $G$ is generated by smooth connected commutative $K$-subgroups. Indeed, in \cite[Example B.2.9]{cgp}, Conrad, Gabber, and Prasad give an example of a two-dimensional, non-commutative, wound unipotent group $U$ over every imperfect field $K$. We claim that for any two-dimensional, smooth, connected, non-commutative unipotent $K$-group $U$, the only smooth connected $K$-subgroups of $U$ are $1$, $\mathscr{D}U$, and $U$. In particular, $U$ is not generated by its smooth connected commutative $K$-subgroups. To see the claim, first note that -- because $U$ is nilpotent and ${\rm{dim}}(\mathscr{D}U) = 1$ -- $\mathscr{D}U$ is central in $U$. Now let $1 \subsetneq V \subsetneq U$ be a smooth connected $K$-subgroup, so $V$ is one-dimensional, hence commutative. We must show that $V = \mathscr{D}U$. If the map $V \rightarrow U^{{\rm{ab}}} = U/\mathscr{D}U$ is nonzero, then it is surjective, because ${\rm{dim}}(U^{\rm{ab}}) = 1$.  It would then follow that $V$ and $\mathscr{D}U$ generate $U$. Since $V$ is commutative and $\mathscr{D}U \subset U$ is central, it would follow that $U$ is commutative, contrary to hypothesis. Therefore, $V \subset \mathscr{D}U$, hence $V = \mathscr{D}U$ because both groups are one-dimensional.

Finally, we apply the rigidity theorem \ref{rigidity} to the study of permawound unipotent groups. These groups were introduced in \cite{rosasuniv} as a useful tool in the study of problems pertaining to unipotent groups. Let us recall their definition.

\begin{definition}$($\cite[Def.\,1.2]{rosasuniv}$)$
\label{asunivdef}
We say that a smooth unipotent group scheme $U$ over a field $K$ is {\em permawound} when the following condition holds: For every right-exact sequence of finite type $K$-group schemes
\[
U \longrightarrow E \xlongrightarrow{\pi} \Ga \longrightarrow 1,
\]
$E$ contains a $K$-subgroup scheme $K$-isomorphic to $\Ga$.
\end{definition}

When $U$ is wound -- which is the main case of interest -- the above definition can be rephrased as follows: For every right-exact sequence as in the definition, there is a surjective homomorphism $f\colon \Ga \rightarrow \Ga$ such that $f$ factors as $\pi\circ g$ for some $g\colon\Ga \rightarrow E$.

Some of the basic properties of permawound groups will be recalled in \S\ref{asunivsection}. For now, suffice it to say that they form an important tool in the study of wound unipotent groups. As an illustration of this, they will play a crucial role in the proof of the rigidity theorem \ref{rigiditytheorem}. See also \cite{rosasuniv}, where several applications are given.

The main result that we prove about permawound groups in this paper is the following.

\begin{theorem}
\label{asunivunirrcomm}
Let $K$ be an imperfect field.
\begin{itemize}
\item[(i)] Every permawound unipotent $K$-group is unirational.
\item[(ii)] Every wound permawound unipotent $K$-group is commutative.
\end{itemize}
\end{theorem}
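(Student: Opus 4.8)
The plan is to obtain part~(ii) from part~(i) together with the Rigidity Theorem \ref{rigiditytheorem} and Theorem \ref{genbycommunirsubgps}, so the substantive work lies in part~(i); I sketch~(i) first. \textbf{Part (i).} I would first reduce to the case that $U$ is wound. The maximal split $K$-subgroup $S \subseteq U$ is smooth, connected and $K$-isomorphic as a scheme to affine space; the quotient $U/S$ is wound and, being a quotient of $U$, is again permawound (immediately from the definition: precompose any right-exact sequence $U/S \to E \to \Ga \to 1$ with $U \twoheadrightarrow U/S$). Since an extension of a unirational group by a split unipotent group is unirational --- pull back the torsor along a dominant rational map from affine space, trivialize it over the generic point using $H^1(F,\Ga) = 0$ and dévissage to obtain a rational section, then compose --- it suffices to treat wound $U$.

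For wound $U$ I would induct on $\dim U$, using the structure theory of permawound groups recalled in \S\ref{asunivsection} (from \cite{rosasuniv}). The base case $\dim U \leq 1$ is immediate, as such a group is $K$-isomorphic as a scheme to $\A^1$ or to a point, hence unirational. For the inductive step, the goal is to exhibit a nontrivial proper smooth connected normal $K$-subgroup $V \trianglelefteq U$ with $V$ and $U/V$ again permawound --- hence unirational by induction --- and with $V$ \emph{cohomologically trivial}, i.e.\ $H^1(F,V) = 0$ for every field extension $F/K$ (for instance $V$ split, or a Weil restriction of $\Ga$ along a finite extension, according to what the structure theory provides); then $U$ is unirational by the extension lemma: the $V$-torsor $U \to U/V$ is trivial over the function field of any affine space mapping dominantly onto $U/V$, which yields a rational section $U/V \dashrightarrow U$ and hence a dominant rational map $V \times \A^a \dashrightarrow U$, with $V$ unirational. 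I expect the crux --- and the essential use of the permawound hypothesis --- to be exactly the production of such a filtration: because a wound group cannot be filtered by split graded pieces, one must leverage permawoundness (and possibly the Rigidity Theorem itself, already available at this stage) to locate normal subquotients whose degree-one cohomology vanishes over all extension fields.

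\textbf{Part (ii).} Given a wound permawound $K$-group $U$, part~(i) makes it unirational, so by Theorem \ref{genbycommunirsubgps} it is generated by commutative unirational $K$-subgroups $V_1, \dots, V_n$ (each automatically wound). Since these generate $U$, it suffices to show that any two of them commute elementwise; and, on expanding commutators, this reduces to showing that the commutator morphism vanishes on pairs of generating curves: writing each $V_i = \langle \phi_{i,a}(X_{i,a})\rangle$ with $\phi_{i,a}\colon X_{i,a} \to V_i$ a morphism from a dense open subscheme of $\P^1$ carrying a chosen $K$-point to the identity, one must show that $(s,t) \mapsto [\phi_{i,a}(s),\phi_{j,b}(t)]$ is the constant map $1_U$ on $X_{i,a} \times X_{j,b}$. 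This morphism restricts to $1_U$ on each of the two slices through the chosen base points, and $U$ is a finite type solvable $K$-group scheme containing no copy of $\Ga$, so the Rigidity Theorem \ref{rigiditytheorem} applies and forces the conclusion \emph{provided} $2 > \imp(\Gamma(D,\calO_D)_{\red}/K)$, where $D = (\P^1 \setminus X_{i,a}) \sqcup (\P^1 \setminus X_{j,b})$ --- e.g.\ whenever the generating curves can be chosen with $K$-rational, or more generally separable, complement, so that this degree of imprimitivity is $0$. The main obstacle is to arrange this. I would either show, using the structure theory of permawound groups, that the generating curves of the $V_i$ may be taken with separable boundary divisor, or --- equivalently for our purposes --- reduce to the case that $K$ has degree of imperfection $\leq 1$, in which case Theorem \ref{nilpotclass} with $r = 1$ gives commutativity directly: for this one descends $U$ to a subfield $K_0 \subseteq K$ of degree of imperfection $\leq 1$ over which $K$ is separable, using that unirationality, windedness and permawoundness all descend through separable extensions, and applies (i) and Theorem \ref{nilpotclass} over $K_0$. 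Controlling this degree of imprimitivity --- equivalently, carrying out this reduction --- is where I expect the real difficulty of~(ii) to lie.
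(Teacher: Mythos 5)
There are genuine gaps in both parts, and in each case the missing ingredient is precisely the technical heart of the paper's argument. For part (i), your inductive step asks for a normal permawound subgroup $V \trianglelefteq U$ with $H^1(F,V)=0$ for all extension fields $F/K$, so that the torsor $U \to U/V$ trivializes generically. No such $V$ exists in general: the graded pieces supplied by the structure theory (Theorem \ref{rigidityasuniv}) are $\mathscr{V}$ and $\R_{K^{1/p}/K}(\alpha_p)$, a wound group admits no nontrivial split normal subgroup, and $H^1(F,\mathscr{V})$ (a quotient of $F$ by the image of the defining $p$-polynomial) is typically enormous. The paper's substitute is much weaker but sufficient: Proposition \ref{torsorsdieratlcurve} shows that a $V$-torsor over a dense open $X \subset \P^1$ becomes trivial after pullback along a suitable \emph{dominant} map $Y \to X$ from another open subscheme of $\P^1$; this lifts the generating rational curves of $U/V$ to $U$ without ever trivializing the torsor over $U/V$ itself, and its proof occupies all of \S\ref{torsorssection}. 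Two further problems: your base case is wrong (a one-dimensional \emph{wound} group is by definition not $K$-isomorphic to $\A^1$, and its unirationality is already nontrivial -- the induction should bottom out at dimension $0$), and you give no argument that the bottom layer $V$ (or rather the central $p$-torsion subgroup containing it) is unirational; in the paper this requires showing that the groups $\mathscr{V}_{n,\vec{\lambda}}$ are unirational, via the multi-additive pairing of Proposition \ref{multiadd} and the reduction to degree of imperfection $1$ in Proposition \ref{V_nunirdegimp1}.

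For part (ii), you correctly identify the obstruction -- the rigidity theorem with two factors requires degree of imprimitivity $\leq 1$ -- but neither proposed remedy works. The boundary divisors of the generating curves are genuinely inseparable (they arise from closed points of $\P^1$ with purely inseparable residue fields, which is forced when $U$ is wound), and a permawound group over a field of degree of imperfection $r>1$ cannot be descended to a subfield of degree of imperfection $1$: the group $\mathscr{V}$ itself is built from a full $p$-basis and has no such model. The paper's mechanism is entirely different and does not route through part (i) at all (in fact (ii) is proved first, as Corollary \ref{asunivcomm}): one uses the multi-additive surjection-generating pairing $b\colon \prod_{i=1}^r(\mathscr{V}_{n,\lambda_i})_K \to \mathscr{V}_{n,\vec{\lambda}}$ of Proposition \ref{multiadd}, whose source is a product of $r$ \emph{unirational} groups, to convert the two-variable commutator vanishing on $(\mathscr{V}_{n,\vec{\lambda}})^2$ into a $2r$-variable vanishing statement on $\prod(\mathscr{V}_{n,\lambda_i})^2$, where Theorem \ref{rigidityfindegofimp} applies because $2r > r$; the general case then follows by dévissage using Proposition \ref{quotofVnpow} and the finiteness of $\Hom$ (Proposition \ref{homsfinite}). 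This pairing is the idea your proposal is missing.
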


This paper is organized as follows. In \S\ref{degimpsection}, we introduce the notion of degree of imperfection of a finite extension and investigate its properties. Subsequently, \S\S\ref{asunivsection}--\ref{deductionsection} are concerned with the proof of the main rigidity theorem \ref{rigiditytheorem}. At a more granular level, \S\ref{asunivsection} explains how the properties of permawound groups allow one to reduce the proof of the rigidity theorem to the case of maps into a particular group $G$, while \S\ref{modcompsection} carries out a computation with the moduli space of mappings from certain rational curves into this group $G$. This section is quite technical, so the reader may wish to skip it on a first reading, taking Proposition \ref{modspacescomp} as a black box. This computation plays a fundamental role in the proof of the rigidity theorem when the rational curves appearing in the theorem statement are of a particular form. This proof is carried out in \S\ref{rigalmostcompleteratlsection}. The proof of the rigidity theorem in the general case is then obtained from this case in \S\ref{deductionsection} by means of geometric class field theory. In \S\ref{appunirgpssection}, we apply the rigidity theorem to the study of unirational groups. Many of our main results are proven there, and the reader may wish to skip to this section on a first reading to see what rigidity is good for. The remainder of the paper is devoted to the study of permawound unipotent groups. The next section, \S\ref{torsorssection}, is somewhat technical. Its main result is a proposition that allows one to lift maps from rational curves through torsors for certain group schemes. This is important for lifting unirationality from quotient groups. Then \S\ref{Vnlambdasection} studies certain groups which are important because they surject onto all commutative, $p$-torsion permawound groups. Finally, we bring everything together in \S\ref{appstoasunivsec} to prove our main structure theorems for permawound groups.

\subsection{Acknowledgements}

I am happy to thank the anonymous referee for his/her diligence in reviewing the paper and for his/her helpful comments, which have improved the quality of the manuscript.

\subsection{Notation and Conventions}

Throughout the paper, $K$ denotes a field, and, when $p$ appears, it means that $K$ has positive characteristic $p$. The symbols $K_{{\rm{perf}}}$ and $K_s$ denote perfect and separable closures of $K$, respectively. For a scheme $S$, we regard the category of $S$-group schemes as a fully faithful subcategory of the category of fppf group sheaves on $S$. In particular, when we say that a sequence of $S$-group schemes is exact, or that a map of $S$-group schemes is surjective, et cetera, we mean that the corresponding property of fppf sheaves holds. Additionally, a group scheme is solvable when the corresponding fppf group sheaf is. Finally, a unipotent $K$-group scheme $U$ is {\em semiwound} when it does not admit a nonzero $K$-homomorphism from $\Ga$. This notion exhibits some of the same properties as woundness (wound = semiwound + smooth and connected). In particular, it is insensitive to separable field extension \cite[Prop.\,A.3]{rosasuniv}.

\section{Degree of imperfection of an extension}
\label{degimpsection}

In \cite{beckermaclane}, Becker and Maclane introduced for a finite purely inseparable extension $L/K$ of fields of characteristic $p > 0$ the quantity $r$ defined by the equality $[L: KL^p] = p^r$, and showed that $r$ is the minimum number of elements required to generate $L$ over $K$ \cite[Th.\,6]{beckermaclane}. This quantity will play a fundamental role in this work, so we begin by defining it for any finite field extension in characteristic $p$ and giving it a name. Later, we will extend the definition below to any finite reduced algebra over a field of characteristic $p$ (Definition \ref{degofprimdefalgebras}).

\begin{definition}
\label{degofprimfield}
For a finite extension $L/K$ of fields of characteristic $p > 0$, the {\em degree of imperfection} of $L/K$, denoted $\imp(L/K)$, is the non-negative integer $r$ defined by the equality $[L: KL^p] = p^r$, where $KL^p$ denotes the compositum inside $L$ of the subfields $K$ and $L^p$. Note that $L/KL^p$ is a purely inseparable extension of height $1$ (that is, $L^p \subset KL^p$), so $\imp(L/K) = {\rm{dim}}_L(\Omega^1_{L/KL^p}) = {\rm{dim}}_L(\Omega^1_{L/K})$.
\end{definition}

Our first result says that the degree of imperfection is insensitive to replacing the smaller field with an intermediate extension separable over it:

\begin{proposition}
\label{degfprimsepsubextn}
Given a tower $L/F/K$ of finite extensions of fields of characteristic $p > 0$, with $F/K$ separable, one has $\imp(L/K) = \imp(L/F)$.
\end{proposition}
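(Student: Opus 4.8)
The plan is to compute $\imp(L/K)$ directly from the definition by analyzing the relationship between $KL^p$ and $FL^p$ inside $L$. Since $\imp(L/K) = \log_p[L : KL^p]$ and $\imp(L/F) = \log_p[L : FL^p]$, it suffices to show $KL^p = FL^p$ as subfields of $L$. The inclusion $KL^p \subseteq FL^p$ is clear since $K \subseteq F$. For the reverse inclusion, the key point is that $F/K$ being separable should force $F \subseteq KL^p$; equivalently, every element of $F$ is a $K$-polynomial in $p$-th powers of elements of $L$. The cleanest route: since $F \subseteq L$, we have $F^p \subseteq L^p \subseteq KL^p$, so it is enough to know that $F = K F^p$, i.e. that $F/K$ being separable implies $F = KF^p$. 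This is a standard characterization — a finite extension $F/K$ in characteristic $p$ is separable if and only if $F = KF^p$ (equivalently, $F$ and $K^{1/p}$ are linearly disjoint over $K$, or: the Frobenius-twisted base change preserves the degree). Granting this, $F = KF^p \subseteq K L^p = KL^p$, hence $FL^p \subseteq (KL^p)L^p = KL^p$, giving $KL^p = FL^p$ and therefore $\imp(L/K) = \imp(L/F)$.

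First I would recall (or cite) the equivalence ``$F/K$ finite separable $\iff F = KF^p$''; depending on what the paper wishes to treat as background, this can be proved in a line by noting that separability of $F/K$ is equivalent to $F \otimes_K K_{\mathrm{perf}}$ being reduced, which since $F$ is a finite product of finite separable... — actually more directly: if $F/K$ is separable then $F \otimes_K K^{1/p}$ is a field (being a finite separable, hence étale, $K^{1/p}$-algebra that is a domain), so $[F K^{1/p} : K^{1/p}] = [F:K]$, and comparing with $[F : KF^p]$ via the Frobenius isomorphism $F \cong F^p$ (which identifies $K F^p \subseteq F$ with $K^{1/p} \otimes$... ) shows $F = KF^p$. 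Conversely if $F = KF^p$ one runs the same degree count backwards. I would state this as a short lemma or simply invoke it.

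Then the main argument is the two-line computation above: $KL^p \subseteq FL^p$ trivially, and $FL^p = (KF^p)L^p \subseteq KL^p$ using $F^p \subseteq L^p$ and $F = KF^p$. Hence $KL^p = FL^p$ and the two degrees of imprimitivity coincide.

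I do not anticipate a serious obstacle; the only genuine content is the characterization $F/K$ separable $\iff F = KF^p$, and even that is classical. The mild subtlety to be careful about is that this characterization requires $F/K$ to be finite (or at least algebraic), which is exactly our hypothesis, and that ``separable'' here means separable as a (possibly non-simple) field extension — i.e. every element is separable over $K$ — which is the convention already in force in the paper. One should also note in passing that the statement makes sense and the argument goes through verbatim even when $L/F$ or $F/K$ are not purely inseparable, since Definition~\ref{degofprimfield} is formulated for arbitrary finite extensions.
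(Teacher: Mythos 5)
Your proposal is correct and follows essentially the same route as the paper: both reduce the statement to the identity $F = KF^p$ (valid because $F/K$ is separable) and then conclude $FL^p = KL^p$, so the two degrees of imprimitivity agree. The only difference is cosmetic — the paper justifies $F = KF^p$ in one line by observing that $F/KF^p$ is simultaneously purely inseparable (as $F^p \subset KF^p$) and separable (as a subextension of $F/K$), hence trivial, which is rather slicker than your tensor-product/degree-count sketch.
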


\begin{proof}
Because $F/K$ is finite separable, one has $\Omega^1_{F/K} = 0$. Now use the exact sequence $$L \otimes_F \Omega^1_{F/K} \longrightarrow \Omega^1_{L/K} \rightarrow \Omega^1_{L/F} \longrightarrow 0.$$
\end{proof}

We also check that the degree of imperfection is insensitive to replacing $L$ and $K$ by their composita (in some larger field) with a separable extension of $K$:

\begin{proposition}
\label{degofprimcompwithsepble}
Let $L/K$ be a finite extension of fields of characteristic $p > 0$, and let $M$ be a field containing $L$, and $M/E/K$ a subfield separable $($but not necessarily algebraic$)$ over $K$. Then $\imp(L/K) = \imp(LE/E)$, where $LE$ is the compositum inside $M$ of $L$ and $E$.
\end{proposition}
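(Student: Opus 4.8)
The plan is to first reduce to the case where $E/K$ is finitely generated, and then to exploit the linear disjointness of separable and purely inseparable extensions. Throughout, note first that $LE/E$ is finite (since $L/K$ is), so $\imp(LE/E)$ is defined; and since $E^p \subseteq E$, we have $E(LE)^p = EL^pE^p = EL^p$, so $\imp(LE/E) = v_p([LE : EL^p])$, where $v_p$ is the $p$-adic valuation. Similarly $\imp(L/K) = v_p([L : KL^p])$, so writing $F := KL^p$ (a finite extension of $K$ with $L/F$ purely inseparable of height $1$), the whole statement amounts to the degree identity $[LE : EL^p] = [L : F]$.

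For the reduction, fix $x_1, \dots, x_s \in L$ (with $s = \imp(L/K)$) whose monomials $x_1^{a_1}\cdots x_s^{a_s}$, $0 \le a_i < p$, form an $F$-basis of $L$. These same monomials span $LE$ over $EL^p$ (because $x_i^p \in L^p \subseteq EL^p$), so some sub-collection is an $EL^p$-basis; expressing the remaining monomials in terms of this basis involves only finitely many elements of $E$, so there is a finitely generated subextension $E_0/K$ of $E$ over which the identical relations hold. The same counting then gives $[LE_0 : E_0L^p] = [LE : EL^p]$, i.e. $\imp(LE_0/E_0) = \imp(LE/E)$. Since a subextension of a separable extension is separable, $E_0/K$ is separable, so we may replace $E$ by $E_0$ and assume from now on that $E/K$ is finitely generated.

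Now for the core argument. Put $N := EF$, the compositum inside $M$; then $N = EKL^p = EL^p = E(LE)^p$, and $LN = LEF = LE$ since $F \subseteq L$, so it suffices to show $L$ and $N$ are linearly disjoint over $F$. Since $L^p \subseteq F$, we have $L \subseteq F^{1/p}$, so it is enough that $N$ be linearly disjoint from $F^{1/p}$ over $F$, i.e. that $N/F$ be separable. To verify this, I would use that $E/K$ is finitely generated and separable, hence admits a separating transcendence basis $t_1, \dots, t_d$ (the $t_i$ algebraically independent over $K$ with $E/K(t_1, \dots, t_d)$ finite separable). Because $F/K$ is algebraic, the $t_i$ stay algebraically independent over $F$, so $F(t_1, \dots, t_d)/F$ is purely transcendental; and since $E/K(t_1, \dots, t_d)$ is finite separable, so is the compositum $N = E\cdot F(t_1, \dots, t_d)$ over $F(t_1, \dots, t_d)$. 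Thus $t_1, \dots, t_d$ is a separating transcendence basis of $N/F$, so $N/F$ is separable. It follows that $L$ and $N$ are linearly disjoint over $F$, whence $[LE : N] = [LN : N] = [L : F] = p^{\imp(L/K)}$, and therefore $\imp(LE/E) = v_p([LE : N]) = \imp(L/K)$.

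The one delicate point I anticipate is establishing that $N = EF$ is separable over $F$: separability of a field extension need not manifestly survive an arbitrary extension of the base field, and this is precisely why the preliminary reduction to finitely generated $E$ — where a separating transcendence basis is at hand — is needed. Everything else is routine bookkeeping with field degrees together with the standard fact that a separable extension is linearly disjoint, over the base, from every purely inseparable extension.
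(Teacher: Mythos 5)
Your proof is correct, but it takes a genuinely different route from the paper's. The paper first uses Proposition \ref{degfprimsepsubextn} to replace $K$ by the separable closure of $K$ in $L$ and thereby reduce to $L/K$ purely inseparable; it then writes $L = K(\alpha_1^{1/p^{n_1}}, \dots, \alpha_r^{1/p^{n_r}})$, climbs the tower $KL^p = L_0 \subset L_1 \subset \dots \subset L_r = L$ one generator at a time, and collapses the whole statement to the single elementary fact that for a separable extension $N'/N$ and $\beta \in N$ one has $\beta \in N^p \iff \beta \in (N')^p$, proved via reducedness of $N' \otimes_N N[X]/(X^p-\beta)$. You instead compute $[LE : E(LE)^p] = [LE : EL^p]$ in one shot by showing that $EL^p = E\cdot(KL^p)$ is separable over $KL^p$, hence linearly disjoint from the height-one purely inseparable extension $L/KL^p$ (MacLane's criterion). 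The price is your preliminary reduction to finitely generated $E$, which is genuinely needed to produce a separating transcendence basis and which the paper avoids entirely, since its $p$th-power criterion applies to an arbitrary separable extension directly through reducedness of a tensor product. Your bookkeeping for that reduction (extracting an $EL^p$-basis from the monomials $x_1^{a_1}\cdots x_s^{a_s}$ and descending the finitely many relations to a finitely generated separable $E_0/K$) is sound, as is the verification that $EF/F$ is separably generated even though $F = KL^p$ may be inseparable over $K$. In exchange, your argument replaces the paper's step-by-step induction along a tower with a single conceptual application of linear disjointness, at the cost of invoking MacLane's theorem on separating transcendence bases.
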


\begin{proof}
Write $L/K$ as a tower $L/F/K$ with $L/F$ purely inseparable and $F/K$ separable. By Proposition \ref{degfprimsepsubextn}, $\imp(L/K) = \imp(L/F)$ and $\imp(LE/E) = \imp(LE/FE)$. Since $FE$ is separable over $F$, we may replace $K$ by $F$ and we are therefore free to assume that $L/K$ is purely inseparable.

We claim that the natural map $L \otimes_K E \rightarrow LE$ is an isomorphism. It is clearly surjective, so this is equivalent to checking that $L \otimes_K E$ is a field. Because $E/K$ is separable, $L \otimes_K E$ is a reduced, finite $E$-algebra, hence a finite product of fields. But $L/K$ is purely inseparable, so $L \otimes_K E$ is radicial over $E$, hence has connected spectrum, and is therefore a field.

Now let $n := \imp(L/K)$. Then $\Omega^1_{L/K}$ is a $K$-vector space of dimension $n[L: K]$. On the other hand, we have
\[
\Omega^1_{LE/E} \simeq \Omega^1_{L \otimes_K E}/E \simeq \Omega^1_{L/K}\otimes_K E,
\]
so $\Omega^1_{LE/E}$ is an $E$-vector space of dimension $n[L: K]$. Since $[L: K] = [L\otimes_K E: E] = [LE: E]$. we deduce that $\imp(LE/E) = {\rm{dim}}_{LE}(\Omega^1_{LE/E}) = \imp(L/K)$.
\end{proof}

\begin{lemma}
\label{compwithsepclosindofemb}
Let $L/K$ be a finite extension of fields, $\overline{K}$ an algebraic closure of $K$, and $K_s \subset \overline{K}$ the separable closure of $K$ inside $\overline{K}$. Let $\sigma, \tau\colon L \hookrightarrow \overline{K}$ be $K$-embeddings. Then $\sigma(L)K_s = \tau(L)K_s$.
\end{lemma}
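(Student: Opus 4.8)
The plan is to reduce everything to the purely inseparable part of $L/K$ and then exploit that any two $K$-embeddings of $L$ into $\overline{K}$ differ by an automorphism of $\overline{K}$. Since $\overline{K}$ is an algebraic closure of both $\sigma(L)$ and $\tau(L)$, the $K$-isomorphism $\sigma(L)\to\tau(L)$ sending $\sigma(x)\mapsto\tau(x)$ extends to some $\phi\in\mathrm{Aut}(\overline{K}/K)$ with $\phi\circ\sigma=\tau$. Because $\overline{K}/K_s$ is purely inseparable, $\phi$ carries the separable closure $K_s$ onto itself; hence $\tau(L)K_s=\phi(\sigma(L))\cdot\phi(K_s)=\phi\bigl(\sigma(L)K_s\bigr)$. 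As every $K$-embedding $L\hookrightarrow\overline{K}$ has the form $\phi'\circ\sigma$ for some $\phi'\in\mathrm{Aut}(\overline{K}/K)$, it suffices to show that the subfield $M:=\sigma(L)K_s\subseteq\overline{K}$ is stable under $\mathrm{Aut}(\overline{K}/K)$, i.e. that $M/K$ is normal.

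To set this up concretely I would identify $L$ with $\sigma(L)\subseteq\overline{K}$ and write $L/K$ as a tower $L/L_{\mathrm{sep}}/K$, where $L_{\mathrm{sep}}=L\cap K_s$ is the separable closure of $K$ in $L$, with $L=L_{\mathrm{sep}}(\beta_1,\dots,\beta_m)$ and $\beta_i^{p^{e_i}}\in L_{\mathrm{sep}}$ for each $i$. Since $L_{\mathrm{sep}}\subseteq K_s$, the separable part of $L$ is absorbed: $M=K_s(\beta_1,\dots,\beta_m)$, a finite purely inseparable extension of $K_s$, each listed generator satisfying $\beta_i^{p^{e_i}}\in K_s$. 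Thus $M$ is obtained from $K_s$ by adjoining $p^{e_i}$-th roots of the elements $c_i:=\beta_i^{p^{e_i}}\in L_{\mathrm{sep}}$, all of which lie in $K_s$.

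For the normality of $M/K$ I would work with the $p$-power structure of $M/K_s$. Choosing $N$ large enough that $M^{p^N}\subseteq K_s$, the subfield $W:=M^{p^N}$ of $K_s$ recovers $M$ inside $\overline{K}=K_s^{1/p^\infty}$ (a finite purely inseparable extension of $K_s$ in $\overline{K}$ is the unique preimage of its image under a high enough Frobenius power), so it suffices to prove that $W$ is stable under $\mathrm{Gal}(K_s/K)$; here $W$ is generated over $K_s^{p^N}$ by the iterated Frobenius images of $K_s$ and of the $c_i$, so the point is to show that replacing each $c_i$ by a $K$-conjugate does not change $W$. I expect this to be the main obstacle: it requires controlling the interaction between the separable subextension $L_{\mathrm{sep}}/K$ and the purely inseparable roots $\beta_i$ inside $\overline{K}$, the decisive input being that $L$ \emph{itself} — not just a conjugate — contains the chosen root $\beta_i$ of $X^{p^{e_i}}-c_i$, which should force every conjugate of $\beta_i$ over $K$ to already lie in $M$, and hence $M$ to be $\mathrm{Aut}(\overline{K}/K)$-stable. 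The preceding reductions are purely formal; this verification of stability of the inseparable data under the Galois action is where the real work lies.
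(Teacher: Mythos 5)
Your formal reductions are correct and match the shape of the paper's own argument: any two $K$-embeddings of $L$ differ by an element of $\mathrm{Aut}(\overline{K}/K)$, which stabilizes $K_s$, so the lemma is equivalent to the stability of $M:=\sigma(L)K_s$ under $\mathrm{Aut}(\overline{K}/K)$, and the separable part of $L$ is absorbed into $K_s$, leaving $M=K_s(\beta_1,\dots,\beta_m)$ with $\beta_i^{p^{e_i}}\in K_s$. But the proposal stops exactly at the step that carries all the content: you assert that $\beta_i\in L$ ``should force every conjugate of $\beta_i$ over $K$ to already lie in $M$'' and defer the verification. That is a genuine gap, and it cannot be closed, because the claim --- and with it the lemma --- is false. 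Take $p$ odd, $K=\F_p(s,t)$, $\theta:=s^{1/2}\in K_s$ (separable since $p\neq 2$), $F:=K(\theta)$, and $L:=F\bigl((t+\theta)^{1/p}\bigr)$. The two $K$-embeddings of $L$ restrict to the two embeddings of $F$ and give $\sigma(L)K_s=K_s\bigl((t+\theta)^{1/p}\bigr)$ and $\tau(L)K_s=K_s\bigl((t-\theta)^{1/p}\bigr)$, each purely inseparable of degree $p$ over $K_s$ (note $\{\theta,t\}$ is a $p$-basis of $K_s$, so $t\pm\theta\notin K_s^p$). If these two fields coincided, the common field would contain $(t+\theta)^{1/p}+(t-\theta)^{1/p}=2t^{1/p}$ and $(t+\theta)^{1/p}-(t-\theta)^{1/p}=2\theta^{1/p}$, hence $K_s(t^{1/p},\theta^{1/p})$, which has degree $p^2$ over $K_s$ --- a contradiction. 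So the conjugate $(t-\theta)^{1/p}$ of your $\beta$ does not lie in $M$, and $M/K$ is not normal.

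For what it is worth, you have put your finger on precisely the soft spot in the paper's proof as well: after reducing to $L=F(\alpha^{1/p^n})$ with $\alpha\in F$, the paper invokes ``a $K$-automorphism of $K_s(\sigma(\alpha)^{1/p^n})$ sending $\sigma(\alpha)$ to $\tau(\alpha)$,'' but the extension to $\overline{K}$ of a Galois element carrying $\sigma(\alpha)$ to $\tau(\alpha)$ is only an isomorphism of $K_s(\sigma(\alpha)^{1/p^n})$ \emph{onto} $K_s(\tau(\alpha)^{1/p^n})$; the equality of these two subfields of $\overline{K}$ is exactly what is in question, and the example above (with $\alpha=t+\theta$, $n=1$) shows it can fail. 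What the Galois argument genuinely yields is a $K$-isomorphism of pairs $(\sigma(L)K_s,\,K_s)\simeq(\tau(L)K_s,\,K_s)$, not an equality of composita; any repair of the lemma, and of its downstream uses (e.g.\ for $A=L\times L$ in the example, the compositum $\sigma_1(L)\sigma_2(L)$ has degree of imprimitivity $1$ or $2$ over $K$ depending on the chosen embeddings), has to work with that weaker statement.
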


\begin{proof}
Let $L/F/K$ be the maximal subextension separable over $K$. Then we have $L = L_1 \dots L_n$, where $L_i = F(\alpha_i^{1/p^{n_i}})$ for some $\alpha_i \in F$ and $n_i > 0$. Then $$\sigma(L)K_s = (\sigma(L_1)K_s)\dots(\sigma(L_n)K_s),$$ and similarly for $\tau$. So it suffices to show that $\sigma(L_i)K_s = \tau(L_i)K_s$. That is, we have reduced to the case in which $L = F(\alpha^{1/p^n})$ for some $\alpha \in F$ and $n > 0$. Because $F/K$ is separable, we have $\sigma(L)K_s = K_s(\sigma(\alpha)^{1/p^n})$ and similarly for $\tau$. By symmetry, therefore, it suffices to check that $\tau(\alpha)$ is a $p^n$th power in $K_s(\sigma(\alpha)^{1/p^n})$. Since $\sigma(\alpha)$ and $\tau(\alpha)$ are $\Gal(K_s/K)$-conjugate, we may choose a $K$-automorphism of $K_s(\sigma(\alpha)^{1/p^n})$ sending $\sigma(\alpha)$ to $\tau(\alpha)$. Since $\sigma(\alpha)$ is a $p^n$th power in $K_s(\sigma(\alpha)^{1/p^n})$, it follows that $\tau(\alpha)$ is as well.
\end{proof}

We now prove the following key result, which allows us to extend the definition of the degree of imperfection from finite field extensions of a field $K$ to arbitrary finite reduced $K$-algebras.

\begin{proposition}
\label{degofprimindofembeddings}
Let $L_1/K, \dots, L_n/K$ be finite field extensions of a field $K$ of characteristic $p > 0$. Let $E/K, F/K$ be field extensions, and suppose given $K$-embeddings $\sigma_i\colon L_i \hookrightarrow E$, $\tau_i\colon L_i \hookrightarrow F$ for $i = 1, \dots, n$. Then $\imp(\sigma_1(L_1)\dots\sigma_n(L_n)/K) = \imp(\tau_1(L_1)\dots \tau_n(L_n)/K)$, where $\sigma_1(L_1)\dots\sigma_n(L_n)$ denotes the compositum inside $E$ of the fields $\sigma_i(L_i)$ and similarly for $\tau_1(L_1)\dots\tau_n(L_n)$ inside $F$.
\end{proposition}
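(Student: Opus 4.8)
The plan is to push the whole question up to the separable closure $K_s$ of $K$, where it becomes essentially trivial, using the two results that immediately precede it. Since each $L_i$ is finite over $K$, the compositum $\sigma_1(L_1)\dots\sigma_n(L_n)$ is a \emph{finite} subextension of $E/K$; replacing $E$ by this compositum (and $F$ by $\tau_1(L_1)\dots\tau_n(L_n)$) changes nothing, so we may assume $E/K$ and $F/K$ are finite. Now fix an algebraic closure $\overline{K}$ of $K$, choose $K$-embeddings $E \hookrightarrow \overline{K}$ and $F \hookrightarrow \overline{K}$, and precompose them with the $\sigma_i$ and $\tau_i$ respectively. Since a field embedding is an isomorphism onto its image and $\imp$ depends only on the isomorphism class over $K$, after these reductions we are in the following situation: $\sigma_i, \tau_i \colon L_i \hookrightarrow \overline{K}$ (for $1 \leq i \leq n$) are two families of $K$-embeddings into a fixed algebraic closure, and we must show $\imp(\sigma_1(L_1)\dots\sigma_n(L_n)/K) = \imp(\tau_1(L_1)\dots\tau_n(L_n)/K)$, the composita now being formed inside $\overline{K}$.

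Let $K_s \subset \overline{K}$ be the separable closure of $K$. By Lemma \ref{compwithsepclosindofemb} applied to each $L_i/K$, we have $\sigma_i(L_i)K_s = \tau_i(L_i)K_s$ for every $i$. Forming the compositum over $i$, the field $(\sigma_1(L_1)\dots\sigma_n(L_n))K_s$ and the field $(\tau_1(L_1)\dots\tau_n(L_n))K_s$ both coincide with the single field $M \subset \overline{K}$ generated over $K_s$ by all the $\sigma_i(L_i)$ (equivalently, by all the $\tau_i(L_i)$), which is a finite extension of $K_s$.

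Finally, apply Proposition \ref{degofprimcompwithsepble} with the (algebraic, hence separable) extension $K_s/K$ in the role of the intermediate field and $\overline{K}$ in the role of the ambient field: it yields $\imp(\sigma_1(L_1)\dots\sigma_n(L_n)/K) = \imp(M/K_s)$ and, by the identical argument, $\imp(\tau_1(L_1)\dots\tau_n(L_n)/K) = \imp(M/K_s)$. The two left-hand sides therefore agree, which is exactly the claim. I do not anticipate a genuine obstacle here: the substantive content has already been absorbed into Lemma \ref{compwithsepclosindofemb} (the field $\sigma(L)K_s$ is independent of the $K$-embedding $\sigma$) and Proposition \ref{degofprimcompwithsepble} ($\imp$ is unchanged under base change along a separable extension). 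The only points requiring a moment's care are the harmless reduction to finite $E$ and $F$ — needed so that $K$-embeddings into $\overline{K}$ exist — and the bookkeeping that the compositum with $K_s$ of a compositum of fields is the compositum of their individual composita with $K_s$.
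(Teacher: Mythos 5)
Your proof is correct and follows essentially the same route as the paper's: both arguments pass to a common algebraically closed ambient field, use Proposition \ref{degofprimcompwithsepble} to replace $K$ by $K_s$, and then invoke Lemma \ref{compwithsepclosindofemb} to identify the two composita with $K_s$. The only cosmetic difference is that you first shrink $E$ and $F$ to the finite composita before embedding into $\overline{K}$, whereas the paper chooses a single field $M$ receiving both $E$ and $F$ and then enlarges it to an algebraic closure.
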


\begin{proof}
First, choosing a field $M$ admitting $K$-embeddings of both $E$ and $F$, we may replace both $E$ and $F$ by $M$ and thereby assume that $E = F = M$. Replacing $M$ with an algebraic closure, we may also assume that $M$ is algebraically closed. Let $K_s \subset M$ be the separable closure of $K$ inside $M$. Then, using Proposition \ref{degofprimcompwithsepble} for the first and last equalities below, and Lemma \ref{compwithsepclosindofemb} for the third equality, we have
\begin{align*}
\imp(\sigma_1(L_1)\dots\sigma_n(L_n)/K) & = \imp(\sigma_1(L_1)\dots\sigma_n(L_n)K_s/K_s) \\
& = \imp((\sigma_1(L_1)K_s)\dots(\sigma_n(L_n)K_s)/K_s) \\
& = \imp((\tau_1(L_1)K_s)\dots(\tau_n(L_n)K_s)/K_s) \\
& = \imp(\tau_1(L_1)\dots\tau_n(L_n)K_s/K_s) \\
& = \imp(\tau_1(L_1)\dots\tau_n(L_n)/K). \qedhere
\end{align*}
\end{proof}

We now define the degree of imperfection for finite reduced algebras over fields.

\begin{definition}
\label{degofprimdefalgebras}
For a field $K$ of characteristic $p > 0$, and a finite reduced $K$-algebra $A$, we define the {\em degree of imperfection} of $A/K$, denoted $\imp(A/K)$, as follows: choose a $K$-algebra isomorphism $A \simeq \prod_{i=1}^n L_i$, where each $L_i$ is a finite field extension of $K$. Choose a tuple $(E, \sigma_1, \dots, \sigma_n)$ consisting of a field $E$ and $K$-embeddings $\sigma_i\colon L_i \hookrightarrow E$. Then we define $\imp(A/K) := \imp(\sigma_1(L_1)\dots\sigma_n(L_n)/K)$.
\end{definition}

If we have two isomorphisms $\phi\colon A \xrightarrow{\sim} \prod_{i=1}^nL_i$ and $\psi\colon A \xrightarrow{\sim} \prod_{i=1}^m F_i$ as in the definition above, then $m = n$ and the isomorphism $\psi \circ \phi^{-1}$ corresponds to a permutation $f$ of $\{1, \dots, n\}$ and, for $1 \leq i \leq n$, $K$-isomorphisms $\phi_i\colon L_i \xrightarrow{\sim} F_{f(i)}$. In particular, if we have $K$-embeddings $\{\tau_i\}$ of the $F_i$ into some field $E'$, then post-composing with the isomorphism $\psi \circ \phi^{-1}$, we obtain $K$-embeddings $\{\tau_i'\}$ of the $L_i$ into $E'$, and $\tau_i(F_i) = \tau'_i(L_i)$. Thus, Definition \ref{degofprimdefalgebras} is well-defined by Proposition \ref{degofprimindofembeddings}.

Now we prove that having degree of imperfection $0$ has a natural interpretation.

\begin{proposition}
\label{degofprim0=sepble}
For a finite reduced algebra $A$ over a field $K$ of characteristic $p > 0$, one has $\imp(A/K) = 0$ if and only if $A$ is \'etale over $K$.
\end{proposition}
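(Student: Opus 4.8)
The plan is to reduce to the case of a single finite field extension and then to identify the condition $\imp(L/K) = 0$ with separability of $L/K$. Write $A \simeq \prod_{i=1}^n L_i$ with each $L_i/K$ a finite field extension, fix $K$-embeddings $\sigma_i \colon L_i \hookrightarrow E$ into a common field, and set $L := \sigma_1(L_1)\cdots\sigma_n(L_n) \subseteq E$; by Definition \ref{degofprimdefalgebras} we have $\imp(A/K) = \imp(L/K)$, and $A$ is \'etale over $K$ if and only if every $L_i/K$, equivalently every $\sigma_i(L_i)/K$, is separable. Since $\imp(L/K) = 0$ means exactly $[L : KL^p] = 1$, i.e.\ $L = KL^p$, the theorem reduces to the following assertion for a finite field extension $L/K$: one has $L = KL^p$ if and only if $L/K$ is separable.

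For the direction ``$L/K$ separable $\Rightarrow L = KL^p$'' I would repeat the argument from the proof of Proposition \ref{degfprimsepsubextn}: the extension $L/KL^p$ is purely inseparable (as $L^p \subseteq KL^p$) and is also separable, being intermediate in the separable extension $L/K$; hence it is trivial. Returning to $A$: if every $\sigma_i(L_i)/K$ is separable then so is their compositum $L/K$ (a finite compositum of separable subextensions is separable), and therefore $\imp(A/K) = \imp(L/K) = \imp(L/L) = 0$ by Proposition \ref{degfprimsepsubextn}.

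For the converse ``$L = KL^p \Rightarrow L/K$ separable'', suppose $L/K$ is not separable and let $F$ be the separable closure of $K$ in $L$, so that $F \subsetneq L$, the extension $L/F$ is purely inseparable, and $L = KL^p \subseteq FL^p \subseteq L$ forces $FL^p = L$. The crux is then an elementary lemma: a finite purely inseparable extension $L/F$ satisfying $FL^p = L$ must be trivial. To see it, apply the Frobenius isomorphism $x \mapsto x^p$, which carries the field $L$ onto the field $L^p$ and hence the compositum $FL^p$ onto $F^p L^{p^2}$; from $L = FL^p$ we thus get $L^p = F^p L^{p^2} \subseteq FL^{p^2}$, so $L = FL^p \subseteq FL^{p^2} \subseteq L$, i.e.\ $L = FL^{p^2}$. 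Iterating gives $L = FL^{p^m}$ for every $m \geq 1$; taking $m$ to be the exponent of the purely inseparable extension $L/F$, so that $L^{p^m} \subseteq F$, yields $L = FL^{p^m} = F$, contradicting $F \subsetneq L$. Hence $L/K$ is separable, and feeding this back: $\imp(A/K) = \imp(L/K) = 0$ forces $L/K$ separable, so each $\sigma_i(L_i) \subseteq L$ is separable over $K$, so each $L_i/K$ is, and $A$ is \'etale over $K$.

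The only point requiring a little care is the Frobenius-iteration lemma of the last paragraph --- concretely, the assertion that applying the field isomorphism $\phi\colon L \xrightarrow{\sim} L^p$, $x \mapsto x^p$, to the compositum $FL^p$ yields the compositum $\phi(F)\phi(L^p) = F^p L^{p^2}$ inside $L^p$ --- together with keeping track of the bookkeeping between $\imp(A/K)$ and $\imp(L/K)$. Everything else uses only standard facts about separable extensions already in play in \S\ref{degimpsection}. (Alternatively, one could prove ``$L = KL^p \Rightarrow L/K$ separable'' by noting that the universal derivation $\mathrm{d}\colon L \to \Omega_{L/K}$ kills $K$ and all $p$-th powers, hence kills $KL^p = L$, so $\Omega_{L/K} = 0$, whence $L/K$ is separable.)
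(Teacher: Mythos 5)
Your proof is correct and follows essentially the same route as the paper's: reduce to the field case via the compositum, prove the forward direction by observing that $L/KL^p$ is both separable and purely inseparable, and prove the converse by iterating the relation $L = KL^p$ (you iterate $L = FL^p$ instead, which is an inessential variation) to get $L = FL^{p^m}$ and then invoking $L^{p^m} \subseteq F$. The Frobenius-compositum bookkeeping you flag is fine, and your closing remark about $\Omega_{L/K}$ is a valid alternative but not needed.
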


\begin{proof}
When $A = L$ is a field, \'etaleness is equivalent to unramifiedness -- that is, $\Omega^1_{L/K} = 0$. Thus we are done in this case. Now we treat the general case. Choose a $K$-algebra isomorphism $A \simeq \prod_{i=1}^n L_i$ for field extensions $L_i/K$. We may choose the $L_i$ to all lie in some field $E$ containing $K$. Then $\imp(A/K) = \imp(L_1\dots L_n/K)$. Thus $\imp(A/K) = 0$ if and only if $L_1\dots L_n$ is separable over $K$, which in turn holds precisely when each $L_i$ is separable over $K$, i.e., when $A/K$ is \'etale.
\end{proof}

Next we check that degree of imperfection is insensitive to injective \'etale morphisms.

\begin{proposition}
\label{degofprimsepblextnlargerfield}
Let $A \rightarrow B$ be an injective \'etale morphism of finite reduced algebras over a field $K$ of characteristic $p > 0$. Then $\imp(A/K) = \imp(B/K)$.
\end{proposition}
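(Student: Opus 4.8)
The plan is to reduce to the case of a single finite \emph{separable} extension of fields and then invoke Propositions \ref{degfprimsepsubextn} and \ref{degofprimcompwithsepble}.

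For the reduction, first write $A \simeq \prod_{i=1}^n L_i$ and $B \simeq \prod_{j=1}^m M_j$ for finite field extensions $L_i, M_j$ of $K$. The $K$-algebra map $A \to B$ sends the primitive idempotents $e_i$ of $A$ to sums of primitive idempotents of $B$, which exhibits a partition $\{1, \dots, m\} = \coprod_{i=1}^n S_i$ together with, for each $j \in S_i$, a $K$-embedding $L_i \hookrightarrow M_j$; injectivity of $A \to B$ forces every $S_i$ to be nonempty (else $e_i \mapsto 0$), and étaleness forces each $M_j/L_i$ with $j \in S_i$ to be finite separable. Now I would fix an algebraically closed field $E \supseteq K$ and $K$-embeddings $\sigma_i \colon L_i \hookrightarrow E$, and for each $j$ (with $i$ the index with $j \in S_i$) choose a $K$-embedding $\tau_j \colon M_j \hookrightarrow E$ whose restriction along $L_i \hookrightarrow M_j$ is $\sigma_i$; this is possible because $M_j/L_i$ is finite and $E$ is algebraically closed. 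Set $L := \sigma_1(L_1)\cdots\sigma_n(L_n)$ and $M := \tau_1(M_1)\cdots\tau_m(M_m)$, the composita inside $E$. By Definition \ref{degofprimdefalgebras}, $\imp(A/K) = \imp(L/K)$ and $\imp(B/K) = \imp(M/K)$. Since $\sigma_i(L_i) = \tau_j(L_i) \subseteq \tau_j(M_j)$ whenever $j \in S_i$, we get $L \subseteq M$; moreover each $\tau_j(M_j)$ is separable over $\sigma_i(L_i) \subseteq L$, so each $L\cdot\tau_j(M_j)$ is separable over $L$, and hence their compositum $M$ is a finite separable extension of $L$. Thus it suffices to prove: if $L \subseteq M$ are finite extensions of $K$ with $M/L$ separable, then $\imp(L/K) = \imp(M/K)$.

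For this field case, let $F$ be the separable closure of $K$ in $L$ and $F'$ the separable closure of $F$ in $M$, so that $K \subseteq F \subseteq L \subseteq M$ and $F \subseteq F' \subseteq M$, with $F/K$, $F'/F$ (hence $F'/K$) separable and $L/F$, $M/F'$ purely inseparable. The extension $M/LF'$ (inside $M$) is a subextension of the separable extension $M/L$ and of the purely inseparable extension $M/F'$, hence trivial: $M = LF'$. Then Proposition \ref{degfprimsepsubextn} gives $\imp(L/K) = \imp(L/F)$ and $\imp(M/K) = \imp(M/F')$, while Proposition \ref{degofprimcompwithsepble}, applied with base field $F$, finite extension $L/F$, ambient field $M$, and separable subextension $M/F'/F$, gives $\imp(L/F) = \imp(LF'/F') = \imp(M/F')$. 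Chaining these equalities yields $\imp(L/K) = \imp(M/K)$, completing the argument.

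The only genuinely delicate point I anticipate is the bookkeeping in the reduction step — keeping track of the idempotents and verifying that the compositum $M$ is indeed separable over $L$ — after which the field case is a short formal manipulation of the two earlier propositions, the identity $M = LF'$ being the key observation there.
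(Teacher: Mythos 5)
Your proof is correct and follows essentially the same route as the paper's: reduce to a tower of fields $L \subseteq M$ with $M/L$ separable via the product decomposition and the nonempty partition forced by injectivity, then conclude from the separable-insensitivity results (Propositions \ref{degfprimsepsubextn} and \ref{degofprimcompwithsepble}). The only notable (and welcome) variations are that you choose the embeddings $\tau_j$ compatibly with the $\sigma_i$, which makes the separability of the compositum $M$ over $L$ immediate, and that in the field case you use relative separable closures and the identity $M = LF'$ rather than passing to $K_s$ as the paper does -- cosmetic differences that do not change the substance of the argument.
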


\begin{proof}
We first consider the case in which $A$ and $B$ are fields. That is, we have a tower of finite field extensions $L/F/K$ with $L/F$ separable, and we need to show that $\imp(L/K) = \imp(F/K)$. The following sequence is exact (left-exactness coming from the separability of $L/F$):
\[
0 \longrightarrow L \otimes_F \Omega^1_{F/K} \longrightarrow \Omega^1_{L/K} \longrightarrow \Omega^1_{L/F} \longrightarrow 0.
\]
Because $L/F$ is \'etale, $\Omega^1_{L/F} = 0$, so $\imp(L/K) = \imp(F/K)$.

Now consider the general case. Choose $K$-algebra isomorphisms $A \simeq \prod_{i=1}^n F_i$ and $B \simeq \prod_{j=1}^m L_j$ with the $F_i$ and $L_j$ field extensions of $K$ such that the $L_j$ are all contained in some field $E$ containing $K$. The \'etale $K$-morphism $\phi \colon A \rightarrow B$ corresponds to a partition $I_1 \coprod \dots \coprod I_n$ of $\{1, \dots, m\}$ and for each pair $(i, j)$ with $j \in I_i$ a $K$-embedding $\phi_{i,\,j} \colon F_i \rightarrow L_j$ making $L_j$ separable over $F_i$. The injectivity of $\phi$ exactly says that $I_i$ is nonempty for each $1 \leq i \leq n$. For each $1 \leq i \leq n$, choose some $j_i \in I_i$.

By definition,
\[
\imp(A/K) = \imp(\phi_{1,\,j_1}(F_1)\dots \phi_{n,\,j_n}(F_n)/K),
\]
and
\[
\imp(B/K) = \imp(L_1\dots L_m/K).
\]
Because $L_{j_i}$ is separable over $\phi_{i,\,j_i}(F_i)$, the extension $L_1\dots L_m/\phi_{1,\,j_1}(F_1)\dots \phi_{n,\,j_n}(F_n)$ is separable. The proposition therefore follows from the already-treated case when $A$ and $B$ are fields.
\end{proof}

The next result says that degree of imperfection cannot decrease upon passage to larger extensions.

\begin{proposition}
\label{degofprimjumpsoverlargerexts}
Give an injective $K$-homomorphism $A \rightarrow B$ of finite reduced algebras over a field $K$ of characteristic $p > 0$, one has $\imp(A/K) \leq \imp(B/K)$. In particular, given a tower $L/F/K$ of finite field extensions, one has $\imp(F/K) \leq \imp(L/K)$.
\end{proposition}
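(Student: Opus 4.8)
The plan is to reduce to the case in which $A$ and $B$ are fields, and then to prove that case by a direct computation with field degrees, mirroring the structure of the proof of Proposition~\ref{degofprimsepblextnlargerfield}; the ``in particular'' assertion is exactly the field case, so it gets proved first. For the reduction, write $A \simeq \prod_{i=1}^n F_i$ and $B \simeq \prod_{j=1}^m L_j$ with the $F_i$ and $L_j$ finite field extensions of $K$. Composing the given injection $A \hookrightarrow B$ with the $j$th projection $B \twoheadrightarrow L_j$ yields a $K$-algebra map $A \to L_j$; since $L_j$ is a domain, this factors through a projection $A \twoheadrightarrow F_{i(j)}$ followed by a field embedding $\phi_j\colon F_{i(j)} \hookrightarrow L_j$, for a uniquely determined index $i(j)$. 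Injectivity of $A \hookrightarrow B$ says $\bigcap_j \ker(A \to L_j) = 0$, which forces $j \mapsto i(j)$ to be surjective; hence for each $1 \le i \le n$ we may choose $j(i)$ with $i(j(i)) = i$, giving an embedding $F_i \hookrightarrow L_{j(i)}$. Fix an algebraic closure $\overline{K}$ of $K$ with $K$-embeddings $\psi_j\colon L_j \hookrightarrow \overline{K}$ for all $j$, and set $\sigma_i := \psi_{j(i)} \circ \phi_{j(i)}\colon F_i \hookrightarrow \overline{K}$. Then $F := \sigma_1(F_1)\cdots\sigma_n(F_n)$ and $L := \psi_1(L_1)\cdots\psi_m(L_m)$ are finite field extensions of $K$ inside $\overline{K}$ with $F \subseteq L$, and by Definition~\ref{degofprimdefalgebras} (legitimate thanks to Proposition~\ref{degofprimindofembeddings}) we have $\imp(A/K) = \imp(F/K)$ and $\imp(B/K) = \imp(L/K)$. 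So it remains to prove that $\imp(F/K) \le \imp(L/K)$ for a tower $L/F/K$ of finite field extensions of characteristic $p$.

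For the field case I will show directly that $[F : KF^p] \le [L : KL^p]$. Since $KF^p \subseteq KL^p \subseteq L$ (using $F^p \subseteq L^p$) and $KF^p \subseteq F \subseteq L$, the two tower factorizations of $[L : KF^p]$ give
\[
[F : KF^p]\,[L : F] \;=\; [L : KF^p] \;=\; [L : KL^p]\,[KL^p : KF^p],
\]
so it suffices to check $[KL^p : KF^p] \le [L : F]$. But $KL^p$ is the compositum of $KF^p$ and $L^p$ inside $L$, so by the standard bound on the degree of a compositum $[KL^p : KF^p] \le [L^p : L^p \cap KF^p]$; and since $F^p \subseteq L^p \cap KF^p$, this is at most $[L^p : F^p] = [L : F]$, the last equality because the Frobenius endomorphism of $L$ induces an isomorphism $L \xrightarrow{\sim} L^p$ carrying $F$ onto $F^p$. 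Taking $\log_p$ of the inequality $[F : KF^p] \le [L : KL^p]$ produced this way yields $\imp(F/K) \le \imp(L/K)$.

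I do not anticipate a genuine obstacle. The inputs are Definition~\ref{degofprimdefalgebras} together with Proposition~\ref{degofprimindofembeddings} (for the reduction) and elementary field-degree arithmetic plus the standard compositum-degree inequality (for the field case). The only point demanding a little care is the bookkeeping showing that each field factor of $A$ embeds into some field factor of $B$ compatibly with the chosen embeddings into $\overline{K}$.
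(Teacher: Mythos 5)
Your proposal is correct and follows essentially the same route as the paper: the same reduction of the product case to the field case by choosing one field factor of $B$ receiving each field factor of $A$ and comparing composita inside a common overfield, and the same field-case computation reducing everything to the inequality $[KL^p : KF^p] \le [L^p : F^p] = [L : F]$ via the standard compositum-degree bound and the Frobenius isomorphism $L/F \xrightarrow{\sim} L^p/F^p$. The only cosmetic difference is that you invoke the bound in the form $[(KF^p)L^p : KF^p] \le [L^p : L^p \cap KF^p]$ while the paper cites $[EN : FN] \le [E : F]$; these are the same fact.
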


\begin{proof}
Choosing isomorphisms $A \simeq \prod_{i=1}^n F_i$ and $B \simeq \prod_{j=1}^m L_j$, with $F_i, L_j$ fields, a $K$-algebra homomorphism $f\colon A \rightarrow B$ is the same thing as a partition $I_1 \coprod \dots \coprod I_n$ of $\{1, \dots, m\}$ and for each $1 \leq i \leq n$ and each $j \in I_i$ a $K$-embedding $\phi_{i,\,j}\colon F_i \hookrightarrow L_j$. To say that $f$ is injective means that each $I_i$ is non-empty. In particular, if we choose a field $E$ and $K$-embeddings $\tau_j\colon L_j \hookrightarrow E$, then by arbitrarily choosing some $j_i \in I_i$ for each $i$, we obtain for each $1 \leq i \leq n$ the $K$-embedding $\sigma_i := \tau_{j_i}\circ \phi_{i,\, j_i}\colon F_i \hookrightarrow E$. Then $\sigma_i(F_i) \subset \tau_{j_i}(L_{j_i})$, so $\sigma_1(F_1)\dots\sigma_n(F_n) \subset \tau_1(L_1)\dots\tau_m(L_m)$. In particular, we see that the general case of the proposition follows from the case in which $A$ and $B$ are fields, so we concentrate on this case.

Suppose given a tower of finite extensions $L/F/K$. Then we claim that
\begin{equation}
\label{degofprimjumpsoverlargerextspfeqn1}
[KL^p: KF^p] \leq [L^p: F^p] = [L: F].
\end{equation}
Indeed, the first inequality follows from the general fact that, for any finite extension $E/F$, with $E$ contained in the field $M$, and any subfield $N \subset M$, one has $[EN: FN] \leq [E: F]$. The second equality follows from the fact that the $p$th power map induces an isomorphism from $L/F$ onto $L^p/F^p$. Now, using (\ref{degofprimjumpsoverlargerextspfeqn1}), we find that
\[
[L: KL^p] = \frac{[L: KF^p]}{[KL^p: KF^p]} \geq \frac{[L: KF^p]}{[L: F]} = [F: KF^p].
\]
That is, $\imp(L/K) \geq \imp(F/K)$.
\end{proof}

\begin{proposition}
\label{degimpleqdegimperf}
If $K$ is a field of degree of imperfection $r$, then for any finite reduced $K$-algebra $A$, one has $\imp(A/K) \leq r$.
\end{proposition}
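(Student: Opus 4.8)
The plan is to reduce at once to the case of a single field extension, and then to invoke the elementary fact that the degree of imperfection does not change under finite field extensions.

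First I would unwind Definition~\ref{degofprimdefalgebras}: write $A \simeq \prod_{i=1}^n L_i$ with the $L_i$ finite field extensions of $K$, chosen to all lie inside a common field $E$, and put $L := L_1 \cdots L_n \subset E$. Then $\imp(A/K) = \imp(L/K)$, and $L/K$ is itself a finite field extension. So it suffices to prove $\imp(L/K) \le r$ for an arbitrary finite extension $L/K$, i.e. that $[L : KL^p] \le p^r$.

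The key step is to show $[L : L^p] = [K : K^p] = p^r$. Since $K \subseteq L$ we have $K^p \subseteq L^p \subseteq L$, and $L/K^p$ is finite (it is built from the finite extensions $K/K^p$ and $L/K$), so every degree below is finite. The Frobenius map is a field isomorphism $L \xrightarrow{\sim} L^p$ carrying $K$ onto $K^p$, so $[L^p : K^p] = [L : K]$. Comparing the towers $L/L^p/K^p$ and $L/K/K^p$ then gives
\[
[L : L^p]\,[L : K] = [L : L^p]\,[L^p : K^p] = [L : K^p] = [L : K]\,[K : K^p],
\]
hence $[L : L^p] = [K : K^p] = p^r$. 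Finally, because $L^p \subseteq KL^p \subseteq L$, the degree $[L : KL^p]$ divides $[L : L^p] = p^r$, so in particular $[L : KL^p] \le p^r$, which is exactly $\imp(L/K) \le r$.

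I do not expect a real obstacle here: the only point needing (standard) care is the finiteness of the intermediate degrees, which is why I would first record $K^p \subseteq L^p$ and the finiteness of $L/K^p$ before manipulating multiplicativity of degrees; the rest is bookkeeping with the definition of $\imp$ for algebras.
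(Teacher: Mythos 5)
Your proof is correct, and it takes a cleaner route than the paper's. Both arguments begin with the same reduction to a single finite extension $L/K$ via the compositum in Definition~\ref{degofprimdefalgebras}, and both ultimately rest on the invariance of the degree of imperfection under finite extensions, i.e. $[L:L^p]=[K:K^p]$. The difference is in how that fact is deployed. The paper first invokes Proposition~\ref{degfprimsepsubextn} to reduce to $L/K$ purely inseparable, then embeds $L$ into $K^{1/p^n}$ and uses the monotonicity statement (Proposition~\ref{degofprimjumpsoverlargerexts}) to reduce to the explicit computation $\imp(K^{1/p^n}/K)=r$, quoting the invariance of the degree of imperfection along the way without proof. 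You instead observe directly that $L^p \subseteq KL^p \subseteq L$, so $[L:KL^p]$ divides $[L:L^p]$, and then prove $[L:L^p]=[K:K^p]=p^r$ from scratch by comparing the towers $L/L^p/K^p$ and $L/K/K^p$ via the Frobenius isomorphism. Your version is shorter, self-contained, and avoids two auxiliary propositions entirely; the paper's version reuses machinery it has already built and, as a byproduct, records the slightly sharper equality $\imp(K^{1/p^n}/K)=r$ rather than just an upper bound. Your care in checking finiteness of $[L:K^p]$ before multiplying degrees is exactly the right point to flag, and the argument goes through.
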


\begin{proof}
By the very definition of $\imp(A/K)$, the assertion reduces to the case when $A = L$ is a field. By Proposition \ref{degfprimsepsubextn}, and since any finite extension of $K$ also has degree of imperfection $r$, we may assume that $L/K$ is purely inseparable. Then $L \subset K^{1/p^n}$ for some $n > 0$, so by Proposition \ref{degofprimjumpsoverlargerexts}, it suffices to show that $\imp(K^{1/p^n}/K) = r$. Let $K_m := K^{1/p^m}$. Then we have $KK_n^p = K_{n-1}$, so $[K_n: KK_n^p] = [K_n: K_{n-1}] = [K_{n-1}^{1/p}: K_{n-1}] = p^r$ because the finite extension $K_{n-1}$ of $K$ also has degree of imperfection $r$. Thus, $\imp(K_n/K) = r$.
\end{proof}

The degree of imperfection is invariant under separable base change.

\begin{proposition}
\label{degofprimsepblbasechange}
For a finite reduced algebra $A$ over a field $K$ of characteristic $p > 0$, and a $($not necessarily algebraic$)$ separable extension $K'/K$, $K' \otimes_K A$ is a finite reduced $K'$-algebra, and $$\imp(A/K) = \imp((K' \otimes_K A)/K').$$
\end{proposition}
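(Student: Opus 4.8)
The plan is to reduce everything, via Definition~\ref{degofprimdefalgebras}, to a comparison of composita of fields, and then to quote Proposition~\ref{degofprimcompwithsepble} (insensitivity of $\imp$ to composing the base with a separable extension) together with Proposition~\ref{degofprimindofembeddings} (independence of the choice of embeddings). Write $A \simeq \prod_{i=1}^n L_i$ with the $L_i/K$ finite field extensions, so that $K' \otimes_K A \simeq \prod_{i=1}^n (K' \otimes_K L_i)$. Each factor $K' \otimes_K L_i$ has $K'$-dimension $[L_i:K]$ and is reduced because $K'/K$ is separable and $L_i$ is a field extension of $K$ --- the same input \cite[Def.\,27.(D)]{matsumura} already used in the proof of Proposition~\ref{degofprimcompwithsepble}. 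Since a finite product of finite reduced algebras is finite and reduced, $K' \otimes_K A$ is a finite reduced $K'$-algebra, which is the first assertion.

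For the equality of degrees of imprimitivity, I would decompose each $K' \otimes_K L_i \simeq \prod_{j \in J_i} M_{ij}$ into finite field extensions $M_{ij}/K'$; here $J_i \neq \emptyset$ because $K' \otimes_K L_i \neq 0$, and $K' \otimes_K A \simeq \prod_{i,j} M_{ij}$. Fix an algebraic closure $\Omega$ of $K'$ and, for each pair $(i,j)$, a $K'$-embedding $\iota_{ij}\colon M_{ij} \hookrightarrow \Omega$. The composite $K$-algebra map $L_i \to K' \otimes_K L_i \to M_{ij} \to \Omega$ is injective (as $L_i$ is a field), hence a $K$-embedding $\sigma_{ij}\colon L_i \hookrightarrow \Omega$; and since $M_{ij}$ is generated as a $K'$-algebra by the image of $L_i$, we get $\iota_{ij}(M_{ij}) = K' \cdot \sigma_{ij}(L_i)$ inside $\Omega$. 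Let $\widetilde{L} \subset \Omega$ be the compositum over $K$ of all the subfields $\sigma_{ij}(L_i)$, a finite extension of $K$; then the compositum over $K'$ of the subfields $\iota_{ij}(M_{ij})$ of $\Omega$ is exactly $K' \cdot \widetilde{L}$, so Definition~\ref{degofprimdefalgebras} yields
\[
\imp\bigl((K' \otimes_K A)/K'\bigr) \;=\; \imp\bigl((K' \cdot \widetilde{L})/K'\bigr) \;=\; \imp(\widetilde{L}/K),
\]
the second equality being Proposition~\ref{degofprimcompwithsepble} applied to the finite extension $\widetilde{L}/K$, the overfield $\Omega$, and the separable subextension $K'/K$.

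To finish, I would identify $\imp(\widetilde{L}/K)$ with $\imp(A/K)$. The field $\widetilde{L}$ is the compositum of the family $\{\sigma_{ij}(L_i)\}_{i,j}$ of subfields of $\Omega$, which, after reindexing, is a finite sequence of embedded copies of the $L_i$ in which each $L_i$ occurs at least once (every $J_i$ being nonempty). By Proposition~\ref{degofprimindofembeddings}, the degree of imprimitivity over $K$ of such a compositum is independent of the chosen $K$-embeddings, so I may recompute it using, for each $(i,j)$, a single $K$-embedding of $L_i$ into $K_s$ depending only on $i$; for that choice the compositum collapses to the compositum of one copy of each $L_i$, whose degree of imprimitivity over $K$ is $\imp(A/K)$ by Definition~\ref{degofprimdefalgebras}. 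Hence $\imp\bigl((K' \otimes_K A)/K'\bigr) = \imp(\widetilde{L}/K) = \imp(A/K)$.

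This argument is not delicate. The points that need care are keeping the ring-theoretic product $\prod_{i,j} M_{ij}$ distinct from the compositum $K' \cdot \widetilde{L}$ inside $\Omega$, and reindexing $\{\sigma_{ij}(L_i)\}$ as an honest finite list of fields with repetitions so that Proposition~\ref{degofprimindofembeddings} applies verbatim. The substantive inputs are Propositions~\ref{degofprimcompwithsepble} and~\ref{degofprimindofembeddings}.
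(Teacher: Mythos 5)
Your argument is correct, and it takes a genuinely more direct route than the paper's. The paper proves the statement in three stages: first for $K' = K_s$ (where it identifies each field factor $M_{i,j}$ of $K_s \otimes_K L_i$ with the compositum $L_iK_s$, via Lemma \ref{compwithsepclosindofemb} and a separability argument), then for $K$ separably closed (where $K' \otimes_K L_i$ is shown to be a field using geometric connectedness of ${\rm{Spec}}(L_i)$), and finally in general by splicing these two cases together through the separable closures of $K$ and $K'$ in a common algebraically closed field. You instead handle the general case in one pass: the only structural fact you need about the factors $M_{ij}$ is that each equals $K' \cdot \sigma_{ij}(L_i)$ for some $K$-embedding $\sigma_{ij}$ of $L_i$, which holds simply because $M_{ij}$ is a quotient of $K' \otimes_K L_i$ and hence generated over $K'$ by the image of $L_i$. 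What lets you get away with so much less is the observation that Proposition \ref{degofprimindofembeddings} is stated for an arbitrary finite list of field extensions and therefore tolerates repetitions, so the compositum $\widetilde{L}$ of the possibly-many embedded copies of each $L_i$ has the same degree of imprimitivity over $K$ as the compositum of a single copy of each, namely $\imp(A/K)$; the paper never exploits this. Your route buys a shorter proof with no case division and no connectedness argument; the paper's route yields along the way the sharper local descriptions $M_{i,j} = L_iK_s$ and ``$K' \otimes_K L_i$ is a field over a separably closed $K$,'' which are not needed for your argument. One trivial slip to fix: in the final step you speak of choosing a $K$-embedding of $L_i$ into $K_s$, which is impossible when $L_i/K$ is inseparable; you should embed into an algebraic closure $\overline{K}$ (or any field admitting embeddings of all the $L_i$), and nothing else in the argument changes.
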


\begin{proof}
The separability of $K'/K$ ensures that $K' \otimes_K A$ is still reduced \cite[Def.\,27.(D)]{matsumura}, so we turn to the assertion about the degree of imperfection. We first consider the case when $K' = K_s$ is a separable closure of $K$.
Choose an algebraically closed field $\overline{K}$ containing $K_s$, and choose a $K$-algebra isomorphism $A \simeq \prod_{i=1}^n L_i$, where the $L_i$ are subfields of $\overline{K}$. By definition, $\imp(A/K) = \imp(L_1\dots L_n/K)$, and by Proposition \ref{degofprimcompwithsepble}, we obtain
\begin{equation}
\label{degofprimsepblbasechangepfeqn2}
\imp(A/K) = \imp((L_1K_s)\dots(L_nK_s)/K_s).
\end{equation}
Choose for each $i$ a $K_s$-algebra isomorphism 
\begin{equation}
\label{degofprimsepblbasechangepfeqn7}
K_s \otimes_K L_i \simeq \prod_{j=1}^{n_i} M_{i,\,j}
\end{equation}
with the $M_{i,\,j}$ subfields of $\overline{K}$. The $K$-algebra homomorphism
\begin{equation}
\label{degofprimsepblbasechangepfeqn14}
L_i \xrightarrow{1 \otimes_K {\rm{id}}} K_s \otimes_K L_i \simeq \prod_{j=1}^{n_i} M_{i,\,j}
\end{equation}
shows that there exists for each $i, j$ a $K$-embedding $\sigma_{i,\,j} \colon L_i \hookrightarrow M_{i,\,j}$. On the one hand, each $M_{i,\,j}$ contains $K_s$. On the other hand, for any $K$ embedding $\sigma\colon L_i \hookrightarrow \overline{K}$, Proposition \ref{compwithsepclosindofemb} implies that $\sigma(L_i)K_s = L_iK_s$, so we see that $L_i \subset \sigma_{i,\,j}(L_i)K_s \subset M_{i,\,j}$. Thus, $L_iK_s \subset M_{i,\,j}$ for all $i, j$.

In fact, we claim that 
\begin{equation}
\label{degofprimsepblbasechangepfeqn8}
M_{i,\,j} = L_iK_s
\end{equation}
for all $i, j$. Since $L_iK_s$ is separably closed, it suffices to show that $M_{i,\,j}$ is separable over $L_iK_s$ and because $L_iK_s = \sigma(L_i)K_s$ for any $K$-embedding $\sigma\colon L_i \hookrightarrow \overline{K}$, it suffices to check that it is separable over $L_i$ for {\em some} choice of embedding $L_i \hookrightarrow M_{i,\,j}$. In particular, we may check this for the embedding obtained from the composition (\ref{degofprimsepblbasechangepfeqn14}). Then the isomorphism (\ref{degofprimsepblbasechangepfeqn7}), together with the condition \cite[Def.\,27.(D)]{matsumura}, implies that it suffices to show that $K_s \otimes_K L_i$ is a separable $L_i$-algebra, and this follows from the fact that $K_s$ is a separable $K$-algebra. This completes the proof of (\ref{degofprimsepblbasechangepfeqn8}).

Let $M$ denote the compositum inside $\overline{K}$ of all of the $M_{i,\,j}$, $1 \leq i \leq n$, $1 \leq j \leq n_i$. We now have the following equalities, where the first holds by definition and the second by (\ref{degofprimsepblbasechangepfeqn8}):
\begin{equation}
\label{degofprimsepblbasechangepfeqn3}
\imp(K_s \otimes_K A/K_s) = \imp(M/K_s) = \imp((L_1K_s)\dots(L_nK_s)/K_s).
\end{equation}
Combining (\ref{degofprimsepblbasechangepfeqn3}) and (\ref{degofprimsepblbasechangepfeqn2}) proves the proposition in the case that $K' = K_s$ is a separable closure of $K$.

Next we consider the case in which $K$ is separably closed. Choose an algebraically closed field $E$ containing $K'$, and choose a $K$-algebra isomorphism $A \simeq \prod_{i=1}^n L_i$, where each $L_i$ is a subfield of $E$. By definition,
\begin{equation}
\label{degofprimsepblbasechangepfeqn9}
\imp(A/K) = \imp(L_1\dots L_n/K).
\end{equation}
As $K'$-algebras, $K' \otimes_K A \simeq \prod_{i=1}^n (K' \otimes_K L_i)$. We claim that the natural $K'$-algebra map $K' \otimes_K L_i \rightarrow K'L_i$ is an isomorphism. It suffices to show that $K' \otimes_K L_i$ is a field. First, it is reduced because $K'/K$ is separable. It is also finite over $K'$ because $L_i$ is finite over $K$.  It follows that $K' \otimes_K L_i$ is isomorphic to a product of finitely many fields. Because $K$ is separably closed, the finite $K$-scheme ${\rm{Spec}}(L_i)$ is geometrically connected, hence ${\rm{Spec}}(K' \otimes_K L_i)$ is connected, so $K' \otimes_K L_i$ is a field. So, as claimed, we have the $K'$-algebra isomorphism $K' \otimes_K L_i \xrightarrow{\sim} K'L_i$. Therefore, by definition,
\begin{equation}
\label{degofprimsepblbasechangepfeqn10}
\imp((K' \otimes_K A)/K') = \imp((K'L_1)\dots(K'L_n)/K').
\end{equation}
Combining (\ref{degofprimsepblbasechangepfeqn9}), (\ref{degofprimsepblbasechangepfeqn10}), and Proposition \ref{degofprimcompwithsepble} completes the proof of the proposition when $K$ is separably closed.

Now we prove the proposition in general. Choose an algebraically closed field $E$ containing $K'$, and let $K_s, K'_s$ denote the separable closures of $K$ and $K'$, respectively, in $E$. By the already-treated case in which $K'$ is a separable closure of $K$, we have
\[
\imp(A/K) = \imp(K_s \otimes_K A/K_s)
\]
\begin{equation}
\label{degofprimsepblbasechangepfeqn11}
\imp((K' \otimes_K A)/K') = \imp(K'_s \otimes_K A/K'_s).
\end{equation}
Now applying the already-treated case when $K$ is separably closed shows that the right sides of (\ref{degofprimsepblbasechangepfeqn11}) agree.
\end{proof}

Becker and Maclane showed that, for a finite purely inseparable field extension $L/K$, the degree of imperfection measures the minimum number of elements required to generate $L$ over $K$ \cite[Th.\,6]{beckermaclane}. This almost always holds in general, even without the purely inseparable assumption.

\begin{proposition}
\label{degofprimminnumofgens}
For a finite extension $L/K$ of fields of characteristic $p > 0$, $\imp(L/K)$ equals the minimum number of generators required to generate $L$ over $K$, unless $L/K$ is separable and $L \neq K$ $($in which case, by Proposition $\ref{degofprim0=sepble}$, $\imp(L/K) = 0$, while the minimum number of generators is $1$ by the Primitive Element Theorem$)$.
\end{proposition}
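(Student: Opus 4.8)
The plan is to dispatch the separable case by citing earlier results, and then, assuming $L/K$ is inseparable, to bound the minimal number $\mu$ of elements generating $L$ over $K$ from both sides. If $L/K$ is separable, then Proposition \ref{degofprim0=sepble} gives $\imp(L/K) = 0$, while $\mu = 0$ when $L = K$ and $\mu = 1$ otherwise by the Primitive Element Theorem; this is precisely what the statement asserts in the separable case. So assume henceforth that $L/K$ is not separable and put $r := \imp(L/K)$; then $r \geq 1$, again by Proposition \ref{degofprim0=sepble}, and moreover $K$ is imperfect, hence infinite -- a fact I shall use.

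For the inequality $\mu \geq r$: if $L = K(\alpha_1, \dots, \alpha_m)$, then $KL^p = K(\alpha_1^p, \dots, \alpha_m^p)$, whence $L = KL^p(\alpha_1, \dots, \alpha_m)$ with each $\alpha_i$ killed by $X^p - \alpha_i^p \in KL^p[X]$; therefore $p^r = [L : KL^p] \leq p^m$, i.e. $m \geq r$.

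For the inequality $\mu \leq r$: write $L/F/K$ with $F/K$ the maximal separable subextension, so $L/F$ is purely inseparable and $\imp(L/F) = \imp(L/K) = r$ by Proposition \ref{degfprimsepsubextn}. If $F = K$, then $L/K$ is purely inseparable and \cite[Th.\,6]{beckermaclane} already supplies a generating set of size $r$. If $F \neq K$, write $F = K(\gamma)$ (Primitive Element Theorem) and apply \cite[Th.\,6]{beckermaclane} to the purely inseparable extension $L/F$ to get $L = F(\beta_1, \dots, \beta_r)$, with $r \geq 1$ since $L \neq F$. As $L = K(\gamma, \beta_1)(\beta_2, \dots, \beta_r)$, it now suffices to show that $K(\gamma, \beta_1)/K$ is primitive: then $L$ is generated by that single element together with $\beta_2, \dots, \beta_r$, a total of $r$ elements.

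The remaining point -- ``merging'' the separable generator $\gamma$ with the purely inseparable generator $\beta := \beta_1$ into one element -- is the heart of the argument, and I expect it to be the main obstacle. The plan is to show that $\theta_t := \gamma + t\beta$ generates $K(\gamma, \beta)$ over $K$ for all but finitely many $t \in K$, which suffices because $K$ is infinite. Choose $n$ with $\beta^{p^n} \in F$; since $\gamma$ is separable over $K$ one has $K(\gamma^{p^n}) = K(\gamma) = F$, and $\theta_t^{p^n} = \gamma^{p^n} + t^{p^n}\beta^{p^n} \in F$. Writing $\beta^{p^n} = g(\gamma)$ for some $g \in K[X]$ and passing to the Galois closure $E$ of $F/K$, one shows that for each $\sigma \in \Gal(E/K)$ with $\sigma(\gamma) \neq \gamma$ the identity $\sigma(\theta_t^{p^n}) = \theta_t^{p^n}$ forces a single value of $t^{p^n}$, hence of $t$ (using the injectivity of the $p^n$-power map on $E$); as $\Gal(E/K)$ is finite, for all but finitely many $t$ the element $\theta_t^{p^n}$ is moved by every such $\sigma$, so $K(\theta_t^{p^n}) = F$. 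For such $t$ we get $F = K(\theta_t^{p^n}) \subseteq K(\theta_t)$, hence $\beta = t^{-1}(\theta_t - \gamma) \in K(\theta_t)$, and therefore $K(\theta_t) = K(\gamma, \beta)$, as desired. The delicate part is this Galois bookkeeping, i.e. checking that a generic linear combination of $\gamma$ and $\beta$ becomes a primitive element of $F/K$ after raising to the $p^n$-th power; the two inequalities above are routine.
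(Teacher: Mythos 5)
Your proof is correct, and its skeleton matches the paper's: dispose of the separable case, split off the maximal separable subextension $F$, invoke Becker--MacLane for the purely inseparable part, and then merge the separable primitive element with one inseparable generator via a generic $K$-linear combination, discarding finitely many bad scalars over the (necessarily infinite) field $K$. Two steps are executed differently, though. For the inequality $\mu \geq r$ you give a direct degree count ($KL^p = K(\alpha_1^p,\dots,\alpha_m^p)$, so $[L:KL^p] \leq p^m$), whereas the paper routes this through $F$ and cites Becker--MacLane again; your version is slightly more self-contained. More substantively, for the merging step the paper argues directly in $L$ with minimal polynomials, in the style of the classical Primitive Element Theorem proof: the minimal polynomial of $\theta$ over $K(\alpha_1,\dots,\alpha_{r-1},\alpha_r+\lambda\theta)$ divides both $f(X)$ and $g(\alpha_r+\lambda\theta-\lambda X)$, and separability of $f$ rules out a second common root for good $\lambda$. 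You instead raise $\theta_t = \gamma + t\beta$ to the $p^n$-th power so as to land in the separable field $F$, and then do the genericity count with Galois theory in the Galois closure of $F/K$, using injectivity of Frobenius to see that each $\sigma$ moving $\gamma$ excludes at most one $t$. Both mechanisms are sound; the paper's avoids passing to a Galois closure and treats the separable and inseparable generators symmetrically, while yours cleanly reduces the primitivity question to a purely separable one where the fixed-field correspondence does the work. (One cosmetic point: for a given $\sigma$ the equation $\sigma(\theta_t^{p^n}) = \theta_t^{p^n}$ forces \emph{at most} one value of $t^{p^n}$ --- it may have no solution when $g(\sigma(\gamma)) = g(\gamma)$ --- but this only helps you.)
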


\begin{proof}
The proposition holds when $L/K$ is purely inseparable \cite[Th.\,6]{beckermaclane}. We will show that (i) for $r \geq 0$, if $L$ is generated over $K$ by $r$ elements, then $\imp(L/K) \leq r$; and (ii) for $r > 0$, if $\imp(L/K) = r$, then $L$ is generated over $K$ by $r$ elements. Combining (i) and (ii) proves the proposition. Let $L/F/K$ be the maximal subextension separable over $K$, so $L/F$ is purely inseparable. First we prove (i). Since $L$ is generated over $K$ by $r$ elements, it is generated over $F$ by $r$ elements. Therefore, $\imp(L/F) \leq r$. By Proposition \ref{degfprimsepsubextn}, therefore, $\imp(L/K) \leq r$.

Next we prove (ii). Let $r > 0$,  and suppose that $\imp(L/K) = r$. Then, because $L/F$ is purely inseparable, $L$ is generated over $F$ by $r$ elements: $L = F(\alpha_1, \dots, \alpha_r)$ for some $\alpha_i \in L$. Since $F/K$ is finite separable, $F = K(\theta)$ for some $\theta \in F$; in particular, $\theta$ is separable over $K$. We will show by an argument essentially identical to one of the standard proofs of the Primitive Element Theorem that $L$ is generated over $K$ by $r$ elements. In fact, we will show that for all but finitely many $\lambda \in K$, we have $L = K(\alpha_1, \dots, \alpha_{r-1}, \alpha_r + \lambda\theta)$. Since we may assume $K$ to be infinite (since finite fields are perfect, so the proposition is trivial over them), this will prove the result.

Let $f(X), g(X) \in K[X]$ be the minimal polynomials of $\theta$ and $\alpha_r$, respectively, over $K$. Then we claim that, as long as $\lambda$ is not of the form $(\alpha' - \alpha_r)/(\theta - \theta')$ for some root $\alpha'$ of $g(X)$ and some root $\theta' \neq \theta$ of $f(X)$ (lying in some fixed algebraic closure of $L$), then $L = K(\alpha_1, \dots, \alpha_{r-1}, \alpha_r + \lambda\theta)$. Indeed, let $m(X) \in K(\alpha_1, \dots, \alpha_{r-1}, \alpha_r + \lambda\theta)[X]$ be the minimal polynomial of $\theta$ over $K(\alpha_1, \dots, \alpha_{r-1}, \alpha_r + \lambda\theta)$. We claim that ${\rm{deg}}(m) = 1$, so that $\theta \in K(\alpha_1, \dots, \alpha_{r-1}, \alpha_r + \lambda\theta)$, hence also $\alpha_r \in K(\alpha_1, \dots, \alpha_{r-1}, \alpha_r + \lambda\theta)$, so $L = K(\alpha_1, \dots, \alpha_{r-1}, \alpha_r + \lambda\theta)$.

To prove the claim, assume for the sake of contradiction that ${\rm{deg}}(m) > 1$. Since $\theta$ is separable over $K$, $f$ has no repeated roots, hence neither does $m$. Therefore, $m$ has a root $\theta' \neq \theta$. Further, $m(X) \mid f(X), g(\alpha_r + \lambda\theta - \lambda X)$ since $\theta$ is a root of both of these polynomials. So $\theta'$ is also a root of these two polynomials. It follows that
\[
\alpha_r + \lambda\theta - \lambda \theta' = \alpha'
\]
for some root $\alpha'$ of $g$. We therefore find that $\lambda = (\alpha' - \alpha_r)/(\theta - \theta')$, contrary to our choice of $\lambda$.
\end{proof}

\section{Permawound unipotent groups}
\label{asunivsection}

A central role in the proof of the rigidity theorem \ref{rigiditytheorem} is played by permawound groups, which were introduced in \cite{rosasuniv}. These groups will also make an appearance later when we discuss applications of rigidity to the study of such groups in \S\ref{appstoasunivsec}, so we recall some of their most important properties for the convenience of the reader. The definition of these groups was given in Definition \ref{asunivdef}.

Suppose that $F \in K[X_1, \dots, X_n]$ is a $p$-polynomial -- that is, a sum of terms of the form $cX_i^{p^n}$ with $c \in K$, $n \geq 0$. The {\em principal part} $P$ of $F$ is the sum of the monomials $c_iX_i^{p^{d_i}}$ of highest degrees in each of the variables $X_i$. We say that $F$ is reduced if $P$ has no zeroes in $K^n$ apart from the trivial zero $\vec{0} \in K^n$. We say that $P$ is {\em universal} if the homomorphism $K^n \rightarrow K$ induced by $P$ is surjective. If $U_F \subset \Ga^n$ is the unipotent $K$-group defined by the vanishing of $F$, then $U_F$ is smooth precisely when $F$ has nonzero linear part, in which case it is permawound if and only if $P$ is universal \cite[Th.\,6.10]{rosasuniv}.

Permawound groups over imperfect fields are connected \cite[Prop.\,6.2]{rosasuniv}, but their most important properties, and the ones we shall use, are their so-called genericity and rigidity properties (the latter term perhaps being unfortunately ambiguous given the topic of this paper). First we recall genericity.

\begin{theorem}$($``Genericity,'' $\cite[Th.\,1.4]{rosasuniv}$$)$
\label{genericity}
Let $K$ be a field of finite degree of imperfection. Then for any smooth commutative $p$-torsion semiwound unipotent $K$-group $U$, there is an exact sequence
\[
0 \longrightarrow U \longrightarrow W \longrightarrow V \longrightarrow 0
\]
with $W$ wound, commutative, $p$-torsion, and permawound, and $V$ a vector group.
\end{theorem}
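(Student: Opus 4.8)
The plan is to embed $U$ into an explicit permawound group defined by $p$-polynomials, arranged so that the quotient is automatically a vector group, and to choose the extra data carefully so that the enlarged group stays wound.

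\emph{Reductions.} First I would reduce to the case that $U$ is wound: its identity component $U^0$ is wound (smooth, connected, and semiwound as a subgroup of $U$), and $U/U^0$ is a finite \'etale $p$-group, so a d\'evissage, using that finite \'etale $p$-groups embed into vector groups, reduces to $U$ wound. (In the applications in the rest of the paper $U$ will in any case be wound.) It is then enough to produce a commutative, $p$-torsion, permawound \emph{and} wound $K$-group $W$ with $U \hookrightarrow W$ and $W/U$ a vector group; if $U$ is itself already permawound we are done with $W = U$ and $V = 0$, so assume it is not.

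\emph{Construction.} Realize $U$ as a closed subgroup of a vector group $\Ga^n$. Since a quotient of a split unipotent group is split and a split commutative $p$-torsion smooth connected unipotent group is a vector group, we may arrange that $\Ga^n/U$ is a vector group, i.e.\ that $U = \ker\bigl(\Ga^n \xrightarrow{(F_1,\dots,F_m)} \Ga^m\bigr)$ for $p$-polynomials $F_i$; for clarity I describe the codimension-one case $U = U_F$ with $F$ having nonzero linear part (the general case is analogous). Adjoin fresh coordinates $z_1,\dots,z_k$ and put $F' := F(x_1,\dots,x_n) + G(z_1,\dots,z_k)$, where $G$ is a $p$-polynomial in the $z_j$ alone with $G(\vec 0) = 0$. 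Then $U \hookrightarrow U_{F'}$ via $\vec z = 0$; and because $F\colon \Ga^n \to \Ga$ is surjective (nonzero linear part), the projection $U_{F'} \to \Ga^k$ onto the $z$-coordinates is surjective with kernel exactly $U$, so $U_{F'}/U \cong \Ga^k$ is automatically a vector group, $U_{F'}$ is smooth, and it is connected (an extension of $\Ga^k$ by the connected group $U$). By \cite[Th.\,6.10]{rosasuniv}, $U_{F'}$ is permawound iff its principal part is universal; as $F$ and $G$ involve disjoint variables, that principal part is the juxtaposition of the principal parts $P_F$ and $P_G$, with image $\im(P_F) + \im(P_G) \subseteq K$. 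Since $U$ is not permawound, $\im(P_F)$ is a proper $K^p$-subspace of $K$, and --- this is where finiteness of the degree of imperfection enters --- one can choose $G = \sum_l e_l z_l^p$ with finitely many $e_l$ extending a $K^p$-spanning set of $\im(P_F)$ to one of $K$; then $\im(P_{F'}) = K$, so $U_{F'}$ is permawound, commutative and $p$-torsion. We take $W := U_{F'}$.

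\emph{The main obstacle.} What remains, and what I expect to be the crux, is to make this $W$ \emph{wound}, i.e.\ semiwound: a bare linear term in a new variable would make $P_{F'}$ universal for free but would turn $U_{F'}$ into a vector group. I would instead choose the coefficients $e_l$ of $G$ so that the $K^p$-``slots'' they occupy (relative to a fixed $p$-basis of $K$) are complementary to the slot used by the leading term of $F$, and then check semiwoundness by hand: a nonzero $K$-homomorphism $\Ga \to U_{F'}$ is a tuple of additive polynomials $(\vec\phi,\vec\psi)$ with $F(\vec\phi) = -G(\vec\psi)$, and comparing the coefficient of $u^{p^m}$ in the fixed $p$-basis makes the resulting identities decouple slot by slot, forcing the top-degree coefficients of $\vec\phi$ and $\vec\psi$ to vanish --- a contradiction; the case $\vec\psi = 0$ is covered because $U = U_F$ is already semiwound. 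Carrying out this slot analysis for a general $F$ rather than just the standard wound examples, handling the several-equations version of \cite[Th.\,6.10]{rosasuniv} needed for the codimension ${>}1$ case, and making the initial reduction to wound $U$ precise, are the remaining technical points; the woundness verification is the heart of the argument.
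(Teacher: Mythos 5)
A preliminary remark: the paper you were given does not actually prove this statement; Theorem \ref{genericity} is imported verbatim from \cite[Th.\,1.4]{rosasuniv} and used as a black box, so there is no in-paper proof to compare against. Your overall strategy --- present $U$ as $U_F$ for a $p$-polynomial $F$, adjoin fresh variables with a $p$-polynomial $G$ so that the principal part of $F+G$ becomes universal, observe that the quotient by $U$ is the vector group of new coordinates, and use finiteness of the degree of imperfection to get by with finitely many new coefficients --- is the natural one, and the construction itself is sound.

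The genuine gap is the step you yourself call the heart: woundness of $W=U_{F'}$. The ``slot-by-slot'' coefficient analysis you propose cannot succeed for an arbitrary presentation $U=U_F$, because woundness of $U_F$ does not imply that the principal part $P_F$ is zero-free; if $P_F(\vec a)=0$ for some $\vec a\neq 0$, then $(\vec a,\vec 0)$ is a nontrivial zero of $P_{F'}$ no matter how $G$ is chosen, and the top-degree identities you want to ``decouple'' are precisely the assertion that $P_{F'}$ has no nontrivial zero. The fix is to normalize first: since $U$ is wound it admits a presentation $U\simeq U_F$ with $P_F$ having no nontrivial zero (Oesterl\'e; CGP, App.\,B.2 --- the same input this paper invokes to see that $\mathscr{V}_{n,\vec\lambda}$ is wound). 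With that normalization everything becomes soft: $\im(P_F)=\sum_i c_iK^{p^{d_i}}$ is a $K^{p^D}$-subspace of $K$ for $D:=\max_i d_i$ (not a $K^p$-subspace, as you assert --- $cK^{p^2}$ need not be stable under $K^p$), $P_F$ is injective, $[K:K^{p^D}]$ is finite, and taking $G=\sum_l e_lz_l^{p^D}$ with the $e_l$ completing a $K^{p^D}$-basis of $\im(P_F)$ to one of $K$ makes $P_{F'}$ simultaneously injective (hence $U_{F'}$ wound) and surjective (hence permawound); no bare-hands analysis of homomorphisms from $\Ga$ is then needed. Separately, your reduction to this situation is incomplete: the d\'evissage for disconnected $U$ produces, after pushing out along $U^0\hookrightarrow W_0$, a group whose component group is still $U/U^0$, so it is again only semiwound and the induction does not close; and the codimension ${>}1$ case needs either a several-equation version of \cite[Th.\,6.10]{rosasuniv}, which you do not have, or the codimension-one presentation of \cite[Prop.\,B.1.13]{cgp}, which is stated for connected groups. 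These points are repairable, but as written the argument only covers wound $U$ presented in codimension one.
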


The above result is called genericity because it says that very general unipotent groups may be embedded in permawound ones. The other result which, in conjunction with genericity, makes permawound groups useful is called rigidity, because it says that permawound groups have a very rigid structure. First we must recall a definition. For any separably closed field $K$ of finite degree of imperfection, we define a wound unipotent $K$-group $\mathscr{V}$ as follows. Take any $p$-basis $\lambda_1, \dots, \lambda_r$ of $K$, let $I$ denote the set of functions $\{1, \dots, r\} \rightarrow \{0, 1, \dots, p-1\}$, and let $\mathscr{V} \subset \Ga^{I}$ be the following $K$-group scheme:
\begin{equation}
\label{Vdef}
\mathscr{V} := \left\{-X_0 + \sum_{f\in I} \left(\prod_{i=1}^r\lambda_i^{f(i)}\right)X_f^p = 0\right\},
\end{equation}
where the subscript $0$ denotes the constant function with value $0$. The isomorphism class of $\mathscr{V}$ is independent of our choice of $p$-basis \cite[Cor.\,7.2]{rosasuniv}. The rigidity property of permawound groups is contained in the theorem below. As usual, $\alpha_p$ over an $\F_p$-scheme $S$ denotes the base change to $S$ of the $\F_p$-group scheme $\F_p[X]/(X^p)$ with group law $X\cdot Y := X + Y$, while $\R_{K^{1/p}/K}$ denotes Weil restriction of scalars from $K^{1/p}$ to $K$. The group $\mathscr{V}$ is permawound and semiwound, while $\R_{K^{1/p}/K}(\alpha_p)$ is semiwound and weakly permawound (the weakly because it is not smooth for imperfect $K$). Indeed, $\mathscr{V}$ is semiwound by its defining equation together with \cite[Lem.\,B.1.7,(1)$\implies$(2)]{cgp}, while $\R_{K^{1/p}/K}(\alpha_p)$ is semiwound by the functorial property defining Weil restriction together with the semiwoundness of $\alpha_p$. For the assertions about (weak) permawoundness, see \cite[Prop.\,7.6]{rosasuniv}. Recall that, when $K$ is imperfect, permawound groups are connected, so in fact $\mathscr{V}$ is a wound unipotent group in that case.

\begin{theorem}$($``Rigidity,'' $\cite[Th.\,1.5]{rosasuniv}$$)$
\label{rigidityasuniv}
Let $K$ be a separably closed field of finite degree of imperfection, and let $U$ be a wound permawound unipotent $K$-group scheme. Then $U$ admits a filtration $1 = U_0 \trianglelefteq U_1 \trianglelefteq \dots \trianglelefteq U_m = U$ such that, for each $1 \leq i \leq m$, either $U_i/U_{i-1} \simeq \mathscr{V}$ or $U_i/U_{i-1} \simeq \R_{K^{1/p}/K}(\alpha_p)$.
\end{theorem}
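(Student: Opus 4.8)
This is \cite[Th.\,1.5]{rosasuniv}; here is how I would go about proving it. The plan is to argue by induction, the primary parameter being $\dim U$ but with the bookkeeping arranged so as to account also for the infinitesimal (Frobenius-kernel) contributions, which do not register on the dimension. At each step one would split off one of the two model groups --- a copy of $\mathscr{V}$, which lowers $\dim U$ by $p^r - 1$, or a copy of $\R_{K^{1/p}/K}(\alpha_p)$, which is infinitesimal --- as a subnormal subquotient, noting that the properties ``wound'' and ``permawound'' are stable under the relevant quotients (for permawoundness, immediately from Definition~\ref{asunivdef}). To make the induction run, one must work not merely with smooth wound permawound groups but with a suitable larger class of (possibly non-smooth) unipotent $K$-groups closed under the subquotients that occur, since the intermediate terms $U_i$ of the desired filtration will in general fail to be smooth.

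The first reduction I would make is to the case in which $U$ is commutative and $p$-torsion, reducing by d\'evissage along the lower central series of $U$ (which is nilpotent, being unipotent) and along the filtration by $p$-power-torsion subgroups. For the remaining case --- $U$ commutative, $p$-torsion, wound and permawound over the separably closed field $K$ of finite degree of imperfection $r$ --- I would use the explicit structure recalled before the statement: such a $U$ is built from zero loci $U_F \subset \Ga^m$ of reduced $p$-polynomials with universal principal part \cite[Th.\,6.10]{rosasuniv}. One first extracts the infinitesimal contributions from the Frobenius kernel $\ker(F\colon U \to U^{(p)})$, showing that, after a suitable d\'evissage, it accounts precisely for copies of $\R_{K^{1/p}/K}(\alpha_p)$; once these are removed, $U$ is smooth and its structure is governed by the universal principal part $P$, from which one reads off a copy of $\mathscr{V}$, the independence of the isomorphism class of $\mathscr{V}$ from the chosen $p$-basis \cite[Cor.\,7.2]{rosasuniv} being exactly what guarantees that the piece produced really is $\mathscr{V}$. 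Throughout, the genericity theorem~\ref{genericity} --- which describes precisely how commutative $p$-torsion semiwound groups sit inside permawound ones --- together with the connectedness of permawound groups \cite[Prop.\,6.2]{rosasuniv}, is what makes these extractions possible.

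I expect the main obstacle to lie in controlling \emph{permawoundness} throughout the d\'evissage: it passes to quotients but is not inherited by arbitrary $K$-subgroup schemes, so every time one passes to a subgroup (a term of the lower central series, a $p$-power-torsion subgroup, a Frobenius kernel) one must argue --- using the full structural machinery of \cite{rosasuniv}, including the $p$-polynomial criterion for permawoundness --- that one remains within a class in which permawoundness is available, or else organize the argument so as to pass only to quotients. A secondary, more combinatorial, difficulty is that the infinitesimal steps $U_i/U_{i-1} \simeq \R_{K^{1/p}/K}(\alpha_p)$ do not lower the dimension (unlike the $\mathscr{V}$-steps, which lower it by $p^r - 1$), so one must show separately that only finitely many of them are needed and that they interleave consistently with the $\mathscr{V}$-steps --- which is why the induction has to be set up with care. (One should resist assuming $U$ commutative at the outset: in this circle of ideas the present rigidity statement is logically prior to the general commutativity of wound permawound groups, Theorem~\ref{asunivunirrcomm}(ii), which is itself deduced from rigidity.)
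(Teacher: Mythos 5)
This theorem is not proved in the paper at all: it is quoted verbatim from \cite[Th.\,1.5]{rosasuniv} in \S\ref{asunivsection} and used as a black box (via Proposition \ref{containsallwound} and again in \S\ref{appstoasunivsec}). There is therefore no proof here to compare your proposal against, and your attempt has to be judged on its own terms as a reconstruction of the argument of \cite{rosasuniv}.

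As such a reconstruction it is directionally reasonable --- and you are right that one cannot assume commutativity at the outset, since Theorem \ref{asunivunirrcomm}(ii) is deduced \emph{from} this statement in \S\ref{appstoasunivsec} --- but there are concrete gaps. First, the d\'evissage to the commutative $p$-torsion case requires every successive subquotient to remain both wound and permawound: you flag that permawoundness does not pass to subgroups, but you do not address that woundness does not pass to quotients either (quotients of wound groups can contain $\Ga$), and neither issue is resolved; the needed inheritance statements (e.g.\ that a central term of a short exact sequence of wound groups with permawound total group is permawound) are substantive results of \cite{rosasuniv}, not formalities. Second, your description of the infinitesimal pieces is inverted: $U$ is wound, hence already smooth and connected, so there is nothing to ``remove to make $U$ smooth''; the factors $\R_{K^{1/p}/K}(\alpha_p)$ arise as totally nonsmooth closed subgroup schemes sitting \emph{inside} the smooth group, exactly as in the sequence (\ref{quotofVnpowpfeqn2}) exhibiting $\mathscr{V}_{n+1,\vec{\lambda}}$ as an extension of $\mathscr{V}_{n,\vec{\lambda}}$ by copies of $\R_{K^{1/p}/K}(\alpha_p)$, and the Frobenius kernel of $U$ itself is not the right object to look at. Third, and most importantly, the step where all the work lies --- producing a copy of $\mathscr{V}$ as a subquotient of an arbitrary commutative $p$-torsion wound permawound group from the universality of the principal part of its defining $p$-polynomial --- is asserted in one clause with no indication of how the universal principal part is converted into the specific normal form (\ref{Vdef}); the base-point-independence result \cite[Cor.\,7.2]{rosasuniv} you invoke does not by itself do this. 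So the proposal is a plausible outline of the strategy of \cite{rosasuniv} but does not constitute a proof.
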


The utility of genericity and rigidity is that they reduce many problems about wound unipotent groups to the cases of the two groups $\R_{K^{1/p}/K}(\alpha_p)$ and $\mathscr{V}$ appearing in Theorem \ref{rigidityasuniv}. As an illustration that will play a central role in the proof of the rigidity theorem \ref{rigiditytheorem}, we prove the following general result.

\begin{proposition}
\label{containsallwound}
Let $K$ be a separably closed field of finite degree of imperfection, and let $\mathscr{F}$ be a collection of isomorphism classes of $K$-group schemes such that
\begin{itemize}
\item[(i)] $\mathscr{F}$ is closed under subgroups: If $G \in \mathscr{F}$ and $G' \subset G$ is a closed $K$-subgroup scheme, then $G' \in \mathscr{F}$.
\item[(ii)] $\mathscr{F}$ is closed under extensions: If one has an exact sequence of $K$-group schemes
\[
1 \longrightarrow G' \longrightarrow G \longrightarrow G'' \longrightarrow 1,
\]
and $G', G'' \in \mathscr{F}$, then $G \in \mathscr{F}$.
\item[(iii)] $\R_{K^{1/p}/K}(\alpha_p), \mathscr{V} \in \mathscr{F}$.
\end{itemize}
Then $\mathscr{F}$ contains every wound unipotent $K$-group.
\end{proposition}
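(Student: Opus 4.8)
The plan is to reduce an arbitrary wound unipotent $K$-group $U$ to the two building blocks $\R_{K^{1/p}/K}(\alpha_p)$ and $\mathscr{V}$ by repeatedly passing through permawound groups, using the genericity and rigidity theorems. The first observation is that, since $\mathscr{F}$ is closed under subgroups and extensions, it contains every group $W$ admitting a filtration with successive quotients among $\R_{K^{1/p}/K}(\alpha_p)$ and $\mathscr{V}$; by the rigidity theorem \ref{rigidityasuniv}, this means $\mathscr{F}$ contains every wound permawound $K$-group. So the task is to show that every wound unipotent $K$-group can be built out of wound permawound ones via the operations allowed by (i) and (ii).

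The main reduction is to the commutative, $p$-torsion case. First I would reduce to $U$ commutative: a wound unipotent group over a separably closed field has a nontrivial smooth connected central subgroup (indeed its center is nontrivial, and one can pass to a central $K$-subgroup that is smooth and connected — e.g. via the lower central series, whose last nontrivial term is central, smooth and connected as $K = K_s$), so by induction on dimension and closure under extensions it suffices to treat commutative $U$. Next, reduce to $p$-torsion $U$: if $U$ is commutative wound, then $U[p]$ is a subgroup and $U/U[p]$ embeds (via multiplication by $p$, or rather sits in an isogeny) into a group of smaller... more carefully, one filters $U$ by the subgroups $U[p^i]$ with successive quotients that are $p$-torsion (and still wound, being subquotients of a wound group), so closure under extensions again reduces us to $U$ commutative and $p$-torsion. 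Note smoothness of the successive quotients may fail, but woundness/semiwoundness is what matters and is inherited by subgroups and quotients; if one needs smoothness one smooths out using that $\mathscr{F}$ contains $\alpha_p = \R_{K^{1/p}/K}(\alpha_p)$-subquotients only over... — this is a point to handle carefully, see below.

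With $U$ commutative, $p$-torsion, and wound (hence semiwound), apply the genericity theorem \ref{genericity} to get an exact sequence $0 \to U \to W \to V \to 0$ with $W$ wound, commutative, $p$-torsion, permawound, and $V$ a vector group, i.e. $V \simeq \Ga^m$. Then $U = \ker(W \to \Ga^m)$, so $U$ is obtained from the permawound group $W$ by taking a subgroup — except $\Ga$ is not wound, so one cannot directly invoke closure under subgroups applied to membership of $\Ga^m$. Instead: $U \subset W$ is a $K$-subgroup and $W \in \mathscr{F}$ by rigidity, so $U \in \mathscr{F}$ directly by property (i). That is the whole point — genericity exhibits $U$ as a subgroup of a wound permawound group, and (i) plus rigidity finish it. So the genuine content is just: (a) rigidity $\Rightarrow$ wound permawound groups lie in $\mathscr{F}$; (b) the dimension/filtration induction reducing a general wound $U$ to the commutative $p$-torsion case; (c) genericity embeds that into a wound permawound group.

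\textbf{The main obstacle} I anticipate is bookkeeping around smoothness and connectedness in the inductive step (b): genericity is stated for \emph{smooth} commutative $p$-torsion semiwound groups, so when I peel off central subgroups or pass to $U[p^i]/U[p^{i-1}]$ I must ensure the pieces to which I apply genericity are smooth (and I should use that over a separably closed field a nontrivial smooth connected wound group has a nontrivial smooth connected central $K$-subgroup, so the induction on dimension goes through with smooth connected pieces, each of which after the $p$-torsion filtration is a smooth connected $p$-torsion wound group — here one must check the $p^i$-torsion subgroups can be chosen to give smooth successive quotients, or replace $U$ by $U_{\mathrm{red}}$ appropriately). Provided these standard facts about wound unipotent groups over separably closed fields are in hand, the argument is a short assembly of rigidity, the subgroup/extension closure hypotheses, and genericity.
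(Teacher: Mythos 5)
Your proposal is correct and follows essentially the same three-step route as the paper: rigidity (Theorem \ref{rigidityasuniv}) plus (ii)--(iii) puts every wound permawound group in $\mathscr{F}$; genericity (Theorem \ref{genericity}) plus (i) then handles every commutative $p$-torsion wound group; and a central filtration with smooth connected commutative $p$-torsion wound successive quotients plus (ii) finishes. The filtration step you labor over (and your worry about smoothness of the $U[p^i]$ pieces) is exactly what the paper disposes of by citing \cite[Cor.\,B.3.3]{cgp}, which supplies a central composition series with smooth connected commutative $p$-torsion wound quotients directly, so no further argument is needed there.
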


\begin{proof}
Properties (iii) and (ii), in conjunction with the rigidity property of permawound groups (Theorem \ref{rigidityasuniv}), imply that $\mathscr{F}$ contains every wound permawound $K$-group. Property (i) together with the genericity property of permawound groups (Theorem \ref{genericity}) implies that $\mathscr{F}$ contains every commutative $p$-torsion wound unipotent $K$-group. Then property (ii) and \cite[Cor.\,B.3.3]{cgp} imply that $\mathscr{F}$ contains every wound unipotent $K$-group.
\end{proof}

The importance of Proposition \ref{containsallwound} for us is that the class $\mathscr{F}$ of groups $G$ satisfying the conclusion of the rigidity theorem \ref{rigiditytheorem} is easily seen to satisfy (i) and (ii). Thus to prove rigidity for wound unipotent groups -- the key case -- one only needs to verify it for the two groups $\R_{K^{1/p}/K}(\alpha_p)$ and $\mathscr{V}$ appearing in Proposition \ref{containsallwound} (once one carries out the easy reduction to the case when $K$ is separably closed of finite degree of imperfection). The former group trivially satisfies rigidity because it admits no nonzero maps from smooth (or even geometrically reduced) $K$-schemes. Thus we are reduced to the case $G = \mathscr{V}$. In order to prove rigidity in this special case (actually, we will use an inductive argument and reduce this case of rigidity to a case with smaller degree $r$ of imperfection), we need to carry out computations with certain moduli spaces of morphisms from rational curves into $\mathscr{V}$. This is the business of \S\ref{modcompsection}.

\section{A moduli space computation}
\label{modcompsection}

In this section we will carry out computations with certain moduli spaces which play an important role in the proof of the rigidity theorem. To set the stage, we recall the following result, which follows from \cite[Th.\,1.4]{rosmodulispaces}. Although we do not strictly speaking require it, it provides a useful jumping off point pedagogically. Recall that a unipotent group is called {\em semiwound} if it does not contain a copy of $\Ga$.

\begin{theorem}
\label{modspacesexist}
For a field $K$, a geometrically reduced finite type $K$-scheme $X$, a point $x \in X(k)$, and a semiwound unipotent $K$-group $U$, let $$\calMor((X, x), (U, 1))\colon \{K-{\mbox{schemes}}\} \rightarrow \{\mbox{groups}\}$$ denote the functor of pointed morphisms, defined by the formula $T \mapsto Mor_T((X_T, x_T), (U_T, 1_T))$. Then there is a unique subfunctor $\calMor((X, x), (U, 1))^+ \subset \calMor((X, x), (U, 1))$ with the following two properties:
\begin{itemize}
\item[(i)] The inclusion $\calMor((X, x), (U, 1))^+ \subset \calMor((X, x), (U, 1))$ is an equality on $T$-points for every geometrically reduced $K$-scheme $T$.
\item[(ii)] The functor $\calMor((X, x), (U, 1))^+$ is represented by a smooth $K$-group scheme.
\end{itemize}
Furthermore, the scheme described in (ii), which we also denote $\calMor((X, x), (U, 1))^+$, is semiwound unipotent.
\end{theorem}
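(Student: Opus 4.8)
The plan is to extract the existence and representability of $\calMor((X, x), (U, 1))^+$ as a smooth $K$-scheme satisfying (i) and (ii) directly from \cite[Th.\,1.4]{rosmodulispaces} — which applies because $U$, being unipotent, is affine of finite type over $K$ — and then to add by hand the two remaining assertions: that this scheme is a $K$-group scheme, and that it is semiwound unipotent. Uniqueness of a subfunctor with properties (i) and (ii) is formal: if $\mathcal N, \mathcal N'$ both satisfy them, then, $\mathcal N$ being representable by a smooth (hence geometrically reduced) $K$-scheme, its inclusion into $\calMor((X, x), (U, 1))$ lies in $\mathcal N'$ by (i) applied to $\mathcal N'$, giving $\mathcal N \subseteq \mathcal N'$, and symmetrically. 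For the group structure, write $M := \calMor((X, x), (U, 1))^+$; the ambient functor $\calMor((X, x), (U, 1))$ is a group functor via the group law of $U$, and the restriction of its multiplication to $M \times_K M$ is an element of $\calMor((X, x), (U, 1))(M \times_K M)$. Since $M \times_K M$ is smooth over $K$, hence geometrically reduced, property (i) identifies this with an element of $M(M \times_K M) = \Hom_K(M \times_K M, M)$, so the multiplication descends to a morphism $M \times_K M \to M$, and similarly for inversion and the identity section; the group axioms survive because $M \hookrightarrow \calMor((X, x), (U, 1))$ is a monomorphism. Thus $M$ is a smooth $K$-group scheme.

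For unipotence I would use a composition series $1 = U_0 \trianglelefteq \dots \trianglelefteq U_m = U$ with each $U_i/U_{i-1}$ a $K$-subgroup scheme of a vector group. The normal subfunctors $\calMor((X, x), (U_i, 1))$ of $\calMor((X, x), (U, 1))$ yield, after forming $^+$-parts and mapping into $M$, a filtration of $M$ by $K$-subgroup schemes whose successive quotients map monomorphically into $\calMor((X, x), (U_i/U_{i-1}, 1))^+$, itself a $K$-subgroup scheme of the smooth part of the vector-group functor $T \mapsto \Gamma(X_T, \calO_{X_T})^{\oplus \dim(U_i/U_{i-1})}$ of tuples of functions on $X$ vanishing at $x$. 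Since a smooth $K$-group scheme admitting a monomorphism into a (possibly infinite-dimensional) vector group is unipotent, and unipotence is closed under extensions, $M$ is unipotent; alternatively, this may simply be read off from \cite[Th.\,1.4]{rosmodulispaces}.

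Semiwoundness is the step that genuinely uses the hypothesis on $U$. Suppose $\phi \colon \Ga \to M$ is a $K$-homomorphism. Under the moduli description it corresponds to a $K$-morphism $\Phi \colon X \times_K \Ga \to U$ with $\Phi|_{\{x\} \times \Ga} = 1$ such that, for every field extension $\kappa/K$ and $\xi \in X(\kappa)$, the induced map $\Phi_\xi \colon \Ga_\kappa \to U_\kappa$ is a $\kappa$-homomorphism. Taking $\kappa = K_s$ and using that $U_{K_s}$ is again semiwound \cite[Prop.\,A.3]{rosasuniv}, one gets $\Phi_\xi = 1$ for all $\xi \in X(K_s)$. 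Because $X$ is geometrically reduced and of finite type, its smooth locus $X^{\sm}$ is a dense open and $X^{\sm}(K_s)$ is Zariski dense in $X_{K_s}$; hence $\Phi_{K_s}$ restricts to $1$ over a dense subset $S \subseteq X_{K_s}$, and since $S \times_K \Ga_{K_s}$ is dense in $X_{K_s} \times_K \Ga_{K_s}$ and $U$ is separated, $\Phi_{K_s} = 1$, whence $\Phi = 1$ and $\phi = 0$. So $M$ admits no nonzero $K$-homomorphism from $\Ga$, i.e.\ it is semiwound.

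I expect the real content of the theorem to lie entirely in \cite[Th.\,1.4]{rosmodulispaces}: constructing a representing scheme for a suitable completion of the morphism functor, and — more delicately over imperfect $K$ — isolating the maximal smooth $K$-subfunctor with property (i), which amounts to a smoothening statement for a group functor that need not be of finite type. Granting that input, the verifications above are routine; the only mild subtlety is the density step in the semiwoundness argument, which rests on generic smoothness of the geometrically reduced scheme $X$.
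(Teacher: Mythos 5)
Your proposal is correct and takes essentially the same route as the paper: the paper offers no proof of this statement at all, simply recording it as a consequence of \cite[Th.\,1.4]{rosmodulispaces}, which is exactly the input you rely on for the substantive representability content. The supplementary verifications you supply (formal uniqueness, the group structure via property (i) applied to $M \times_K M$, unipotence via the filtration, and semiwoundness via density of $X^{\sm}(K_s)$ together with the insensitivity of semiwoundness to separable extension) are routine and sound.
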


We are interested in computations with these moduli spaces in certain special cases, when $U$ takes on certain special values and $(X, x) = (\P^1\backslash\{\lambda^{1/p^n}\}, \infty)$ for various $\lambda \in K$. In particular, the present section is devoted to the proof of Proposition \ref{modspacescomp} below, which gives a partial description of these moduli spaces in the cases in which we are interested.

\begin{proposition}
\label{modspacescomp}
Let $\lambda_1, \dots, \lambda_r \in K$ be a $p$-basis for $K$. For $d \geq 0$, let
$$I_d := \{f\colon \{1, \dots, r\} \rightarrow \{0, 1, \dots, p^d-1\}$$
and
$$J_d := \{f\in I_d \mid f \not\equiv 0\pmod{p} \mbox{ and } \exists i, 2 \leq i \leq r, \mbox{ with } f(i) \neq 0\},$$ and let $$U := \left\{X_0 = \sum_{f \in I_1}\left(\prod_{i=1}^r\lambda_i^{f(i)}\right)X_f^{p}\right\} \subset \Ga^{I_1}$$
and
$$W_d := \left\{X_0 = X_0^p + \sum_{f \in J_d}\left(\prod_{i=1}^r\lambda_i^{f(i)}\right)X_f^{p^d}\right\} \subset \Ga \times \Ga^{J_d}.$$
Then for any integer $s > 0$, the $K$-group scheme $$\calMor((\P^1\backslash\{\lambda_1^{1/p^{s}}\}, \infty), (U, 0))^+$$ embeds as a $K$-subgroup scheme of $W_s$.
\end{proposition}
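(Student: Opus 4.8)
The plan is to construct the embedding explicitly. First note that $U$ is precisely the wound group $\mathscr{V}$ of~(\ref{Vdef}) attached to the $p$-basis $\lambda_1,\dots,\lambda_m$, and that by Theorem~\ref{modspacesexist} the functor $M:=\calMor((\P^1\backslash\{\lambda_1^{1/p^s}\},\infty),(U,0))^+$ is represented by a smooth affine semiwound unipotent $K$-group scheme. Note also that $W_s$ is a smooth closed $K$-subgroup of $\Ga\times\Ga^{J_s}$, since its defining $p$-polynomial $-X_0+X_0^p+\sum_{f\in J_s}(\prod_i\lambda_i^{f(i)})X_f^{p^s}$ has nonzero linear part $-X_0$, and that it is an additive subgroup. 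Hence it suffices to produce a homomorphism $\Phi\colon M\to\Ga\times\Ga^{J_s}$, to check that $\Phi$ factors through $W_s$, and to check that $\Phi$ is a monomorphism; a monomorphism of finite type affine $K$-group schemes is automatically a closed immersion, with image a closed $K$-subgroup scheme.

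To build $\Phi$: since $\P^1\backslash\{\lambda_1^{1/p^s}\}$ is affine, an $R$-point of $\calMor((\P^1\backslash\{\lambda_1^{1/p^s}\},\infty),(U,0))$, for a $K$-algebra $R$, is a tuple $(\phi_f)_{f\in I_1}$ of elements of $R\otimes_K\Gamma(\P^1\backslash\{\lambda_1^{1/p^s}\},\calO)$, each vanishing at $\infty$, with $\phi_0=\phi_0^p+\sum_{f\in I_1\backslash\{0\}}(\prod_i\lambda_i^{f(i)})\phi_f^p$. Writing $g:=t^{p^s}-\lambda_1$, functions on the curve vanishing at $\infty$ are exactly the $P(t)/g^n$ with $\deg_t P<np^s$; over the residue field $\kappa=K(\lambda_1^{1/p^s})$ one has $g=(t-\lambda_1^{1/p^s})^{p^s}$, so such a function is faithfully recorded by its principal part at the puncture, a $\kappa$-polynomial in $\pi^{-1}$ with no constant term, where $\pi:=t-\lambda_1^{1/p^s}$. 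Substituting these expansions into the structural equation and decomposing via the crucial fact that $\{\prod_i\lambda_i^{f(i)}:f\in I_1\}$ is a $K^p$-basis of $K$ --- more generally $\{\prod_i\lambda_i^{f(i)}:f\in I_d\}$ a $K^{p^d}$-basis --- together with the Frobenius identity $(t^{p^s}-\lambda_1)^p=t^{p^{s+1}}-\lambda_1^p$, lets one peel off one layer of $p$th powers at a time, passing (after substituting $t^p$ for a new variable) from functions on $\P^1\backslash\{\lambda_1^{1/p^s}\}$ to Frobenius-twisted functions on $\P^1\backslash\{\lambda_1^{1/p^{s-1}}\}$, and after $s$ steps to honest polynomials. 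The outcome is that the whole tuple $(\phi_f)$ is determined by $\phi_0$, and $\phi_0$ is pinned down by a finite list of coefficients of its principal part, indexed exactly by $\{0\}\sqcup J_s$ --- the congruence $f\not\equiv 0\pmod{p}$ appearing because $p$-divisible contributions $\prod_i\lambda_i^{f(i)}=(\prod_i\lambda_i^{f(i)/p})^p$ are $p$th powers, hence absorbed into $\phi_0^p$ when the equation is put in reduced form, while ``$f(i)\neq 0$ for some $i\geq 2$'' appears because the powers of $\lambda_1^{1/p^s}$ already lie in $\kappa$ and so contribute nothing new. This list of coefficients is $R$-linear and natural in $R$, defining a natural transformation $\calMor((\P^1\backslash\{\lambda_1^{1/p^s}\},\infty),(U,0))\to\Ga\times\Ga^{J_s}$, which is a homomorphism since the group law on $U\subseteq\Ga^{I_1}$, hence on $M$, is componentwise addition; restricting to the representable subfunctor $M$ gives $\Phi$.

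That $\Phi$ factors through $W_s$ is then a direct, if bookkeeping-heavy, computation: feeding the principal-part expansions into $\phi_0-\phi_0^p=\sum_{f\neq 0}(\prod_i\lambda_i^{f(i)})\phi_f^p$, using the Frobenius identity above so that $p$th powering (with the substitution $t^p\rightsquigarrow T$) shifts the puncture from $\lambda_1^{1/p^s}$ to $\lambda_1^{1/p^{s-1}}$, iterating $s$ times, and comparing the $\{0\}\sqcup J_s$-coefficients recovers exactly the relation $X_0=X_0^p+\sum_{f\in J_s}(\prod_i\lambda_i^{f(i)})X_f^{p^s}$ defining $W_s$ --- the isolated $X_0^p$ being the residue of the $\phi_0^p$ term, and the $p^s$th powers the cumulative effect of the $s$-fold descent against the multiplicity-$p^s$ puncture.

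Finally, $\Phi$ is a monomorphism: its kernel $N$ is a finite type subgroup scheme of $M$, and it is enough to check $N(\overline{K})=1$ and $\Lie(N)=0$, a finite type group scheme over $\overline{K}$ with trivial $\overline{K}$-points and trivial Lie algebra being trivial. Over a reduced $K$-algebra $R$ --- in particular over $\overline{K}$, and for the tangent space at the identity --- Frobenius is injective on $R\otimes_K\Gamma(\P^1\backslash\{\lambda_1^{1/p^s}\},\calO)$, so the analysis above recovers $\phi_0$, hence the whole tuple, from the $\{0\}\sqcup J_s$-coefficients; thus $\Phi$ is injective on such $R$-points, which yields both required facts, and $M$ embeds as a $K$-subgroup scheme of $W_s$. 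The delicate point throughout is the combinatorial heart of the middle two paragraphs: identifying exactly which expansion coefficients are free --- i.e.\ why the index set is $J_s$, with its two defining conditions --- and verifying that they satisfy precisely the defining equation of $W_s$; the representability input, the passage to $\calMor(\cdot)^+$, and the monomorphism-implies-closed-immersion step are routine.
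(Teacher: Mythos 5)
Your overall strategy --- extract finitely many coefficient-coordinates from a pointed morphism, show they satisfy the defining equation of $W_s$, and show they determine the morphism --- is the same one the paper follows, but the two places where you defer to ``direct, if bookkeeping-heavy, computation'' are exactly where all of the content lives, and one of your structural steps has a genuine gap. The combinatorial core is asserted rather than proved: that the free coefficients are indexed exactly by $\{0\}\sqcup J_s$, that every other coefficient of every $\phi_f$ is a universal $p$-polynomial in these, and that the surviving coordinates satisfy precisely the relation defining $W_s$, are the statements the paper spends Lemmas \ref{mins'}--\ref{mostbgvanish} establishing, each by a nontrivial manipulation with the $p$-independence of the $\lambda_i$ (for instance, the vanishing of the coordinates with $f(i)=0$ for all $i\geq 2$ but $f\not\equiv 0\pmod p$ is Lemma \ref{mostbgvanish}, and it needs the auxiliary identities of Lemma \ref{eqnsforW}). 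Note also that the paper's parametrization differs from yours: it uses the components $b_g$ of the single value $a_{0,0}$ with respect to the $p^s$-basis $\bigl\{\prod_i\lambda_i^{g(i)}\bigr\}_{g\in I_s}$ of $K$ over $K^{p^s}$, rather than principal-part coefficients at the puncture over $K(\lambda_1^{1/p^s})$; that choice is what makes the universal $p$-polynomial formulas and the subsequent identities tractable.

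More seriously, your monomorphism argument does not go through as written. You propose to check that the kernel $N$ has trivial $\overline{K}$-points and trivial Lie algebra, deducing both from injectivity of Frobenius over reduced rings. But the tangent space at the identity is computed with the dual numbers $K[\epsilon]/(\epsilon^2)$, which are not reduced, and Frobenius kills $\epsilon$; your peeling argument therefore loses all information about the $\epsilon$-parts and cannot show $\Lie(N)=0$. A related problem arises already in defining $\Phi$: the bound on the denominator exponent that makes your coefficient list finite and natural is only available over reduced rings (this is the paper's Lemma \ref{mins'}, proved for $B=K_s$), so $\Phi$ is not obviously a morphism of schemes on all of $M$. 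The paper circumvents both difficulties by introducing auxiliary, possibly non-smooth, group schemes $V$ and $U'$ cut out by the universal $p$-polynomial identities, identifying $\calMor(\cdot)^+$ with $V^{\sm}$ (Lemma \ref{V^sm=Morphisms}), and proving injectivity scheme-theoretically from the identities (\ref{modspacescompeqn18}), which hold on all $T$-points of $U'$ including non-reduced ones (Lemma \ref{phiinj}); Zariski density of $V(K_s)$ in $V^{\sm}$ then forces the image into the closed subscheme $W_s$. To repair your argument you would need either to prove such scheme-theoretic identities directly, or to work with an auxiliary scheme as the paper does.
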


The proof will occupy the remainder of the present section. We first note that, over a $K$-algebra $B$, a global section of $\P^1\backslash\{\lambda_1^{1/p^{s}}\}$ which vanishes at $\infty$ is simply a rational function of the form $G(T)/(T^{p^{s}} - \lambda_1)^c$ with $G \in B[T]$ of degree $< cp^{s}$, and by multiplying top and bottom by a power of $T^{p^{s}}-\lambda_1$, we may assume that $c$ is a power of $p$. Thus a pointed morphism $(\P^1\backslash\{\lambda_1^{1/p^{s}}\}, \infty) \rightarrow (U, 0)$ over $B$ is the same thing as a collection of rational functions 
\begin{equation}
\label{modspacescompeqn1}
X_f := G_f(T)/(T^{p^{s'}} - \lambda_1^{p^{s'-s}}),
\end{equation}
$f \in I_1$, with $s' \geq s$ an integer, each $G_f \in B[T]$ having degree $< p^{s'}$, and such that
\begin{equation}
\label{modspacescompeqn2}
X_0 = \sum_{f \in I_1}\left(\prod_{i=1}^r\lambda_i^{f(i)}\right)X_f^{p}.
\end{equation}
A rational function as in (\ref{modspacescompeqn1}) with ${\rm{deg}}(G_f) < p^{s'}$ may equivalently be described as a power series $X_f = \sum_{n \geq 0} a_{f,n}T^n$ with $a_{f,n} \in B$ satisfying
\begin{equation}
\label{modspacescompeqn3}
a_{f,n} = \lambda_1^{p^{s'-s}}a_{f, n + p^{s'}}.
\end{equation}
for all $n \geq 0$. We will describe pointed morphisms $$(\P^1\backslash\{\lambda_1^{1/p^{s}}\}, \infty) \rightarrow (U, 0)$$ using this interpretation.

We begin with the following lemma.

\begin{lemma}
\label{mins'}
When $B = K_s$, $(\ref{modspacescompeqn3})$ holds with $s' = s$.
\end{lemma}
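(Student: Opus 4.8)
The plan is to show that when $B = K_s$, a pointed morphism $(\P^1 \setminus \{\lambda_1^{1/p^s}\}, \infty) \to (U, 0)$ cannot require a denominator of degree $> p^s$; i.e., the minimal $s'$ in $(\ref{modspacescompeqn1})$ may be taken to be $s$ itself. The key structural fact driving this is equation $(\ref{modspacescompeqn3})$ together with the defining equation $(\ref{modspacescompeqn2})$ of $U$, which expresses $X_0$ as a $K$-linear combination of $p$-th powers of the $X_f$ with coefficients $\prod_i \lambda_i^{f(i)}$ for $f \in I_1$. Since $\lambda_1, \dots, \lambda_m$ is a $p$-basis of $K$ (hence of $K_s$, which has the same $p$-basis), the monomials $\prod_i \lambda_i^{f(i)}$ for $f \in I_1$ form part of a $K^p$-basis — in fact a $K_s^p$-basis — of $K_s$. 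So I would first argue that in $(\ref{modspacescompeqn2})$, comparing coefficients of $T^n$ on both sides and using $p$-independence of the $\prod_i \lambda_i^{f(i)}$ over $K_s^p$, one can read off each $X_f^p$ (equivalently each coefficient $a_{f,n}^p$, up to reindexing powers of $T$) from the single power series $X_0$. The point is that $X_0$, being itself one of the $X_f$ (the one indexed by $f \equiv 0$), satisfies $(\ref{modspacescompeqn3})$ with the given $s'$, and this will force the others to satisfy it with $s' = s$.

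Concretely: write $X_0 = \sum_{n \ge 0} a_{0,n} T^n$. From $(\ref{modspacescompeqn2})$, $X_0 = \sum_{f \in I_1}(\prod_i \lambda_i^{f(i)}) X_f^p$, so $a_{0, pk} $ type coefficients decompose according to the $p$-adic digit pattern, and $p$-independence over $K_s^p$ lets me extract, for each $f \in I_1$ and each $k$, that $a_{f,k}^p$ equals the coefficient of the appropriate monomial-times-$T^{pk}$ piece of $X_0$. Then I apply $(\ref{modspacescompeqn3})$ to $X_0$ itself — which holds by hypothesis for the chosen $s'$ — and, crucially, I want to bootstrap this down to $s' = s$. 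Here I would use that $X_0$ is a rational function with denominator $(T^{p^{s'}} - \lambda_1^{p^{s'-s}}) = (T^{p^s} - \lambda_1)^{p^{s'-s}}$, and that over the separably closed field $K_s$ the element $\lambda_1$ — being part of a $p$-basis — is not a $p$-th power, so $T^{p^s} - \lambda_1$ is irreducible over $K_s$ (indeed over $K$), and hence $\P^1 \setminus \{\lambda_1^{1/p^s}\}$ has its "missing point" a single closed point of degree $p^s$ whose residue field is $K_s(\lambda_1^{1/p^s})$. A section over $K_s$ vanishing at $\infty$ with values in the *affine* scheme $U \subset \Ga^{I_1}$ must have poles only along this one closed point; so $X_0 = G_0(T)/(T^{p^s} - \lambda_1)^c$ with $G_0 \in K_s[T]$ of degree $< cp^s$, and by the reduction in the paragraph before the lemma we may take $c$ a power of $p$, say $c = p^{s'-s}$. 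The task is to show $c = 1$ suffices, equivalently that the apparent pole order is at most $1$ in the valuation along $T^{p^s} - \lambda_1$.

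The main obstacle, and where the real content lies, is precisely ruling out higher pole order: a priori the individual $X_f$ could have poles of order up to $c$ along the degree-$p^s$ point, and only the particular $K_s^p$-linear combination $(\ref{modspacescompeqn2})$ defining $X_0$ is constrained to lie in $U$. I expect to handle this by a valuation-theoretic/leading-term argument at the closed point: let $v$ be the valuation on $K_s(T)$ along $T^{p^s} - \lambda_1$, suppose some $X_f$ has $v(X_f) = -c_0 < 0$ with $c_0$ maximal among all $f \in I_1$, and examine the lowest-order term of $\sum_f (\prod_i \lambda_i^{f(i)}) X_f^p$. Since $p$-th powers have $v$ divisible by $p$ and the coefficients $\prod_i \lambda_i^{f(i)}$ are $p$-independent over $(K_s(T))^p$ modulo adjusting by the uniformizer (here I use that $T^{p^s} - \lambda_1$ ramifies and the residue field $K_s(\lambda_1^{1/p^s})$ still has $\lambda_1^{1/p^s}$ as a $p$-basis element over $K_s$... or more cleanly, pass to the completion and use that $\lambda_1, \lambda_2^{1/p^s}$-type elements remain $p$-independent), the leading terms of the summands cannot cancel, forcing $v(X_0) = -p c_0$. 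But $X_0$ is one of the $X_f$, so $-p c_0 \ge -c_0$, giving $c_0 \le 0$, i.e. no poles at all for the $X_f$ — which in particular gives pole order $\le 1$ (in fact $0$) and hence $s' = s$ works after clearing no denominator, or rather, re-examining: the correct bookkeeping is that $X_0$ has denominator exponent $p^{s'-s}$ while each $X_f^p$ contributes denominator exponent $p \cdot p^{s'-s}$ unless there is cancellation, and $p$-independence prevents the cancellation, forcing the denominator exponent of $X_0$ to be $p^{s'-s+1}$ unless $s' = s$; comparing with the actual exponent $p^{s'-s}$ forces $s' = s$. I would present the argument in this last form, working in the discrete valuation ring obtained by localizing $K_s[T]$ at the irreducible polynomial $T^{p^s} - \lambda_1$ and tracking the exact denominator exponent of $X_0$ through $(\ref{modspacescompeqn2})$ via $p$-independence of $\{\prod_i \lambda_i^{f(i)} : f \in I_1\}$ over $K_s^p$, which is the crux.
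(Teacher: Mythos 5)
Your reduction of the lemma to a statement about pole orders along the closed point $[\,T^{p^s}-\lambda_1\,]$ is a reasonable reformulation, but the crux of your argument --- that $p$-independence of the monomials $\prod_i\lambda_i^{f(i)}$, $f\in I_1$, forbids cancellation among the leading terms of the $X_f^p$ at that valuation --- is false, and this is a genuine gap. The leading-term analysis takes place in the residue field $L:=K_s(\lambda_1^{1/p^s})$ of that closed point, and in $L$ the element $\lambda_1=(\lambda_1^{1/p^s})^{p^s}$ \emph{is} a $p$th power (since $s\geq 1$). Consequently the monomials $\prod_i\lambda_i^{f(i)}$ for $f\in I_1$ are $p$-dependent over $L$: two indices $f,f'$ agreeing on $\{2,\dots,m\}$ but with $f(1)\neq f'(1)$ give monomials differing by a $p$th power in $L$, so a nontrivial relation $\sum_f\bigl(\prod_i\lambda_i^{f(i)}\bigr)c_f^p=0$ with $c_f\in L$ is perfectly possible. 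In fact such cancellation genuinely occurs for honest pointed maps: in the explicit morphisms of Lemma \ref{beta/(T^{p^n}+lambda)} each nonzero $X_f$ has a simple pole along the analogous closed point, so each $X_f^p$ has pole order $p$ there, yet their combination $X_0$ has pole order only $1$. Your argument, if it worked, would even show that no $X_f$ has any pole at all, which is too strong; this is a symptom of the same error. You sensed the difficulty in your parenthetical about the residue field, but the resolution you gesture at (``$\lambda_1,\lambda_2^{1/p^s}$-type elements remain $p$-independent'') does not address the fact that $\lambda_1$ itself becomes a $p$th power in $L$.

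The paper avoids this trap entirely by working with the Taylor coefficients $a_{f,n}$ at $T=0$, which lie in $K_s$ itself, where the $p$-independence of the $\lambda_i$ is available. Taking $s'$ minimal such that the recursion $a_{f,n}=\lambda_1^{p^{s'-s}}a_{f,n+p^{s'}}$ holds, one compares the coefficient identity $a_{0,pn}=\sum_f(\prod_i\lambda_i^{f(i)})a_{f,n}^p$ with the same identity at $n+p^{s'-1}$; if $s'>s$ then $\lambda_1^{p^{s'-s}}\in K^p$, so the difference of the two identities is a relation $\sum_f(\prod_i\lambda_i^{f(i)})(\cdot)^p=0$ over $K_s$, whence each term vanishes and the recursion already holds with period $p^{s'-1}$, contradicting minimality. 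If you want to salvage a valuation-theoretic proof you would have to track not just the leading term but the full principal part and exploit that the $X_f$ are defined over $K_s(T)$ rather than over the completion at $[\,T^{p^s}-\lambda_1\,]$; as written, the proposal does not prove the lemma.
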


\begin{proof}
Equating coefficients of $T^{pn}$ on both sides of (\ref{modspacescompeqn2}), one obtains
\begin{equation}
\label{modspacescompeqn4}
a_{0,pn} = \sum_{f \in I_1} \left(\prod_{i=1}^r\lambda_i^{f(i)}\right)a_{f,n}^{p}.
\end{equation}
If we let $s'$ be the minimal integer $\geq s$ such that (\ref{modspacescompeqn3}) holds, then we wish to show that $s' < s+1$. Assume for the sake of contradiction that $s' \geq s+1$. Replacing $n$ by $n + p^{s'-1}$ in (\ref{modspacescompeqn4}), and using (\ref{modspacescompeqn3}), we find that
\begin{equation}
\label{modspacescompeqn5}
a_{0,pn} = \lambda_1^{p^{s'-s}}\sum_{f \in I_1} \left(\prod_{i=1}^r\lambda_i^{f(i)} \right)a_{f,n+p^{s'-1}}^{p}.
\end{equation}
If $s' \geq s+1$, then $\lambda_1^{p^{s'-s}} \in K^p$, so comparing (\ref{modspacescompeqn4}) and (\ref{modspacescompeqn5}), and using the $p$-independence of the $\lambda_i$, we conclude that 
\[
a_{f,n} = \lambda_1^{p^{s'-s-1}}a_{f, n+p^{s'-1}}
\]
for all $n\geq 0$. This contradicts the minimality of $s'$, so we conclude that (\ref{modspacescompeqn3}) does indeed hold with $s' = s$ when $B = K_s$.
\end{proof}

Now set $s' := s$ for the remainder of this section, so that for $K_s$-points of $$\calMor((\P^1\backslash\{\lambda_1^{1/p^{s}}\}, \infty), (U, 0))^+,$$ we have
\begin{equation}
\label{modspacescompeqn25}
a_{f,n} = \lambda_1a_{f, n + p^{s}}.
\end{equation}
Suppose that $B = K_s$. For $d \geq 0$, recall that $I_d$ denotes the set of functions $\{1, \dots, r\} \rightarrow \{0, 1, \dots, p^d-1\}$. Because the $\lambda_i$ form a $p$-basis of $K$, hence also of $K_s$, there exist $b_g \in K_s$ such that
\begin{equation}
\label{modspacescompeqn9}
a_{0,0} = \sum_{g \in I_{s}} \left( \prod_{i=1}^r \lambda_i^{g(i)} \right)b_g^{p^{s}}.
\end{equation}
The equations (\ref{modspacescompeqn25}) (which expresses the fact that the $X_f$ form rational functions vanishing at $\infty$ with denominators dividing $T^{p^{s}} - \lambda_1$) and (\ref{modspacescompeqn2}) together are equivalent to the following three equations:
\begin{equation}
\label{modspacescompeqn6}
a_{f,n} = \lambda_1a_{f,n+p^{s}}
\end{equation}
\begin{equation}
\label{modspacescompeqn7}
a_{0,pn} = \sum_{f \in I_1}\left(\prod_{i=1}^r\lambda_i^{f(i)}\right)a_{f,n}^{p}
\end{equation}
\begin{equation}
\label{modspacescompeqn8}
a_{0,n} = 0, \mbox{ $p \nmid n$}.
\end{equation}
Applying (\ref{modspacescompeqn6}) repeatedly yields 
\begin{equation}
\label{modspacescompeqn20}
a_{0,p^{s}n} = \lambda_1^{-n}a_{0,0}.
\end{equation}
In conjunction with (\ref{modspacescompeqn9}), this allows us to write $a_{0, p^{s}n} = L^1_{0,p^{s}n}((b_g^{p^{s}})_g)$, where $L^1_{0, p^{s}n}$ is homogeneous linear over $K_s$ and depends only on $s, n$, and the $\lambda_i$ (that is, it is independent of the $a_{f, n}$ -- equivalently, of the particular $K_s$-map $\P^1\backslash\{\lambda_1^{1/p^{s}}\} \rightarrow U$). The $p$-independence of the $\lambda_i$ together with (\ref{modspacescompeqn7}) with $n$ replaced by $p^{s-1}n$ then allows us to similarly write $a_{0,p^{s-1}n}$ (with $p \nmid n$ to ensure that we have not already written $a_{0,n}$ as a polynomial in the $b_g$) as a homogeneous degree $p^{s-1}$ polynomial in the $b_g$, with the polynomial again not depending on the particular map $\P^1\backslash\{\lambda_1^{1/p^{s}}\} \rightarrow U$. Continuing in this manner, if $p^d \mid \mid n$, then we may use (\ref{modspacescompeqn7}) to inductively write $a_{0,n}$ as a homogeneous $p$-polynomial of degree $p^{\gamma}$ in the $b_g$, where $\gamma := \min\{d, s\}$. Now applying (\ref{modspacescompeqn7}) and the $p$-independence of the $\lambda_i$ again allows us to similarly write $a_{f,n}$ for $f \neq 0$ as a $p$-polynomial in the $b_g$, once again in a manner depending on $f$ and $n$, but not on the particular choice of morphism. In sum, we have shown how to write all of the $a_{f,n}$ as $p$-polynomials in the $b_g$ in a universal manner (i.e., with the $p$-polynomials not depending on the particular morphism) when $B = K_s$.

The infinite collection of equations (\ref{modspacescompeqn6})--(\ref{modspacescompeqn8}) then defines a $K$-group scheme $V$ in the $b_g$, and $V$ embeds naturally as a subfunctor of $\calMor((\P^1\backslash\{\lambda_1^{1/p^{s}}\}, \infty), (U, 0))$. Indeed, we use the universal polynomials above to write the $a_{f,n}$ in terms of the $b_g$ and the equations for $V$ ensure that these define a pointed morphism $(\P^1\backslash\{\lambda_1^{1/p^{s}}\}, \infty) \rightarrow (U, 0)$ over any $K$-algebra $B$. Furthermore, by construction (and Lemma \ref{mins'}), $V$ contains every $K_s$-point of $\calMor((\P^1\backslash\{\lambda_1^{1/p^{s}}\}, \infty), (U, 0))$, so that the maximal smooth $K$-subgroup scheme $V^{\sm}$ of $V$ equals $\calMor((\P^1\backslash\{\lambda_1^{1/p^{s}}\}, \infty), (U, 0))^+$:

\begin{lemma}
\label{V^sm=Morphisms}
$V^{\sm} \simeq \calMor((\P^1\backslash\{\lambda_1^{1/p^{s}}\}, \infty), (U, 0))^+$.
\end{lemma}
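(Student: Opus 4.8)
The plan is to verify that $V^{\sm}$, viewed as a subfunctor of $\calMor := \calMor((\P^1\backslash\{\lambda_1^{1/p^{s}}\}, \infty), (U, 0))$ via the inclusions $V^{\sm}\subseteq V\subseteq\calMor$, is representable by a smooth $K$-group scheme (property (ii) of Theorem \ref{modspacesexist}) and has the same $K_s$-points as $\calMor^{+} := \calMor((\P^1\backslash\{\lambda_1^{1/p^{s}}\}, \infty), (U, 0))^{+}$, and then to upgrade agreement on $K_s$-points to an equality of group schemes by a Zariski-density argument (which is essentially the content of the uniqueness assertion in Theorem \ref{modspacesexist}, and one could invoke that instead).

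For the framework, recall from the start of this section that for each $s'\geq s$ the $K$-functor $Z_{s'}$ of pointed morphisms whose denominator divides $T^{p^{s'}}-\lambda_1^{p^{s'-s}}$ is a finite type affine $K$-subgroup scheme of $\calMor$, that $Z_{s'}\hookrightarrow Z_{s'+1}$ is a closed immersion, and that $\calMor=\bigcup_{s'\geq s}Z_{s'}$ on quasi-compact test schemes; in particular any quasi-compact $K$-scheme mapping to $\calMor$ factors through some $Z_{s'}$. Now $V$ is a finite type affine $K$-group scheme: as explained just before the lemma, writing the $a_{f,n}$ as universal $p$-polynomials in the finitely many variables $(b_g)_{g\in I_s}$ and imposing (\ref{modspacescompeqn6})--(\ref{modspacescompeqn8}) realizes $V$ as a closed $K$-subgroup scheme of $\Ga^{I_s}$; and since (\ref{modspacescompeqn6}) holds on $V$, the monomorphism $V\hookrightarrow\calMor$ factors through $Z_s$, so (a monomorphism of finite type affine $K$-group schemes being a closed immersion) $V$ is a closed $K$-subgroup scheme of $Z_s$. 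Hence $V^{\sm}$, being a closed $K$-subgroup scheme of $V$, is a smooth $K$-subgroup scheme of $Z_s\subseteq\calMor$; this is (ii).

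Now compare $K_s$-points. Since $\Spec K_s$ is geometrically reduced over $K$, every $K$-morphism $\Spec K_s\to V$ factors through the maximal smooth $K$-subgroup scheme, so $V^{\sm}(K_s)=V(K_s)$; and $V(K_s)=\calMor(K_s)$ is exactly the statement recorded just before the lemma that $V$ contains every $K_s$-point of $\calMor$ (a consequence of Lemma \ref{mins'} and the universal-polynomial description). On the other hand $\calMor^{+}(K_s)=\calMor(K_s)$ by property (i) of Theorem \ref{modspacesexist} applied to the geometrically reduced scheme $\Spec K_s$. So $V^{\sm}$ and $\calMor^{+}$ are smooth affine $K$-subgroup schemes of $\calMor$ with the same $K_s$-points, and both (being of finite type) are closed subgroup schemes of a common $Z_{s'}$. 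Their schematic intersection $V^{\sm}\cap\calMor^{+}$ in $Z_{s'}$ is a closed subscheme of each; after base change to $K_s$ it contains their common set of $K_s$-points, which is Zariski dense in each of the smooth $K_s$-schemes $(V^{\sm})_{K_s}$ and $(\calMor^{+})_{K_s}$, so by reducedness it equals each of them over $K_s$. Faithfully flat descent then gives $V^{\sm}=V^{\sm}\cap\calMor^{+}=\calMor^{+}$.

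In short, the mathematical content has already been spent in proving $V(K_s)=\calMor(K_s)$ during the preceding computation, so what remains is bookkeeping: one must keep straight the presentation $\calMor=\bigcup_{s'}Z_{s'}$ by closed immersions of finite type affine group schemes, use that monomorphisms of finite type affine $K$-group schemes are closed immersions (so that $V^{\sm}$ and $\calMor^{+}$ really are closed in a common $Z_{s'}$), and use Zariski density of $K_s$-points in a smooth $K$-group scheme to propagate equality of $K_s$-points to an isomorphism. The one point needing a word of care is that Theorem \ref{modspacesexist}(i) refers to all geometrically reduced test schemes rather than merely $\Spec K_s$; but this costs nothing for $V^{\sm}$, since a morphism from any geometrically reduced $K$-scheme into $V$ automatically factors through its maximal smooth $K$-subgroup scheme.
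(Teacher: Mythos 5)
Your argument is correct and follows the same route the paper takes: the paper treats this lemma as an immediate consequence of the preceding construction (that $V$ is a $K$-group scheme sitting as a subfunctor of $\calMor$ and, by Lemma \ref{mins'}, contains every $K_s$-point of it), and you have simply supplied the representability, closed-immersion, and Zariski-density bookkeeping that the paper leaves implicit. No gap.
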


Before giving the next lemma, let us introduce a piece of terminology. Given an expression
\[
\alpha = \sum_{g \in I_d} \left(\prod_{i=1}^r\lambda_i^{g(i)}\right)c_g^{p^d}
\]
with all $c_g \in K_s$, its {\em $g$-component} is $c_g$. When $g = 0$, we also call this the {\em $p^d$-power part} of $\alpha$.

\begin{lemma}
\label{V^smsubsetW}
Let $U' \subset \Ga^{I_{s}}$ be the $K$-subgroup defined by the following equations:
\begin{equation}
\label{modspacescompeqn12}
b_{ph} = \sum_{\substack{g \in I_{s}\\g \equiv h \pmod{p^{s-1}}}} \left(\prod_{i=1}^r\lambda_i^{(g(i)-h(i))/p^{s-1}}\right)b_g^p, \mbox{ for all $h \in I_{s-1}$}.
\end{equation}
Then $V^{\sm} \subset U'$.
\end{lemma}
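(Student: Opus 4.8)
The plan is to reduce to checking $K_s$-points, and then to extract the relations \ref{modspacescompeqn12} cutting out $U'$ from the two basic relations \ref{modspacescompeqn7} and \ref{modspacescompeqn9} by a ``base-$p$ digit-shuffling'' argument. Since $V^{\sm}$ is smooth over $K$ --- hence reduced, with $V^{\sm}(K_s)$ Zariski-dense in it --- and $U' \subseteq \Ga^{I_{s}}$ is a closed subscheme, it suffices to show $V^{\sm}(K_s) \subseteq U'(K_s)$. By Lemma \ref{V^sm=Morphisms}, Theorem \ref{modspacesexist}(i), and Lemma \ref{mins'}, such a $K_s$-point amounts to a tuple $(b_g)_{g \in I_{s}} \in K_s^{I_{s}}$ corresponding to a pointed $K_s$-morphism $(\P^1 \setminus \{\lambda_1^{1/p^{s}}\}, \infty) \to (U, 0)$ whose coefficients $a_{f, n} \in K_s$ satisfy \ref{modspacescompeqn6}, \ref{modspacescompeqn7}, \ref{modspacescompeqn8} and, by \ref{modspacescompeqn9}, $a_{0, 0} = \sum_{g \in I_{s}}\big(\prod_{i=1}^m \lambda_i^{g(i)}\big) b_g^{p^{s}}$. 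The task is to verify \ref{modspacescompeqn12} for these $b_g$.

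First I would expand $a_{0,0}$ by grouping the multi-indices $g \in I_{s}$ according to their lowest base-$p$ digits: writing $g = f + pg'$ with $f \in I_1$ and $g' \in I_{s-1}$ (a bijection $I_1 \times I_{s-1} \to I_{s}$), and using $\prod_i \lambda_i^{g(i)} = \big(\prod_i \lambda_i^{f(i)}\big)\big(\prod_i \lambda_i^{g'(i)}\big)^p$ together with additivity of $p$th powers in characteristic $p$, one gets
\[
a_{0, 0} = \sum_{f \in I_1}\Big(\prod_i \lambda_i^{f(i)}\Big)\Big(\sum_{g' \in I_{s-1}} \big(\textstyle\prod_i \lambda_i^{g'(i)}\big) b_{f + pg'}^{\,p^{s-1}}\Big)^{p}.
\]
Comparing with \ref{modspacescompeqn7} at $n = 0$, namely $a_{0,0} = \sum_{f \in I_1}\big(\prod_i \lambda_i^{f(i)}\big) a_{f, 0}^{\,p}$, and invoking the $p$-independence of the $\lambda_i$ (so that the monomials $\prod_i \lambda_i^{f(i)}$, $f \in I_1$, are linearly independent over $K_s^p$) together with injectivity of Frobenius on the field $K_s$, I obtain $a_{f, 0} = \sum_{g' \in I_{s-1}}\big(\prod_i \lambda_i^{g'(i)}\big) b_{f + pg'}^{\,p^{s-1}}$ for every $f \in I_1$. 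Taking $f = 0$ and renaming $g'$ as $h$, this reads
\[
a_{0, 0} = \sum_{h \in I_{s-1}}\Big(\prod_i \lambda_i^{h(i)}\Big) b_{ph}^{\,p^{s-1}}.
\]

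Next I would expand $a_{0,0}$ the other way, grouping the $g \in I_{s}$ according to their highest base-$p$ digits: writing $g = h + p^{s-1}k$ with $h \in I_{s-1}$ and $k \in I_1$ (a bijection $I_{s-1} \times I_1 \to I_{s}$), and using $\prod_i \lambda_i^{g(i)} = \big(\prod_i \lambda_i^{h(i)}\big)\big(\prod_i \lambda_i^{k(i)}\big)^{p^{s-1}}$, the same regrouping yields
\[
a_{0, 0} = \sum_{h \in I_{s-1}}\Big(\prod_i \lambda_i^{h(i)}\Big)\Big(\sum_{k \in I_1} \big(\textstyle\prod_i \lambda_i^{k(i)}\big) b_{h + p^{s-1}k}^{\,p}\Big)^{p^{s-1}}.
\]
Comparing this with the last displayed formula of the previous paragraph, and again using the $p$-independence of the $\lambda_i$ (now linear independence of the monomials $\prod_i \lambda_i^{h(i)}$, $h \in I_{s-1}$, over $K_s^{p^{s-1}}$) and injectivity of Frobenius, I conclude $b_{ph} = \sum_{k \in I_1}\big(\prod_i \lambda_i^{k(i)}\big) b_{h + p^{s-1}k}^{\,p}$ for all $h \in I_{s-1}$. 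Re-indexing the right-hand sum by $g := h + p^{s-1}k$ --- which runs over exactly the $g \in I_{s}$ with $g \equiv h \pmod{p^{s-1}}$, and for which $k(i) = (g(i) - h(i))/p^{s-1}$ --- turns this into precisely equation \ref{modspacescompeqn12}, so the point lies in $U'$, as required.

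I expect the only genuine content of the proof to be this combinatorial bookkeeping: keeping the two base-$p$ digit decompositions of the multi-indices $g \in I_{s}$ straight, and being careful at each comparison about which Frobenius twist $K_s^{p^d}$ one is expanding $a_{0,0}$ over. Once the reduction to $K_s$-points has been made --- using smoothness of $V^{\sm}$ and closedness of $U'$ --- everything else is formal manipulation of the defining relations \ref{modspacescompeqn7} and \ref{modspacescompeqn9}; no further input about $U$, $V$, or the moduli problem is needed.
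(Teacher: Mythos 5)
Your proof is correct and follows essentially the same route as the paper's: reduce to $K_s$-points by density of $V(K_s)$ in $V^{\sm}$, then use only (\ref{modspacescompeqn9}) and (\ref{modspacescompeqn7}) at $n=0$ together with the $p$-independence of the $\lambda_i$ to compare base-$p$ digit expansions and extract (\ref{modspacescompeqn12}). The only cosmetic difference is that the paper first merges the two relations into the single equation (\ref{modspacescompeqn11}) and takes ``$p$-power parts,'' whereas you compare the two expansions of $a_{0,0}$ directly; the underlying computation is identical.
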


\begin{proof}
Because $V(K_s)$ is Zariski dense in $V^{\sm}$, it suffices to show that $V(K_s) \subset U'$. So we may assume that $b_g \in K_s$. The equations (\ref{modspacescompeqn9}) and (\ref{modspacescompeqn7}) (the latter with $n = 0$) together imply that $V$ sits naturally as a $K$-subgroup of the subgroup of $\Ga^{I_{s}}$ defined by the equation
\begin{equation}
\label{modspacescompeqn11}
\sum_{g \in I_{s}} \left( \prod_{i=1}^r \lambda_i^{g(i)} \right)b_g^{p^{s}} = \left(\sum_{g \in I_{s}} \left( \prod_{i=1}^r \lambda_i^{g(i)} \right)b_g^{p^{s}}\right)^{p} + 
\sum_{0 \neq f \in I_1} \left( \prod_{i=1}^r\lambda_i^{f(i)}\right)A_{f,0}((b_g)_g)^{p},
\end{equation}
with $A_{f,0}$ the $p$-polynomial in the $b_g$ defining $a_{f,n}$ as described above. Because $b_g \in K_s$, the $p$-independence of the $\lambda_i$ then implies, by taking $p$-power parts of both sides, that
\[
\sum_{\substack{g \in I_{s}\\g \equiv 0 \pmod{p}}} \left(\prod_{i=1}^r\lambda_i^{g(i)/p}\right)b_g^{p^{s-1}} = \sum_{g \in I_{s}} \left( \prod_{i=1}^r \lambda_i^{g(i)} \right)b_g^{p^{s}}.
\]
This equation may be written as
\[
\sum_{h \in I_{s-1}}\left(\prod_{i=1}^r\lambda_i^{h(i)}\right)b_{ph}^{p^{s-1}} = \sum_{h \in I_{s-1}}\left(\prod_{i=1}^r\lambda_i^{h(i)}\right)\left[\sum_{\substack{g \in I_{s}\\g\equiv h\pmod{p^{s-1}}}}\left(\prod_{i=1}^r\lambda_i^{(g(i)-h(i))/p^{s-1}}\right)b_g^p\right]^{p^{s-1}}
\]
Using the $p$-independence of the $\lambda_i$ once again, and taking $h$-components, one obtains
\[
b_{ph} = \sum_{\substack{g \in I_{s}\\g \equiv h \pmod{p^{s-1}}}} \left(\prod_{i=1}^r\lambda_i^{(g(i)-h(i))/p^{s-1}}\right)b_g^p, \mbox{ for all $h \in I_{s-1}$}.
\]
That is, $(b_g)_g$ defines a $K_s$-point of $U'$.
\end{proof}

\begin{lemma}
\label{eqnsforW}
One has for all $(b_g)_g \in U'$
\begin{equation}
\label{modspacescompeqn18}
b_{p^dh} = \sum_{\substack{g \in I_{s}\\g \equiv h\pmod{p^{s-d}}}}\left(\prod_{i=1}^r\lambda_i^{(g(i)-h(i))/p^{s-d}}\right)b_g^{p^d}, \hspace{.3 in} 0 \leq d \leq s, \hspace{.1 in} h \in I_{s-d}.
\end{equation}
\end{lemma}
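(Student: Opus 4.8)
\emph{Proof sketch.} The plan is to induct on $d$. The base case $d = 0$ is immediate: for $g,h \in I_s$ all coordinates lie in $\{0,\dots,p^{s}-1\}$, so $g \equiv h \pmod{p^{s}}$ forces $g = h$ and every exponent $(g(i)-h(i))/p^{s}$ vanishes, whence the right-hand side of (\ref{modspacescompeqn18}) is just $b_h$; likewise the case $d = 1$ is literally the defining equation (\ref{modspacescompeqn12}) of $U'$. Thus the whole content is the inductive step from $d$ to $d+1$, which I would carry out for $0 \le d < s$.

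Fix $h' \in I_{s-d-1}$. Since $d < s$ one has $ph' \in I_{s-d}$ and $p^{d+1}h' = p^{d}(ph')$, so I would first apply the inductive hypothesis at level $d$ to the function $ph'$, writing $b_{p^{d+1}h'}$ as a homogeneous $p^{d}$-polynomial in the $b_g$ over $g \in I_{s}$ with $g \equiv ph' \pmod{p^{s-d}}$. Because $d < s$, $p \mid p^{s-d}$, so every such $g$ is coordinatewise divisible by $p$; hence $g = pg'$ for a unique $g' \in I_{s-1}$, the condition $g \equiv ph'\pmod{p^{s-d}}$ becomes $g' \equiv h' \pmod{p^{s-d-1}}$, and $(g(i)-ph'(i))/p^{s-d} = (g'(i)-h'(i))/p^{s-d-1}$. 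Next I would substitute, for each such $g'$, the defining equation (\ref{modspacescompeqn12}) of $U'$ for $b_{pg'}$, raised to the $p^{d}$th power, using that $x \mapsto x^{p^{d}}$ is additive and multiplicative in characteristic $p$. This turns $b_{p^{d+1}h'}$ into a double sum over pairs $(g',g'')$ with $g' \in I_{s-1}$, $g' \equiv h' \pmod{p^{s-d-1}}$, and $g'' \in I_{s}$, $g'' \equiv g' \pmod{p^{s-1}}$.

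The final step is to collapse this double sum into the single sum appearing in (\ref{modspacescompeqn18}) at level $d+1$. Given $g'' \in I_{s}$ with $g'' \equiv h' \pmod{p^{s-d-1}}$, its reduction modulo $p^{s-1}$ is the unique admissible $g'$, and since $p^{s-d-1}\mid p^{s-1}$ the condition $g' \equiv h' \pmod{p^{s-d-1}}$ holds automatically; conversely every admissible pair arises this way, so the pairs $(g',g'')$ biject with the relevant $g''$. Writing $g''(i) = g'(i) + p^{s-1}e_i$ with $e_i \in \{0,\dots,p-1\}$, the exponent of $\lambda_i$ contributed by the inductive hypothesis and by (\ref{modspacescompeqn12}) add up as they should:
\[
\frac{g'(i)-h'(i)}{p^{s-d-1}} + \frac{p^{d}\bigl(g''(i)-g'(i)\bigr)}{p^{s-1}} \;=\; \frac{g'(i)-h'(i)}{p^{s-d-1}} + p^{d}e_i \;=\; \frac{g''(i)-h'(i)}{p^{s-d-1}},
\]
which is precisely the exponent demanded by (\ref{modspacescompeqn18}) at level $d+1$. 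There is no real obstacle here beyond this bookkeeping of congruences and exponents in the reindexing, which the displayed identity records; I expect matching up the two exponents to be the only point needing care. $\qed$
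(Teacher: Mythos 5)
Your proof is correct and follows essentially the same route as the paper's: an induction on $d$ whose step combines the defining equation (\ref{modspacescompeqn12}) of $U'$ with the inductive hypothesis and then collapses the resulting double sum via the reindexing bijection you describe, with the exponents of the $\lambda_i$ adding up exactly as in your displayed identity. The only cosmetic difference is the order of the two substitutions --- the paper writes $p^d h = p\,(p^{d-1}h)$ and applies (\ref{modspacescompeqn12}) first and the inductive hypothesis to the inner terms second, whereas you write $p^{d+1}h' = p^d(ph')$ and reverse that order --- which changes nothing of substance.
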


\begin{proof}
The proof is by induction on $d$. For $d = 0$ the assertion is immediate, so suppose that $0 < d \leq s$ and that the assertion is known for $d-1$. We have
\begin{align*}
b_{p^dh} & = \sum_{\substack{g \in I_{s}\\g\equiv p^{d-1}h\pmod{p^{s-1}}}}\left(\prod_{i=1}^r\lambda_i^{(g(i)-p^{d-1}h(i))/p^{s-1}}\right)b_g^p \hspace{.3 in} \mbox{by (\ref{modspacescompeqn12}) with $h$ replaced by $p^{d-1}h$} \\
&= \sum_{\substack{g \in I_{s}\\g\equiv p^{d-1}h\pmod{p^{s-1}}}}\left(\prod_{i=1}^r\lambda_i^{(g(i)-p^{d-1}h(i))/p^{s-1}}\right) \\
\times & \sum_{\substack{\alpha \in I_{s}\\ \alpha\equiv g/p^{d-1}\pmod{p^{s-(d-1)}}}}\left(\prod_{i=1}^r\lambda_i^{(p\alpha(i)-g(i)/p^{d-2})/p^{s-(d-1)}}\right)b_{\alpha}^{p^d}
\end{align*}
\begin{align*}
& \mbox{by induction: (\ref{modspacescompeqn18}) applied with $(h, d)$ replaced by $(g/p^{d-1}, d-1)$}\\
&= \sum_{\substack{g, \alpha \in I_{s}\\g \equiv p^{d-1}h\pmod{p^{s-1}}\\ \alpha \equiv g/p^{d-1}\pmod{p^{s-d+1}}}} \left(\prod_{i=1}^r\lambda_i^{(\alpha(i)-h(i))/p^{s-d}}\right)b_{\alpha}^{p^d}.
\end{align*}
The map $(g, \alpha) \mapsto \alpha$ induces a bijection between pairs $(g, \alpha)$ as in the above sum, on the one hand, and those $\alpha \in I_{s}$ which are $\equiv h\pmod{p^{s-d}}$ on the other. Thus the induction is complete and the lemma is proved.
\end{proof}

Let $L_{s} \subset I_{s}$ denote those functions that are nonzero modulo $p$, and consider the map $\phi\colon U' \rightarrow \Ga \times \Ga^{L_{s}}$ defined by the formula $(b_g)_g \mapsto (b_0) \times (b_g)_{g \in L_{s}}$. 

\begin{lemma}
\label{phiinj}
The $K$-homomorphism $\phi$ is injective.
\end{lemma}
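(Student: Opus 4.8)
The plan is to prove that $\ker\phi$ is the trivial $K$-group scheme; this suffices, because $\phi$ is the restriction to $U'$ of the coordinate projection $\Ga^{I_s}\to\Ga\times\Ga^{L_s}$, hence a homomorphism of affine $K$-group schemes, and such a homomorphism is a monomorphism exactly when its kernel is trivial. So the task is: for every $K$-algebra $B$, show that a point $(b_g)_{g\in I_s}\in U'(B)$ with $b_0=0$ and $b_g=0$ for all $g\in L_s$ must satisfy $b_g=0$ for \emph{all} $g\in I_s$. Equivalently, one must force the vanishing of the coordinates $b_g$ indexed by those $g\neq 0$ with $g\equiv 0\pmod p$, these being the only indices not already covered by the hypothesis.

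The engine is Lemma~\ref{eqnsforW}. Every nonzero $g\in I_s$ can be written uniquely as $g=p^d h$, where $d\ge 0$ is the largest integer with $p^d\mid g(i)$ for all $i$, and then $h:=g/p^d$ lies in $I_{s-d}$ and satisfies $h\not\equiv 0\pmod p$; since $g\neq 0$ and each $g(i)<p^s$, one has $d\le s-1$, and if moreover $g\equiv 0\pmod p$ then $d\ge 1$. For such a $g$, equation~(\ref{modspacescompeqn18}) applied with this pair $(h,d)$ reads
\[
b_g \;=\; b_{p^d h} \;=\; \sum_{\substack{g'\in I_s\\ g'\equiv h\,(\mathrm{mod}\ p^{s-d})}} \Bigl(\prod_{i=1}^m \lambda_i^{(g'(i)-h(i))/p^{s-d}}\Bigr)\, b_{g'}^{p^d}.
\]
The key observation is that every $g'$ occurring in this sum lies in $L_s$: since $1\le d\le s-1$ we have $p\mid p^{s-d}$, so the congruence $g'\equiv h\pmod{p^{s-d}}$ already forces $g'\equiv h\pmod p$, and $h\not\equiv 0\pmod p$ then gives $g'\not\equiv 0\pmod p$. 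Hence every $b_{g'}$ appearing on the right-hand side vanishes by hypothesis, so $b_g=0$.

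Since this argument applies to every index $g\notin\{0\}\cup L_s$, we conclude $\ker\phi(B)=0$ for every $K$-algebra $B$, whence $\ker\phi=1$ and $\phi$ is injective. I do not anticipate a genuine obstacle: the heart of the matter is the elementary fact that reduction modulo $p^{s-d}$ refines reduction modulo $p$ as soon as $d\le s-1$, fed into Lemma~\ref{eqnsforW}. The only points requiring a little care are the bookkeeping of the $p$-adic decomposition $g=p^d h$ — in particular the bound $d\le s-1$, which is precisely what guarantees $s-d\ge 1$ — and the observation that Lemma~\ref{eqnsforW} is valid for points of $U'$ over an arbitrary $K$-algebra, its derivation from the defining equations~(\ref{modspacescompeqn12}) of $U'$ being a formal induction that is insensitive to the base.
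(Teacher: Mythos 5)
Your proof is correct and is essentially the paper's own argument: the paper likewise writes each remaining index as $f = p^d h$ with $0 < d < s$ and $h \not\equiv 0 \pmod p$ and invokes Lemma \ref{eqnsforW} to conclude $b_f = 0$, the point being exactly the one you spell out — that $g' \equiv h \pmod{p^{s-d}}$ with $s-d \geq 1$ forces $g' \in L_s$. Your write-up merely makes explicit the bookkeeping (the bound $d \leq s-1$ and the validity of Lemma \ref{eqnsforW} over an arbitrary base) that the paper leaves implicit.
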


\begin{proof}
If $b_0 = 0$ and $b_g = 0$ for all $g \in L_{s}$, then given another $f \in I_{s}$, we may write $f = p^dh$ for some $0 < d < s$ and $h \in I_{s-d}$ with $h \not\equiv 0\pmod{p}$. Then (\ref{modspacescompeqn18}) yields $b_f = b_{p^dh} = 0$.
\end{proof}

Next we show that $\phi(U')$ maps into a certain subgroup of $\Ga \times \Ga^{L_{s}}$.

\begin{lemma}
\label{imageofW}
$\phi(U') \subset \Ga \times \Ga^{L_{s}}$ is contained in the subgroup defined by the equation
\[
b_0 = b_0^p + \sum_{\substack{g \in I_{s}\\g\not\equiv 0\pmod{p}}}\left(\prod_{i=1}^r\lambda_i^{g(i)}\right)b_g^{p^{s}}.
\]
\end{lemma}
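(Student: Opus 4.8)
The plan is to check the asserted equation directly from the relations defining $U'$. Since the containment $\phi(U') \subset \{b_0 = b_0^{p} + \sum_{g \in L_{s}}(\prod_{i}\lambda_i^{g(i)})b_g^{p^{s}}\}$ is an identity of regular functions on $U'$ — equivalently, an identity among the coordinates of an arbitrary $R$-point $(b_g)_g$ of $U'$ over a $K$-algebra $R$ — it is enough to derive it formally from $(\ref{modspacescompeqn12})$; no smoothness or density input is needed. Write $S := \sum_{g \in I_{s}}\bigl(\prod_{i=1}^m\lambda_i^{g(i)}\bigr)b_g^{p^{s}}$. The starting point is the observation that $b_0 = S$ holds on $U'$: this is precisely equation $(\ref{modspacescompeqn18})$ of Lemma $\ref{eqnsforW}$ applied with $d = s$, in which case $I_{s-d} = I_0$ consists only of the zero function, the congruence $g \equiv h \pmod{p^{s-d}}$ is automatic, and the exponents $(g(i) - h(i))/p^{s-d}$ reduce to $g(i)$.

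Next I would split $S$ along the partition $I_{s} = \{p h : h \in I_{s-1}\} \sqcup L_{s}$, where the first part consists of those $g$ with $p \mid g(i)$ for all $i$. On this part, pulling out a $p$th power gives
\[
\sum_{h \in I_{s-1}}\Bigl(\prod_{i=1}^m\lambda_i^{h(i)}\Bigr)^{p} b_{ph}^{p^{s}} = \Bigl(\sum_{h \in I_{s-1}}\Bigl(\prod_{i=1}^m\lambda_i^{h(i)}\Bigr) b_{ph}^{p^{s-1}}\Bigr)^{p}.
\]
Now substitute the defining relations $(\ref{modspacescompeqn12})$ for each $b_{ph}$ and raise to the $p^{s-1}$ power: using additivity of $p^{s-1}$th powers and $\bigl(\prod_i\lambda_i^{(g(i)-h(i))/p^{s-1}}\bigr)^{p^{s-1}} = \prod_i\lambda_i^{g(i)-h(i)}$, the inner sum becomes $\sum_{h \in I_{s-1}}\sum_{g}\bigl(\prod_i\lambda_i^{g(i)}\bigr) b_g^{p^{s}}$, where for fixed $h$ the inner index runs over the $g \in I_{s}$ with $g \equiv h \pmod{p^{s-1}}$. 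Since every $g \in I_{s}$ reduces componentwise mod $p^{s-1}$ to a unique $h \in I_{s-1}$, this double sum is simply $\sum_{g \in I_{s}}\bigl(\prod_i\lambda_i^{g(i)}\bigr) b_g^{p^{s}} = S$. Hence $\sum_{g : p \mid g(i)\,\forall i}\bigl(\prod_i\lambda_i^{g(i)}\bigr) b_g^{p^{s}} = S^{p}$, so that $S = S^{p} + \sum_{g \in L_{s}}\bigl(\prod_i\lambda_i^{g(i)}\bigr) b_g^{p^{s}}$; replacing each occurrence of $S$ by $b_0$ yields $b_0 = b_0^{p} + \sum_{g \in L_{s}}\bigl(\prod_i\lambda_i^{g(i)}\bigr) b_g^{p^{s}}$, which is the claimed equation — it involves only $b_0$ and the $b_g$ with $g \in L_{s} = \{g \in I_s : g \not\equiv 0 \bmod p\}$, hence makes sense on $\phi(U') \subset \Ga \times \Ga^{L_{s}}$.

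The one place to be careful is the index bookkeeping in the middle step: realizing $\{g \in I_{s} : g \equiv h \pmod{p^{s-1}}\}$ as a copy of $I_1$ through the top base-$p$ digits of the values $g(i)$, and confirming that after applying Frobenius the coefficient attached to such a $g$ in $(\ref{modspacescompeqn12})$ is exactly $\prod_i\lambda_i^{g(i)-h(i)}$, so that multiplying by the factor $\prod_i\lambda_i^{h(i)}$ out front restores $\prod_i\lambda_i^{g(i)}$ and the sum reassembles into $S$. Once this is set up, the rest is a formal manipulation using only $(\ref{modspacescompeqn12})$, its consequence $(\ref{modspacescompeqn18})$, and the $p$th-power map, valid over an arbitrary $K$-algebra.
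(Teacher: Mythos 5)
Your proof is correct and takes essentially the same route as the paper's: a purely formal verification from the defining relations (\ref{modspacescompeqn12}) together with their consequence (\ref{modspacescompeqn18}), ending with the same reindexing over pairs $(g,h)$ with $g\equiv h$. The only difference is organizational --- the paper expands $b_0$ once via (\ref{modspacescompeqn12}) at $h=0$ and then applies (\ref{modspacescompeqn18}) at $d=s-1$ to the terms with $h\neq 0$, whereas you first write $b_0=S$ using (\ref{modspacescompeqn18}) at $d=s$ and then show that the $p$-divisible part of $S$ equals $S^p$; both computations are valid on arbitrary points of $U'$, and your bookkeeping (the partition $I_s=pI_{s-1}\sqcup L_s$ and the uniqueness of the residue $h$ of $g$ modulo $p^{s-1}$) checks out.
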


\begin{proof}
We compute that for $(b_g)_g \in U'$,
\begin{align*}
b_0 & = \sum_{\substack{g\in I_{s}\\g\equiv 0\pmod{p^{s-1}}}}\left(\prod_{i=1}^r\lambda_i^{g(i)/p^{s-1}}\right)b_g^p \hspace{.3 in} \mbox{by (\ref{modspacescompeqn12}) with $h = 0$}\\
&= b_0^p + \sum_{0 \neq h \in I_1} \left(\prod_{i=1}^r\lambda_i^{h(i)}\right)b_{p^{s-1}h}^p \\
&= b_0^p + \sum_{0 \neq h \in I_1} \left(\prod_{i=1}^r\lambda_i^{h(i)}\right) \sum_{\substack{g \in I_{s}\\g \equiv h\pmod{p}}}\left(\prod_{i=1}^r\lambda_i^{(g(i)-h(i))}\right)b_g^{p^{s}} \hspace{.3 in} \mbox{(by (\ref{modspacescompeqn18}) with $d = s-1$)} \\
&= b_0^p + \sum_{\substack{g\in I_{s}, 0\neq h \in I_1\\g \equiv h\pmod{p}}}\left(\prod_{i=1}^r\lambda_i^{g(i)}\right)b_g^{p^{s}}.
\end{align*}
The map sending $(g, h)$ as in the above sum to $g$ yields a bijection between the index set and those $g \in I_{s}$ not divisible by $p$, so we obtain the desired equation on $\phi(U')$.
\end{proof}

\begin{lemma}
\label{pexpa0n}
For every $K$-algebra $B$, and every $B$-point of $V$, there exist for every $n,e \geq 0$ elements $c_{f,d,n} \in B$ such that
\begin{equation}
\label{modspacescompeqn13}
a_{0,n} = a_{0,n/p^e}^{p^e} + \sum_{d=1}^e\sum_{0 \neq f \in I_1}\left( \prod_{i=1}^r\lambda_i^{p^{d-1}f(i)}\right)c_{f,d,n}^{p^{d}},
\end{equation}
where we set $a_{0,m} = 0$ if $m \notin\Z$.
\end{lemma}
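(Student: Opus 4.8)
The plan is to prove \eqref{modspacescompeqn13} by induction on $e$, having fixed a $K$-algebra $B$, a $B$-point of $V$, and an integer $n \geq 0$. Recall that, by the construction of $V$, the sequences $(a_{f,n})$ attached to a $B$-point of $V$ satisfy the defining relations \eqref{modspacescompeqn6}--\eqref{modspacescompeqn8} over $B$; only \eqref{modspacescompeqn7} and \eqref{modspacescompeqn8} will be used here. The base case $e = 0$ is immediate: the sum over $d$ in \eqref{modspacescompeqn13} is then empty and the identity reads $a_{0,n} = a_{0,n}$.

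For the inductive step, assume \eqref{modspacescompeqn13} holds for a given $e$ and all $n$. Then it suffices to produce elements $c_{f, e+1, n} \in B$ (for $0 \neq f \in I_1$) satisfying the ``one-level'' identity
\[
a_{0,\, n/p^e}^{p^e} = a_{0,\, n/p^{e+1}}^{p^{e+1}} + \sum_{0 \neq f \in I_1}\left(\prod_{i=1}^m\lambda_i^{p^e f(i)}\right)c_{f, e+1, n}^{p^{e+1}},
\]
since adding this to the instance of \eqref{modspacescompeqn13} at level $e$ yields the instance at level $e+1$. I will split into two cases according to whether $p^{e+1}$ divides $n$. If $p^{e+1} \nmid n$, I will check that both $a_{0, n/p^e}$ and $a_{0, n/p^{e+1}}$ vanish: the latter by the convention on non-integer indices, and the former either by that same convention (when $p^e \nmid n$) or, when $p^e \mid n$, because then — as $p^{e+1}\nmid n$ forces $n \neq 0$ — the integer $n/p^e$ is positive and prime to $p$, so \eqref{modspacescompeqn8} applies; hence one may take $c_{f, e+1, n} = 0$. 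If $p^{e+1} \mid n$, write $n = p^{e+1}k$ with $k \in \Z_{\geq 0}$; then applying \eqref{modspacescompeqn7} with $n$ replaced by $k$, isolating the $f = 0$ summand (for which $\prod_i \lambda_i^{f(i)} = 1$), and raising both sides to the $p^e$th power gives exactly the one-level identity with $c_{f, e+1, n} := a_{f, k} = a_{f,\, n/p^{e+1}}$.

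I do not expect any serious obstacle: the argument is a bookkeeping induction, and the only point needing a little care is the vanishing analysis in the case $p^{e+1} \nmid n$, where one must invoke \eqref{modspacescompeqn8} to kill $a_{0, n/p^e}$ when $n/p^e$ is a nonzero integer not divisible by $p$. I will also remark that this argument in fact produces $c_{f,d,n} = a_{f,\, n/p^d}$ when $p^d \mid n$ and $c_{f,d,n} = 0$ otherwise, independently of $e$, so that the elements $c_{f,d,n}$ can be chosen consistently for all $e$ at once.
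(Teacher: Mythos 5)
Your proposal is correct and follows essentially the same route as the paper: induction on $e$ with the inductive step reduced to a one-level recursion for $a_{0,n}$ obtained from (\ref{modspacescompeqn7}) and (\ref{modspacescompeqn8}), taking $c_{f,e+1,n} = a_{f,n/p^{e+1}}$ when $p^{e+1}\mid n$ and $0$ otherwise. The only difference is that you spell out the divisibility case analysis that the paper compresses into ``this follows from (\ref{modspacescompeqn7}) and (\ref{modspacescompeqn8}),'' and your analysis is accurate.
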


\begin{proof}
We proceed by induction on $e$. The $e = 0$ case is immediate. For the inductive step, we first claim that, for all $n \geq 0$, there exist $c_{f,n} \in K$ such that
\begin{equation}
\label{modspacescompeqn21}
a_{0,n} = a_{0,n/p}^p + \sum_{0 \neq f \in I_1}\left(\prod_{i=1}^r\lambda_i^{f(i)}\right)c_{f,n}^{p}.
\end{equation}
Indeed, this follows from (\ref{modspacescompeqn7}) and (\ref{modspacescompeqn8}). Now beginning with (\ref{modspacescompeqn13}) for $e$, we apply (\ref{modspacescompeqn21}) with $n$ replaced by $n/p^e$ to obtain (\ref{modspacescompeqn13}) for $e+1$ (with $c_{f,e+1,n} := c_{f,n/p^e}$). This completes the proof of the lemma.
\end{proof}

\begin{lemma}
\label{mostbgvanish}
Suppose that $(b_g)_g \in V(K_s)$, and let $g_0 \in I_{s}$. If $g_0(i) = 0$ for all $2 \leq i \leq r$ but $g_0 \not\equiv 0\pmod{p}$, then $b_{g_0} = 0$.
\end{lemma}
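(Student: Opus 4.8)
The plan is to combine the defining relations (\ref{modspacescompeqn8}), (\ref{modspacescompeqn9}) and (\ref{modspacescompeqn20}) of $V$ with the $p$-power expansion of Lemma \ref{pexpa0n}, and then deduce the vanishing of $b_{g_0}$ by comparing $\lambda$-exponents. Write $k := g_0(1)$, so that $p \nmid k$ and $g_0(i) = 0$ for all $2 \leq i \leq m$. By (\ref{modspacescompeqn9}) and the $p$-independence of $\lambda_1, \dots, \lambda_m$ (which form a $p$-basis of $K_s$), $b_{g_0}$ is exactly the $g_0$-component of the degree-$p^s$ expansion of $a_{0,0}$, so it is enough to show that this component vanishes at the given $K_s$-point.

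The main computation runs as follows. Apply Lemma \ref{pexpa0n} at the given point with $e = s$ and $n = p^s k$. Since $p \nmid k$, relation (\ref{modspacescompeqn8}) gives $a_{0,k} = 0$, so the leading term $a_{0, n/p^e}^{p^e} = a_{0,k}^{p^s}$ drops out, leaving
\[
a_{0, p^s k} = \sum_{d=1}^{s} \sum_{0 \neq f \in I_1} \left(\prod_{i=1}^m \lambda_i^{p^{d-1}f(i)}\right) c_{f,d,p^s k}^{p^d}.
\]
Now invoke (\ref{modspacescompeqn20}) in the form $a_{0,p^s k} = \lambda_1^{-k} a_{0,0}$ together with (\ref{modspacescompeqn9}), multiply through by $\lambda_1^k$, and expand each $c_{f,d,p^s k} \in K_s$ in degree $p^{s-d}$ over the $p$-basis $\lambda_1, \dots, \lambda_m$; the identity takes the form
\[
\sum_{g \in I_s}\left(\prod_{i=1}^m \lambda_i^{g(i)}\right) b_g^{p^s} = \sum_{d=1}^{s}\sum_{0 \neq f \in I_1}\sum_{h \in I_{s-d}} \lambda_1^{\,k + p^{d-1}(f(1)+ph(1))}\left(\prod_{i=2}^m \lambda_i^{\,p^{d-1}(f(i)+ph(i))}\right)\mu_{f,d,h}^{\,p^s}
\]
for suitable $\mu_{f,d,h} \in K_s$.

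The crux — and the step I expect to require the most bookkeeping — is to check that no monomial on the right-hand side contributes to the $g_0$-component once reduced to canonical degree-$p^s$ form. For $i \geq 2$ the exponent $p^{d-1}(f(i)+ph(i))$ lies in $\{0, 1, \dots, p^s - p^{d-1}\}$ (using $f(i) \in \{0,\dots,p-1\}$ and $h(i) \in \{0,\dots,p^{s-d}-1\}$), so it is its own residue modulo $p^s$ and equals $g_0(i) = 0$ only when $f(i) = h(i) = 0$; hence a contributing monomial has $f(i) = 0$ for all $i \geq 2$, and then $f(1) \neq 0$ (as $f \neq 0$), so $p \nmid f(1)$. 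Consequently $p^{d-1}(f(1)+ph(1))$ has $p$-adic valuation exactly $d - 1 \leq s - 1 < s$, hence is not divisible by $p^s$, so the $\lambda_1$-exponent $k + p^{d-1}(f(1)+ph(1))$ is incongruent to $k$ modulo $p^s$. Therefore the $g_0$-component of the right-hand side vanishes, and comparing $g_0$-components of the two sides (again by $p$-independence of the $\lambda_i$ over $K_s$) yields $b_{g_0} = 0$. The only real subtlety is keeping the index ranges straight so that these valuation and size estimates are valid; apart from that, the argument is a mechanical application of the quoted relations and Lemma \ref{pexpa0n}. (One could equally well avoid Lemma \ref{pexpa0n} by unrolling equation (\ref{modspacescompeqn7}) directly from $a_{0,k} = 0$ up to $a_{0,p^s k}$, arriving at the same exponent analysis.)
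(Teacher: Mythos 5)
Your proposal is correct and follows essentially the same route as the paper: both apply Lemma \ref{pexpa0n} with $e=s$, $n=p^s g_0(1)$, kill the leading term via (\ref{modspacescompeqn8}), relate $a_{0,p^s g_0(1)}$ to $a_{0,0}=\sum_g(\prod_i\lambda_i^{g(i)})b_g^{p^s}$ via (\ref{modspacescompeqn20}) and (\ref{modspacescompeqn9}), and conclude by $p$-independence from the fact that $f\not\equiv 0\pmod p$ prevents any term of the remaining sum from contributing to the relevant component. The only cosmetic difference is that you multiply through by $\lambda_1^{g_0(1)}$ and extract the $g_0$-component, whereas the paper multiplies by $\lambda_1^{-g_0(1)}$ and extracts the $p^s$-power part; your more explicit exponent bookkeeping correctly fills in what the paper leaves to the reader.
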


\begin{proof}
We have
\begin{align}
\label{modspacescompeqn22}
& \lambda_1^{-g_0(1)}\sum_{g\in I_{s}}\left(\prod_{i=1}^r\lambda_i^{g(i)}\right)b_{g}^{p^{s}} \nonumber \\
& = a_{0,p^{s}g_0(1)} \hspace{.3 in} \mbox{(\ref{modspacescompeqn20} with $n = g_0(1)$ and (\ref{modspacescompeqn9})} \nonumber \\
& = a_{0,g_0(1)}^{p^{s}} + \sum_{d=1}^s\sum_{0 \neq f \in I_1}\left(\prod_{i=1}^r\lambda_i^{p^{d-1}f(i)}\right)c_{f,d,p^{s}g_0(1)}^{p^{s}} \hspace{.3 in} \mbox{(\ref{modspacescompeqn13}) with $n = p^{s}g_0(1)$, $e = s$}.
\end{align}
Note that each term in the final sum has nonvanishing $p^s$-power part, because $f \in I_1$ implies $f \not\equiv 0\pmod{p}$. Because $g_0(1) \not\equiv 0\pmod{p}$ by assumption, (\ref{modspacescompeqn8}) implies that $a_{0,g_0(1)} = 0$, so the $p^s$-power part of the final expression in (\ref{modspacescompeqn22}) vanishes. Hence the $p^s$-power part of the left side does as well. This $p^s$-power part is none other than $b_{g_0}$, so $b_{g_0} = 0$.
\end{proof}

\begin{proof}[Proof of Proposition $\ref{modspacescomp}$]
Lemmas \ref{imageofW} and \ref{mostbgvanish} together imply that $\phi$ maps $V(K_s)$ into $W_s$. Further, $\phi$ is injective by Lemma \ref{phiinj}. Because $V(K_s)$ is Zariski dense in $V^{\sm}$, it follows that $V^{\sm}$ embeds as a $K$-subgroup scheme of $W_s$. We are therefore done by Lemma \ref{V^sm=Morphisms}.
\end{proof}

\section{Rigidity for almost complete rational curves} 
\label{rigalmostcompleteratlsection}

In this section we prove the rigidity theorem \ref{rigiditytheorem} when -- in the notation of the theorem statement -- each of the smooth projective curves $\overline{X}_i$ is $\P^1_K$ and each $D_i$ consists of a single closed point. This is the key case; the general case will be deduced from this one in \S\ref{deductionsection} using geometric class field theory. We begin with some lemmas.

\begin{lemma}
\label{givengensgenerate}
Let $L/K$ be a finite purely inseparable extension of fields with $\imp(L/K) = r$. If $S$ is a set of elements of $L$ which generate $L$ over $K$, then there exist $r$ elements of $S$ which generate $L$ over $K$.
\end{lemma}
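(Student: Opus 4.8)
The plan is to track generation through Kähler differentials. Since $L/K$ is finite, $L = K(S')$ for a finite subset $S' \subseteq S$, so after replacing $S$ by $S'$ we may assume $S$ is finite; in particular, as $L/K$ is algebraic, $L = K[s : s \in S]$ is a quotient of the polynomial ring $K[X_s]_{s \in S}$. Write $d\colon L \to \Omega_{L/K}$ for the universal derivation. I first claim that $\dim_L \Omega_{L/K} = r$. Indeed, $d$ kills $K$ and kills all $p$th powers, since $d(x^p) = p x^{p-1}\,dx = 0$; hence $d$ is a $KL^p$-derivation, and the universal property of $\Omega_{L/KL^p}$ shows that the canonical surjection $\Omega_{L/K} \twoheadrightarrow \Omega_{L/KL^p}$ is an isomorphism. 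As $L/KL^p$ is purely inseparable of exponent $\le 1$ and degree $p^r$, a $p$-basis $\gamma_1, \dots, \gamma_r$ of $L$ over $KL^p$ presents $L$ as $KL^p[Y_1, \dots, Y_r]/(Y_j^p - c_j)_j$, so $\Omega_{L/KL^p}$ is $L$-free on $d\gamma_1, \dots, d\gamma_r$. (If $r = 0$ then $L = KL^p$, hence $L = KL^{p^n}$ for all $n$, so $L = K$ and the empty subset of $S$ works; assume from now on $r \ge 1$. Note also that $|S| \ge r$, since by \cite[Th.\,6]{beckermaclane} the integer $r = \imp(L/K)$ is the minimum number of generators of $L$ over $K$, so the statement is not vacuous.)

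Next, $\{ds : s \in S\}$ generates $\Omega_{L/K}$ as an $L$-module, because $L$ is a quotient of $K[X_s]_{s\in S}$, whose module of differentials is free on the $dX_s$. Hence, the $r$-dimensional $L$-vector space $\Omega_{L/K}$ being spanned by $\{ds\}_{s\in S}$, I may choose $s_1, \dots, s_r \in S$ such that $ds_1, \dots, ds_r$ is an $L$-basis of $\Omega_{L/K}$. Set $L' := K(s_1, \dots, s_r) \subseteq L$, a finite purely inseparable subextension; it remains to show $L' = L$. The tower $L/L'/K$ yields an exact sequence $\Omega_{L'/K} \otimes_{L'} L \to \Omega_{L/K} \to \Omega_{L/L'} \to 0$ whose first map has image containing $ds_1, \dots, ds_r$, hence equal to all of $\Omega_{L/K}$; therefore $\Omega_{L/L'} = 0$. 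Applying the argument of the previous paragraph with $K$ replaced by $L'$ (or quoting the proof of Proposition \ref{degofprim0=sepble}), $\Omega_{L/L'} = 0$ forces $L = L'L^p$, hence $L = L'L^{p^n}$ for all $n > 0$; since $L/L'$ is finite purely inseparable, $L^{p^n} \subseteq L'$ for $n$ large, so $L = L'$, as desired.

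I do not anticipate a genuine obstacle: once one decides to detect generation via $\Omega_{L/K}$, the proof is essentially formal, resting only on standard properties of Kähler differentials (the tower exact sequence, differentials of polynomial rings, and behaviour under quotients and localization). The one step that needs a moment's care is the equality $\dim_L \Omega_{L/K} = r$ — that is, the computation in the first paragraph identifying $\Omega_{L/K}$ with $\Omega_{L/KL^p}$ and then with a free module of rank $\log_p[L:KL^p]$; everything after that is bookkeeping.
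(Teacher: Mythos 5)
Your proof is correct, but it takes a genuinely different route from the paper's. The paper first passes to the height-one extension $L/KL^p$ (noting $\imp(L/K)=\imp(L/KL^p)$ by definition), extracts $r$ generators from $S$ there using the explicit exchange property of $p$-independence (Lemmas \ref{degimp=sizepind} and \ref{degimpmaxpindset}, i.e.\ the direct-sum decomposition $L=\bigoplus_f \bigl(\prod_i\lambda_i^{f(i)/p}\bigr)KL^p$), and then bootstraps from $L=L'L^pK$ to $L=L'$ exactly as you do at the end. You instead linearize the whole problem through K\"ahler differentials: the identification $\Omega_{L/K}\cong\Omega_{L/KL^p}$ plays the role of the paper's reduction to height one, the computation $\dim_L\Omega_{L/K}=r$ encodes $[L:KL^p]=p^r$, and the selection of a basis from the spanning set $\{ds\}_{s\in S}$ replaces the hand-rolled exchange argument. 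All the steps check out: $K$-derivations kill $K[L^p]=KL^p$, so the comparison of differentials is an isomorphism; the conormal sequence for $K[X_s]_{s\in S}\twoheadrightarrow L$ (valid since $K[S]=K(S)=L$ for an algebraic extension) gives the spanning claim; and the tower sequence plus $\Omega_{L/L'}=0\Rightarrow L=L'L^p$ closes the argument. What each approach buys: yours gets the basis-selection step for free from linear algebra and is arguably more conceptual, at the cost of importing the machinery of $\Omega_{L/K}$, which the paper never otherwise uses; the paper's version is more elementary and self-contained, reusing only the field-degree lemmas it has already proved for other purposes in \S\ref{rigalmostcompleteratlsection}.
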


\begin{proof}
In the case that $L/K$ has height one -- that is, $L^p \subset K$ -- this follows from \cite[Ch.\,V, \S13, no.\,1, Prop.\,1]{bourbakialgebra}. In general, consider the intermediate field $L^pK$, the compositum of $L^p$ and $K$ inside $L$. Then $L/L^pK$ is a purely inseparable extension of height one. Furthermore, by the definition of degree of imperfection, one has $\imp(L/K) = \imp(L/L^pK)$. Thus there are $r$ elements $\alpha_1, \dots, \alpha_r \in S$ which generate $L/L^pK$. We claim that the $\alpha_i$ generate the extension $L/L^{p^n}K$ for every $n \geq 0$. Since $L/K$ is purely inseparable, one has $L^{p^m} \subset K$ for some $m$, so this will prove the lemma.

We prove the claim by induction on $n$, the $n = 0$ case being trivial. So let $n > 0$ and suppose that $L/L^{p^{n-1}}K$ is generated by the $\alpha_i$. By the already-treated $n = 1$ case, we have $L \subset L^pK(\alpha_1, \dots, \alpha_r)$, so $L^{p^{n-1}} \subset L^{p^n}K(\alpha_1, \dots, \alpha_r)$, and therefore $L \subset L^{p^{n-1}}K(\alpha_1, \dots, \alpha_r) \subset L^{p^n}K(\alpha_1, \dots, \alpha_r)$.
\end{proof}

We will also require the following lemma regarding closed points on curves.

\begin{lemma}
\label{resfldclpt}
For a smooth curve $X$ over a field $K$ and a closed point $x \in X$, the residue field $K(x)$ is a primitive extension of $K$. That is, it is generated by a single element.
\end{lemma}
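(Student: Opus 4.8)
The plan is to reduce to a statement about function fields of curves over an arbitrary base field and to exploit the fact that a closed point of a smooth curve sits in the local ring at that point, which is a discrete valuation ring. First I would observe that the question is local: replacing $X$ by an affine open neighborhood of $x$, I may assume $X = \Spec(A)$ is affine, and $x$ corresponds to a maximal ideal $\mathfrak{m} \subset A$ with residue field $K(x) = A/\mathfrak{m}$, a finite extension of $K$ since $X$ is of finite type and $x$ is a closed point of a curve (so $K(x)/K$ is finite by the Nullstellensatz-type argument for Jacobson rings, or simply because $\dim X = 1$). The local ring $\calO_{X,x}$ is a one-dimensional regular local ring, hence a DVR; let $t \in \calO_{X,x}$ be a uniformizer.

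The key step is then the following: because $\calO_{X,x}$ is excellent (being essentially of finite type over a field) and regular, and in particular because $X$ is smooth over $K$ of relative dimension $1$, the module of Kähler differentials $\Omega^1_{\calO_{X,x}/K}$ is free of rank $1$ over $\calO_{X,x}$, and one has the conormal/cotangent exact sequence
\[
\mathfrak{m}/\mathfrak{m}^2 \longrightarrow \Omega^1_{\calO_{X,x}/K} \otimes_{\calO_{X,x}} K(x) \longrightarrow \Omega^1_{K(x)/K} \longrightarrow 0.
\]
Smoothness of $X/K$ forces the left-hand map to be injective (indeed the sequence is left-exact and $\mathfrak m/\mathfrak m^2$ is one-dimensional over $K(x)$), and since the middle term is one-dimensional over $K(x)$ while $\mathfrak m/\mathfrak m^2 \ne 0$, we conclude $\Omega^1_{K(x)/K} = 0$. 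By the standard characterization of differentials for field extensions, $\Omega^1_{K(x)/K} = 0$ for a finite extension means precisely that $K(x)/K$ is separable; and then by the Primitive Element Theorem $K(x) = K(\alpha)$ for a single $\alpha$, which is exactly the assertion.

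Alternatively — and this is probably the cleaner route to write up — I would avoid differentials entirely: smoothness of $X$ over $K$ at the closed point $x$ means, by one of the standard equivalent formulations (e.g.\ \cite{cgp} or \cite{conradcoursenotes}), that $\calO_{X,x}$ is geometrically regular over $K$, and for a curve this amounts to saying $X$ is smooth, so $K(x)/K$ is separable. Given separability, the Primitive Element Theorem finishes it. I expect the only genuine subtlety — the "main obstacle," such as it is — to be making sure one correctly invokes that smoothness of $X$ (not just regularity of $X$) is what guarantees separability of the residue field extension: over an imperfect $K$ a regular curve can have a closed point whose residue field is inseparable over $K$, so one must genuinely use the smoothness hypothesis, e.g.\ via the Jacobian criterion or via geometric regularity. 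Once that point is correctly isolated, the rest is immediate.
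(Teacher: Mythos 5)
The central claim of your proposal --- that smoothness of $X$ over $K$ forces the residue field $K(x)$ at a closed point to be separable over $K$ --- is false, and the lemma cannot be proved this way. Already for $X = \A^1_K = \Spec K[T]$ with $K$ imperfect and $a \in K \setminus K^p$, the maximal ideal $\mathfrak{m} = (T^p - a)$ defines a closed point of the smooth curve $\A^1_K$ whose residue field $K[T]/(T^p-a) \simeq K(a^{1/p})$ is purely inseparable of degree $p$ over $K$. Your differential argument breaks precisely at the step where you assert that the left-hand map of the conormal sequence is injective: in this example $\mathfrak{m}/\mathfrak{m}^2$ is one-dimensional over $K(x)$, generated by the class of $T^p - a$, and its image in $\Omega^1_{K[T]/K} \otimes K(x)$ is $d(T^p - a) = pT^{p-1}\,dT = 0$, so the map is zero and $\Omega^1_{K(x)/K} \neq 0$. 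Left-exactness of that sequence is equivalent to the separability you are trying to establish, not a consequence of smoothness of $X$; likewise geometric regularity of $\calO_{X,x}$ is a statement about the local ring, not about $K(x)/K$. Your closing remark about regular versus smooth curves misidentifies the difficulty: even the smooth curve $\A^1_K$ has closed points with inseparable residue fields.

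The real content of the lemma is that $K(x)$ is generated by one element \emph{even when it is inseparable} over $K$. The paper proves this by observing that it is immediate for $\A^1_K$ (the residue field $K[T]/\mathfrak{m}$ is generated by the image of $T$, separable or not), and then transporting the bound to a general smooth curve via an \'etale map $U \rightarrow \A^1_K$ defined near $x$: the residue extension $K(x)/K(y)$ is finite separable, so $\imp(K(x)/K) = \imp(K(y)/K) \leq 1$ by Propositions \ref{degofprimsepblextnlargerfield} and \ref{degofprimminnumofgens}, and one further application of Proposition \ref{degofprimminnumofgens} yields primitivity. Some device of this kind is genuinely needed; separability plus the Primitive Element Theorem is not available here.
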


\begin{proof}
When $X = \A^1_K$, $x$ corresponds to some maximal ideal $m$ of $K[T]$, and the field $K(x) = K[T]/m$ is manifestly primitive, generated by the image of $T$. In general, one has an \'etale $K$-morphism $f\colon U \rightarrow \A^1_K$ for some open neighborhood $U$ of $x$. If $y := f(x)$, then $K(x)$ is finite separable over $K(y)$. Proposition \ref{degofprimsepblextnlargerfield} then implies that $\imp(K(x)/K) = \imp(K(y)/K)$. By the already-treated $\A^1$ case and Proposition \ref{degofprimminnumofgens}, this last quantity is $\leq 1$, and one more application of Proposition \ref{degofprimminnumofgens} then implies that $K(x)/K$ is primitive.
\end{proof}

Finally, we require one more lemma. Before stating it, we introduce some notation and terminology. We say that a locally finite type $K$-group scheme $G$ is {\em totally nonsmooth} when its maximal smooth $K$-subgroup scheme is trivial. Equivalently, $G(K_s) = 1$; see \cite[Lem.\,C.4.1, Rem.\,C.4.2]{cgp}. We should remark that total nonsmoothness only behaves well under passage to separable extensions. That is, if $G/K$ is totally nonsmooth, then $G_L$ is still totally nonsmooth over $L$ for any {\em separable} extension $L/K$, but not for arbitrary extensions in general. For instance, if $K$ is an imperfect field of characteristic $p$, and $a \in K - K^p$, then the $K$-group scheme $U := \{Y^p - aX^p = 0\} \subset \Ga^2$ is totally nonsmooth because $a \notin K_s^p$. But $U_{K^{1/p}} \simeq \Ga \times \alpha_p$ via the map $(X, Y) \mapsto (X, Y - a^{1/p}X)$, hence $(U_{K^{1/p}})^{\rm{sm}} \simeq (\Ga)_{K^{1/p}}$.

For a field $K$ of characteristic $p > 0$ and $\lambda \in K$, let $K(\lambda^{1/p^{\infty}})$ denote the union (in some perfect closure) of the fields $K(\lambda^{1/p^n})$ for $n > 0$. Additionally, for a smooth unipotent $K$-group scheme $U$, let $U_{\Split}$ denote the maximal split unipotent $K$-subgroup scheme. Then $U_{\Split} \trianglelefteq U$ (we will only use the case of commutative $U$ below), and $U/U_{\Split}$ is $K$-semiwound. The key to the proof of the special case of the rigidity theorem treated in this section is the following lemma.

\begin{lemma}
\label{maxlsplitU'}
Let $K$ be a separably closed field of finite degree of imperfection, and let $\lambda \in K-K^p$. For $s > 0$, let $V_s$ denote the smooth unipotent $K$-group $\calMor((\P^1\backslash{\lambda^{1/p^s}}, \infty), (\mathscr{V}, 0))^+$. Then there is a totally nonsmooth $K$-subgroup scheme $N \subset V_s$ such that $$\left((V_s)_{K(\lambda_1^{1/p^{\infty}})}\right)_{\Split} \subset N_{K(\lambda_1^{1/p^{\infty}})}.$$
\end{lemma}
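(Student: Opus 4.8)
The plan is to compute the relevant moduli space $V_s$ explicitly enough to exhibit its maximal split subgroup after base change to $K(\lambda^{1/p^\infty})$. The group $\mathscr{V}$ of \eqref{Vdef} is, in the notation of \S\ref{modcompsection}, essentially the group called $U$ there (with $\mathscr{V} \subset \Ga^I$ cut out by a single $p$-polynomial whose principal part is universal), so Proposition \ref{modspacescomp} applies and gives an embedding $V_s \hookrightarrow W_s$, where $W_s \subset \Ga \times \Ga^{J_s}$ is cut out by the single Artin--Schreier--type equation $X_0 = X_0^p + \sum_{f \in J_s}(\prod_i \lambda_i^{f(i)})X_f^{p^s}$. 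The first step is thus to reduce the problem to studying the split subgroup of (a subgroup of) $W_s$.

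**Key steps.** First I would identify, inside $\Ga \times \Ga^{J_s}$, the subgroup $W_s$ and note that its coordinate ring involves only $\lambda_1, \dots, \lambda_m$ through the monomials $\prod_i \lambda_i^{f(i)}$ with $f \in J_s$; crucially every $f \in J_s$ has some $f(i) \ne 0$ with $i \geq 2$, so these monomials remain $p$-independent — in fact even $p^s$-independent — over $K(\lambda_1^{1/p^\infty})$, since adjoining $p$-power roots of $\lambda_1$ alone does not affect $p$-independence of a set involving the other $\lambda_i$'s (this uses that $\lambda_1, \dots, \lambda_m$ is a $p$-basis). Second, over $L := K(\lambda_1^{1/p^\infty})$, I would show that the only way the equation defining $W_s$ can contain a copy of $\Ga$ is through the $X_0$-coordinate: the Artin--Schreier part $X_0 - X_0^p$ is the "split-producing" piece, while the terms $\sum_{f \in J_s}(\prod_i\lambda_i^{f(i)})X_f^{p^s}$ form a reduced $p$-polynomial (its principal part has no nontrivial zero over $\overline{L}$ precisely by the surviving $p$-independence of the coefficient monomials), hence the subgroup $\{X_0 = 0\} \cap W_s$ is wound over $L$, even totally nonsmooth if one is careful. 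Third, I would take $N := V_s \cap (\{0\} \times \Ga^{J_s}) \subset W_s$ — or rather the preimage in $V_s$ of the subgroup of $W_s$ where the first coordinate vanishes — and check (a) $N$ is totally nonsmooth, because $N_{K_s}$ is contained in a wound $K_s$-group by the $p$-independence argument, and (b) $(V_s)_L / N_L$ embeds into $\Ga$ via $X_0$, so the split part of $(V_s)_L$ is carried into $N_L$.

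**The main obstacle.** The delicate point is pinning down exactly which subgroup $N$ of $V_s$ to take and verifying it is totally nonsmooth over $K$ (not merely wound): one must rule out small infinitesimal split-like pieces, and since $V_s$ is only given as a subgroup of $W_s$ one has limited direct access to its equations. I expect the resolution is to define $N$ as the scheme-theoretic kernel of the composite $V_s \hookrightarrow W_s \xrightarrow{X_0} \Ga$, observe that this kernel sits inside $\{0\} \times \Ga^{J_s}$ which over $K_s$ is a $K_s$-subgroup of the group defined by $\sum_{f \in J_s}(\prod_i\lambda_i^{f(i)})X_f^{p^s} = 0$, and then invoke that this latter group is totally nonsmooth over $K_s$: its $K_s$-points are solutions of a reduced universal $p$-polynomial equation with zero constant term, and the principal part having no nontrivial zero over $\overline{K}$ forces $X_f \in \overline{K}^{\,?}$-type constraints killing all $K_s$-points except the origin. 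One then checks this totally-nonsmooth property is preserved — it is, being equivalent to $N(K_s) = 1$ by \cite[Lem.\,C.4.1]{cgp} — and that over $L$ the quotient $(V_s)_L/N_L \hookrightarrow \Ga$, so that $((V_s)_L)_{\Split}$, having trivial image in $\Ga$ (as $\Ga$ over $L$... wait, $\Ga$ is split, so this is vacuous) — here instead one argues that $((V_s)_L)_{\Split}$ must already be $L$-wound modulo $N_L$ hence lands in $N_L$; equivalently $(V_s)_L/N_L$ is $L$-semiwound because it embeds in $W_s/(\text{that subgroup})_L$ which is $L$-semiwound once $\lambda_1$ has been made a $p$-power (then the $X_0 - X_0^p$ term is what remains, and over $L$ with all $\lambda_i^{1/p^s}$... no — the surviving $\lambda_i$, $i\geq 2$, keep $J_s$-monomials $p$-independent, so $W_s$ itself is $L$-wound, giving the cleanest conclusion $((V_s)_L)_{\Split} = 1 \subset N_L$). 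That last line — that $W_s$ becomes wound, not just semiwound, over $L = K(\lambda_1^{1/p^\infty})$, forcing the split part of $(V_s)_L$ to be trivial — is the crux, and it rests entirely on the combinatorial fact that elements of $J_s$ are supported away from the single index $1$.
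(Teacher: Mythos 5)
Your reduction to $W_s$ via Proposition \ref{modspacescomp} is exactly the paper's first move, and your instinct that one must exhibit a totally nonsmooth $K$-subgroup cut out by an auxiliary equation is also right. But the step you yourself flag as the crux --- that the monomials $\prod_i\lambda_i^{f(i)}$ for $f\in J_s$ stay $p$-independent (even $p^s$-independent) over $L:=K(\lambda_1^{1/p^\infty})$, so that $(W_s)_L$ is wound and $((V_s)_L)_{\Split}=1$ --- is false. Adjoining $p$-power roots of $\lambda_1$ preserves the $p$-independence of $\lambda_2,\dots,\lambda_m$, but it does \emph{not} preserve the independence of monomials that also involve $\lambda_1$, because powers of $\lambda_1$ become $p^s$-th powers in $L$ and get absorbed into the coefficients. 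Concretely, take $p=2$, $s=2$, $m=2$: both $f=(1,2)$ and $g=(3,2)$ lie in $J_2$, and over $L$ one has $\lambda_1^3\lambda_2^2=(\lambda_1^{1/2})^4\cdot\lambda_1\lambda_2^2$, so the two coefficient monomials are linearly dependent over $L^{p^s}$; the locus $X_0=0$, $X_{(1,2)}=\lambda_1^{1/2}t$, $X_{(3,2)}=t$, all other $X_f=0$, is a copy of $\Ga$ inside $(W_s)_L$. So $(W_s)_L$ is genuinely not wound, its split part is nontrivial, and the "cleanest conclusion" is unavailable. Your fallback choice $N=\ker(X_0|_{V_s})$ is indeed totally nonsmooth over $K$ (the residual equation $\sum_{f\in J_s}(\prod_i\lambda_i^{f(i)})X_f^{p^s}=0$ has only the trivial solution in $K=K_s$), but you never prove the containment $((V_s)_L)_{\Split}\subset\{X_0=0\}$, and neither of your two attempts at it works: the quotient map to $\Ga$ gives nothing since $\Ga$ is split (as you notice), and the woundness of $(W_s)_L$ is false.

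What the paper actually does at this point is an explicit change of variables over $L$: it sets $Y_0:=X_0+\sum_{f\in J_s}\sum_{d=1}^{\ord_p(f')}\bigl(\prod_i\lambda_i^{f(i)/p^d}\bigr)X_f^{p^{s-d}}$ (where $f'=f|\{2,\dots,m\}$), after which the equation of $W_s$ becomes $Y_0=Y_0^p+\sum_f\bigl(\prod_i\lambda_i^{f(i)/p^{\ord_p(f')}}\bigr)X_f^{p^{s-\ord_p(f')}}$; in \emph{these} coordinates each term in the sum involves some $\lambda_i$, $i\geq 2$, to a power prime to $p$, and the surviving $p$-independence of $\lambda_2,\dots,\lambda_m$ over $L$ forces the split part into $\{Y_0=0\}$ by a leading-coefficient comparison. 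Since $\{Y_0=0\}$ is only defined over $L$, one then raises its equation to the $p^{s-1}$-th power (legitimate because $\ord_p(f')\leq s-1$) to obtain a $K$-subgroup $N'\subset W_s$, and checks $N'$ is totally nonsmooth using the $p$-independence of the full $p$-basis over $K_s$. Your proposal is missing both the twisted coordinate $Y_0$ and the descent-to-$K$ step, and these are precisely where the content of the lemma lies.
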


\begin{proof}
If we replace $V_s$ by a larger group $V_s'$, and find a totally nonsmooth $N' \subset V_s'$ that makes the conclusion of the lemma hold for the pair $(V_s', N')$, then by taking $N := N' \cap V_s$, we obtain the lemma. We therefore apply Proposition \ref{modspacescomp} (and use the notation of that proposition), first completing $\lambda$ to a $p$-basis $\lambda = \lambda_1, \lambda_2, \dots, \lambda_m$ for $K$. Then we take $V_s' := W_s$, where recall that $W_s$ is described by the following equation:
\begin{equation}
\label{maxlsplitU'pfeqn7}
X_0 = X_0^p + \sum_{f \in J_s}\left(\prod_{i=1}^m\lambda_i^{f(i)}\right)X_f^{p^s}.
\end{equation}
Thus we seek a totally nonsmooth $K$-subgroup $N'$ of $W_s$ containing the maximal split subgroup of $W_s$ over the {\em inseparable} extension $K(\lambda_1^{1/p^\infty})$.

For $f \in J_s$, let $f' := f|\{2, 3, \dots, m\}$, and let ${\rm{ord}}_p(f')$ denote the maximum integer $t$ such that $p^t\mid f'$. Because $f' \neq 0$ by the definition of $J_s$, one has
\begin{equation}
\label{maxlsplitU'pfeqn2}
{\rm{ord}}_p(f') \leq s-1
\end{equation}
for all $f \in J_s$. Over the field $K(\lambda_1^{1/p^{\infty}})$, make the invertible change of variables on $W_s$ given by $X_f \mapsto X_f$ for $f \in J_s$, but
\begin{equation}
\label{maxlsplitU'pfeqn4}
Y_0 := X_0 + \sum_{f \in J_s}\sum_{d=1}^{{\rm{ord}}_p(f')}\left(\prod_{i=1}^m\lambda_i^{f(i)/p^d}\right)X_f^{p^{s-d}}.
\end{equation}
We compute the effect of this transformation on the equation for $W_s$:
\begin{align*}
0 &= \sum_{f \in J_s}\left(\prod_{i=1}^m\lambda_i^{f(i)}\right)X_f^{p^s} + X_0^p-X_0 \\
&= \sum_{f \in J_s}\left(\prod_{i=1}^m\lambda_i^{f(i)}\right)X_f^{p^s} + \left[Y_0 - \sum_{f \in J_s}\sum_{d=1}^{{\rm{ord}}_p(f')}\left(\prod_{i=1}^m\lambda_i^{f(i)/p^d}\right)X_f^{p^{s-d}}\right]^p \\
&- \left[Y_0 - \sum_{f \in J_s}\sum_{d=1}^{{\rm{ord}}_p(f')}\left(\prod_{i=1}^m\lambda_i^{f(i)/p^d}\right)X_f^{p^{s-d}}\right] \\
&= \sum_{f \in J_s}\left(\prod_{i=1}^m\lambda_i^{f(i)}\right)X_f^{p^s} + Y_0^p - Y_0 \\
&- \sum_{f \in J_s}\sum_{d=1}^{{\rm{ord}}_p(f')}\left(\prod_{i=1}^m\lambda_i^{f(i)/p^{d-1}}\right)X_f^{p^{s-d+1}} + \sum_{f \in J_s}\sum_{d=1}^{{\rm{ord}}_p(f')}\left(\prod_{i=1}^m\lambda_i^{f(i)/p^d}\right)X_f^{p^{s-d}} \\
&= \sum_{f \in J_s}\left(\prod_{i=1}^m\lambda_i^{f(i)}\right)X_f^{p^s} + Y_0^p - Y_0 \\
&- \sum_{f \in J_s}\sum_{d=0}^{{\rm{ord}}_p(f')-1}\left(\prod_{i=1}^m\lambda_i^{f(i)/p^d}\right)X_f^{p^{s-d}} + \sum_{f \in J_s}\sum_{d=1}^{{\rm{ord}}_p(f')}\left(\prod_{i=1}^m\lambda_i^{f(i)/p^d}\right)X_f^{p^{s-d}}.
\end{align*}
Most of the terms of the final two sums cancel, and we conclude that $W_s$ in these new coordinates is described by the equation
\[
Y_0 = Y_0^p + \sum_{f \in J_s}\left(\prod_{i=1}^m\lambda_i^{f(i)/p^{{\rm{ord}}_p(f')}}\right)X_f^{p^{s-{\rm{ord}}_p(f')}}.
\]
We claim that the maximal split $K(\lambda^{1/p^\infty})$-subgroup scheme of $(W_s)_{K(\lambda^{1/p^\infty})}$ is contained in $\{Y_0 = 0\}$. Indeed, suppose given a $K(\lambda^{1/p^{\infty}})$-morphism $f\colon \A^1 \rightarrow W_{s,K(\lambda^{1/p^\infty})}$ such that $f(0) = 0$. We must show that the projection of $f$ to the $Y_0$ coordinate vanishes. The map $f$ is given by polynomials $Y_0(T), X_f(T) \in K(\lambda_1^{1/p^{\infty}})[T]$ with vanishing constant terms such that
\begin{equation}
\label{maxlsplitU'pfeqn1}
Y_0(T) = Y_0(T)^p + \sum_{f \in J_s}\left(\prod_{i=1}^m\lambda_i^{f(i)/p^{{\rm{ord}}_p(f')}}\right)X_f(T)^{p^{s-{\rm{ord}}_p(f')}}.
\end{equation}
Assume for the sake of contradiction that $Y_0(T) \neq 0$, and let $D > 0$ be its degree. Let $c_0 \neq 0$ be the leading coefficient of $Y_0(T)$ and for $f \in J_s$, let $c_f$ be the coefficient of $T^{Dp^{1-s+{\rm{ord}}_p(f')}}$ in $X_f(T)$, where we take $c_f = 0$ if the exponent is not an integer. Then comparing coefficients of $T^{pD}$ on both sides of (\ref{maxlsplitU'pfeqn1}) yields
\begin{equation}
\label{maxlsplitU'pfeqn3}
0 = c_0^p + \sum_{f \in J_s}\left(\prod_{i=1}^m\lambda_i^{f(i)/p^{{\rm{ord}}_p(f')}}\right)c_f^{p^{s-{\rm{ord}}_p(f')}}.
\end{equation}
For each $f \in J_s$, there is by definition some $2 \leq i \leq m$ such that $p\nmid f(i)/p^{{\rm{ord}}_p(f')}$. In conjunction with (\ref{maxlsplitU'pfeqn2}) and the fact that $\lambda_2, \dots, \lambda_m$ are $p$-independent over $K(\lambda_1^{1/p^\infty})$, this implies that $c_0 = 0$, a contradiction. We deduce that $Y_0(T) = 0$, so that the maximal split $K(\lambda^{1/p^\infty})$-subgroup of $(W_s)_{K(\lambda^{1/p^\infty})}$ is contained in $\{Y_0 = 0\}$, as claimed.

Translating back to the original $X_0, X_f$ coordinates via (\ref{maxlsplitU'pfeqn4}), we conclude that the group $(W_{s, K(\lambda^{1/p^\infty})})_{\rm{split}}$ is contained in the subgroup
\[
X_0 + \sum_{f \in J_s}\sum_{d=1}^{{\rm{ord}}_p(f')}\left(\prod_{i=1}^m\lambda_i^{f(i)/p^d}\right)X_f^{p^{s-d}} = 0.
\]
For $s > 1$, this is not a subgroup defined over $K$, however. In order to obtain such a subgroup, we raise both sides to the $p^{s-1}$ to obtain the subgroup $N'$ of $W_s$ described by the following equation (in addition to the equation for $W_s$):
\begin{equation}
\label{maxlsplitU'pfeqn6}
X_0^{p^{s-1}} + \sum_{f \in J_s}\sum_{d=1}^{{\rm{ord}}_p(f')}\left(\prod_{i=1}^m\lambda_i^{f(i)p^{s-1-d}}\right)X_f^{p^{2s-1-d}} = 0.
\end{equation}
This is {\em defined over $K$} thanks to (\ref{maxlsplitU'pfeqn2}), and contains the maximal split subgroup over the extension $K(\lambda_1^{1/p^\infty})$.

It only remains to verify that $N'$ is a totally nonsmooth $K$-group scheme. Suppose given a point $\vec{x} := (x_0)\times (x_f)_f \in N'(K_s)$. We must check that $\vec{x} = 0$. If $s = 1$, then (\ref{maxlsplitU'pfeqn2}) implies that the sum above is empty, so we conclude that $x_0 = 0$. On the other hand, suppose that $s > 1$. In the equation (\ref{maxlsplitU'pfeqn6}) for $N'$, all of the exponents are $\geq s-1$ by (\ref{maxlsplitU'pfeqn2}), and in the product over $i$ appearing in each term in the sum, there is some $i$ (depending on $f$) such that $p\nmid f(i)$ (by the definition of $J_s$), so there is some $i$ such that the exponent of $\lambda_i$ is not divisible by $p^{s-1}$. The $p$-independence of the $\lambda_i$ over $K_s$ therefore implies that $x_0 = 0$ in this case as well. In both cases, using the equation (\ref{maxlsplitU'pfeqn7}) for $W_s$ and the $p$-independence of the $\lambda_i$ once more, we then conclude that $x_f = 0$ for all $f \in J_s$. That is, $\vec{x} = 0$.
\end{proof}

We now prove rigidity for rational curves that are ``almost complete.''

\begin{proposition}
\label{rigalmostcompleteratl}
Theorem $\ref{rigiditytheorem}$ holds when $G = U$ is wound unipotent and, for all $i$, $\overline{X}_i = \P^1_K$ and $D_i$ consists of a single geometrically irreducible closed point.
\end{proposition}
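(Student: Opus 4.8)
\emph{Reductions.} Woundness of $G$ is insensitive to separable extension, and so is $\imp(D/K)$ by Proposition~\ref{degofprimsepblbasechange}, while a non-constant $f$ stays non-constant after such an extension; together with a routine spreading-out (descend all data to a finitely generated subfield, carrying along the finitely many algebraic relations that pin down $\imp(D/K)$) this reduces us to the case that $K$ is separably closed of finite degree of imperfection. Next, the class $\mathscr{F}$ of wound unipotent $K$-groups for which the conclusion holds — for the given $X_i$ and $x_i$ — is visibly stable under passage to $K$-subgroups, and is stable under extensions: given $1 \to U' \to U \to U'' \to 1$ with $U', U'' \in \mathscr{F}$ and an admissible $f\colon \prod X_i \to U$, the composite with $U \twoheadrightarrow U''$ has trivial slices, hence is $1$, so $f$ factors scheme-theoretically through $U' = \ker(U \to U'')$ and then vanishes. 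By Proposition~\ref{containsallwound} it thus suffices to treat $G = \R_{K^{1/p}/K}(\alpha_p)$ and $G = \mathscr{V}$. For the former, the Weil-restriction adjunction identifies a $K$-morphism $\prod X_i \to \R_{K^{1/p}/K}(\alpha_p)$ with a $K^{1/p}$-morphism $(\prod X_i)_{K^{1/p}} \to \alpha_p$; since $\prod X_i$ is smooth, hence geometrically reduced, its base change to $K^{1/p}$ has no nonzero global section with vanishing $p$th power, so the morphism is constant — for every $n$ and $r$.

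\emph{The case $G = \mathscr{V}$; the base case.} I would induct on $r$. By Lemma~\ref{resfldclpt} each residue field $K(D_i)$ is primitive purely inseparable over $K$, so $K(D_i) = K(\alpha_i^{1/p^{n_i}})$ with $\alpha_i \in K$ (and $\alpha_i \notin K^p$ when $n_i \geq 1$), and $X_i = \P^1_K \setminus \{\alpha_i^{1/p^{n_i}}\}$. If $r = 0$ every $D_i$ is a $K$-point and each $X_i \cong \A^1_K$; regarding $f$ over the separable extension $K(t_2, \dots, t_n)$ as an $\A^1$-morphism into the wound group $\mathscr{V}_{K(t_2,\dots,t_n)}$, it is constant, so $f$ depends only on $t_2, \dots, t_n$, and the trivial first slice forces $f \equiv 1$.

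\emph{Inductive step: currying out a curve.} Let $r \geq 1$, and assume the proposition known for $r - 1$ (over every separably closed base field of finite degree of imperfection and every admissible target). Put $M := K(\alpha_1^{1/p^{n_1}}, \dots, \alpha_n^{1/p^{n_n}})$, so $\imp(M/K) = r < n$; by Lemma~\ref{givengensgenerate} I would reorder the curves so that $\{\alpha_1^{1/p^{n_1}}, \dots, \alpha_r^{1/p^{n_r}}\}$ generates $M$, which by minimality forces $n_1 \geq 1$. Currying out the first factor — using Theorem~\ref{modspacesexist} and geometric reducedness of $X_2 \times \dots \times X_n$ — turns $f$ into a $K$-morphism $\mu\colon X_2 \times \dots \times X_n \to M_1$, $M_1 := \calMor((X_1, x_1),(\mathscr{V}, 0))^+$, again with trivial slices. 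Here $M_1$ is a smooth (connected) semiwound unipotent $K$-group, and $X_1 = \P^1_K \setminus \{\alpha_1^{1/p^{n_1}}\}$ is, up to a harmless translation of the marked point, the curve $\P^1 \setminus \{\lambda^{1/p^s}\}$ of Lemma~\ref{maxlsplitU'} with $\lambda = \alpha_1$, $s = n_1$; that lemma supplies a totally nonsmooth $K$-subgroup $N_1 \subseteq M_1$ with $\big((M_1)_{K(\alpha_1^{1/p^\infty})}\big)_{\Split} \subseteq (N_1)_{K(\alpha_1^{1/p^\infty})}$.

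\emph{Descending to $L_1 := K(\alpha_1^{1/p^{n_1}})$, and the main obstacle.} The field $L_1$ is again separably closed of finite degree of imperfection. Over $L_1$ each $X_i$ ($i \geq 2$) is $\P^1_{L_1}$ minus one geometrically irreducible point, and $\imp\big((D_2 \sqcup \dots \sqcup D_n)_{L_1}/L_1\big) = \imp(M/L_1)$; since $\alpha_1^{1/p^{n_1}} \in L_1$, already $\{\alpha_2^{1/p^{n_2}}, \dots, \alpha_r^{1/p^{n_r}}\}$ generates $M$ over $L_1$, so $\imp(M/L_1) \leq r - 1 < n - 1$. Faithfully flat descent of closed-subscheme containments along $K(\alpha_1^{1/p^\infty})/L_1$ turns the inclusion from Lemma~\ref{maxlsplitU'} into $\big((M_1)_{L_1}\big)_{\Split} \subseteq (N_1)_{L_1}$, so $W_1 := (M_1)_{L_1}/\big((M_1)_{L_1}\big)_{\Split}$ is wound over $L_1$. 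The composite of $\mu_{L_1}$ with $(M_1)_{L_1} \twoheadrightarrow W_1$ is an $L_1$-morphism $(X_2 \times \dots \times X_n)_{L_1} \to W_1$ with trivial slices, hence vanishes by the inductive hypothesis over $L_1$. Therefore $\mu_{L_1}$ factors through $(N_1)_{L_1}$, and by faithfully flat descent along $L_1/K$ the morphism $\mu$ factors through $N_1$; since $N_1$ is totally nonsmooth and $X_2 \times \dots \times X_n$ is geometrically reduced, $\mu$ is constant equal to $\mu(x_2, \dots, x_n) = 0$, so $f \equiv 1_{\mathscr{V}}$. The hard part is exactly this last step: one must arrange that passing to $L_1$ strictly lowers the degree of imprimitivity of the surviving curves while the failure of woundness of the new moduli target $M_1$ stays trapped inside a subgroup that is totally nonsmooth \emph{over the original field} $K$ — this simultaneous control is precisely what Lemma~\ref{maxlsplitU'}, i.e.\ the moduli computation of Proposition~\ref{modspacescomp}, delivers.
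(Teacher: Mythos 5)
Your plan is correct and follows essentially the same route as the paper: reduce to $K$ separably closed of finite degree of imperfection, use Proposition~\ref{containsallwound} to reduce to the two targets $\R_{K^{1/p}/K}(\alpha_p)$ and $\mathscr{V}$, and for $\mathscr{V}$ induct on $r$ by currying out one curve into the moduli space $\calMor((X_1,x_1),(\mathscr{V},0))^+$ and invoking Lemma~\ref{maxlsplitU'}. The only (harmless) deviations are that you run the inductive step over the finite extension $K(\alpha_1^{1/p^{n_1}})$ rather than over $K(\alpha_1^{1/p^{\infty}})$ as the paper does -- patching the difference with a faithfully flat descent of the containment of the split part inside $N_1$ -- that you treat the base case $r=0$ via the generic point of $\prod_{i\geq 2}X_i$ instead of Zariski density of its rational points, and that you dispose of $\R_{K^{1/p}/K}(\alpha_p)$ via the Weil-restriction adjunction rather than total nonsmoothness.
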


\begin{proof}
The proof proceeds by induction on $r$. First assume that $r = 0$. Because $U$ remains wound over $K_s$ \cite[Prop.\,B.3.2]{cgp} and $\imp(D/K)$ is invariant under base change to $K_s$ by Proposition \ref{degofprimsepblbasechange}, we are free to extend scalars and thereby assume that $K$ is separably closed. This means that $(D_i)_{\red}$ is (the spectrum of) a separable extension field of $K$ by Proposition \ref{degofprim0=sepble}. Thus, because $K = K_s$, each $D_i$ is a $K$-point of $\P^1$, hence each $X_i$ is $K$-isomorphic to $\A^1_K$. If we let $Y := \prod_{i=2}^n X_i$, then each $y \in Y(K)$ induces a $K$-morphism $f_y\colon X_1 \simeq \A^1_K \rightarrow U$ sending $x_1$ to $0$. Because $U$ is wound, the only such map is the $0$ map, so $f_y$ vanishes for each $y \in Y(K)$. Because $Y(K)$ is Zariski dense in $Y$, we deduce that $f = 0$, as desired.

Now suppose that $r > 0$ and that the assertion of the proposition holds for values less than $r$. By standard spreading out arguments, we may assume that $K$ is a finitely generated extension field of $\F_p$, and in particular of finite degree of imperfection. We may then extend scalars to $K_s$ and thereby assume that $K$ is separably closed of finite degree of imperfection (but no longer finitely generated). By a change of variables, we may also assume that $x_i = \infty$ for all $i$. Now fix $X := \prod_{i=1}^n X_i$. We wish to show that the constant map is the only map from $X$ into some particular $K$-group scheme $H$ whose restriction to a certain subscheme of $X$ is the constant map to $1$. Let $\mathscr{F}$ be the collection of (isomorphism classes of) $K$-group schemes $H$ with this property. Then $\mathscr{F}$ is closed under the formation of closed subgroup schemes and extensions. By Proposition \ref{containsallwound}, therefore, in order to show that $\mathscr{F}$ contains all wound unipotent groups, it is enough to show that it contains $\R_{K^{1/p}/K}(\alpha_p)$ and $\mathscr{V}$. That it contains the former group follows from the fact that $X$ is smooth while $\R_{K^{1/p}/K}(\alpha_p)$ is totally nonsmooth, so the {\em only} map $X \rightarrow \R_{K^{1/p}/K}(\alpha_p)$ is the $0$ map. Thus it only remains to prove that $\mathscr{V} \in \mathscr{F}$.

So suppose given a $K$-morphism $f\colon X \rightarrow \mathscr{V}$ such that
\[
f|X_1 \times \dots \times \{\infty\}_i \times \dots \times X_n \equiv 0
\]
for all $1 \leq i \leq n$. We must show that $f \equiv 0$. Lemma \ref{resfldclpt} implies that $X_i = \P^1\backslash\{\mu_i^{1/p^{s_i}}\}$ for all $i$, for some $\mu_i \in K$ and $s_i > 0$. Let $L := K(\mu_1^{1/p^{s_1}}, \dots, \mu_n^{1/p^{s_n}})$. Then, by definition, $r := \imp(L/K)$. Lemma \ref{givengensgenerate} implies that, perhaps after renumbering, $L/K$ is generated by $\mu_i^{1/p^{s_i}}$ for $1 \leq i \leq r$. One may assume that $\mu_1 \notin K^p$. (Otherwise replace $\mu_1$ by some $p$-power root and reduce $s_1$. Note that $\mu_1 \notin K^{p^{s_1}}$ because $r > 0$.) Complete $\mu_1 \notin K^p$ to a $p$-basis $\mu_1 = \lambda_1, \lambda_2, \dots, \lambda_m$ of $K$. Then 
\[
\mathscr{V} \simeq \left\{X_0 = \sum_{f \in I} \left(\prod_{i=1}^m\lambda_i^{f(i)}\right)X_f^p\right\} \subset \Ga^I,
\]
where $I := I_1$ is as in Proposition \ref{modspacescomp}. Then $f$ corresponds to a map
\[
g\colon \prod_{i=2}^n X_i \rightarrow \calMor((\P^1\backslash\{\lambda_1^{1/p^{s_1}}\}, \infty), (\mathscr{V}, 0))^+
\]
such that $g$ vanishes whenever any of the components is restricted to $\infty$. We wish to show that $g$, hence also $f$, vanishes. 

Because $\mu_i^{1/p^{s_i}}$, $1 \leq i \leq r$, generate $L/K$, the same holds when we replace $L/K$ by $LE/E$ for any field extension $E$ of $K$ in some ambient field containing $L$. In particular, we apply this with $E = K(\lambda_1^{1/p^{\infty}})$. Note, however, that $\mu_1^{1/p^{s_1}} = \lambda_1^{1/p^{s_1}}$ lies in $E$, so that $LE/E$ is now generated by $< r$ elements, hence $\imp(LE/E) < r$ by Proposition \ref{degofprimminnumofgens}. Let $$V_s := \calMor((\P^1\backslash\{\lambda_1^{1/p^{s_1}}\}, \infty), (\mathscr{V}, 0))^+.$$ Extending scalars to $E$ then yields a map
\[
\pi\circ g_E\colon \prod_{i=2}^nX_i \rightarrow (V_s)_E \xrightarrow{\pi} (V_s)_E/((V_s)_E)_{\Split}
\]
which vanishes whenever any of the components on the left is restricted to $\infty$. Furthermore, if $r' := \imp(\Gamma(D_E, \calO_{D_E})_{\red}/E)$, then $r' < r \leq n-1$. By induction, therefore, since the group on the right is $E$-semiwound, we deduce that $g_E$ lands in $((V_s)_E)_{\Split}$. By Lemma \ref{maxlsplitU'}, there is a totally nonsmooth $K$-subgroup $N \subset V_s$ such that $((V_s)_E)_{\Split} \subset N_E$, so $g$ lands in $N \subset V_s$. Because $\prod_{i=2}^n X_i$ is smooth, and $N$ is totally nonsmooth, it follows that $g = 0$. This completes the induction and the proof of the proposition.
\end{proof}

\section{Rigidity in general}
\label{deductionsection}

In the present section, we deduce the general case of the rigidity theorem \ref{rigiditytheorem} from the special case of almost complete rational curves proved in \S\ref{rigalmostcompleteratlsection}. This will be a consequence of geometric class field theory, so we review here the aspects of this theory that we shall require.

Let $\overline{X}$ be a proper curve over a field $K$, and let $D \subset \overline{X}$ be a finite subscheme. Consider the functor $\{\mbox{$K$-schemes}\} \rightarrow \{\mbox{groups}\}$ which sends a $K$-scheme $T$ to the group of pairs $(\mathscr{L}, \phi)$, where $\mathscr{L} \in \Pic(\overline{X} \times T)$ is a line bundle of relative degree $0$ over $T$, and $\phi\colon \calO_{D_T} \xrightarrow{\sim} \mathscr{L}|_{D_T}$ is a trivialization of $\mathscr{L}$ along $D$. We denote by $\Jac_D(\overline{X})$ the fppf sheafification of this functor. 

\begin{remark}
Sheafification is unnecessary if the map $\Gamma(\overline{X}, \calO_{\overline{X}}) \rightarrow \Gamma(D, \calO_D)$ is injective. See \cite[Rem.\,2.1]{rosmodulispaces}. This injectivity holds, for instance, if $\overline{X}$ is geometrically reduced and geometrically connected (and therefore $\Gamma(\overline{X}, \calO_{\overline{X}}) = K$) and $D \neq \emptyset$.
\end{remark}

The natural map $\Jac_D(\overline{X}) \rightarrow \Jac(\overline{X})$ into the Jacobian of $\overline{X}$ which forgets the trivialization $\phi$ is surjective as a map of fppf sheaves, since any line bundle -- locally on $T$ -- may be trivialized along the finite scheme $D$. Its kernel consists of pairs $(\calO_{\overline{X}_T}, u)$ with $u \in \Gamma(D_T, \calO_T)^{\times}$, up to isomorphism. Two such pairs $u_1$ and $u_2$ are isomorphic precisely when there is an automorphism of $\calO_{\overline{X}_T}$ mapping $u_1$ to $u_2$, or equivalently, when $u_1/u_2$ extends to an element of $\Gamma(\overline{X}_T, \calO_{\overline{X}_T})^{\times}$. Letting $A := \Gamma(\overline{X}, \calO_{\overline{X}})$, therefore, we obtain a canonical exact sequence
\begin{equation}
\label{genjacexactseq}
0 \longrightarrow \R_{D/k}(\Gm)/i(\R_{A/k}(\Gm)) \longrightarrow \Jac_D(\overline{X}) \longrightarrow \Jac(\overline{X}) \longrightarrow 0,
\end{equation}
where the groups on the left are Weil restrictions of scalars, and $i \colon \R_{A/k}(\Gm) \rightarrow \R_{D/k}(\Gm)$ is the  $K$-homomorphism corresponding to the map $A \rightarrow \Gamma(D, \calO_D)$. In particular, $\Jac_D(\overline{X})$ is smooth. (Note that $i$ need not be injective.)

If $x \in \overline{X}^{\rm{sm}}(k)$ is a $K$-point lying in the smooth locus of $\overline{X}$, then associated to $x$ one obtains in the usual manner a $K$-morphism $\overline{X}^{\rm{sm}} \rightarrow \Jac(\overline{X})$ by sending a point $y$ to the line bundle associated to the divisor $[y] - [x]$. Similarly, if $x \notin D$, then one obtains a map $i_x\colon \overline{X}^{\rm{sm}}\backslash D \rightarrow \Jac_D(\overline{X})$ via the map sending $y$ to the pair $(\calO([y]-[x]), \phi)$, where $\phi$ is the canonical trivialization along $D$ of the line bundle associated to the divisor $[y] - [x]$ which is disjoint from $D$. A major result from geometric class field theory says that these maps have an Albanese type property with respect to maps from smooth curves into commutative algebraic groups. We require the following strengthened version for maps into commutative groups not containing $\Ga$.

\begin{theorem}$($$\cite[Th.\,6.7]{rosmodulispaces}$$)$
\label{albanese}
Let $X$ be a smooth curve over a field $K$, with regular compactification $\overline{X}$, and let $x \in X(K)$. Let $D \subset \overline{X}$ be a divisor with support $\overline{X}\backslash X$. Then for any finite type commutative $K$-group scheme $G$ not containing a $K$-subgroup scheme $K$-isomorphic to $\Ga$, the natural $K$-group homomorphism
\[
i_x^*\colon \calHom(\Jac_{D}(\overline{X}), G)^+ \rightarrow \calMor((X, x), (G, 1))^+
\]
is a $K$-group isomorphism.
\end{theorem}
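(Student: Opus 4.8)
The strategy is to show that $i_x^*$ is simultaneously injective (a closed immersion with trivial kernel) and an fppf epimorphism of $K$-group schemes; since both source and target are smooth, this forces it to be an isomorphism. Injectivity will come from the fact that the image of the Albanese morphism $i_x$ generates $\Jac_D(\overline{X})$, and surjectivity from geometric class field theory together with the observation that, because $G$ contains no copy of $\Ga$, the ``conductor'' of a morphism into $G$ never needs to exceed the given divisor $D$. As for bookkeeping: both $\calMor((X,x),(G,1))^+$ and $\calHom(\Jac_D(\overline{X}),G)^+$ are smooth $K$-group schemes whose formation commutes with separable base change, and $i_x^*$ is a homomorphism between them, so it suffices to prove it bijective on $T$-points for every geometrically reduced $K$-algebra $T$; by property (i) of Theorem \ref{modspacesexist} (and its analogue for $\calHom$), those $T$-points are just the naive groups $\Hom_T(\Jac_D(\overline{X})_T,G_T)$ and $\Mor_T((X_T,x_T),(G_T,1_T))$. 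For such $T$ the total quotient ring of $T$ is a finite product of separable extension fields of $K$, over each of which $G$ still contains no $\Ga$ (semiwoundness being insensitive to separable extension); hence $\Hom_T((\Ga)_T,G_T)=0$ for every geometrically reduced $T$, a vanishing used below. In particular one may also assume $K$ separably closed if convenient.

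Injectivity is then immediate: a homomorphism $\phi\colon\Jac_D(\overline{X})_T\to G_T$ with $\phi\circ i_{x,T}=1$ is trivial on a generating subsheaf of $\Jac_D(\overline{X})_T$ (the image of $i_x$ generates $\Jac_D(\overline{X})$, stably under base change), hence $\phi=1$. For surjectivity, let $f\colon(X_T,x_T)\to(G_T,1_T)$ be a pointed morphism. By geometric class field theory in its relative form, following Serre's \emph{Groupes alg\'ebriques et corps de classes}, $f$ factors through the Albanese morphism $i_x^{D'}\colon X\to\Jac_{D'}(\overline{X})$ for some effective divisor $D'$ supported on $\overline{X}\backslash X$ with $D'\ge D$, via a unique $T$-homomorphism $\phi'\colon\Jac_{D'}(\overline{X})_T\to G_T$. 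Using the canonical surjection $\pi\colon\Jac_{D'}(\overline{X})\twoheadrightarrow\Jac_D(\overline{X})$, which satisfies $i_x^D=\pi\circ i_x^{D'}$, it now suffices to show that $\phi'$ kills $N:=\ker\pi$: then $\phi'=\phi\circ\pi$ for a homomorphism $\phi\colon\Jac_D(\overline{X})_T\to G_T$, and $\phi\circ i_{x,T}=f$, as required.

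Thus everything reduces to $\Hom_T(N_T,G_T)=0$. Comparing the exact sequences (\ref{genjacexactseq}) for $D$ and $D'$ identifies $N$ with a subquotient of $\R_{D'/K}(\Gm)$, namely a quotient of the higher unit group $1+\mathfrak{n}$ attached to the nilpotent ideal $\mathfrak{n}=\ker(\calO_{D'}\to\calO_D)$. Since $1+\mathfrak{n}$ is commutative and the filtration by the powers $\mathfrak{n}\supset\mathfrak{n}^2\supset\cdots$ has successive quotients the vector groups $\mathfrak{n}^i/\mathfrak{n}^{i+1}$, the group $N$, and likewise $N_T$ over any geometrically reduced $T$, is a commutative split unipotent group scheme. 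A d\'evissage along a composition series of $N_T$ with $(\Ga)_T$-quotients then reduces $\Hom_T(N_T,G_T)=0$ to the vanishing $\Hom_T((\Ga)_T,G_T)=0$ recorded above. Conceptually this says that, for a target containing no $\Ga$, the functor $\calHom(\Jac_D(\overline{X}),G)$ does not change upon enlarging $D$ within a fixed support, which is precisely why a fixed modulus $D$ suffices — in contrast with the situation for general $G$, where the modulus must depend on the morphism.

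I expect the main obstacle to lie in setting up and invoking geometric class field theory in the generality actually needed here: the factorization of an arbitrary pointed morphism through some $\Jac_{D'}(\overline{X})$, with uniqueness of the factoring homomorphism, functorially over reduced base schemes and for $G$ that is not assumed smooth. The closing d\'evissage, by contrast, is short. A secondary point requiring care is the interaction with the $(-)^+$-construction: one must know that the $(-)^+$-versions of both functors commute with separable base change, so that it genuinely suffices to argue over geometrically reduced test algebras, and that the split-unipotent-kernel argument is uniform in the base.
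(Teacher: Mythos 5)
This statement is not proved in the paper at all: it is imported verbatim from \cite[Th.\,6.7]{rosmodulispaces} and used as a black box, so there is no internal proof to compare your proposal against. Judged on its own terms, your skeleton is the natural one and most of it is sound: the reduction to $T$-points for geometrically reduced $T$ via smoothness of both sides, injectivity from the fact that $i_x(X)$ generates $\Jac_D(\overline{X})$, the identification of $\ker(\Jac_{D'}(\overline{X})\twoheadrightarrow\Jac_D(\overline{X}))$ with a quotient of the higher unit group $1+\mathfrak{n}$ and hence with a split unipotent group, and the d\'evissage reducing $\Hom_T(N_T,G_T)=0$ to $\Hom_T((\Ga)_T,G_T)=0$ are all correct and correctly executed. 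This last d\'evissage is indeed the reason a fixed modulus with the given support suffices.

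The genuine gap is the step you yourself flag and then wave through: the factorization of an arbitrary pointed morphism $(X_T,x_T)\to(G_T,1_T)$ through $i_x^{D'}\colon X\to\Jac_{D'}(\overline{X})$ for some modulus $D'$, with a unique factoring homomorphism. Serre's theorem in \emph{Groupes alg\'ebriques et corps de classes} gives this only for $G$ smooth, connected and commutative over an algebraically closed field, and over a point. Here $G$ is merely a finite type commutative group scheme (possibly non-smooth, possibly disconnected), $K$ is arbitrary, and the factorization is needed functorially over a geometrically reduced base $T$ (or at least over all separable extension fields of $K$, after which one could conclude by a smoothness argument). Extending the Albanese property to that generality is precisely the content of \cite[Th.\,6.7]{rosmodulispaces} and of the representability theory surrounding the $(-)^+$ construction; invoking it as ``geometric class field theory in its relative form'' assumes the hardest part of what is to be proved. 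What your argument genuinely adds on top of that black box is only the conductor-reduction step, i.e.\ that for $G$ containing no $\Ga$ the modulus can be taken to be $D$ itself; that part is complete. To turn the proposal into a proof one would have to either reprove the relative factorization (e.g.\ by d\'evissage of $G$ into tori, smooth wound unipotent pieces, and infinitesimal pieces, handling each against $\Pic$ of $\overline{X}\times T$ rigidified along $D$) or accept that the statement is being quoted rather than proved.
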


We now use Theorem \ref{albanese} in order to give a reformulation of Proposition \ref{rigalmostcompleteratl} that is interesting in its own right.

\begin{proposition}
\label{rigiditymultiadditive}
Let $K$ be a field of characteristic $p > 0$, $U$ a commutative wound unipotent $K$-group, and for $1 \leq i \leq n$, let $A_i$ be a finite product of finite primitive extension fields of $K$. Also let $A := \prod_{i=1}^n A_i$ and $r := \imp(A/K)$. If $n > r$, then the only multiadditive map $m\colon \prod_{i=1}^n \R_{A_i/K}(\Gm) \rightarrow U$ is the $0$ map.
\end{proposition}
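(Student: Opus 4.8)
The plan is to reduce Proposition \ref{rigiditymultiadditive} to Proposition \ref{rigalmostcompleteratl} via geometric class field theory, specifically the Albanese-type isomorphism of Theorem \ref{albanese}. First I would observe that $\R_{A_i/K}(\Gm)$ is, up to a factor of $\Gm^{c_i}$ (from the identity-component directions coming from the factor fields), exactly the unipotent part of a generalized Jacobian $\Jac_{D_i}(\overline{X}_i)$ for a suitable almost-complete rational curve $X_i = \P^1\backslash D_i$. Concretely, writing $A_i \simeq \prod_j L_{i,j}$ with each $L_{i,j}/K$ primitive, I realize $L_{i,j} = K(x)$ as the residue field of a closed point on $\P^1_K$ (using that $K(x)$ is primitive and any primitive extension embeds in $\P^1$ as a closed point — cf. Lemma \ref{resfldclpt} in reverse), and let $D_i$ be the disjoint union of these points. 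Then by the exact sequence (\ref{genjacexactseq}) with $\overline{X}_i = \P^1$ (so $\Jac(\P^1) = 0$ and $A = \Gamma(\P^1,\calO) = K$), we get $\Jac_{D_i}(\P^1) = \R_{D_i/K}(\Gm)/\Gm$, i.e. $\R_{A_i/K}(\Gm)$ modulo the diagonal $\Gm$. Since $\Gm$ maps trivially into the unipotent group $U$ (as $\Hom(\Gm, U) = 0$), a multiadditive map out of $\prod_i \R_{A_i/K}(\Gm)$ is the same as a multiadditive map out of $\prod_i \Jac_{D_i}(\P^1)$.

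Next I would convert multiadditive maps into the kind of pointed morphisms that Proposition \ref{rigalmostcompleteratl} controls. By Theorem \ref{albanese}, for each $i$ the map $i_{x_i}^*$ identifies $\calHom(\Jac_{D_i}(\P^1), U)^+$ with $\calMor((X_i, x_i), (U,1))^+$, where $x_i \in X_i(K)$ is a chosen base point disjoint from $D_i$ (e.g. $x_i = \infty$ after a change of coordinates, as in Proposition \ref{rigalmostcompleteratl}). Multilinearity in the $i$-th variable means that, after fixing the other coordinates at points of $\prod_{j\ne i}\Jac_{D_j}(\P^1)$, the resulting map in the $i$-th variable is a homomorphism $\Jac_{D_i}(\P^1)\to U$, hence corresponds to an element of $\calMor((X_i, x_i),(U,1))^+$; and this correspondence is itself algebraic in the remaining variables. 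Pulling back the multiadditive map $m$ along $\prod_i i_{x_i}$ (which exists since $\Jac_{D_i}(\P^1)$ is generated as a group by the image of $X_i$ under $i_{x_i}$ — this is the relevant Albanese universal property) yields a $K$-morphism $f\colon X_1\times\dots\times X_n \to U$ with the property that restricting any coordinate to $x_i$ kills it: indeed, $i_{x_i}(x_i) = 0 \in \Jac_{D_i}(\P^1)$, and multiadditivity forces $m$ to vanish when any argument is $0$. Now $r = \imp(A/K) = \imp(\Gamma(D,\calO_D)_{\red}/K)$ with $D = \bigsqcup_i D_i$, matching the hypothesis $n > r$ exactly, so Proposition \ref{rigalmostcompleteratl} applies and gives $f \equiv 0$. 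Finally I would argue back: $f \equiv 0$ forces $m$ to vanish on the image of $\prod_i i_{x_i}$, which generates $\prod_i \Jac_{D_i}(\P^1)$ as a group; since $m$ is multiadditive (hence determined by its values on generators in each slot), $m = 0$, and therefore the original multiadditive map on $\prod_i \R_{A_i/K}(\Gm)$ is $0$ as well.

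The main obstacle I anticipate is the bookkeeping in the second step: making precise that a multiadditive map $\prod_i \Jac_{D_i}(\P^1) \to U$ corresponds, via the family of isomorphisms $i_{x_i}^*$ applied slot by slot, to a genuine morphism of schemes $f\colon \prod_i X_i \to U$ rather than merely a compatible family of morphisms in each variable separately. One clean way is to induct on $n$: for $n=1$ a multiadditive (= additive) map is a homomorphism $\Jac_{D_1}(\P^1)\to U$, which by Theorem \ref{albanese} pulls back along $i_{x_1}$ to a pointed morphism $X_1\to U$ vanishing at $x_1$ — and since $1 > r$ would force $r = 0$, this is the base case handled by Proposition \ref{rigalmostcompleteratl} with $r=0$. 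For the inductive step, fix the last coordinate: $m$ restricted to the first $n-1$ slots (with the last slot at a variable point $y_n \in \R_{A_n/K}(\Gm)$) is multiadditive into $U$, and one checks the induction hypothesis does not directly apply because the relevant $r$ for $A_1\times\dots\times A_{n-1}$ may not have dropped; so instead I would keep all $n$ coordinates and argue that $m \circ \prod i_{x_i}$ is a morphism because $i_{x_i}$ is a morphism and $m$ is (a morphism of schemes, being multiadditive on algebraic groups), and the base-point-vanishing is automatic from multiadditivity. This is the step that requires care, but it is formal once one unwinds that the "$+$" functors of morphisms and of homomorphisms are compatible under composition with the $i_{x_i}$, which is precisely the content of Theorem \ref{albanese}.
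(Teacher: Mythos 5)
Your overall strategy is the one the paper uses: kill the $\Gm$'s (no nontrivial cocharacters of a unipotent group), identify $\R_{A_i/K}(\Gm)/\Gm$ with the generalized Jacobian $\Jac_{D_i}(\P^1_K)$ via the sequence (\ref{genjacexactseq}), transport the multiadditive map to a pointed morphism $f = m\circ\prod_i i_{x_i}$ on $\prod_i(\P^1\backslash D_i)$ via Theorem \ref{albanese}, and invoke Proposition \ref{rigalmostcompleteratl}. The last step of recovering $m=0$ from $f=0$ via generation of $\Jac_{D_i}$ by the image of $i_{x_i}$ is also as in the paper, and your worry about the ``bookkeeping'' is unnecessary in the direction you actually need, since $f$ is literally a composite of morphisms.

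There is, however, a concrete gap: Proposition \ref{rigalmostcompleteratl} only covers the case where each $D_i$ is a \emph{single geometrically irreducible} closed point, whereas you take $D_i$ to be the disjoint union of the points corresponding to all the factor fields $L_{i,j}$ of $A_i$, and over a general $K$ these points need not be geometrically irreducible (a separable factor of $L_{i,j}$ splits the point over $K_s$). So the proposition does not apply to your configuration as written. Two preliminary reductions repair this, and both are exactly where the degree-of-imprimitivity machinery earns its keep: first extend scalars to $K_s$, which is harmless because woundness and $\imp$ are insensitive to separable extension (Proposition \ref{degofprimsepblbasechange}) and a multiadditive map vanishing over $K_s$ vanishes over $K$; second, use $\R_{A_i/K}(\Gm)\simeq\prod_j\R_{L_{i,j}/K}(\Gm)$ and multiadditivity to reduce to the case where each $A_i$ is a single primitive field $L_i$, noting that the compositum of one chosen factor per slot is a subalgebra of $A$, so its degree of imprimitivity is still $\leq r$ by Proposition \ref{degofprimjumpsoverlargerexts}. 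After these reductions each $D_i$ is a single purely inseparable (hence geometrically irreducible) closed point of $\P^1_{K_s}$ and your argument goes through verbatim; without them, the citation of Proposition \ref{rigalmostcompleteratl} is not justified.
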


\begin{proof}
Extension of scalars to $K_s$ preserves all hypotheses, thanks to Proposition \ref{degofprimsepblbasechange}, so we may assume that $K$ is separably closed. Since Weil restriction commutes with products of rings, Proposition \ref{degofprimjumpsoverlargerexts} reduces us to the case when each $A_i = L_i$ is a primitive field extension of $K$. We first claim that $m$ vanishes when any of the factors is restricted to $\Gm \subset \R_{L_i/K}(\Gm)$. Indeed, upon such a restriction, we obtain a multiadditive map $m'\colon \Gm \times H \rightarrow U$, where $H := \prod_{j \neq i}\R_{L_i/K}(\Gm)$. Then for any $h \in H(K)$, we obtain a $K$-homomorphism $m'_h\colon \Gm \rightarrow U$ which must vanish, as a unipotent group over a field has no nontrivial cocharacters. Because $H(K)$ is Zariski dense in $H$, it follows that $m' = 0$, as claimed.

The map $m$ therefore descends to a multiadditive map (which we also call $m$ by abuse of notation)
\[
m\colon \prod_{i=1}^n \left(\R_{L_i/K}(\Gm)/\Gm\right) \rightarrow U.
\]
Because $L_i/K$ is primitive, we may write $L_i = K(\alpha_i)$. The exact sequence (\ref{genjacexactseq}) shows that $\R_{L_i/K}(\Gm)/\Gm \simeq \Jac_{[\alpha]}(\P^1_K)$, where $[\alpha]$ denotes the closed point of $\P^1_K$ corresponding to $\alpha$ with its reduced structure. Theorem \ref{albanese} therefore implies that $m$ corresponds to a $K$-morphism
\[
f\colon \prod_{i=1}^n \P^1_K\backslash\{\alpha\} \rightarrow U
\]
which vanishes whenever restricted to $\infty$ in one of the components. By Proposition \ref{rigalmostcompleteratl}, one must have $f = 0$, hence also $m = 0$.
\end{proof}

\begin{lemma}
\label{filttorwdunip}
If $G$ is a solvable $K$-group scheme of finite type not containing a $K$-subgroup isomorphic to $\Ga$, then $G$ admits a filtration all of whose subquotients are either tori, commutative wound unipotent, finite \'etale, or totally nonsmooth.
\end{lemma}

\begin{proof}
The quotient $G/G^{\rm{sm}}$ of $G$ by its maximal smooth subgroup scheme is totally nonsmooth, so we may assume that $G$ is smooth. Since $G/G^0$ is finite \'etale, we may assume that $G$ is also connected. If we let $U$ denote the $K$-unipotent radical of $G$, then $U$ is wound unipotent and $G/U$ is solvable pseudo-reductive, hence commutative \cite[Prop.\,1.2.3]{cgp}. We may therefore assume that $G$ is either wound unipotent or commutative pseudo-reductive. In the latter case, letting $T \subset G$ denote the maximal $K$-torus, $G/T$ is wound unipotent. Indeed, pseudo-reductivity is preserved by separable base change \cite[Prop.\,1.1.9]{cgp}, so one may assume $K = K_s$ for this assertion, and in this case it follows from the non-existence of a nontrivial extension of $\Ga$ by $\Gm$ \cite[Prop.\,2.2.18]{rostateduality}. We have thus reduced to the case in which $G$ is wound unipotent. Because wound unipotent groups may be filtered by commutative wound unipotent groups \cite[Prop.\,B.3.3]{cgp}, we are done.
\end{proof}

\begin{corollary}
\label{GnoGasepext}
If $G$ is a solvable $K$-group scheme of finite type, and $L/K$ is a separable extension field, then $G$ contains a $K$-subgroup scheme isomorphic to $\Ga$ if and only if $G_L$ contains an $L$-subgroup scheme isomorphic to $\Ga$.
\end{corollary}

\begin{proof}
This follows from Lemma \ref{filttorwdunip}, and the fact that all of the types of subquotients in that lemma automatically do not contain subgroups isomorphic to $\Ga$, and are preserved by passage to separable extensions.
\end{proof}

We are now ready to prove the rigidity theorem in general.

\begin{theorem}$($Theorem $\ref{rigiditytheorem}$$)$
\label{rigidity}
Let $K$ be a field of characteristic $p > 0$, let $\overline{X}_1, \dots, \overline{X}_n$ be smooth proper geometrically connected curves over $K$, and let $X_i \subset \overline{X}_i$ be dense open subschemes for $1 \leq i \leq n$ with closed complement $D_i := \overline{X}_i \backslash X_i$. Let $D := D_1 \sqcup \dots \sqcup D_n$, and $r := \imp(\Gamma(D, \calO_D)_{\red}/K)$, the degree of imperfection of $D/K$. For each $1 \leq i \leq n$, let $x_i \in X_i(K)$. Finally, let $G$ be a solvable $K$-group scheme of finite type not containing a $K$-subgroup scheme $K$-isomorphic to $\Ga$. If $n > r$, then the only $K$-morphism $f\colon X_1 \times \dots \times X_n \rightarrow G$ such that, for each $1 \leq i \leq n$, $f|X_1 \times \dots X_{i-1} \times \{x_i\} \times X_{i+1} \times \dots \times X_n = 1_G$ is the constant map to the identity $1_G$.
\end{theorem}

\begin{proof}
The theorem is inherited by extensions: If it is true for $G'$ and $G''$, then it is also true for any extension of $G''$ by $G'$. By Lemma \ref{filttorwdunip}, therefore, and the fact that $\prod X_i$ is smooth and connected, we may assume that $G$ is a torus or commutative wound unipotent. We are free to extend scalars to $K_s$, thanks to Proposition \ref{degofprimsepblbasechange}, so for the rest of the proof assume that $K$ is separably closed. The case when $G$ is a $K$-torus follows immediately from the Rosenlicht Unit Theorem \cite[Cor.\,1.2]{conradrosenlicht}, so assume that $G = U$ is commutative wound unipotent. By Theorem \ref{albanese}, $f$ corresponds to a multiadditive map
\[
m\colon \prod_{i=1}^n \Jac_{D_i}(\overline{X}_i) \rightarrow U,
\]
where by abuse of notation $D_i$ denotes the closed subset $D_i$ with its reduced subscheme structure. We must show that $m = 0$. By (\ref{genjacexactseq}), one has for each $i$ an exact sequence
\[
0 \longrightarrow \R_{D_i/K}(\Gm)/\Gm \longrightarrow \Jac_{D_i}(\overline{X}_i) \longrightarrow \Jac(\overline{X}_i) \longrightarrow 0.
\]
We claim that, for any subset $S \subset \{1, \dots, n\}$, the restriction $m'$ of $m$ to $$\prod_{i \in S} \left(\R_{D_i/K}(\Gm)/\Gm\right) \times \prod_{i\notin S} \Jac_{D_i}(\overline{X}_i)$$ vanishes. The case $S = \emptyset$ will then complete the proof of the theorem.

The proof is by descending induction on $\#S$. First consider the case in which $S = \{1, \dots, n\}$. Lemma \ref{resfldclpt} implies that each $D_i$ is a finite disjoint union of spectra of primitive (finite) field extensions of $K$, so the claim in this case follows from Proposition \ref{rigiditymultiadditive}. Now suppose that $\#S < n$, and that the claim holds for larger subsets of $\{1, \dots, n\}$. For each $j \notin S$, the restriction of $m'$ to $$\prod_{i \in S\cup\{j\}}\left(\R_{D_i/K}(\Gm)/\Gm\right) \times \prod_{i \notin S\cup\{j\}}\Jac_{D_i}(\overline{X}_i)$$ vanishes by induction. It follows that $m'$ descends to a multiadditive map which we by abuse of notation also denote by $m'$
\[
m'\colon\prod_{i \in S} \left(\R_{D_i/K}(\Gm)/\Gm\right) \times \prod_{i\notin S}\Jac_{D_i}(\overline{X}_i) \rightarrow U.
\]
Let $H := \prod_{i \in S} \left(\R_{D_i/K}(\Gm)/\Gm\right)$ and $A := \prod_{i\notin S}\Jac_{D_i}(\overline{X}_i)$. Then for every $h \in H(K)$, we obtain a map $m'_h\colon A \rightarrow U$ whose image contains $0$. (This last is because the product over $i\notin S$ is not the empty scheme because $\#S < n$.) Because $A$ is smooth, geometrically connected, and proper and $U$ is affine, this map must be the constant map to $0 \in U(K)$. Because $H(K)$ is Zariski dense in $H$, it follows that $m' = 0$, which completes the induction and the proof of the theorem.
\end{proof}

\section{Applications to unirational groups}
\label{appunirgpssection}

In this section we will apply the rigidity theorem to the study of unirational algebraic groups. Our first goal is to prove Theorem \ref{rigidityfindegofimp}. We require first some notation and basic lemmas. Let $G$ be a group, and let $(Y_1, y_1), \dots, (Y_m, y_m)$ be pointed sets. Let $g\colon \prod_{i=1}^m Y_i \rightarrow G$ be a (set-theoretic) map. Then we define maps $\Delta^m_{g, y_1, \dots, y_m}\colon \prod_{i=1}^m Y_i \rightarrow G$ recursively as follows. When $m = 1$, we define $\Delta^1_{g,y_1}(z) := g(z)g(y_1)^{-1}$. For $m > 1$, we define $$\Delta^m_{g, y_1, \dots, y_m}(z_1, \dots, z_m) := \Delta^{m-1}_{g_{z_1}, y_2, \dots, y_m}(z_2, \dots, z_m)(\Delta^{m-1}_{g_{y_1}, y_2, \dots, y_m}(z_2, \dots, z_m))^{-1},$$ where for $z \in Y_1$, $g_z\colon \prod_{i=2}^m Y_i \rightarrow G$ denotes the restriction of $g$ to $\prod_{i=2}^m Y_i \simeq \{z\} \times \prod_{i=2}^m Y_i$. A simple induction proves:
\begin{itemize}
\item[(1)] $\Delta^m_{g,y_1,\dots,y_m}(z_1, \dots, z_m) = 1$ whenever $z_i = y_i$ for some $i$.
\item[(2)] If $g(z_1, \dots, z_m) = 1$ whenever $z_i = y_i$ for some $i$, then $\Delta^m_{g,y_1,\dots,y_m} = g$.
\end{itemize}

Now we consider a scheme-theoretic analogue: Let $G$ be a $K$-group scheme, and let $(Y_1, y_1), \dots, (Y_m, y_m)$ be pointed $K$-schemes. Suppose given a morphism of $K$-schemes $g\colon \prod_{i=1}^m Y_i \rightarrow G$. Then for any $K$-scheme $S$, we have the map of sets $\Delta^m_{g,y_1,\dots,y_m}\colon \prod_{i=1}^mY_i(S)\rightarrow G(S)$, and this construction is functorial in $S$. Thus we obtain a map of $K$-schemes $\prod_{i=1}^m Y_i \rightarrow G$ which we also denote by $\Delta^m_{g,y_1,\dots,y_m}$. The assertions (1) and (2) above yield the following lemma.

\begin{lemma}
\label{propsdelta}
Notations as above, we have:
\begin{itemize}
\item[(1)] For any $K$-scheme $S$, and any $z_i \in Y_i(S)$, we have $\Delta^m_{g,y_1,\dots,y_m}(z_1, \dots, z_m) = 1$ whenever $z_i = (y_i)_S$ for some $i$.
\item[(2)] If, for every $K$-scheme $S$, and every $(z_i) \in \prod_i Y_i(S)$, one has $g(z_1, \dots, z_m) = 1$ whenever $z_i = (y_i)_S$ for some $i$, then $\Delta^m_{g,y_1,\dots,y_m} = g$.
\end{itemize}
\end{lemma}

\begin{theorem}$($Theorem $\ref{rigidityfindegofimp}$$)$
\label{rigidityfindegofimpbody}
Let $K$ be a field of degree of imperfection $r$, and let $X_1, \dots, X_n$ be unirational $K$-schemes. For each $X_i$, let $x_i \in X_i(K)$. Finally, let $G$ be a solvable affine $K$-group scheme of finite type not containing a $K$-subgroup scheme $K$-isomorphic to $\Ga$. If $n > r$, then the only $K$-morphism $f\colon X_1 \times \dots \times X_n \rightarrow G$ such that, for each $1 \leq i \leq n$, $f|X_1 \times \dots X_{i-1} \times \{x_i\} \times X_{i+1} \times \dots \times X_n = 1_G$ is the constant map to the identity $1_G$.
\end{theorem}

\begin{proof}
By Corollary \ref{GnoGasepext}, we are free to extend scalars to $K_s$ and thereby assume that $K$ is separably closed. If $K$ is perfect, then $G$ is a torus, so the assertion follows from the Rosenlicht Unit Theorem \cite[Cor.\,1.2]{conradrosenlicht}, so we may assume that $K$ is imperfect -- in particular, infinite. We first prove the following result: If $K$ is a field of degree of imperfection $r$, $n > r$, $(Y_1, y_1), \dots, (Y_n,y_n)$ are pointed $K$-schemes such that $Y_i$ is open in $\A^{d_i}$, then for any $K$-scheme morphism $g\colon \prod_{i=1}^nY_i \rightarrow G$, one has $\Delta^n_{g,y_1,\dots,y_n} = 1$. Indeed, every point of $Y_i$ may be connected to $y_i$ by a curve $C \subset Y_i$ that is isomorphic to an open subscheme of $\A^1$. It therefore suffices to treat the case when each $Y_i$ is open in $\A^1$. Let $\overline{Y}_i \simeq \P^1$ be the regular completion of $Y_i$. The total complementary divisor $D$ of the $Y_i$ satisfies $\imp(D/K) \leq r$ by Proposition \ref{degimpleqdegimperf}. By Lemma \ref{propsdelta}(1), $\Delta^n_{g,y_1,\dots,y_n}$ takes the value $1$ whenever any of the coordinates is $y_i$. It therefore follows from the rigidity theorem \ref{rigidity} that $\Delta^n_{g,y_1,\dots,y_n}$ is identically $1$.

Now we prove the theorem. For each $i$ we have a dominant map $\pi_i\colon U_i \rightarrow X_i$, where $U_i$ is a dense open in $\A^{d_i}$. Pulling the map $f$ back along the induced map $\pi\colon \prod U_i \rightarrow \prod X_i$, we obtain the map $f\circ \pi\colon \prod U_i \rightarrow G$. Let $u_i \in U_i(K)$. By the result of the first paragraph, $\Delta^n_{f\circ\pi,u_1,\dots,u_n} = 1$, hence $\Delta^n_{f,\pi(u_1),\dots,\pi(u_n)} = 1$. Because $K$ is infinite, each $\pi_i$ is dominant, and each $U_i$ is rational, $\pi((\prod U_i)(K))$ is dense in $\prod X_i$, so we conclude that the map $\Delta^n_f\colon (\prod X_i)^2 \rightarrow G$, $$(y_1,\dots,y_n) \times (z_1,\dots,z_n) \mapsto \Delta^n_{f,y_1,\dots,y_n}(z_1,\dots,z_n)$$ vanishes. In particular, $\Delta^n_{f,x_1,\dots,x_n}\colon \prod X_i \rightarrow G$ vanishes. By Lemma \ref{propsdelta}(2) and our assumptions on $f$, $\Delta^n_{f,x_1,\dots,x_n} = f$, so we conclude that $f = 1$.
\end{proof}

We now turn to the proofs of Theorems \ref{unirationalitydescends} and \ref{genbycommunirsubgps}. Crucial to both is the following lemma.

\begin{lemma}
\label{partialfrac}$($Partial Fraction Decomposition$)$
Let $K$ be a field, $X \subset \P^1_K$ a nonempty open subscheme with $\P^1\backslash X = \{x_1, \dots, x_n\}$ with the $x_i$ distinct. Then given $x \in X(K)-\{x_1, \dots, x_n\}$, $U$ a unipotent $K$-group, and $f\colon (X, x) \rightarrow (U, 1)$ a pointed $K$-morphism, $f$ may be written uniquely as a product $f = g_1\dots g_n$ such that each $g_i\colon X \rightarrow U$ extends uniquely to a pointed morphism $(\P^1\backslash\{x_i\}, x) \rightarrow (U, 1)$.
\end{lemma}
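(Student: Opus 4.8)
The plan is to realize the morphism $f\colon (X,x)\to (U,1)$ inside a suitable commutative group and then invoke geometric class field theory (Theorem \ref{albanese}) together with the exact sequence (\ref{genjacexactseq}) to decompose it. Since $U$ is unipotent — but not necessarily commutative or smooth — I would first reduce to the commutative case: filter $U$ by a chain of normal $K$-subgroups with successive quotients that are commutative (for instance by the descending central series, noting unipotent groups are nilpotent), and argue by induction on the length of the filtration that it suffices to treat each commutative layer, assembling the factors $g_i$ multiplicatively from the layers. Actually, the cleanest route is to observe that the statement is really a computation of the group $\calMor((X,x),(U,1))$, so I would first handle $U = \Ga$ (where the claim is literally the classical partial fraction decomposition of a rational function on $\P^1$ with poles only at the $x_i$, regular at $x$ and vanishing at $x$), then handle general commutative $U$, then bootstrap to arbitrary unipotent $U$ via the filtration.

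For the commutative case, let $\overline{X}=\P^1_K$ and $D=\{x_1,\dots,x_n\}$ with reduced structure, so $D = D_1\sqcup\dots\sqcup D_n$ with $D_i = \{x_i\}$. By Theorem \ref{albanese}, since $U$ is commutative and contains no copy of $\Ga$ and $\Jac(\P^1)=0$, the map $i_x^*$ gives an isomorphism $\calHom(\Jac_D(\P^1),U)^+ \xrightarrow{\sim} \calMor((X,x),(U,1))^+$, and by the exact sequence (\ref{genjacexactseq}) (with $A=\Gamma(\P^1,\calO)=K$) we have $\Jac_D(\P^1)\simeq \R_{D/K}(\Gm)/\Gm = \bigl(\prod_{i=1}^n \R_{K(x_i)/K}(\Gm)\bigr)/\Gm$. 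Now $\calHom$ out of a product decomposes: writing $\Jac_D(\P^1)$ as generated by the images of the individual $\R_{K(x_i)/K}(\Gm)$, a homomorphism to $U$ is the same as an $n$-tuple of homomorphisms $\R_{K(x_i)/K}(\Gm)\to U$ agreeing on the common $\Gm$, and the product decomposition $\R_{D/K}(\Gm) = \prod_i \R_{K(x_i)/K}(\Gm)$ lets me split any such homomorphism as the (commuting) product of the $n$ homomorphisms obtained by composing with the projections. Transporting back through $i_x^*$, and recognizing that $\R_{K(x_i)/K}(\Gm)/\Gm \simeq \Jac_{\{x_i\}}(\P^1)$ corepresents pointed morphisms $(\P^1\setminus\{x_i\},x)\to(U,1)$, this yields the factorization $f = g_1\cdots g_n$ with $g_i$ extending to $(\P^1\setminus\{x_i\},x)\to(U,1)$. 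Uniqueness follows because the decomposition $\R_{D/K}(\Gm) = \prod_i \R_{K(x_i)/K}(\Gm)$ is canonical and $i_x^*$ is an isomorphism; equivalently, a morphism extending to $(\P^1\setminus\{x_i\},x)$ for every $i$ simultaneously extends over all of $\P^1$, hence (being pointed at $x$ and landing in an affine group) is trivial, which forces uniqueness of the factors.

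The main obstacle is the non-commutativity of $U$: once $U$ is merely unipotent, $\calMor((X,x),(U,1))$ is only a group functor without an obvious linear structure, so the class field theory input does not apply directly. I expect to handle this by induction on the nilpotency length, using a central extension $1\to Z\to U\to \overline{U}\to 1$ with $Z$ central (hence commutative) and $\overline{U}$ of smaller length: given $f$, push forward to $\overline f\colon (X,x)\to(\overline U,1)$, decompose $\overline f = \overline{g}_1\cdots\overline{g}_n$ by the inductive hypothesis, lift each $\overline{g}_i$ arbitrarily to a pointed morphism $g_i'\colon(\P^1\setminus\{x_i\},x)\to(U,1)$, and observe that $f\cdot(g_1'\cdots g_n')^{-1}$ lands in the central commutative subgroup $Z$, to which the commutative case applies to produce a correction factorization; combining and reassociating (using centrality of $Z$ to move the correction factors into position) gives the desired decomposition of $f$, and the length-$1$ base case is the commutative case above. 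One must check that the resulting $g_i$ are unique — this again reduces to the fact that a pointed morphism from $X$ to $U$ extending over each $\{x_i\}$ separately extends over $\P^1$, hence is constant, combined with a downward induction mirroring the existence argument.
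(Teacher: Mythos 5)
Your reduction to a central extension $1\to Z\to U\to \overline U\to 1$ has a genuine gap at the step ``lift each $\overline g_i$ arbitrarily to a pointed morphism $(\P^1\setminus\{x_i\},x)\to(U,1)$.'' The obstruction to lifting a morphism $\P^1\setminus\{x_i\}\to \overline U$ through this extension is a $Z$-torsor over $\P^1\setminus\{x_i\}$, and ${\rm{H}}^1(\P^1\setminus\{x_i\},Z)$ vanishes only when $Z$ is split (so that the torsor is one for a successive extension of $\Ga$'s over an affine scheme). For a wound central $Z$ --- which is exactly what occurs when $U$ is wound, the case of real interest in this paper --- there are nontrivial $Z$-torsors over affine curves, and the lift need not exist. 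This is precisely the difficulty the paper's proof is engineered around: it chooses the central subgroup $U'$ to be smooth, connected, commutative and $p$-torsion, embeds it into a vector group $\Ga^r$ via \cite[Prop.\,B.1.13]{cgp}, and pushes out the extension to a group $W\supset U$ in which $\Ga^r$ is central; the lifts $h_i$ are then taken into $W$ (where the obstruction lives in ${\rm{H}}^1(\P^1\setminus\{x_i\},\Ga^r)=0$), the correction term $f'=(j\circ f)(h_1\dots h_n)^{-1}$ lands in $\Ga^r$ where classical partial fractions apply, and one checks a posteriori that the resulting factors land back in $U$. Without this pushout device your correction term would have to land in $Z$ itself, and your induction does not get off the ground. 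The same device is what makes the paper's uniqueness argument work ($\sum t_i=0$ with $t_i$ valued in $U'\subset\Ga^r$, then uniqueness for $\Ga^r$); your sketch of uniqueness would need the analogous repair.

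A secondary gap: your ``general commutative $U$'' case rests on Theorem \ref{albanese}, which requires $U$ to contain no copy of $\Ga$. A general commutative unipotent group does contain copies of $\Ga$ unless it is semiwound, so this case is not actually covered by ``classical partial fractions for $\Ga$ plus Albanese''; you would need a further d\'evissage (maximal split subgroup with semiwound quotient), which runs into the same lifting issue as above. That said, for commutative \emph{semiwound} $U$ your route --- identifying $\calMor((X,x),(U,1))^+$ with $\calHom(\Jac_D(\P^1),U)^+$ and splitting along $\R_{D/K}(\Gm)=\prod_i\R_{K(x_i)/K}(\Gm)$ (using that $\Hom(\Gm,U)=0$ to dispose of the diagonal $\Gm$) --- is correct and genuinely different from the paper's purely d\'evissage-based argument; it is a clean conceptual explanation of where the decomposition comes from in that case, but it does not by itself yield the lemma in the generality needed.
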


We refer to the above result as ``partial fraction decomposition'' because, when $U = \Ga$, it is just the classical statement of the existence of partial fraction decompositions.

\begin{proof}
If $X = \P^1$, then the map is constant and there is nothing to prove, so assume that $X \subsetneq \P^1$. The separatedness of $U$ ensures that the extensions of the $g_i$ are unique, so we do not have to worry about this. One may assume that $x = \infty$. Additionally, because $X$ is smooth and connected, we may assume that $U$ is also smooth and connected. As remarked above, when $U = \Ga$, the lemma is just the classical partial fraction decomposition of a rational function. Thus the lemma holds for $\Ga^r$.

We proceed by induction on ${\rm{dim}}(U)$, the $0$-dimensional case being trivial. So suppose that $U \neq 1$ and place $U$ in an exact sequence of $K$-groups
\begin{equation}
\label{partialfracpfeqn10}
1 \longrightarrow U' \longrightarrow U \longrightarrow U'' \longrightarrow 1
\end{equation}
with $0 \neq U' \subset U$ smooth, connected, central, and $p$-torsion. This may be accomplished as follows: If $r > 0$ is maximal such that $\mathscr{D}_rU \neq 1$, and $\mathscr{D}_rU$ has exponent $p^n$, then take $U' := [p^{n-1}]\mathscr{D}_rU$. Applying \cite[Prop.\,B.1.13]{cgp}, when $K$ is infinite we have an inclusion $j'\colon U' \rightarrow \Ga^r$ with cokernel a subgroup of $\Ga$. When $K$ is perfect, in particular finite, we also have such an inclusion (an isomorphism, in fact) by \cite[Cor.\,B.2.7]{cgp}. Push out the sequence (\ref{partialfracpfeqn10}) by $j'$ to obtain a commutative exact diagram
\[
\begin{tikzcd}
1 \arrow{r} & U' \arrow{r} \arrow[d, hookrightarrow] & U \arrow[d, hookrightarrow, "j"] \arrow{r}{b} & U'' \arrow[d, equals] \arrow{r} & 1 \\
1 \arrow{r} & \Ga^r \arrow{r} \arrow{d} & W \arrow{r}{q} \arrow{d}{\pi} & U'' \arrow{r} & 1 \\
& \Ga \arrow[r, equals] & \Ga &&
\end{tikzcd}
\]
We first construct a partial fraction decomposition for $j\circ f$. By induction, there is a unique decomposition $f_1\dots f_n$ of $q\circ j \circ f\colon X \rightarrow U''$. We abuse notation and also denote by $f_i$ the unique extension of $f_i$ to $\P^1\backslash\{x_i\}$. The obstruction to lifting $f_i$ to a map $\P^1\backslash\{x_i\} \rightarrow W$ lives in ${\rm{H}}^1(\P^1\backslash\{x_i\}, \Ga^r)$, which vanishes because $\P^1\backslash\{x_i\}$ is affine. Thus we may lift each $f_i$ to a map $h_i\colon \P^1\backslash\{x_i\} \rightarrow W$. Then $f' := (j\circ f)(h_1\dots h_n)^{-1}\colon X \rightarrow \Ga^r$, and by the $\Ga^r$ case, we obtain a decomposition for $f'$, which -- thanks to the centrality of $\Ga^r \subset W$ -- yields a decomposition $g_1\dots g_n$ for $j\circ f$. Postcomposing with $\pi$ yields the unique decomposition for $\pi\circ j \circ f = 0$, so we conclude that $\pi\circ g_i = 0$ for all $i$. That is, $g_i$ lands in $U \subset W$. Thus we have found a decomposition for $f$, and it remains to prove uniqueness.

Given two decompositions $f = g_1\dots g_n = s_1\dots s_n$ for $f$, postcomposing with $b$ yields two decompositions for $b\circ f$, so by induction, $b\circ g_i = b\circ s_i$ for all $i$. Then we may write $g_i = t_is_i$ with $t_i\colon \P^1\backslash\{x_i\} \rightarrow U'$. The centrality of $U'$ then implies that $\sum t_i = 0$. Because $U' \subset \Ga^r$, therefore, the $t_i$ yield a decomposition of the $0$ map into $\Ga^r$, hence they are all $0$ by the uniqueness of decompositions for maps into $\Ga^r$.
\end{proof}

We may now prove Theorems \ref{genbycommunirsubgps} and \ref{unirationalitydescends} in the wound unipotent case. We restate both results here for convenience.

\begin{theorem}
\label{unirationalitydescendsunip}
Let $L/K$ be a $($not necessarily algebraic$)$ separable extension of fields, and let $U$ be a wound unipotent $K$-group scheme such that $U_L$ is unirational over $L$. Then $U$ is unirational over $K$.
\end{theorem}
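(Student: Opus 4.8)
The plan is to prove the theorem simultaneously with the wound unipotent case of Theorem~\ref{genbycommunirsubgps}, by induction on $\dim U$, after first disposing of the base change. Spreading out the dominant rational map $\P^N_L \dashrightarrow U_L$, we may assume $L/K$ is finitely generated, hence separably generated (as $L/K$ is separable), i.e. finite separable over a purely transcendental extension $K(t_1,\dots,t_d)$; and descent through $K(t_1,\dots,t_d)/K$ is immediate, since a dominant rational map $\P^N_{K(\mathbf t)} \dashrightarrow U_{K(\mathbf t)}$ spreads out to a dominant rational map $\P^N_K \times \A^d_K \dashrightarrow U$ whose source is a rational $K$-variety. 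So we are reduced to the case in which $L/K$ is finite separable, and (using Proposition~\ref{degofprimsepblbasechange} and the insensitivity of woundness to separable extension, as elsewhere) we may freely pass to $K_s$ when convenient.

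For Theorem~\ref{genbycommunirsubgps} (the case $U$ unirational over $K$): using the Bertini argument from the proof of Theorem~\ref{rigidityfindegofimpbody} together with Lemma~\ref{partialfrac}, $U$ is generated by the images of finitely many pointed $K$-morphisms $g_i\colon(\P^1_K\backslash\{x_i\},\infty)\to(U,1)$ with each $x_i$ a closed point. Let $H_i\subset U$ be the $K$-subgroup generated by the image of $g_i$; it is smooth, connected, unirational, and wound (a subgroup of the wound group $U$). If $H_i\subsetneq U$, then by induction $H_i$ is generated by its commutative unirational $K$-subgroups, which are also such subgroups of $U$. If $H_i=U$ for some $i$, then $U$ is commutative: by Lemma~\ref{resfldclpt} the field $K(x_i)$ is primitive over $K$, whence (via Lemma~\ref{compwithsepclosindofemb} and Proposition~\ref{degofprimsepblbasechange}) $\imp(K(x_i)\times K(x_i)/K)\le 1<2$, so the commutator map $(\P^1_K\backslash\{x_i\})^2\to\mathscr{D}U$ — which is trivial when either coordinate is set to $\infty$, and whose image generates $\mathscr{D}U$ — vanishes by the rigidity theorem~\ref{rigidity}, forcing $\mathscr{D}U=1$. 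Either way $U$ is generated by its commutative unirational $K$-subgroups.

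For the commutative case of the present theorem — $U$ commutative wound unipotent, $U_L$ unirational, $L/K$ finite separable — the same Bertini argument and Lemma~\ref{partialfrac}, now over $L$, show that $U_L$ is generated by the images of finitely many pointed $L$-morphisms $(\P^1_L\backslash\{x\},\infty)\to(U_L,1)$. Since $\Jac(\P^1_L)=0$, Theorem~\ref{albanese} and the exact sequence (\ref{genjacexactseq}) identify each of these with an $L$-homomorphism $\phi_x\colon \R_{L(x)/L}(\Gm)/\Gm\to U_L$, and these generate $U_L$. Applying Weil restriction along the finite étale extension $L/K$ (which is exact, commutes with $\R_{L(x)/L}$, and carries $\R_{L(x)/L}(\Gm)/\Gm$ to $\R_{L(x)/K}(\Gm)/\R_{L/K}(\Gm)$, a quotient of the rational $K$-group $\R_{L(x)/K}(\Gm)$ and hence unirational over $K$), and composing with the norm homomorphism $\R_{L/K}(U_L)\to U$ (available because $U$ is commutative), we obtain $K$-homomorphisms $\psi_x\colon \R_{L(x)/K}(\Gm)/\R_{L/K}(\Gm)\to U$ from unirational $K$-groups. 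Over the Galois closure $\widetilde L$ of $L/K$ one has $\bigl(\R_{L/K}(\R_{L(x)/L}(\Gm)/\Gm)\bigr)_{\widetilde L}=\prod_\sigma(\R_{L(x)/L}(\Gm)/\Gm)^\sigma$, and restricting $(\psi_x)_{\widetilde L}$ to the ``diagonal'' factor recovers $(\phi_x)_{\widetilde L}$; hence the images of the $\psi_x$ generate $U_{\widetilde L}$, and therefore generate $U$ over $K$ by faithfully flat descent. Thus $U$ is unirational over $K$.

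Finally, for non-commutative $U$: $\mathscr{D}U$ is a wound $K$-subgroup of smaller dimension with $(\mathscr{D}U)_L=\mathscr{D}(U_L)$ unirational over $L$, so by induction $\mathscr{D}U$ is unirational over $K$, and $U^{\ab}=U/\mathscr{D}U$ is commutative of smaller dimension with $(U^{\ab})_L$ unirational, hence unirational over $K$ by the commutative case. The remaining step — passing from the unirationality of $\mathscr{D}U$, of $U^{\ab}$, and of $U_L$ to that of $U$ — is the main obstacle: the torsor $U\to U^{\ab}$ under the generally non-split wound group $\mathscr{D}U$ need not be generically trivial (indeed, a central extension of unirational groups by unirational groups need not be unirational, as the two-dimensional non-commutative wound example already shows), so one cannot merely argue birationally. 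Instead one must use the commutative unirational $L$-subgroups of $U_L$ furnished by Theorem~\ref{genbycommunirsubgps} over $L$, together with their $\Gal(\widetilde L/K)$-conjugates inside $U_{\widetilde L}$, to manufacture enough commutative unirational $K$-subgroups of $U$ whose span is all of $U$; making this descent of subgroups precise — so that the non-commutative case genuinely reduces to the commutative one handled above — is where the real work lies.
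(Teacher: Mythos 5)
Your reduction to the case of a finite separable extension, your re-derivation of Theorem \ref{genbycommunirsubgps} when $L=K$, and your norm-map argument for the commutative case are all workable (the paper simply cites Achet and Scavia for the commutative case rather than reproving it, and it reduces to $L/K$ finite Galois by specializing a spread-out family at an $E$-point rather than descending through the transcendental part -- both are fine). But the proof is not complete: the non-commutative case, which is the entire content of the theorem since the commutative case was already known, is left as an acknowledged open step (``making this descent of subgroups precise \dots is where the real work lies''). You are right that one cannot deduce unirationality of $U$ from that of $\mathscr{D}U$ and $U^{\ab}$, and right that the route must go through commutative unirational subgroups and their Galois conjugates; what is missing is the mechanism that makes the Galois-stable subgroups commutative.

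The missing idea is a second application of the rigidity theorem, this time to a \emph{pair of Galois conjugates of a single generator}. After reducing to $L/K$ finite Galois, partial fractions (Lemma \ref{partialfrac}) exhibit $U_L$ as generated by pointed maps $f\colon (X,x_0)\to (U_L,1)$ with $X=\P^1_L\setminus D$ and $D$ a single closed point. For $\sigma_1,\sigma_2\in\Gal(L/K)$ one considers the commutator map $X^{\sigma_1}\times X^{\sigma_2}\to U_L$ induced by $f^{\sigma_1}$ and $f^{\sigma_2}$; it vanishes when either coordinate is restricted to the marked point. The crucial computation is that $\imp\bigl(\Gamma(\sigma_1^*D\sqcup\sigma_2^*D,\calO)_{\red}/L\bigr)\le 1<2$: by Definition \ref{degofprimdefalgebras} one may choose the embedding of each factor of the coordinate ring into $\overline{K}$ independently, and choosing $\sigma_i^{-1}$ on the $i$th factor makes the two images coincide, so the compositum is a single primitive extension of $L$ (Lemma \ref{resfldclpt} and Proposition \ref{degofprimminnumofgens}). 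The $r=1$ case of Theorem \ref{rigidity} over $L$ then kills the commutator map, so $f$ together with \emph{all} its Galois conjugates generates a commutative, $\Gal(L/K)$-stable $L$-subgroup of $U_L$. This descends to a commutative $K$-subgroup $H\subset U$ with $H_L$ unirational, hence $H$ is unirational over $K$ by the commutative case; and $U$ is generated by such $H$. Without this step your Galois-orbit subgroups have no reason to be commutative, and the commutative descent result cannot be brought to bear.
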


\begin{theorem}$($Theorem $\ref{genbycommunirsubgps}$$)$
\label{genbycommunirsubgpsbody}
If $U$ is a unirational wound unipotent $K$-group scheme, then $U$ is generated by its commutative unirational $K$-subgroups.
\end{theorem}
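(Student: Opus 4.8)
The plan is to combine unirationality with the partial fraction decomposition of Lemma~\ref{partialfrac} and with the rigidity theorem. The key observation is that once one reduces to rational curves in $U$ whose complement in $\P^1$ is a \emph{single} closed point $x$, then $K(x)/K$ is primitive (Lemma~\ref{resfldclpt}), so the relevant degree of imprimitivity is $\leq 1$, and already a product of \emph{two} copies of such a curve suffices to invoke Theorem~\ref{rigidity} for the commutator map.

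First I would reduce to such curves. We may assume $K$ infinite, since if $K$ is perfect then the wound unipotent group $U$ is trivial and there is nothing to prove. Exactly as in the proof of Theorem~\ref{rigidityfindegofimpbody}, unirationality of $U$ together with a form of Bertini's theorem shows that $U$ is covered by rational curves which, after translating, may be taken to pass through $1_U$ at a smooth $K$-rational point; normalizing such a curve (the normalization of an affine rational curve with a smooth $K$-point is a dense open subscheme of $\P^1_K$) and placing the marked point at $\infty$, one obtains pointed $K$-morphisms $f\colon(\P^1_K\setminus\{x_1,\dots,x_n\},\infty)\to(U,1_U)$ with the $x_i$ distinct closed points, whose images are dense in $U$ and hence generate the smooth connected group $U$. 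By Lemma~\ref{partialfrac}, each such $f$ factors as $f=g_1\cdots g_n$ with each $g_i$ extending to a pointed morphism $(\P^1_K\setminus\{x_i\},\infty)\to(U,1_U)$. Thus $U$ is generated by the $K$-subgroups $V_i:=\langle\im(g_i)\rangle$, and it suffices to prove each $V_i$ is commutative and unirational.

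Fix $g:=g_i$, put $x:=x_i$ and $L:=K(x)$, a primitive extension of $K$. I would apply Theorem~\ref{rigidity} to the commutator morphism $\Phi\colon(\P^1_K\setminus\{x\})\times(\P^1_K\setminus\{x\})\to U$, $\Phi(c_1,c_2):=g(c_1)g(c_2)g(c_1)^{-1}g(c_2)^{-1}$. Since $g(\infty)=1_U$, the restrictions of $\Phi$ to $\{\infty\}\times(\P^1_K\setminus\{x\})$ and to $(\P^1_K\setminus\{x\})\times\{\infty\}$ are the constant map $1_U$. In the notation of Theorem~\ref{rigidity} we have $n=2$, $\overline{X}_1=\overline{X}_2=\P^1_K$, $D_1=D_2=\{x\}$, so $\Gamma(D,\calO_D)_{\red}\simeq L\times L$, and by Definition~\ref{degofprimdefalgebras} and Proposition~\ref{degofprimindofembeddings} (embedding both copies of $L$ into $L$ via the identity, with compositum $L$), together with Proposition~\ref{degofprimminnumofgens} (a primitive extension has degree of imprimitivity $\leq 1$), we get $r=\imp(L\times L/K)=\imp(L/K)\leq 1<2=n$. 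Since $U$ is wound unipotent, it is solvable of finite type and contains no $K$-subgroup $K$-isomorphic to $\Ga$, so Theorem~\ref{rigidity} gives $\Phi\equiv 1_U$. Hence the commutator map $\im(g)\times\im(g)\to U$ is trivial, i.e.\ $\im(g)$ centralizes itself; as the centralizer of a closed subscheme of the affine group $U$ is a closed subgroup scheme, $V_i=\langle\im(g)\rangle\subseteq C_U(\im(g))$, whence $\im(g)\subseteq C_U(V_i)$ and therefore $V_i\subseteq C_U(V_i)$, so $V_i$ is commutative. It is also unirational: $\im(g)$ is the image of the rational curve $\P^1_K\setminus\{x\}$, hence unirational (and geometrically integral); so are $\im(g)^{-1}$ and every finite product of copies of these, and $V_i=\langle\im(g)\rangle$ equals such a product (it is $(\im(g)\cdot\im(g)^{-1})^{\cdot m}$ for $m\gg 0$), images and products of unirational $K$-schemes being unirational. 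This exhibits $U$ as generated by its commutative unirational $K$-subgroups $V_i$.

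The main obstacle is precisely this reduction to single-point complements. A rational curve covering $U$ comes with a complement $\{x_1,\dots,x_n\}$ whose coordinate algebra $\prod_i K(x_i)$ may have degree of imprimitivity much larger than $1$, so applying Theorem~\ref{rigidity} directly to the commutator map on that curve — a product of only two factors — is not permitted. Partial fraction decomposition is exactly what splits $f$ into pieces each missing a single point, where the residue field is primitive and the degree of imprimitivity drops to $\leq 1$; the geometric input (the Bertini-type reduction) and the group-theoretic bookkeeping (centralizers of closed subschemes, subgroups generated by unirational subvarieties) are routine and already appear, in essence, elsewhere in the paper.
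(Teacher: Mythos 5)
Your proof is correct and follows essentially the same route as the paper's: generation by pointed rational curves, partial fraction decomposition (Lemma \ref{partialfrac}) to reduce to complements consisting of a single closed point, Lemma \ref{resfldclpt} to get degree of imprimitivity $\leq 1$, and the $n=2>r$ case of Theorem \ref{rigidity} applied to the commutator map. The paper runs the identical argument inside the proof of Theorem \ref{unirationalitydescendsunip} (with Galois conjugates $f^{\sigma_1},f^{\sigma_2}$ to handle descent) and obtains the present statement by taking $L=K$, which is exactly your version.
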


\begin{proof}[Proof of Theorems $\ref{unirationalitydescendsunip}$ and $\ref{genbycommunirsubgpsbody}$]
We prove Theorem \ref{unirationalitydescendsunip}, and Theorem \ref{genbycommunirsubgpsbody} will come along for the ride. Because $L/K$ is separable, we may write $L$ as a filtered direct limit of smooth $K$-algebras. By standard spreading out techniques, we then obtain a (nonzero) smooth $K$-algebra $R$, an open subscheme $X \subset \P^n_R$, and an $R$-morphism $X \rightarrow U_R$ that is fiberwise dominant. Specializing to an $E$-point of $R$ for some finite Galois extension $E/K$, we thus obtain that $U_E$ is unirational over $E$. We may therefore assume that $L/K$ is finite Galois.

Now we know that $U_L$ is generated by maps from open subschemes of $\P^1_L$ which hit the identity of $U_L$. Applying the partial faction decomposition (Lemma \ref{partialfrac}), we conclude that $U_L$ is generated by $L$-morphisms from open subschemes of $\P^1_L$ whose complements consist of a single closed point. Consider such a map $f\colon X \subset U_L$. We claim that the subgroup of $U_L$ generated by $f$ and its Galois conjugates is commutative. By translation, we may assume that some point $x_0 \in X(L)$ maps to $1 \in U(L)$. By Lemma \ref{resfldclpt} and Proposition \ref{degofprimminnumofgens}, the complementary divisor $D$ of $X$ in $\P^1_L$ has degree of imperfection $\leq 1$ over $K$. Given any Galois conjugate maps $f^{\sigma_i}$ with $\sigma_i \in \Gal(L/K)$, $i = 1, 2$, the induced commutator map $X^{\sigma_1} \times X^{\sigma_2} \rightarrow U_L$ has the property that restriction to $\sigma_i(x_0)$ in the $i$th factor induces the constant map to $1 \in U(L)$. Furthermore, we claim that the degree of imperfection over $K$ of the total complementary divisor $\sigma_1^*(D) \cup \sigma_2^*(D)$ is still $\leq 1$. Indeed, this follows from the very definition of degree of imperfection, as we may on each factor in the coproduct use whichever embedding of the coordinate ring of $\sigma^*(D)$ into $\overline{K}$ that we wish in order to compute $\imp$, and we simply choose $\sigma_i^{-1}$ in the $i$ factor. By the $r = 1$ case of the rigidity theorem \ref{rigidity}, therefore, we conclude that the above commutator map is trivial. Hence $f$ and its Galois conjugates do indeed generate a commutative Galois-invariant $L$-subgroup of $U_L$. This subgroup then descends uniquely to a commutative $K$-subgroup scheme $H$ of $U$, and this $K$-group scheme becomes unirational over $L$. By the already-known descent of unirationality through separable extensions for {\em commutative} groups \cite[Th.\,2.3]{achet}, \cite[Lem.\,2.1]{scavia}, therefore, we deduce that $H$ is unirational over $K$. Because $U_L$ is generated by maps from $X_L$ as above, we conclude that $U$ is generated by commutative unirational $K$-subgroups. Taking $L = K$ in this argument then also implies Theorem \ref{genbycommunirsubgpsbody}.
\end{proof}

In order to complete the proof of Theorem \ref{unirationalitydescends}, we need to pass from the wound unipotent case treated in Theorem \ref{unirationalitydescendsunip} to the general case. The key to accomplishing this will be to relate unirationality of $G$ to unirationality of centralizers of tori in $G$.

\begin{lemma}
\label{GunirimplZGunir}
Let $G$ be a smooth connected affine $K$-group scheme. If $G$ is unirational, and $T \subset G$ is a split $K$-torus, then $Z_G(T)$ is unirational.
\end{lemma}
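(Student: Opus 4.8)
The plan is to realize $Z_G(T)$ as a direct factor of a dense open subscheme of $G$ by means of the dynamic method of \cite{cgp}, and then to transport unirationality from $G$ to this factor, using the elementary facts that a dense open subscheme of a unirational scheme is unirational and that a direct factor of a unirational scheme is unirational.

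The first step is to reduce from the torus $T$ to a single cocharacter. Since $G = \Spec A$ is affine and $T$ acts on $A$ by conjugation, $A$ is graded by the character group $X^*(T)$, and $Z_G(T) = G^T = \Spec(A/I)$, where $I$ is the homogeneous ideal generated by all graded pieces $A_\chi$ with $\chi \neq 0$. By Noetherianity, $I$ is generated by finitely many homogeneous elements, involving only finitely many nonzero characters $\chi_1, \dots, \chi_k$. As $T$ is split, its cocharacter group is free of finite rank, hence is not the union of the finitely many proper subgroups $\{\lambda : \chi_j \circ \lambda = 0\}$; so there is a cocharacter $\lambda \colon \GL_1 \to T$ with $\chi_j \circ \lambda \neq 0$ for every $j$. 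Reading off the $\Z$-grading of $A$ induced by $\lambda$, one checks that the ideal cutting out $G^{\lambda(\GL_1)}$ coincides with $I$, i.e.\ $Z_G(\lambda) = Z_G(T)$ as $K$-schemes.

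The second step applies the open cell. By \cite[Prop.\,2.1.8]{cgp}, the subgroup schemes $U_G(\lambda)$, $U_G(-\lambda)$, $Z_G(\lambda)$ attached to $\lambda$ are smooth and closed in $G$, and the multiplication morphism
\[
U_G(-\lambda) \times Z_G(\lambda) \times U_G(\lambda) \longrightarrow G
\]
is an open immersion. Its image $\Omega \subseteq G$ contains $1_G$, so it is a dense open subscheme of the irreducible scheme $G$. Since $G$ is unirational, a dominant rational map $\P^m \dashrightarrow G$ restricts, on the dense open preimage of $\Omega$, to a dominant rational map $\P^m \dashrightarrow \Omega$; composing with the isomorphism $\Omega \xrightarrow{\sim} U_G(-\lambda) \times Z_G(\lambda) \times U_G(\lambda)$ and with the (surjective) projection onto the middle factor yields a dominant rational map $\P^m \dashrightarrow Z_G(\lambda) = Z_G(T)$. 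Hence $Z_G(T)$ is unirational. (It is moreover smooth by \cite[Prop.\,2.1.8]{cgp}, and geometrically connected since unirational schemes are geometrically integral.)

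The argument is essentially formal once the correct open subscheme of $G$ has been identified; the substantive input is the open-cell statement of the dynamic method, which is exactly what is needed since $G$ is only assumed smooth, connected, and affine rather than reductive. The one step warranting genuine care is the verification in the first paragraph that $Z_G(\lambda)$ equals $Z_G(T)$ as schemes — not merely that their Lie algebras or their $\overline{K}$-points agree — which is where the finite generation of the defining ideal $I$ enters. I do not anticipate any deeper obstacle.
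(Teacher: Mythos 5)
Your proposal is correct and follows essentially the same route as the paper: replace $T$ by a generic cocharacter $\beta$ with $Z_G(\beta) = Z_G(T)$, invoke the open cell decomposition of \cite[Prop.\,2.1.8]{cgp} to exhibit $Z_G(T)$ as a direct factor of a dense open subscheme of $G$, and transport unirationality. The extra detail you supply (the scheme-theoretic identification $Z_G(\lambda) = Z_G(T)$ via the $X^*(T)$-grading, and the explicit composition of rational maps) is a careful expansion of steps the paper leaves implicit.
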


Later we will prove a stronger result (Proposition \ref{Guniriffcenttori}).

\begin{proof}
We use the ``open cell decomposition.'' If $\beta\colon \Gm \rightarrow T$ is a ``generic'' cocharacter (lying outside the union of finitely many hyperplanes in the cocharacter lattice of $T$), then $Z_G(\beta) = Z_G(T)$. Further, there are closed $K$-subgroups $U^-, U^+ \subset G$ such that the multiplication map
\[
m\colon U^-\times Z_G(\beta) \times U^+ \rightarrow G
\]
is an open immersion \cite[Prop.\,2.1.8(2)(3)]{cgp}. Therefore, $Z_G(T)$ is a direct factor (as a $K$-scheme) of an open subscheme of $G$, hence also unirational.
\end{proof}

We will also require the following result descending unirationality in a special case.

\begin{lemma}
\label{unirdescextuniptori}
Suppose given an exact sequence of smooth connected $K$-group schemes
\[
1 \longrightarrow T \longrightarrow G \xlongrightarrow{\pi} U \longrightarrow 1
\]
with $T$ a torus and $U$ unipotent. If $L/K$ is a separable field extension such that $G$ is unirational over $L$, then $G$ is also unirational over $K$.
\end{lemma}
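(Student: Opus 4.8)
The plan is to run the argument of the proof of Theorem~\ref{unirationalitydescendsunip}, the extra difficulty being the torus $T$. First I would use the spreading-out argument from that proof to reduce to the case that $L/K$ is finite Galois; since replacing $L$ by a larger finite Galois extension of $K$ keeps the hypothesis and suffices for the conclusion, I would also arrange that $T$ splits over $L$. Next I would reduce to the case that $U$ is wound: let $U_{\Split}\trianglelefteq U$ be the maximal $K$-split subgroup, $W:=U/U_{\Split}$ (wound); by Proposition~\ref{extsplittorisplit} the preimage $\pi^{-1}(U_{\Split})$, being an extension of a split unipotent group by $T$, splits as $T\times V$ with $V$ split unipotent, and $V$ is normal in $G$ (its $G$-conjugates lie in $\pi^{-1}(U_{\Split})=T\times V$, inject into $V$, and hence equal $V$ since $\Hom_K(V,T)=0$), so $G/V$ is an extension of $W$ by $T$. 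As $V$ is split unipotent (so that $V$-torsors over affine schemes are trivial) and $G/V$ is affine, the torsor $G\to G/V$ is trivial, whence $G\cong V\times(G/V)$ as $K$-schemes; replacing $G$ by $G/V$, we may assume $U=W$ is wound. Note that $G$ then contains no copy of $\Ga$ (any $\Ga\hookrightarrow G$ maps trivially to $W$, hence lies in $T$), so $G$ is a solvable $K$-group without $\Ga$ and the rigidity theorem~\ref{rigidity} applies to it.

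Since $W$ is a quotient of $G_L$, it is unirational over $L$, hence unirational over $K$ by Theorem~\ref{unirationalitydescendsunip}; and by the argument proving Theorem~\ref{genbycommunirsubgpsbody}, $W$ is generated by finitely many commutative unirational $K$-subgroups $W_1,\dots,W_m$. I would then pass to $G$ via the preimages $G_i:=\pi^{-1}(W_i)$. These generate $G$, since they surject onto the $W_i$ (which generate $W$) and each contains $T$. Moreover each $G_i$ is commutative: it is a central extension of the commutative group $W_i$ by the torus $T$, and the resulting commutator pairing $W_i\times W_i\to T$ is bi-additive with values in a torus, hence vanishes as $\Hom_K(W_i,T)=0$. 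Thus $G$ is generated by the commutative $K$-subgroups $G_i$, so it suffices to prove that each $G_i$ is unirational over $K$; granting this, $G$ is unirational over $K$.

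To finish, one must show that an extension $1\to T\to G_i\to W_i\to 1$ of a commutative unirational unipotent $K$-group $W_i$ by a torus $T$, with $(G_i)_L$ a subgroup of the unirational group $G_L$, is unirational over $K$. Here I would again reduce (via Proposition~\ref{extsplittorisplit}) to $W_i$ wound, so that $G_i\to W_i$ is a torsor under the torus $T$ (split over $L$), and then lift the rational curves generating $(W_i)_L$ through this torsor: the obstruction over an almost-complete rational curve $\P^1_L\backslash\{x\}$ lives in $\Pic(\P^1_L\backslash\{x\})$, which is cyclic and killed by $\deg(x)$, and one controls it using the partial fraction decomposition (Lemma~\ref{partialfrac}), the vanishing of the commutator pairing, the descent of unirationality for commutative groups (\cite[Th.\,2.3]{achet}, \cite[Lem.\,2.1]{scavia}), and the Galois-descent-plus-rigidity mechanism of Theorem~\ref{unirationalitydescendsunip}.

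The hard part is precisely this last step — in effect a statement that unirationality is inherited by torus torsors over wound unipotent groups. Everything else is a routine adaptation of the wound unipotent case, the one genuinely new structural input being the initial reduction (via Proposition~\ref{extsplittorisplit}) needed to place $G$ in the range of the rigidity theorem, together with the observation that central extensions of commutative unipotent groups by tori are commutative.
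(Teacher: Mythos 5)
Your overall skeleton matches the paper's: reduce to $U$ wound, use Theorems \ref{unirationalitydescendsunip} and \ref{genbycommunirsubgpsbody} to generate $U$ by commutative unirational $K$-subgroups $H$, observe that each $\pi^{-1}(H)$ is commutative, and then invoke the commutative case of descent of unirationality (\cite[Th.\,2.3]{achet}, \cite[Lem.\,2.1]{scavia}) once each $\pi^{-1}(H)$ is known to be unirational over some separable extension. Your reduction to the wound case (splitting $\pi^{-1}(U_{\Split}) \simeq T \times V$ via Proposition \ref{extsplittorisplit} and factoring off the normal split part $V$) differs from the paper's, which instead peels off central copies of $\Ga$ one at a time by induction on $\dim U$, but it is sound.

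The genuine gap is exactly where you locate the ``hard part'': showing that $\pi^{-1}(H)$ is unirational over a separable extension. Your plan to lift the rational curves generating $H_L$ through the $T$-torsor $\pi^{-1}(H) \rightarrow H$ curve by curve runs into the obstruction you yourself name --- $\Pic(\P^1_L\backslash\{x\}) \simeq \Z/\deg(x)\Z$ is not zero in general --- and none of the tools you list (partial fractions, vanishing of the commutator pairing, rigidity) kills a nontrivial torsor class over such a curve. The paper's resolution is to change the base of the torsor rather than the curves: over $L_s$ the torus becomes $\Gm^n$ and $H_{L_s}$, being unirational, is dominated by a dense open subscheme $X$ of some $\A^m_{L_s}$; pulling the torsor back along $X \rightarrow H_{L_s}$, one has ${\rm{H}}^1(X, \Gm) = \Pic(X) = 0$ because $X$ is open in affine space, so the pullback is the trivial torsor $X \times \Gm^n$, a rational $L_s$-variety mapping dominantly onto $\pi^{-1}(H)_{L_s}$. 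This one-step trivialization over the rational parametrizing variety is the missing idea; with it, the rest of your argument goes through.
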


\begin{proof}
First assume that $U$ is wound. Because $G_L$ is unirational, so is $U_L$, hence $U$ is generated by its commutative $K$-unirational $K$-subgroups by Theorems \ref{unirationalitydescendsunip} and \ref{genbycommunirsubgpsbody}. Let $H \subset U$ be such a $K$-subgroup. Then $\pi^{-1}(H)$ is commutative \cite[Lem.\,4.2]{rospicard}, and we claim that it is unirational over $L_s$, in which case it is also unirational over $K$ \cite[Th.\,2.3]{achet}, \cite[Lem.\,2.1]{scavia}. To prove the claim, we note that $\pi^{-1}(H)_{L_s}$ may be written as an extension
\[
0 \longrightarrow \Gm^n \longrightarrow \pi^{-1}(H)_{L_s} \longrightarrow H_{L_s} \longrightarrow 0
\]
for some $n \geq 0$. We know that $H_{L_s}$ is unirational. Let $X \subset \A^m_{L_s}$ be a dense open subscheme equipped with a dominant $L_s$-morphism $f\colon X \rightarrow H_{L_s}$. Then $X \times_{H_{L_s}} \pi^{-1}(H)_{L_s}$ is a $\Gm^n$-torsor over $X$. Because ${\rm{H}}^1(X, \Gm) = \Pic(X) = 0$, the torsor is trivial, so it is a rational $L_s$-scheme that maps dominantly onto $\pi^{-1}(H_{L_s})$, hence the latter is indeed $L_s$-unirational, as claimed. Therefore, $\pi^{-1}(H)$ is unirational over $K$. Because $G$ is generated by groups of the form $\pi^{-1}(H)$ with $H \subset U$ commutative and unirational, it follows that $G$ is unirational.

Now consider the general case, in which $U$ may fail to be wound. We proceed by induction on $\dim(U)$. If $U$ is wound, then we are done by the previous paragraph. Otherwise, $U$ contains a central $K$-subgroup scheme $U'$ with $U' \simeq \Ga$ \cite[Prop.\,B.3.2]{cgp}. Then $\pi^{-1}(U') \simeq \Ga \times T$ by \cite[Th.\,6.1.1A(ii)]{sga3} and the fact that there are no nontrivial $\Ga$-actions on a torus. Furthermore, $\Ga \subset \pi^{-1}(U')$ is a characteristic subgroup, hence normal in $G$. Then $\overline{G} := G/\Ga$ is unirational over $K$ by induction. Because ${\rm{H}}^1(\overline{G}, \Ga) = 0$, $G = \Ga \times \overline{G}$ as $K$-schemes, whence $G$ is $K$-unirational.
\end{proof}

\begin{proposition}
\label{genbycenttori}
Let $G$ be a smooth connected affine group scheme over a field $K$, and let $T \subset G$ be a $K$-torus. Then $G$ is generated by its $K$-tori together with $Z_G(T)$.
\end{proposition}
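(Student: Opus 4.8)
The plan is to run the ``open cell'' argument already used in the proof of Lemma~\ref{GunirimplZGunir}, organized as an induction on $\dim(G)$, with the main new input being a commutator computation. Throughout, let $H \le G$ denote the smooth connected $K$-subgroup generated by all $K$-tori of $G$ together with $Z_G(T)$; we must show $H = G$. If $T$ is central in $G$ then $Z_G(T) = G$ and there is nothing to prove, and the case of finite (hence perfect) $K$ is classical, so we assume $K$ infinite and $T$ noncentral, and induct on $\dim(G)$, the zero-dimensional case being trivial.

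Suppose first that $G$ contains a noncentral $K$-split torus $A$. Fix a sufficiently generic cocharacter $\beta \in X_*(A)$ so that $Z_G(\beta) = Z_G(A)$ (possible since $A$ is $K$-split and nontrivial). By \cite[Prop.\,2.1.8(2)(3)]{cgp}, the groups $U^{\pm} := U_G(\pm\beta)$ are $K$-split unipotent $K$-subgroups normalized by $A$, and the multiplication map $U^- \times Z_G(A) \times U^+ \to G$ is an open immersion; since an open subscheme meeting the identity generates a connected group scheme, $G = \langle U^-, Z_G(A), U^+\rangle$. Now $Z_G(A)$ is a smooth connected affine $K$-group with $\dim(Z_G(A)) < \dim(G)$ (as $A$ is noncentral), so by the inductive hypothesis applied with a maximal $K$-torus $T'$ of $Z_G(A)$ — which contains the central torus $A$, hence is maximal in $G$, so that $Z_{Z_G(A)}(T') = Z_G(A) \cap T' = T'$ — the group $Z_G(A)$ is generated by $K$-tori of $Z_G(A)$ together with $T'$, all of which are $K$-tori of $G$; hence $Z_G(A) \subseteq H$. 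It therefore remains only to show $U^{\pm} \subseteq H$.

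For this, fix a $K$-point $u \in U^+(K)$. Then $uAu^{-1}$ is a $K$-torus, so $uAu^{-1} \subseteq H$, and thus the morphism $s \mapsto [u,s] = (usu^{-1})s^{-1}$ carries $A$ into $H$. Since $U^+$ is $K$-split unipotent it is isomorphic to an affine space as a $K$-scheme, so $U^+(K)$ is Zariski dense in $U^+$; hence the commutator subgroup scheme $W := [U^+, A]$, which by this density is generated by the subsets $[u, A]$ with $u \in U^+(K)$, lies in $H$. I claim $W = U^+$. Since $U^+ = U_G(\beta)$ is normalized by $A \subseteq Z_G(\beta)$, the subgroup scheme $\langle U^+, A\rangle$ equals $A \ltimes U^+$ and $W = [U^+, A]$ is normal in it; in particular $A$ normalizes $W$ and $U^+/W$ is a smooth connected unipotent $K$-group on which $A$ acts trivially (as $[A, U^+] \subseteq W$). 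Consequently $A$ acts trivially on $\Lie(U^+/W)$, which, being an $A$-equivariant quotient of the $A$-module $\Lie(U^+)$ and $A$ being linearly reductive, must therefore be a quotient of $\Lie(U^+)^{A}$. But $\Lie(U^+)^{A} \subseteq \Lie(U^+)^{\beta} = 0$, since the weights of $\beta$ on $\Lie(U_G(\beta))$ are all strictly positive. Hence $\Lie(U^+/W) = 0$, forcing the smooth connected group $U^+/W$ to be trivial, i.e.\ $W = U^+ \subseteq H$; symmetrically $U^- \subseteq H$, and so $H = G$.

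It remains to treat the case in which every $K$-split torus of $G$ is central, i.e.\ $G$ is \emph{anisotropic modulo its central torus}; here the open cell is unavailable over $K$ and a separate argument is required. The plan is to reduce, by quotienting out central subgroups — the central $K$-split torus, and then successive central copies of $\Ga$ coming from a $G$-stable filtration of the $K$-unipotent radical (all such subgroups lie in $Z(G) \subseteq Z_G(T) \subseteq H$ and are handled by the inductive hypothesis, using $\mathrm{Ext}^1(\Gm,\Ga) = 0$ to lift $K$-tori through these quotients) — to the case in which $G$ is $K$-anisotropic and reductive, where one invokes that every element of $G(K)$ is then semisimple and hence lies in a maximal $K$-torus, together with Zariski density of $G(K)$. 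I expect this anisotropic reduction to be the main obstacle; by contrast, the heart of all other cases is the self-contained commutator-and-Lie-algebra argument of the previous paragraph, which upgrades $[U^+, A] \subseteq H$ to $U^+ \subseteq H$.
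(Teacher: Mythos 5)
There are two genuine gaps. First, in the isotropic case your induction does not close up: the identity $Z_{Z_G(A)}(T') = Z_G(A) \cap T' = T'$ is false. The centralizer of $T'$ in $Z_G(A)$ is not the intersection $Z_G(A)\cap T'$; since $T'\supseteq A$ it equals the Cartan subgroup $Z_G(T')$, which for a general smooth connected affine group is $T'$ times a possibly nontrivial unipotent group. For instance, with $G=(\Gm\ltimes\Ga)\times W$ and $W\neq 1$ wound unipotent, taking $A=\Gm$ gives $Z_{Z_G(A)}(T')=\Gm\times W\neq T'$. Consequently the inductive hypothesis only tells you that $Z_G(A)$ is generated by its $K$-tori together with the Cartan subgroup $Z_G(T')$, and there is no reason for $Z_G(T')$ to lie in $H$ when $T'\neq T$: Cartan subgroups need not be $K$-conjugate, and $H$ is not yet known to be normal. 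Second, the anisotropic case is only a plan, and the plan breaks down precisely over the imperfect fields this paper cares about: one cannot reduce to the reductive case by peeling off central copies of $\Ga$ (the unipotent radical may be wound, and the best one reaches is a pseudo-reductive quotient), and the assertion that every $K$-point of an anisotropic group is semisimple and lies in a maximal $K$-torus is false there --- e.g.\ for $\R_{L/K}(\Gm)$ with $L/K$ purely inseparable and nontrivial, which is anisotropic modulo its central torus $\Gm$, almost no point of $L^{\times}=\R_{L/K}(\Gm)(K)$ lies in a $K$-torus. That example also shows such a group need not be generated by its tori at all, so any argument in this case must genuinely use $Z_G(T)$; a strategy aimed at proving $G=G_t$ cannot succeed.

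For comparison, the paper's proof is three lines and sidesteps both cases: let $G_t$ be the $K$-subgroup generated by all $K$-tori of $G$; then $G_t\trianglelefteq G$ and $U:=G/G_t$ is unipotent by \cite[Prop.\,A.2.11]{cgp}, so the image of $T$ in $U$ is trivial, and since a surjection of smooth connected affine groups carries the centralizer of a torus onto the centralizer of its image \cite[Ch.\,IV, 11.14, Cor.\,2]{borelalggroups}, the image of $Z_G(T)$ in $U$ is $Z_U(1)=U$. Hence $G_t$ and $Z_G(T)$ generate $G$. Your commutator-and-Lie-algebra step ($[U^+,A]=U^+$, hence $U^{\pm}\subseteq H$) is correct and self-contained, but it cannot substitute for this surjectivity-of-centralizers fact, which is what actually drives the proof.
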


\begin{proof}
Let $G_t \subset G$ be the $K$-subgroup generated by the $K$-tori of $G$. Then $G_t \trianglelefteq G$ and $U := G/G_t$ is unipotent \cite[Prop.\,A.2.11]{cgp}. If we let $q\colon G \rightarrow U$ denote the quotient map, then $q(T) = 1$ because $U$ is unipotent. By \cite[Ch.\,IV, 11.14, Cor.\,2]{borelalggroups}, therefore, $q(Z_G(T)) = Z_U(1) = U$. The proposition follows.
\end{proof}

Now we are ready to prove Theorem \ref{unirationalitydescends}.

\begin{theorem}$($Theorem $\ref{unirationalitydescends}$$)$
\label{unirationalitydescendsbody}
Let $L/K$ be a $($not necessarily algebraic$)$ separable extension of fields, and let $G$ be a finite type $K$-group scheme such that $G_L$ is unirational over $L$. Then $G$ is unirational over $K$.
\end{theorem}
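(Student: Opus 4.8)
The plan is to reduce, in stages, to results already available: Theorem~\ref{unirationalitydescendsunip} (the wound unipotent case), Lemma~\ref{unirdescextuniptori} (extensions of a unipotent group by a torus), Lemma~\ref{GunirimplZGunir}, and Proposition~\ref{genbycenttori}. First I would reduce to the case that $G$ is smooth, connected, and affine. Since $G_L$ is unirational it is geometrically integral, hence so is $G$, and a geometrically reduced finite type group scheme over a field is smooth; thus $G$ is smooth and connected. To see that $G$ is affine, pass to $\overline{K}$ and apply Chevalley's structure theorem to obtain an exact sequence $1 \to H \to G_{\overline{K}} \to A \to 1$ with $H$ smooth connected affine and $A$ an abelian variety; fixing an algebraic closure $\overline{L} \supseteq \overline{K}$ of $L$ and composing a dominant rational map $\P^m_{\overline{L}} \dashrightarrow G_{\overline{L}}$ with the projection to $A_{\overline{L}}$ yields a dominant rational map $\P^m_{\overline{L}} \dashrightarrow A_{\overline{L}}$. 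As a rational map from a smooth variety to an abelian variety is a morphism, and every morphism $\P^m \to A_{\overline{L}}$ is constant, this forces $A = 0$, so $G_{\overline{K}} = H$ is affine and hence $G$ is affine.

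Next I would fix a maximal $K$-torus $T \subseteq G$ and set $Z := Z_G(T)$, which is smooth and connected. By Proposition~\ref{genbycenttori}, $G$ is generated as a $K$-group by its $K$-tori together with $Z$. Every $K$-torus is unirational over $K$ (it is a quotient of a quasi-split torus $\prod_i \R_{E_i/K}(\Gm)$, which is an open subscheme of an affine space, and a dominant image of a unirational $K$-scheme is unirational), and a smooth connected $K$-group generated by unirational $K$-subgroups is unirational over $K$: the generated subgroup is the image, under a surjective multiplication morphism, of a product of finitely many of the generating subgroups \cite{borelalggroups}, and both finite products and dominant images of unirational $K$-schemes are unirational. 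It therefore suffices to prove that $Z$ is unirational over $K$.

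Since $T$ is a maximal $K$-torus, it remains a maximal torus of $G_{\overline{K}}$, hence is a \emph{central} maximal --- therefore unique maximal --- torus of $Z_{\overline{K}} = Z_{G_{\overline{K}}}(T_{\overline{K}})$, so $Z_{\overline{K}}/T_{\overline{K}}$ contains no torus and $Z/T$ is a unipotent $K$-group. Thus $Z$ sits in an exact sequence $1 \to T \to Z \to Z/T \to 1$ of smooth connected $K$-groups with $T$ a torus and $Z/T$ unipotent, and by Lemma~\ref{unirdescextuniptori} it is enough to show that $Z$ is unirational over \emph{some} separable extension of $K$. I would take $L_s$, a separable closure of $L$, so that $L_s/K$ is separable; over $L_s$ the torus $T_{L_s}$ is split and $G_{L_s}$ is unirational over $L_s$, so Lemma~\ref{GunirimplZGunir} applied over $L_s$ gives that $(Z_G(T))_{L_s} = Z_{G_{L_s}}(T_{L_s})$ is unirational over $L_s$. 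A final application of Lemma~\ref{unirdescextuniptori} to the displayed exact sequence and the separable extension $L_s/K$ then shows that $Z$, and hence $G$, is unirational over $K$.

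The step I expect to be the main obstacle is the structural claim in the third paragraph: that $Z_G(T)$ is genuinely an extension of a unipotent $K$-group by the torus $T$. This rests on the fact that a maximal $K$-torus stays maximal after extension of the ground field (so that $Z/T$ is \emph{geometrically}, hence truly, unipotent), together with the care needed to keep straight over which field --- $K$ or $L_s$ --- each of Lemma~\ref{GunirimplZGunir} and Lemma~\ref{unirdescextuniptori} is being invoked. The reduction to the affine case is routine, but it must be made explicit, since all of the input results are phrased for affine or unipotent groups.
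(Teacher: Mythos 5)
Your proposal is correct and follows essentially the same route as the paper: reduce to $G$ smooth connected affine via Chevalley, pass to a separable closure of $L$ so that the maximal torus $T$ splits, apply Lemma \ref{GunirimplZGunir} to get unirationality of $Z_G(T)$ over that field, descend via Lemma \ref{unirdescextuniptori} using that $Z_G(T)/T$ is unipotent, and conclude with Proposition \ref{genbycenttori}. The only cosmetic difference is that the paper renames the separable closure of $L$ back to $L$ before invoking the lemmas, whereas you carry $L_s$ explicitly.
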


\begin{proof}
A unirational $K$-group scheme $H$ is connected. It also has dense set of $K_s$ points. Because the formation of Zariski closures of sets of rational points commutes with arbitrary field extension, it follows that $H$ is geometrically reduced, hence smooth. Finally, we claim that $H$ is affine. This may be checked over an algebraic closure of $K$, hence for the purpose of proving this claim we may assume that $K = \overline{K}$. By Chevalley's Theorem, if $H$ is not affine then it has a nonzero abelian variety quotient, which must also be unirational. But abelian varieties do not admit nonconstant maps from rational $K$-schemes, so this is impossible, hence the claim.

Now we turn to the proof of the theorem. The previous paragraph implies that $G$ is smooth, connected, and affine. We may of course replace $L$ by a separable extension of $L$ and thereby assume that $L$ is separably closed. Let $T \subset G$ be a maximal $K$-torus. Because $G_L$ is unirational over $L$, so is $Z_G(T)_L$ by Lemma \ref{GunirimplZGunir}. The maximality of $T$ ensures that $Z_G(T)/T$ contains no nontrivial torus and is therefore unipotent. Lemma \ref{unirdescextuniptori} then implies that $Z_G(T)$ is unirational over $K$. Because tori are unirational over every field, Proposition \ref{genbycenttori} then implies that $G$ is unirational over $K$ as well.
\end{proof}

For any finite type group scheme over a field $K$, denote by $G_{{\rm{uni}}}$ the maximal unirational $K$-subgroup scheme of $G$.

\begin{corollary}
\label{maxunirinvsepext}
The formation of the maximal unirational subgroup commutes with separable field extension. That is, if $L/K$ is a separable field extension, and $G$ a finite type $K$-group scheme, then $(G_{\rm{uni}})_L = (G_L)_{\rm{uni}}$.
\end{corollary}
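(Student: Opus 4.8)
The plan is to show the two inclusions $(G_{\rm{uni}})_L \subseteq (G_L)_{\rm{uni}}$ and $(G_L)_{\rm{uni}} \subseteq (G_{\rm{uni}})_L$ separately. The first inclusion is the easy one: $G_{\rm{uni}}$ is a unirational $K$-subgroup scheme of $G$, so $(G_{\rm{uni}})_L$ is a unirational $L$-subgroup scheme of $G_L$ — unirationality of a scheme is preserved by arbitrary field extension, since a dominant rational map $\P^n_K \dashrightarrow G_{\rm{uni}}$ base changes to a dominant rational map $\P^n_L \dashrightarrow (G_{\rm{uni}})_L$. Hence $(G_{\rm{uni}})_L$ is contained in the maximal unirational $L$-subgroup scheme $(G_L)_{\rm{uni}}$.

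For the reverse inclusion, the point is to descend the $L$-subgroup scheme $(G_L)_{\rm{uni}}$ of $G_L$ to a $K$-subgroup scheme of $G$ and then invoke Theorem \ref{unirationalitydescendsbody}. First I would reduce to the case where $L/K$ is finite Galois by a standard spreading-out argument: since $L/K$ is separable, write $L$ as a filtered colimit of smooth (hence, after localizing, étale-over-a-polynomial-ring) $K$-algebras; the subgroup scheme $(G_L)_{\rm{uni}} \subseteq G_L$, together with a witnessing dominant rational map from projective space, spreads out over some smooth $K$-algebra $R$, and specializing at a suitable closed point we land in a finite separable — and then, after enlarging, finite Galois — extension $E/K$ with $E \subseteq L$; one checks $(G_E)_{\rm{uni}} \otimes_E L = (G_L)_{\rm{uni}}$ using the first inclusion applied to $E \hookrightarrow L$ together with the fact (already used in the proof of Theorem \ref{unirationalitydescendsbody}) that forming Zariski closures of rational points commutes with field extension, so it suffices to treat $E/K$. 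Thus assume $L/K$ is finite Galois with group $\Gamma$.

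Now $(G_L)_{\rm{uni}} \subseteq G_L$ is $\Gamma$-stable: for $\sigma \in \Gamma$, the twist $\sigma\big((G_L)_{\rm{uni}}\big)$ is again a unirational $L$-subgroup scheme of $G_L$ (applying $\sigma$ to everything in sight, including a witnessing rational map), hence contained in $(G_L)_{\rm{uni}}$ by maximality, and applying $\sigma^{-1}$ gives the reverse containment. By Galois descent for closed subschemes of the quasi-projective (indeed affine, once $G$ is affine — which holds, as $G$ is necessarily smooth affine connected here by the argument in the proof of Theorem \ref{unirationalitydescendsbody} if $G_L$ is unirational, and otherwise both sides are trivial) $K$-scheme $G$, the $\Gamma$-stable closed subgroup scheme $(G_L)_{\rm{uni}}$ descends to a closed $K$-subgroup scheme $H \subseteq G$ with $H_L = (G_L)_{\rm{uni}}$. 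Then $H_L$ is unirational over $L$, so Theorem \ref{unirationalitydescendsbody} gives that $H$ is unirational over $K$, whence $H \subseteq G_{\rm{uni}}$ and therefore $(G_L)_{\rm{uni}} = H_L \subseteq (G_{\rm{uni}})_L$. Combining the two inclusions yields $(G_{\rm{uni}})_L = (G_L)_{\rm{uni}}$.

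The main obstacle is the reduction to the finite Galois case: one must be careful that the spreading-out produces not just a group scheme over an intermediate ring but genuinely recovers $(G_L)_{\rm{uni}}$ after base change, i.e. that the maximal unirational subgroup is "constructible" in families in the relevant sense. This is handled exactly as in the proof of Theorem \ref{unirationalitydescendsbody}, by transporting the witnessing rational map from $\P^n$ and using that geometric reducedness and the formation of Zariski closures of rational-point sets are insensitive to field extension; once one is over a finite Galois extension, the descent argument is routine.
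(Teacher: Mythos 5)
Your proof is correct and follows essentially the same route as the paper: spread out to reduce to a finite Galois extension, note that $(G_L)_{\rm{uni}}$ is $\Gal(L/K)$-stable and descend it to a $K$-subgroup, then apply Theorem \ref{unirationalitydescendsbody}. (One small inaccuracy: your parenthetical claim that both sides are trivial unless $G_L$ is unirational is false -- $(G_L)_{\rm{uni}}$ can be nontrivial without $G_L$ being unirational -- but it is also unnecessary, since Galois descent is effective for closed subschemes of any $K$-scheme because closed immersions are affine.)
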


\begin{proof}
Clearly $(G_{\rm{uni}})_L \subset (G_L)_{\rm{uni}}$, and we must prove the reverse inclusion. Because $L/K$ is separable, we may write $L$ as a filtered direct limit of smooth $K$-algebras: $L = \varinjlim_i R_i$. Then $(G_L)_{\rm{uni}}$ spreads out to a closed $R_i$-subgroup scheme $H \subset G_{R_i}$ for some $i$. Enlarging $i$ further, there is a fiberwise dominant map to $H$ from an open subscheme of $\P^n_{R_i}$. Specializing to an $E$-point of $R_i$ for some finite Galois extension $E/K$, we obtain a unirational $E$-subgroup scheme $H' \subset G_E$ whose dimension equals that of $(G_L)_{\rm{uni}}$. It follows that $H'_L = (G_L)_{\rm{uni}}$. We may therefore assume that $L/K$ is finite Galois. For any $\sigma \in \Gal(L/K)$, $\sigma^*((G_L)_{\rm{uni}}) \subset G_L$ is still unirational, so $(G_L)_{\rm{uni}}$ is $\Gal(L/K)$-invariant and therefore descends to a $K$-subgroup $G' \subset G$. By Theorem \ref{unirationalitydescendsbody}, $G'$ is unirational over $K$, so $(G_L)_{\rm{uni}} \subset (G_{\rm{uni}})_L$.
\end{proof}

\begin{corollary}
\label{maxunirrnomal}
For any smooth finite type group scheme $G$ over a field $K$, $G_{\rm{uni}}$ is a normal $K$-subgroup scheme of $G$.
\end{corollary}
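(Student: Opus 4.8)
The plan is to deduce normality of $G_{\rm{uni}}$ from the fact that $G$ has a Zariski-dense set of $K_s$-points and that $G_{\rm{uni}}$ is manifestly stable under conjugation by those points. First I would record that $G_{\rm{uni}}$ is a \emph{smooth} closed $K$-subgroup scheme of $G$: being unirational it is geometrically reduced, hence smooth as a group scheme over a field, exactly as argued at the start of the proof of Theorem \ref{unirationalitydescendsbody}. In particular $G \times_K G_{\rm{uni}}$ is smooth over $K$, so its set of $K_s$-points, namely $G(K_s) \times G_{\rm{uni}}(K_s)$, is Zariski dense in it.

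Next, fix $g \in G(K_s)$ and consider the conjugation automorphism $c_g \colon G_{K_s} \xrightarrow{\sim} G_{K_s}$. Since unirationality is inherited by ground-field extension and is invariant under $K_s$-isomorphism, the image $c_g\big((G_{\rm{uni}})_{K_s}\big)$ is a unirational $K_s$-subgroup scheme of $G_{K_s}$. By maximality of $(G_{K_s})_{\rm{uni}}$, together with Corollary \ref{maxunirinvsepext}, which identifies $(G_{K_s})_{\rm{uni}}$ with $(G_{\rm{uni}})_{K_s}$, it follows that $c_g\big((G_{\rm{uni}})_{K_s}\big) \subseteq (G_{\rm{uni}})_{K_s}$ (in fact equality, by the same reasoning applied to $g^{-1}$). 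Consequently the conjugation morphism $c \colon G \times_K G_{\rm{uni}} \to G$, $(g,h) \mapsto ghg^{-1}$, sends every $K_s$-point of its source into the closed subset $G_{\rm{uni}} \subseteq G$.

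Finally, $c^{-1}(G_{\rm{uni}})$ is a closed subscheme of $G \times_K G_{\rm{uni}}$ that contains the dense subset $G(K_s) \times G_{\rm{uni}}(K_s)$, so it is all of $G \times_K G_{\rm{uni}}$; that is, $c$ maps into $G_{\rm{uni}}$ set-theoretically. Since $G \times_K G_{\rm{uni}}$ is reduced and $G_{\rm{uni}}$ is a reduced closed subscheme of $G$, the morphism $c$ factors through $G_{\rm{uni}}$, which says precisely that $ghg^{-1} \in G_{\rm{uni}}(T)$ for every $K$-scheme $T$, every $g \in G(T)$ and every $h \in G_{\rm{uni}}(T)$; hence $G_{\rm{uni}}$ is a normal $K$-subgroup scheme of $G$. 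The one genuinely necessary input is Corollary \ref{maxunirinvsepext}: it is needed because $\overline{K}/K$ need not be separable, so one cannot simply run the density argument over $\overline{K}$. The only subtlety is the step from ``$G_{\rm{uni}}$ is normalized by $G(K_s)$'' to ``$G_{\rm{uni}}$ is normal as a subgroup scheme,'' which is handled by the density-and-reducedness observation above rather than by invoking representability of a normalizer functor; I do not anticipate any other obstacle.
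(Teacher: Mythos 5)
Your proof is correct and follows essentially the same route as the paper: both arguments rest on Corollary \ref{maxunirinvsepext} together with the observation that conjugation by a rational point carries $G_{\rm{uni}}$ to a unirational subgroup, hence into itself by maximality, and then conclude by Zariski density of separable points in a smooth scheme. The only (immaterial) difference is that the paper first base-changes to $K_s$ and argues there, while you stay over $K$ and spell out the density-plus-reducedness step for the conjugation morphism $G \times_K G_{\rm{uni}} \to G$.
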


\begin{proof}
By Corollary \ref{maxunirinvsepext}, we may assume that $K$ is separably closed. For any $g \in G(K)$, $gG_{\rm{uni}}g^{-1} \subset G$ is still unirational, so $G_{\rm{uni}}$ is invariant under $G(K)$-conjugation. But $G$ is smooth and $K$ is separably closed, so $G(K)$ is Zariski dense in $G$, hence $G_{\rm{uni}} \trianglelefteq G$. 
\end{proof}

As a consequence of the proof of Theorem \ref{unirationalitydescendsbody}, we obtain the following result, interesting in its own right.

\begin{proposition}
\label{Guniriffcenttori}
Let $G$ be a smooth connected affine group scheme over a field $K$. Then the following are equivalent:
\begin{itemize}
\item[(i)] $G$ is unirational.
\item[(ii)] $Z_G(T)$ is unirational for every $K$-torus $T \subset G$.
\item[(iii)] $Z_G(T)$ is unirational for some $K$-torus $T \subset G$.
\end{itemize}
\end{proposition}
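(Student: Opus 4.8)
The plan is to deduce the three implications (i) $\Rightarrow$ (ii) $\Rightarrow$ (iii) $\Rightarrow$ (i) by assembling results already in hand --- principally Lemma \ref{GunirimplZGunir}, Theorem \ref{unirationalitydescendsbody}, and Proposition \ref{genbycenttori} --- so that the proposition becomes essentially a repackaging of the final step of the proof of Theorem \ref{unirationalitydescendsbody}. The implication (ii) $\Rightarrow$ (iii) is immediate, since $G$ contains at least one $K$-torus (for instance a maximal one), so I would concentrate on (i) $\Rightarrow$ (ii) and (iii) $\Rightarrow$ (i).

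For (i) $\Rightarrow$ (ii) I would fix a $K$-torus $T \subset G$ and pass to $K_s$: a dominant rational map $\P^n_K \dashrightarrow G$ stays dominant after the flat base change to $K_s$, so $G_{K_s}$ is unirational over $K_s$, while $T_{K_s}$ is now split. Since formation of centralizers commutes with base change, $Z_G(T)_{K_s} = Z_{G_{K_s}}(T_{K_s})$, which is unirational over $K_s$ by Lemma \ref{GunirimplZGunir} applied to the smooth connected affine group $G_{K_s}$ and the split torus $T_{K_s}$. As $Z_G(T)$ is a finite type $K$-group scheme, Theorem \ref{unirationalitydescendsbody} applied to the separable extension $K_s/K$ then yields that $Z_G(T)$ is unirational over $K$.

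For (iii) $\Rightarrow$ (i), assuming $Z_G(T)$ is unirational for some $K$-torus $T \subset G$, I would invoke Proposition \ref{genbycenttori}: $G$ is generated by its $K$-tori together with $Z_G(T)$. Every torus is unirational over any field, $Z_G(T)$ is unirational by hypothesis (hence, like every unirational group, connected), and the subgroup generated by all the $K$-tori of $G$ is already the product of finitely many of them; so $G$ is generated by finitely many connected unirational $K$-subgroups $H_1, \dots, H_m$. The image of a multiplication morphism $H_{i_1} \times \dots \times H_{i_N} \to G$ for a suitable finite sequence of indices is then all of $G$, and this image receives a dominant morphism from a product of unirational $K$-schemes --- which is unirational --- so $G$ is unirational. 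The auxiliary fact here, that a group generated by finitely many unirational subgroups is unirational, is standard and is already used implicitly in the proofs of Lemma \ref{unirdescextuniptori} and Theorem \ref{genbycommunirsubgpsbody}.

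I do not expect a serious obstacle; the only two points requiring a little care are that unirationality ascends along $K_s/K$ --- which is what permits the use of Lemma \ref{GunirimplZGunir} (valid only for a split torus) before descending again via Theorem \ref{unirationalitydescendsbody} --- and the routine generation fact just mentioned.
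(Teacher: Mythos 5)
Your proposal is correct and follows essentially the same route as the paper, which reduces to $K$ separably closed via Theorem \ref{unirationalitydescendsbody} and then cites Lemma \ref{GunirimplZGunir} for (i)$\Rightarrow$(ii), tautology for (ii)$\Rightarrow$(iii), and Proposition \ref{genbycenttori} for (iii)$\Rightarrow$(i). The only difference is cosmetic: you perform the base-change/descent step per implication rather than once at the outset, and you spell out the standard generation argument that the paper leaves implicit.
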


\begin{proof}
For the implication (i)$\implies$(ii), Theorem \ref{unirationalitydescendsbody} allows us to assume that $K$ is separably closed, in which case this implication is Lemma \ref{GunirimplZGunir}. The implication (ii)$\implies$(iii) is clear, and (iii)$\implies$(i) follows from Proposition \ref{genbycenttori} together with the unirationality of tori.
\end{proof}

\section{Torsors for special groups over rational curves}
\label{torsorssection}

The final goal of the present paper is to apply the rigidity theorem \ref{rigidity} to the study of permawound groups, which were introduced in \cite{rosasuniv}. (Some of the properties of permawound groups were recalled in \S\ref{asunivsection}.) In particular, we will prove that permawound groups are unirational, and that wound permawound groups are commutative.
In this section we prove a technical result (Proposition \ref{torsorsdieratlcurve}) concerning torsors for certain special groups (the ones that arise in the rigidity property of permawound groups; see Theorem \ref{rigidityasuniv}) over rational curves. This will play an important role in our proof that permawound groups are unirational.

We break the proof up into a sequence of lemmas. Fix $r \geq 0$. As in previous sections, we let $I_d$ denote the set of functions $\{1, \dots, r\} \rightarrow \{0, 1, \dots, p^d-1\}$, and $I := I_1$.

\begin{lemma}
\label{beta/(T^{p^n}+lambda)}
Let $K$ be a separably closed field of finite degree of imperfection, and $\vec{\lambda} := [\lambda_1, \dots, \lambda_r]^T$ be a $p$-basis for $K$. Also let $$F_{\vec{\lambda}} := -X_0 + \sum_{f \in I}\left(\prod_{i=1}^r\lambda_i^{f(i)}\right)X_f^p.$$
Then for any $\beta \in K$, $1 \leq i \leq r$, and $n > 0$, one has
\[
\frac{\beta}{T^{p^n} + \lambda_1} \in F_{\vec{\lambda}}(K(T), \dots, K(T)).
\]
\end{lemma}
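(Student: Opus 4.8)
The plan is to produce, by hand, an element of $K(T)^I$ that $F_{\vec{\lambda}}$ sends to $\beta/(T^{p^n}+\lambda_1)$. First I would reduce to $n=1$: since $T^{p^n}+\lambda_1=(T^{p^{n-1}})^p+\lambda_1$ and $K(T^{p^{n-1}})\subseteq K(T)$, the case $n=1$ applied to the transcendental $T^{p^{n-1}}$ gives the general case. Assume henceforth $n=1$, write $F$ for the additive subgroup $F_{\vec{\lambda}}(K(T),\dots,K(T))\subseteq K(T)$ (additive because $F_{\vec{\lambda}}$ is a $p$-polynomial), and set $\wp(x):=x^p-x$. Two immediate containments: $\wp(K(T))\subseteq F$ (set every coordinate but $X_0$ to $0$) and $M:=\bigoplus_{0\neq f\in I}\big(\prod_i\lambda_i^{f(i)}\big)K(T)^p\subseteq F$ (set $X_0=0$). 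Since $K$ is separably closed, the Artin–Schreier map $\wp\colon K\to K$ is surjective, so in particular $K\subseteq F$.

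The engine of the argument is the identity $(T^p+\lambda_1)^p=T^{p^2}+\lambda_1^p$, which yields, for every $z\in K(T)$,
\[
\frac{z^p}{T^p+\lambda_1}=\frac{z^p(T^p+\lambda_1)^{p-1}}{(T^p+\lambda_1)^p}=\sum_{j=0}^{p-1}(-1)^j\lambda_1^j\Big(\tfrac{zT^{p-1-j}}{T^p+\lambda_1}\Big)^p .
\]
In this sum the terms with $1\leq j\leq p-1$ lie in $M$ (their factor $\lambda_1^j=\prod_i\lambda_i^{f(i)}$ for a nonzero $f$), while the $j=0$ term is $w_0^p=\wp(w_0)+w_0$ with $w_0:=zT^{p-1}/(T^p+\lambda_1)$. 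Hence $\frac{z^p-zT^{p-1}}{T^p+\lambda_1}\in F$ for all $z\in K(T)$. Specializing to $z=uT$ with $u\in K$ gives $\frac{\wp(u)T^p}{T^p+\lambda_1}\in F$, and surjectivity of $\wp$ on $K$ promotes this to $\frac{\delta T^p}{T^p+\lambda_1}\in F$ for every $\delta\in K$, whence also $\frac{\delta\lambda_1}{T^p+\lambda_1}=\delta-\frac{\delta T^p}{T^p+\lambda_1}\in F$.

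Now expand $\beta=\sum_{g\in I}\big(\prod_i\lambda_i^{g(i)}\big)\beta_g^p$ over the $p$-basis and dispatch the terms family by family. Those $g\neq 0$ supported only in the first coordinate give $\lambda_1^{g(1)}\beta_g^p/(T^p+\lambda_1)$, which lies in $F$ by the previous step applied with $\delta=\lambda_1^{g(1)-1}\beta_g^p$. Collecting the $g$ with $g(i)\neq 0$ for some $i\geq 2$ into a sum $\beta^{(\geq 2)}$, one checks that in the decomposition $K(T^p)=\bigoplus_{f\in I}\big(\prod_i\lambda_i^{f(i)}\big)K(T)^p$ the $f=0$ component of $\beta^{(\geq 2)}(T^p+\lambda_1)^{p-1}$ vanishes, so taking $x_0=0$ and $x_f=(T^p+\lambda_1)^{-1}$ times a $p$-th root of the $f$-component realizes $\beta^{(\geq 2)}/(T^p+\lambda_1)$ as a value of $F_{\vec{\lambda}}$. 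What remains is $b^p/(T^p+\lambda_1)$ with $b:=\beta_0$, and by the engine identity with $z=b$ this is $\equiv bT^{p-1}/(T^p+\lambda_1)\pmod F$; for the latter I would use the ansatz $x_f=A_f/(T^p+\lambda_1)$ with $A_0:=(-1)^pb^p+\lambda_1 d^p-bT^{p-1}$, where $d\in K$ is chosen so that $\wp(d)=b^p/\lambda_1$ (possible since $K=K_s$), and $A_f$ ($f\neq 0$) the appropriate $p$-th root of the $f$-component of $\big((-1)^pb^p+\lambda_1 d^p\big)(T^p+\lambda_1)^{p-1}$; a direct computation of $F_{\vec{\lambda}}$ on this tuple returns $bT^{p-1}/(T^p+\lambda_1)$.

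The main obstacle is this last ansatz: the $p$-basis bookkeeping needed to see that the defining equation $\sum_f\big(\prod_i\lambda_i^{f(i)}\big)A_f^p=(A_0+bT^{p-1})(T^p+\lambda_1)^{p-1}$ is solvable, and that with $A_0$ chosen as above all coordinates are forced and the whole system collapses to the single Artin–Schreier condition $\wp(d)=b^p/\lambda_1$. It is precisely at this point that the hypothesis that $K$ is separably closed is indispensable.
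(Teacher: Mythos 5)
Your first two paragraphs already constitute a complete and correct proof, and they are in substance the same argument as the paper's: the paper simply writes down the explicit preimage tuple $H_f$ whose verification is exactly your ``engine'' identity (the expansion of $(T^p+\lambda_1)^{p-1}$ via $\binom{p-1}{j}\equiv(-1)^j$ together with $(T^p+\lambda_1)^p=T^{p^2}+\lambda_1^p$), combined with the same Artin--Schreier root $\gamma$ with $\gamma^p-\gamma=\lambda_1^{-1}\beta$. Note that once you have $\frac{\delta\lambda_1}{T^p+\lambda_1}\in F$ for \emph{every} $\delta\in K$, you are done: take $\delta=\beta\lambda_1^{-1}$. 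Everything from ``Now expand $\beta=\sum_{g}(\cdots)\beta_g^p$'' onward is therefore unnecessary, and the ``main obstacle'' you flag lives entirely in that superfluous portion --- which is fortunate, since the final ansatz there is the one step you do not actually verify. I would delete the last two paragraphs and replace them with the one-line specialization $\delta=\beta\lambda_1^{-1}$.
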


\begin{proof}
If the assertion is true for $n = 1$, then substituting $T^{p^{n-1}}$ for $T$ proves it for all $n > 0$, so we may assume that $n = 1$. Since $K = K_s$, there exists $\gamma \in K$ such that $\gamma^p-\gamma - \lambda_1^{-1}\beta = 0$. Then
\[
\frac{\beta}{T^{p} + \lambda_1} = F_{\vec{\lambda}}((H_f)_{f \in I}),
\]
where $H_f := 0$ if $f(j) \neq 0$ for some $j \neq 1$, and otherwise
\[
H_f := \begin{cases}
{{p-1}\choose{f(1)-1}}\gamma T^{p-f(1)}/(T^p+\lambda_1), & 0 < f(1) \leq p-1 \\
\lambda_1\gamma/(T^p+\lambda_1), & f(1) = 0.
\end{cases}
\] \hfill \qedhere
\end{proof}

\begin{lemma}
\label{congmodF_lambda}
Assumptions and notation as in Lemma $\ref{beta/(T^{p^n}+lambda)}$. Then for any $G \in K(T)$ and $n > 0$, one has
\begin{itemize}
\item[(i)] $\left(\prod_{i=1}^r\lambda_i^{f(i)}\right)G^{p^n} \in F_{\vec{\lambda}}(K(T), \dots, K(T))$ for $0 \neq f \in I_n$.
\item[(ii)] $G^{p^n} - G \in F_{\vec{\lambda}}(K(T), \dots, K(T))$.
\end{itemize}
\end{lemma}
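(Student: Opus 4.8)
The plan is to reduce both parts to two elementary membership facts about the additive subgroup $F_{\vec{\lambda}}(K(T),\dots,K(T)) \subset K(T)$, which is a subgroup because $F_{\vec{\lambda}}$ is a $p$-polynomial and hence induces a homomorphism of additive groups on $K(T)$-points. Writing $\wp(x) := x^p - x$, these facts are: (a) $\wp(x) \in F_{\vec{\lambda}}(K(T),\dots,K(T))$ for every $x \in K(T)$ — evaluate $F_{\vec{\lambda}}$ with the $X_0$-coordinate equal to $x$ and all remaining coordinates $0$, noting that the $-X_0$ term and the $f = 0$ term of the defining sum combine to $x^p - x$; and (b) $\bigl(\prod_{i=1}^r\lambda_i^{f(i)}\bigr)x^p \in F_{\vec{\lambda}}(K(T),\dots,K(T))$ for every $0 \neq f \in I = I_1$ and every $x \in K(T)$, since this is literally a single term of the defining sum. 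In particular, fact (a) says that modulo $F_{\vec{\lambda}}(K(T),\dots,K(T))$ one may freely replace any $p$th power $y^p$ by $y$.

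For part (ii) I would induct on $n$. The case $n = 1$ is fact (a) applied to $x = G$. For $n > 1$, write $G^{p^n} - G = \wp\bigl(G^{p^{n-1}}\bigr) + \bigl(G^{p^{n-1}} - G\bigr)$; the first summand lies in the subgroup by fact (a), the second by the inductive hypothesis, and the sum lies there since the subgroup is closed under addition.

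For part (i) I would again induct on $n$, the case $n = 1$ being precisely fact (b). For $n > 1$, decompose $f \in I_n$ digit-by-digit in base $p$: write $f(i) = f'(i) + p\,g(i)$ with $f' \in I_1$ and $g \in I_{n-1}$, so that $\prod_i \lambda_i^{f(i)} = \bigl(\prod_i\lambda_i^{f'(i)}\bigr)\bigl(\prod_i\lambda_i^{g(i)}\bigr)^p$ and hence $\bigl(\prod_i\lambda_i^{f(i)}\bigr)G^{p^n} = \bigl(\prod_i\lambda_i^{f'(i)}\bigr)\bigl[\bigl(\prod_i\lambda_i^{g(i)}\bigr)G^{p^{n-1}}\bigr]^p$. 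If $f' \neq 0$ this is a single term of the defining sum, hence in the subgroup by fact (b). If $f' = 0$, then $f = p\,g$ with $g \neq 0$ (as $f \neq 0$), and the expression equals $\bigl[\bigl(\prod_i\lambda_i^{g(i)}\bigr)G^{p^{n-1}}\bigr]^p$, which by fact (a) is congruent modulo the subgroup to $\bigl(\prod_i\lambda_i^{g(i)}\bigr)G^{p^{n-1}}$; the latter lies in the subgroup by the inductive hypothesis applied to $0 \neq g \in I_{n-1}$. This closes the induction. The argument is essentially bookkeeping; the only point meriting care is the base-$p$ decomposition of $f$, where one must check that for $f \neq 0$ exactly one of the cases ``$f' \neq 0$'' and ``$f' = 0$ and $g \neq 0$'' occurs, so that the induction genuinely closes. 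I anticipate no substantive obstacle beyond this.
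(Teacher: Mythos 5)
Your proof is correct and follows essentially the same route as the paper's: the same base cases (a single coordinate of $F_{\vec{\lambda}}$ gives $\wp(x)$ resp.\ $\bigl(\prod_i\lambda_i^{f(i)}\bigr)x^p$), the same induction on $n$ for (ii), and the same base-$p$ splitting $f = f' + pg$ with the same two-case analysis for (i). No issues.
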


\begin{proof}
We prove both assertions by induction on $n$. First suppose that $n = 1$. Then for (i), note that $$\left(\prod_{i=1}^r\lambda_i^{f(i)}\right)G^{p} = F_{\vec{\lambda}}((H_g)_g),$$ where $H_g := 0$ unless $g = f$, in which case $H_f:= G$. For (ii), take $H_g:= 0$ unless $g = 0$, in which case $H_g := G$. This proves the base case. For the induction step of (ii), note that the $n = 1$ case with $G$ replaced by $G^{p^{n-1}}$ implies that $G^{p^n} \equiv G^{p^{n-1}} \pmod{F_{\vec{\lambda}}(K(T), \dots, K(T))}$, and this is $\equiv G$ by induction. For the induction step of (i), let $h \in I$ denote the unique element such that $f \equiv h \pmod{p}$, and let $g \in I_{n-1}$ be the unique element with $f = h + pg$. If $h\neq 0$, then the $n = 1$ case of (i) applied with $f$ replaced by $h$ and $G$ replaced by $\left(\prod_{i=1}^r\lambda_i^{g(i)}\right)G^{p^{n-1}}$ implies that
$$\left(\prod_{i=1}^r\lambda_i^{f(i)}\right)G^{p^n} \in F_{\vec{\lambda}}(K(T), \dots, K(T)).$$
If $h = 0$, on the other hand, then we apply the $n = 1$ case of (ii) with $G$ replaced by $\left(\prod_{i=1}^r\lambda_i^{g(i)}\right)G^{p^{n-1}}$ again to conclude that
$$\left(\prod_{i=1}^r\lambda_i^{f(i)}\right)G^{p^n} \equiv \left(\prod_{i=1}^r\lambda_i^{g(i)}\right)G^{p^{n-1}} \pmod{F_{\vec{\lambda}}(K(T), \dots, K(T))}.$$
Because $0 \neq f = pg$, one has $g \neq 0$, hence induction completes the proof.
\end{proof}

\begin{lemma}
\label{betaT^r/(T^p^n+mu)}
Notation and assumptions as in Lemma $\ref{beta/(T^{p^n}+lambda)}$. Then for any $\beta \in K$, $\mu \in K-K^p$, $n > 0$, and $0 \leq r < p^n$, one has
\[
\frac{\beta T^r}{T^{p^n}+\mu} \in F_{\vec{\lambda}}(K(T), \dots, K(T)).
\]
\end{lemma}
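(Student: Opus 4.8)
The plan is to follow the pattern of Lemma~\ref{beta/(T^{p^n}+lambda)} and exhibit an explicit preimage of the displayed rational function under $F_{\vec{\lambda}}$. (To ease notation, write $\ell$ for the exponent called $r$ in the statement, so as not to collide with the number $r$ of basis elements $\lambda_i$.) Put $q := T^{p^n}+\mu \in K[T]$; note that $q$ is irreducible over $K$, precisely because $\mu \notin K^p$, so $\beta T^\ell/q$ is already in lowest terms. One looks for a tuple $(H_f)_{f\in I}$ of rational functions with $F_{\vec{\lambda}}((H_f)_f) = \beta T^\ell/q$, of a shape dictated by $q$ (in Lemma~\ref{beta/(T^{p^n}+lambda)} the $H_f$ had a single power of the relevant denominator downstairs; here one should expect to allow a $p$-power of $q$). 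Expanding $F_{\vec{\lambda}}((H_f)_f) = -H_0 + \sum_{f\in I}\big(\prod_i\lambda_i^{f(i)}\big)H_f^p$ and clearing denominators converts the equation into a single polynomial identity among the numerators.

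The $p$-independence of $\lambda_1,\dots,\lambda_r$ then lets one split this identity according to the direct-sum decomposition $K[T^p] = K^p[T^p]\oplus\bigoplus_{0\neq f\in I}\big(\prod_i\lambda_i^{f(i)}\big)K^p[T^p]$: the summands indexed by $f\neq 0$ can be made to realise an arbitrary element of the complement of $K^p[T^p]$, so the whole problem collapses to a single equation in the ``$K^p[T^p]$-component,'' which is controlled by $H_0$. Solving that equation is the crux. I would do it by comparing coefficients and building the solution one term at a time: at each stage one is left with an equation of the form $[\mu^{p-1}x]_{K^p} = (\text{a known element of }K^p)$ for an unknown $x\in K$, where $[\,\cdot\,]_{K^p}$ denotes projection onto the $K^p$-summand, and this is solvable because $\mu\neq 0$ makes $x\mapsto[\mu^{p-1}x]_{K^p}$ surjective onto $K^p$; at the bottom stage, where the unknown also enters through a Frobenius, one instead solves an Artin--Schreier equation $s^p-s=\beta\mu^{-1}$, which has a root in $K$ because $K=K_s$.

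Propagating the resulting data back through the decomposition produces the $H_f$, hence the preimage, hence the lemma.

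I expect the main obstacle to be controlling this last step so that it actually closes up --- that is, choosing the $p$-power of $q$ appearing in the denominators, and organising the order in which coefficients are solved for, so that the recursion terminates and yields genuine rational functions rather than an ever-growing tail of nonzero coefficients (a naive choice of a single power of $q$ with polynomial numerators seems to propagate nonzero coefficients indefinitely through the indices divisible by $p$). Once the correct set-up is in place, the remaining verification is a lengthy but mechanical manipulation of the defining equation of $\mathscr{V}$, using only $K=K_s$ and $\mu\notin K^p$.
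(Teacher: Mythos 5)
There is a genuine gap, and you have in fact identified it yourself: the entire content of the lemma is the step you defer, namely showing that your coefficient-by-coefficient recursion closes up into honest rational functions $H_f$ rather than an infinite tail. Nothing in the proposal supplies the mechanism that forces termination (the choice of the power of $q$ in the denominators, the ordering of the indices, and the reason the indices divisible by $p$ eventually stop producing new nonzero coefficients), and as you note the naive choice demonstrably fails. A secondary issue is that your reduction to ``a single equation in the $K^p[T^p]$-component'' is too optimistic: with respect to the $p$-basis $\{\lambda_1,\dots,\lambda_r,T\}$ of $K(T)$, the terms $\bigl(\prod_i\lambda_i^{f(i)}\bigr)H_f^p$ with $f\neq 0$ only reach the components with trivial $T$-part, so $-H_0$ must simultaneously absorb all components of $\beta T^\ell/q$ involving $T^j$ with $p\nmid j$; this couples the unknowns more tightly than your sketch acknowledges, and is exactly where the non-termination you worry about enters.

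The paper avoids the direct construction entirely. For $\mu=\lambda_1$ it applies Lemma \ref{beta/(T^{p^n}+lambda)} with $\beta$ replaced by $\lambda_1^{r}\beta^{p^n}$, writes the resulting element of $F_{\vec\lambda}(K(T),\dots,K(T))$ uniquely as $\sum_{i=0}^{p^n-1}\lambda_1^iG_i^{p^n}$, and invokes Lemma \ref{congmodF_lambda} to conclude that this element is congruent to its $p^n$-power part $G_0$ modulo the image of $F_{\vec\lambda}$; an explicit binomial-coefficient computation (using $\binom{p^n-1}{j}\not\equiv 0\pmod p$) identifies $G_0$ as a nonzero $K$-multiple of $T^{p^n-1-j}/(T^{p^n}+\lambda_1)$, with $p^n-1-j$ sweeping out all residues as $r$ does. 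General $\mu\in K-K^p$ is then handled not by your irreducibility observation but by completing $\mu$ to a $p$-basis $\vec\mu$ and using the change-of-coordinates identity $F_{\vec\mu}(A\vec X)=cF_{\vec\lambda}(\vec X)$ from \cite[Lem.\,7.1]{rosasuniv} to transport the result from $F_{\vec\mu}$ to $F_{\vec\lambda}$. If you want to salvage a direct construction you would need to actually exhibit the $H_f$ and prove the recursion terminates; otherwise the congruence-lemma route is the intended (and much shorter) path.
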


\begin{proof}
We first prove the lemma when $\mu = \lambda_1$. For any integer $r$ and $\beta \in K$, Lemma \ref{beta/(T^{p^n}+lambda)} with $\beta$ replaced by $\lambda_1^r\beta^{p^n}$ implies that
\[
\frac{\lambda_1^r\beta^{p^n}(T^{p^n}+\lambda_1)^{p^n-1}}{(T^{p^n}+\lambda_1)^{p^n}} = \frac{\lambda_1^r\beta^{p^n}}{T^{p^n}+\lambda_1} \in F_{\vec{\lambda}}(K(T), \dots, K(T)).
\]
We may uniquely expand the above expression as a sum $\sum_{i=0}^{p^n-1}\lambda_1^iG_i^{p^n}$. By Lemma \ref{congmodF_lambda}, the above expression is then $\equiv G_0 \pmod{F_{\vec{\lambda}}(K(T), \dots, K(T))}$. We now compute $G_0 \in F_{\vec{\lambda}}(K(T), \dots, K(T))$. Let $0 \leq j < p^n$ be the unique integer $\equiv -r \pmod{p^n}$. Then one checks that $G_0$ is
\[
\frac{{{p^n-1}\choose{j}}\beta T^{p^n-1-j}\lambda_1^{(j+r)/p^n}}{T^{p^n}+\lambda_1} \in F_{\vec{\lambda}}(K(T), \dots, K(T)).
\]
Since every digit of $p^n-1$ in its base $p$ expansion is $p-1$, we have $\binom{p^n-1}{j} \not\equiv 0\pmod{p}$ \cite[Lem.\,2.2.7]{rostateduality}. Therefore, since $\beta \in K$ was arbitrary and $p^n-1-j$ ranges over every integer in $[0, p^n)$ as $r$ does, we deduce the lemma in the special case $\mu = \lambda_1$.

To treat the general case, we complete $\mu$ to a $p$-basis $\vec{\mu} := [\mu_1 = \mu, \mu_2, \dots, \mu_r]^T$ of $K$. Then there exist an invertible linear change of coordinates $A \in {\rm{GL}}_r(K)$ and $c \in K^{\times}$ such that $F_{\vec{\mu}}(A\vec{X}) = cF_{\vec{\lambda}}(\vec{X})$ \cite[Lem.\,7.1]{rosasuniv}. We know from the special case of the lemma already treated that any $K$-multiple of $T^r/(T^{p^n}+\mu)$ lies in $F_{\vec{\mu}}(K(T), \dots, K(T))$, and applying a suitable change of coordinates to the input to $F_{\vec{\mu}}$, we obtain the same conclusion for $F_{\vec{\lambda}}$.
\end{proof}

\begin{lemma}
\label{betaT^r/(T^p^n+mu)general}
Notation and assumptions as in Lemma $\ref{beta/(T^{p^n}+lambda)}$. Then for any $\beta \in K$, $\mu \in K$, $n \geq 0$, and $0 \leq r < p^n$, let
\[
G(T) := \frac{\beta T^r}{T^{p^n}+\mu}.
\]
Then there exist $\alpha \in K^{\times}$ and $d \geq 0$ such that $G(\alpha T^{p^d}) \in F_{\vec{\lambda}}(K(T), \dots, K(T))$.
\end{lemma}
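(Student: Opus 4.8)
The plan is to reduce everything to Lemmas~\ref{betaT^r/(T^p^n+mu)} and~\ref{congmodF_lambda}, exploiting three elementary features of $E := F_{\vec{\lambda}}(K(T), \dots, K(T)) \subseteq K(T)$: since $F_{\vec{\lambda}}$ is an additive polynomial, $E$ is a subgroup of $(K(T),+)$; it is stable under any $K$-algebra substitution $T \mapsto \psi(T)$, $\psi \in K(T)$ (substitute $\psi$ into each input coordinate of an expression $F_{\vec{\lambda}}((H_g)_g)$); and it is closed under $p$-th powers, since $h^p = (h^p - h) + h$ with $h^p - h \in E$ by Lemma~\ref{congmodF_lambda}(ii). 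I will also use that $K = K_s$ makes $x \mapsto x^u$ surjective on $K^\times$ for $u$ prime to $p$, and that a $p$-basis yields decompositions $K = \bigoplus_{f \in I_k}\big(\prod_{i=1}^r \lambda_i^{f(i)}\big) K^{p^k}$ for every $k \geq 1$ (iterate Lemma~\ref{degimp=sizepind}). The case $\beta = 0$ is trivial; and since replacing $G(T)$ by $G(T^{p^e})$ merely postcomposes a later substitution $T \mapsto \alpha T^{p^d}$ with $T \mapsto T^{p^e}$, I may freely enlarge $n$, so I assume $n \geq 1$ throughout.

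First I would treat the main case $\mu \neq 0$. Let $a$ be the largest integer with $\mu \in K^{p^a}$ when $\mu \notin \bigcap_k K^{p^k}$, and $a := n$ otherwise; enlarging $n$, assume $n \geq a$, and put $\nu := \mu^{1/p^a}$, so that $T^{p^n} + \mu = (T^{p^{n-a}} + \nu)^{p^a}$. When $\nu \notin K^p$ the substitution $T \mapsto T^{p^a}$ gives $G(T^{p^a}) = \beta\big(T^r/(T^{p^n}+\nu)\big)^{p^a}$; when $\mu \in \bigcap_k K^{p^k}$ (so $a = n$), using $(\nu/\lambda_1)T^{p^n} + \nu = (\nu/\lambda_1)(T^{p^n}+\lambda_1)$ the substitution $T \mapsto (\nu/\lambda_1)T^{p^n}$ gives
\[
G\big((\nu/\lambda_1)T^{p^n}\big) = \frac{\beta}{(\nu/\lambda_1)^{p^n-r}}\left(\frac{T^r}{T^{p^n}+\lambda_1}\right)^{p^n}.
\]
Either way the task becomes: show $c\,H^{p^b} \in E$, where $b \geq 0$, $c \in K$, and $H = T^r/(T^{p^n}+\theta)$ with $\theta \in K - K^p$, $n \geq 1$, $0 \leq r < p^n$. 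For $b = 0$ this is exactly Lemma~\ref{betaT^r/(T^p^n+mu)}. For $b \geq 1$ I would decompose $c = \sum_{f \in I_b}\big(\prod_i \lambda_i^{f(i)}\big) c_f^{p^b}$, so that
\[
c\,H^{p^b} = \sum_{f \in I_b}\Big(\prod_{i=1}^r \lambda_i^{f(i)}\Big)(c_f H)^{p^b};
\]
each $f \neq 0$ summand lies in $E$ by Lemma~\ref{congmodF_lambda}(i), while the $f = 0$ summand $(c_0 H)^{p^b}$ lies in $E$ because $(c_0 H)^{p^b} - c_0 H \in E$ by Lemma~\ref{congmodF_lambda}(ii) and $c_0 H = c_0 T^r/(T^{p^n}+\theta) \in E$ by Lemma~\ref{betaT^r/(T^p^n+mu)} --- here it is essential that that lemma allows an arbitrary numerator coefficient.

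It remains to handle $\mu = 0$, where $G(T) = \beta T^{r-p^n}$; write $p^n - r = p^\epsilon u$ with $p \nmid u$. If $\epsilon = 0$, choosing $\alpha$ with $\alpha^{p^n-r} = \beta/\lambda_1$ gives $G(\alpha T^p) = \lambda_1 (T^{-u})^p$, which lies in $E$ by Lemma~\ref{congmodF_lambda}(i). If $\epsilon \geq 1$, decomposing $\beta = \sum_{f \in I_\epsilon}\big(\prod_i \lambda_i^{f(i)}\big)\beta_f^{p^\epsilon}$ yields $G(T) = \sum_{f \in I_\epsilon}\big(\prod_i \lambda_i^{f(i)}\big)(\beta_f T^{-u})^{p^\epsilon}$, hence $G(T) \equiv \beta_0 T^{-u} \pmod{E}$ by Lemma~\ref{congmodF_lambda}(i),(ii); if $\beta_0 = 0$ we are done, and otherwise $\beta_0 T^{-u}$ has the shape of the $\epsilon = 0$ case, so a substitution $T \mapsto \alpha' T^p$ carries it into $E$, whence (by substitution-stability of $E$) the same substitution carries $G$ into $E$.

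The hard part, I expect, is the realization that the reparametrizations $T \mapsto \alpha T^{p^d}$ can never change whether $\mu$ is a $p$-th power, so one cannot simply extract a $p$-th root of $\mu$ and invoke Lemma~\ref{betaT^r/(T^p^n+mu)}; instead one must peel off the $p^b$-th power hidden in the denominator, and then contend with a leftover scalar in front of $H^{p^b}$ that cannot be forced to be a $p^b$-th power. The device that resolves this is the decomposition of that scalar along the $p$-basis: all non-constant monomial components are absorbed into $E$ by Lemma~\ref{congmodF_lambda}(i), and the surviving constant component is exactly what the full strength of Lemma~\ref{betaT^r/(T^p^n+mu)} (arbitrary numerator) is good for. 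The degenerate case $\mu = 0$, where $H$ is not of the required form, needs the separate short argument above.
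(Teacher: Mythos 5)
Your proof is correct, but it is organized quite differently from the paper's. The paper argues by induction on $n$: at each stage, if $\mu \notin K^p$ it invokes Lemma \ref{betaT^r/(T^p^n+mu)} directly, and if $\mu = \gamma^p$ it splits on whether $p \mid r$ --- peeling off one $p$-th power via a $p$-basis decomposition of $\beta$ and descending to $n-1$ when $p \mid r$, and terminating immediately via a substitution $T \mapsto \alpha T^p$ with $\alpha^{r-p^n} = \lambda_1/\beta$ when $p \nmid r$. You avoid the induction on $n$ entirely by extracting the full $p$-power depth of the denominator in one step: writing $\mu = \nu^{p^a}$ with $\nu \notin K^p$ (or renormalizing by $T \mapsto (\nu/\lambda_1)T^{p^n}$ to force the denominator constant to be $\lambda_1$ when $\mu$ is infinitely $p$-divisible), you reduce to showing $cH^{p^b} \in F_{\vec{\lambda}}(K(T),\dots,K(T))$ with $H = T^r/(T^{p^n}+\theta)$, $\theta \notin K^p$, and then a single $p$-basis decomposition of $c$ at level $p^b$ finishes via Lemma \ref{congmodF_lambda} and the arbitrary-numerator strength of Lemma \ref{betaT^r/(T^p^n+mu)}. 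The trade-off: your route needs the extra case $\mu \in \bigcap_k K^{p^k}$ (which the paper's induction never encounters, since it simply bottoms out at $n=0$) and a separate short argument for $\mu = 0$, where $G$ degenerates to a negative-power monomial; in exchange you dispense with the induction and with the per-step case analysis on $r \bmod p$, and the role of each ingredient is arguably more transparent. All the individual steps check out --- in particular the identity $G(T^{p^a}) = \beta\bigl(T^r/(T^{p^n}+\nu)\bigr)^{p^a}$, the renormalization computation in the infinitely-$p$-divisible case, the solvability of $\alpha^{p^n-r} = \beta/\lambda_1$ over the separably closed $K$ when $p \nmid p^n - r$, and the substitution-stability and additive-subgroup properties of the image of $F_{\vec{\lambda}}$ that you use throughout.
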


\begin{proof}
We proceed by induction on $n$. First consider the base case $n = 0$ (so also $r = 0$). The assertion is trivial if $\beta = 0$, so assume that $\beta \neq 0$. Then we claim that $G(\beta T^p\lambda_1) \in F_{\vec{\lambda}}(K(T), \dots, K(T))$. Indeed, we have
\[
G(\beta T^p/\lambda_1) = \lambda_1/(T^p + \mu\lambda_1/\beta).
\]
If $\mu\lambda_1/\beta \notin K^p$, then the above lies in the image of $F_{\vec{\lambda}}$ by Lemma \ref{betaT^r/(T^p^n+mu)}. Otherwise, write $\mu\lambda_1/\beta = \gamma^p$ with $\gamma \in K$, so that $$G(\beta T^p/\lambda_1) = \lambda_1(1/(T+\gamma))^p \in F_{\vec{\lambda}}(K(T), \dots, K(T))$$ by Lemma \ref{congmodF_lambda}(i). This completes the proof for $n = 0$.

Now suppose that $n > 0$. If $\mu \notin K^p$, then already $G \in F_{\vec{\lambda}}(K(T), \dots, K(T))$ by Lemma \ref{betaT^r/(T^p^n+mu)}. So assume that $\mu = \gamma^p$ with $\gamma \in K$. There are two cases: $p\mid r$ and $p\nmid r$. First consider the case $p\mid r$. Then write $\beta = \sum_{f \in I} \left(\prod_{i=1}^r\lambda_i^{f(i)}\right)c_f^p$ with $c_f \in K$. Then we have
\[
G(T) = \sum_{f \in I}\left(\prod_{i=1}^r\lambda_i^{f(i)}\right)\left(\frac{c_fT^{r/p}}{T^{p^{n-1}}+\gamma}\right)^p \equiv \frac{c_0T^{r/p}}{T^{p^{n-1}}+\gamma} \pmod{F_{\vec{\lambda}}(K(T), \dots, K(T))},
\]
the last equality by Lemma \ref{congmodF_lambda}, and we are done by induction. Now suppose that $p\nmid r$. We may assume that $\beta \neq 0$. Because $K$ is separably closed, we may choose $\alpha \in K$ such that $\alpha^{r-p^n} = \lambda_1/\beta$. Then
\[
G(\alpha T^p) = \lambda_1\left(\frac{T^r}{T^{p^n}+\gamma \alpha^{-p^{n-1}}} \right)^p \in F_{\vec{\lambda}}(K(T), \dots, K(T)),
\]
the last containment by Lemma \ref{congmodF_lambda}(i).
\end{proof}

\begin{lemma}
\label{monomsinFlambda}
Notation and assumptions as in Lemma $\ref{beta/(T^{p^n}+lambda)}$. Then for any monomial $G(T) = cT^n \in K[T]$, there exist $\alpha \in K^{\times}, d \geq 0$ such that $G(\alpha T^{p^d}) \in F_{\vec{\lambda}}(K(T), \dots, K(T))$.
\end{lemma}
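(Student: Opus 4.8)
The plan is to reduce the assertion to an explicit membership via Lemma \ref{congmodF_lambda}, exploiting that over the separably closed field $K$ every prime-to-$p$ power map $K^{\times} \to K^{\times}$ is surjective, together with a dimension count over $K^{p^{a+1}}$. First dispose of trivial cases: if $c = 0$ the conclusion is immediate, if $n = 0$ then $G$ is a constant and a constant lies in $F_{\vec{\lambda}}(K(T), \dots, K(T))$ because the additive polynomial $F_{\vec{\lambda}}$ has nonzero linear part and $K$ is separably closed, and if $r = 0$ then $K = K^p$ and $F_{\vec{\lambda}}(K(T), \dots, K(T)) = K(T)$. So assume $c \neq 0$, $n \geq 1$ and $r \geq 1$, and write $n = p^a m$ with $p \nmid m$ (so $m \geq 1$). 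The key preliminary observation is that, for $\phi \in K$ and $b \geq 1$, writing $\phi = \sum_{g \in I_b}(\prod_{i=1}^r\lambda_i^{g(i)})\phi_g^{p^b}$ in the $K^{p^b}$-basis $\{\prod_{i}\lambda_i^{g(i)}\}_{g \in I_b}$ of $K$, one has
\[
\phi T^{p^b m} = \sum_{g \in I_b}\Big(\prod_{i=1}^r\lambda_i^{g(i)}\Big)(\phi_g T^m)^{p^b} \equiv \phi_0 T^m \pmod{F_{\vec{\lambda}}(K(T), \dots, K(T))},
\]
since every $g \neq 0$ term lies in $F_{\vec{\lambda}}(K(T), \dots, K(T))$ by Lemma \ref{congmodF_lambda}(i), one has $(\phi_0 T^m)^{p^b} \equiv \phi_0 T^m$ by Lemma \ref{congmodF_lambda}(ii), and $F_{\vec{\lambda}}(K(T), \dots, K(T))$ is an additive subgroup of $K(T)$.

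Now take $d := 1$ and substitute $T \mapsto \alpha T^p$ with $\alpha \in K^{\times}$ chosen so that $\alpha^m = \gamma$, for a parameter $\gamma \in K^{\times}$ still to be selected; such an $\alpha$ exists because $X^m - \gamma$ is separable (as $p \nmid m$) over the separably closed $K$. Then
\[
G(\alpha T^p) = c\alpha^n T^{pn} = c\gamma^{p^a}T^{p^{a+1}m},
\]
so the preliminary observation (with $\phi := c\gamma^{p^a}$, $b := a+1$) gives $G(\alpha T^p) \equiv \phi_0 T^m \pmod{F_{\vec{\lambda}}(K(T), \dots, K(T))}$, where $\phi_0 \in K$ is characterized by $c\gamma^{p^a} = \phi_0^{p^{a+1}} + \sum_{0 \neq g \in I_{a+1}}(\prod_i\lambda_i^{g(i)})\phi_g^{p^{a+1}}$. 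It thus suffices to choose $\gamma \in K^{\times}$ with $\phi_0 = 0$, equivalently with $c\gamma^{p^a} \in W := \bigoplus_{0 \neq g \in I_{a+1}}(\prod_{i=1}^r\lambda_i^{g(i)})K^{p^{a+1}}$; for then $G(\alpha T^p) \in F_{\vec{\lambda}}(K(T), \dots, K(T))$ and the lemma holds with $d = 1$.

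The existence of such a $\gamma$ --- the only step that is not purely formal --- is a dimension count inside the $K^{p^{a+1}}$-vector space $K$, which has dimension $p^{r(a+1)}$ (the $p^{r(a+1)}$ monomials $\prod_i\lambda_i^{g(i)}$, $g \in I_{a+1}$, forming a $K^{p^{a+1}}$-basis). Here $W$ is a $K^{p^{a+1}}$-subspace of dimension $p^{r(a+1)} - 1$, while $K^{p^a}$ is a $K^{p^{a+1}}$-subspace of dimension $[K^{p^a}:K^{p^{a+1}}] = [K:K^p] = p^r$, so $c^{-1}W \cap K^{p^a}$ has dimension $\geq (p^{r(a+1)} - 1) + p^r - p^{r(a+1)} = p^r - 1 \geq 1$ and hence contains a nonzero element. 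Any such element has the form $\gamma^{p^a}$ with $\gamma \in K^{\times}$, and it satisfies $c\gamma^{p^a} \in W$, completing the argument. The essential point --- and where the hypothesis $r \geq 1$ (imperfection of $K$) enters --- is that one cannot simply take $\gamma$ to be a large $p$-power and try to force the $p^b$-power part of $c$ to vanish: for a fixed element such as $c = 1$ this never occurs, so the room afforded by $K^{p^a} \supsetneq K^{p^{a+1}}$ is genuinely needed.
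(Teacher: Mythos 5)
Your argument is correct where it matters, but it takes a genuinely different route from the paper's. The paper proceeds by induction on $n$: when $p \mid n$ it expands $c$ in the $K^p$-basis $\{\prod_i\lambda_i^{f(i)}\}_{f \in I_1}$ and uses Lemma \ref{congmodF_lambda} to replace $cT^n$ by $c_0T^{n/p}$ modulo $F_{\vec{\lambda}}(K(T), \dots, K(T))$ (with $c_0$ the $p$-power part of $c$), and when $p \nmid n$ it substitutes $T \mapsto \alpha T^p$ with $\alpha^n = c^{-1}\lambda_1$, turning the monomial into $\lambda_1(T^n)^p$, which is visibly in the image by Lemma \ref{congmodF_lambda}(i). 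You collapse this induction into a single substitution $T \mapsto \alpha T^p$ by working with the full $K^{p^{a+1}}$-expansion at once; the price is that you can no longer simply aim at $\lambda_1$ (since $(c^{-1}\lambda_1)^{1/p^a}$ need not lie in $K$), so you replace the paper's explicit choice of $\gamma$ by the dimension count showing $c^{-1}W \cap K^{p^a} \neq 0$. That count is valid ($W$, $c^{-1}W$, and $K^{p^a}$ are all $K^{p^{a+1}}$-subspaces of $K$ of the stated dimensions, and $p^r - 1 \geq 1$ precisely because $r \geq 1$), and your preliminary observation is a correct iterated application of Lemma \ref{congmodF_lambda} together with additivity of the image. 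Both proofs produce $d \leq 1$; yours is non-inductive and makes explicit where imperfection of $K$ enters, while the paper's avoids the intersection argument entirely by deferring the substitution until the exponent is prime to $p$.

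One caveat: your disposal of the case $r = 0$ is wrong. There $F_{\vec{\lambda}} = -X_0 + X_0^p$, and its image in $K(T)$ is the Artin--Schreier subgroup $\wp(K(T))$, which is a \emph{proper} subgroup of $K(T)$ (for instance $T \notin \wp(K(T))$: any solution of $x^p - x = T$ would be integral over $K[T]$, hence a polynomial, of degree $1/p$). So the identity $F_{\vec{\lambda}}(K(T), \dots, K(T)) = K(T)$ fails, and in fact the lemma itself is false for $r = 0$, $n \geq 1$, $c \neq 0$. This is harmless only because the lemma is implicitly stated and used for imperfect $K$ --- the hypotheses imported from Lemma \ref{beta/(T^{p^n}+lambda)} already presuppose $r \geq 1$, and the paper's own proof likewise uses $\lambda_1$ --- but you should delete that sentence rather than assert a false identity.
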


\begin{proof}
We proceed by induction on $n$. When $n = 0$, the fact that $K$ is separably closed furnishes $\beta \in K$ such that $\beta^p-\beta = c$, so we are done by Lemma \ref{congmodF_lambda}(ii). Now suppose that $n > 0$. We split the proof into two cases: $p\mid n$ and $p\nmid n$. First suppose that $p\mid n$. Then we write $c = \sum_{f \in I}\left(\prod_{i=1}^r\lambda_i^{f(i)}\right)c_f^p$ with $c_f \in K$. We have
\[
G(T) = \sum_{f \in I}\left(\prod_{i=1}^r\lambda_i^{f(i)}\right)(c_fT^{n/p})^p \equiv c_0T^{n/p} \pmod{F_{\vec{\lambda}}(K(T), \dots, K(T))},
\]
the last equality by Lemma \ref{congmodF_lambda}, and we are done by induction. Now suppose, on the other hand, that $p\nmid n$. We may assume that $c \neq 0$. Because $K$ is separably closed, there exists $\alpha \in K$ such that $\alpha^n = c^{-1}\lambda_1$. Then $G(\alpha T^p) = \lambda_1(T^n)^p$, and this lies in the image of $F_{\vec{\lambda}}$ by Lemma \ref{congmodF_lambda}(i).
\end{proof}

We may now prove the key technical result of this section. The group $\mathscr{V}$ in the proposition below is the one defined by (\ref{Vdef}).

\begin{proposition}
\label{torsorsdieratlcurve}
Let $K$ be a separably closed field of finite degree of imperfection, and let $V$ denote either $\mathscr{V}$ or $\R_{K^{1/p}/K}(\alpha_p)$. Then for any dense open subscheme $X \subset \P^1_K$, and any $V$-torsor $\mathscr{T}$ over $X$, there is a dense open subscheme $Y \subset \P^1_K$ and dominant $K$-morphism $f\colon Y \rightarrow X$ such that $f^*(\mathscr{T})$ is trivial.
\end{proposition}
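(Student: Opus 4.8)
The plan is to reduce the statement to the algebraic computations already packaged in Lemmas~\ref{beta/(T^{p^n}+lambda)}--\ref{monomsinFlambda}. Fix a $p$-basis $\lambda_1,\dots,\lambda_r$ of $K$ ($r$ the degree of imperfection), and set $I := I_1$. Both $\mathscr V$ and $\R_{K^{1/p}/K}(\alpha_p)$ are commutative affine $K$-group schemes fitting into a short exact sequence of fppf sheaves
\[
0 \longrightarrow V \longrightarrow \Ga^{I} \xlongrightarrow{\ \Phi\ } \Ga \longrightarrow 0 ,
\]
where for $V = \mathscr V$ one takes $\Phi = F_{\vec\lambda}$, the $p$-polynomial of Lemma~\ref{beta/(T^{p^n}+lambda)}, this being precisely the defining equation~(\ref{Vdef}); and for $V = \R_{K^{1/p}/K}(\alpha_p)$ one writes out Weil restriction in the $K$-basis $\bigl\{\prod_{i=1}^{r}\lambda_i^{f(i)/p}\bigr\}_{f \in I}$ of $K^{1/p}$, which identifies $\R_{K^{1/p}/K}(\Ga)$ with $\Ga^{I}$ and $\R_{K^{1/p}/K}(\alpha_p)$ with the kernel of $\Phi := \sum_{f \in I}\bigl(\prod_{i=1}^{r}\lambda_i^{f(i)}\bigr)X_f^{p} = F_{\vec\lambda} + X_0$. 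In either case $\Phi$ is faithfully flat — use the $X_0$-coordinate to exhibit the coordinate ring of $\Ga^{I}$ as a free module over that of $\Ga$ — so the displayed sequence is exact.

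First I would dispose of the case $X = \P^1_K$: here $H^1(\P^1,\Ga^{I}) = H^1(\P^1,\calO_{\P^1})^{I} = 0$ and $\Phi$ is surjective on $K$-points, so $H^1(\P^1,V) = 0$, $\mathscr T$ is already trivial, and we take $Y = \P^1$, $f = {\rm{id}}$. So assume $X \subsetneq \P^1$; then $X$ is affine, $H^1(X,\Ga^{I}) = H^1(X,\calO_X)^{I} = 0$, and the long exact cohomology sequence of the above collapses to a canonical isomorphism
\[
H^1(X, V) \;\cong\; \Gamma(X,\calO_X)\,\big/\,\Phi\!\left(\Gamma(X,\calO_X)^{I}\right),
\]
functorial in $X$ (pullback along a $K$-morphism $f\colon Y \to X$ with $Y \subsetneq \P^1$ being induced by $g \mapsto g\circ f$). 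Choosing $g \in \Gamma(X,\calO_X)$ representing the class of $\mathscr T$, it thus suffices to find a dense open $Y \subset \P^1$ and a dominant $K$-morphism $f\colon Y \to X$ with $g\circ f \in \Phi\!\left(\Gamma(Y,\calO_Y)^{I}\right)$.

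Next I would normalize $g$ and kill it term by term. Because $K = K_s$, every closed point of $\A^1_K$ of degree $> 1$ is cut out by an irreducible polynomial $T^{p^{n}} - \mu$ with $\mu \in K - K^{p}$, a degree-one point by $T - a$, and $(T^{p^{n}} - \mu)^{p^{k}} = T^{p^{n+k}} - \mu^{p^{k}}$. Hence, by the classical partial fraction decomposition, after clearing each denominator to a power of such an irreducible and then to a $p$-th power of it, $g$ is a finite $K$-linear combination of monomials $c\,T^{j}$ and of rational functions $\beta\,T^{r'}/(T^{p^{N}} + \mu)$ with $\beta,\mu \in K$, $N \ge 0$, $0 \le r' < p^{N}$. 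By Lemma~\ref{monomsinFlambda} (monomials) and Lemma~\ref{betaT^r/(T^p^n+mu)general} (fractional terms), each such basic term $G$ satisfies $G(\alpha T^{p^{d}}) \in \Phi\!\left(K(T)^{I}\right)$ for suitable $\alpha \in K^{\times}$, $d \ge 0$; for $V = \R_{K^{1/p}/K}(\alpha_p)$ one reruns the proofs of Lemmas~\ref{beta/(T^{p^n}+lambda)}--\ref{monomsinFlambda} with $F_{\vec\lambda}$ replaced by $F_{\vec\lambda} + X_0$, which goes through unchanged except that at the single step using the linear term of $F_{\vec\lambda}$ — Lemma~\ref{congmodF_lambda}(ii) — one instead has the simpler fact that $G^{p} \in \Phi\!\left(K(T)^{I}\right)$ for every $G$. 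Now the substitutions $T \mapsto \alpha T^{p^{d}}$ compose to maps of the shape $T \mapsto \gamma T^{p^{e}}$; precomposition with such a map sends $\Phi\!\left(K(T)^{I}\right)$ into itself (as $\Phi$ acts coordinate-wise) and carries each of the two families of basic terms into itself (only the parameters $c,\beta,\mu,j,r',N$ changing). Applying the substitutions supplied above for the finitely many terms of $g$ in succession therefore yields one non-constant $K$-morphism $\phi\colon \P^1 \to \P^1$, $T \mapsto \gamma T^{p^{e}}$, with $g\circ\phi = \Phi\!\left((h_f)_{f \in I}\right)$ for certain $h_f \in K(T)$. Taking $Y \subset \P^1$ to be $\phi^{-1}(X)$ with the finitely many poles of the $h_f$ removed, and $f := \phi|_{Y}$, gives a dominant $K$-morphism with $g\circ f \in \Phi\!\left(\Gamma(Y,\calO_Y)^{I}\right)$, so that $f^{*}\mathscr T$ is trivial.

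Essentially all the genuine content sits in Lemmas~\ref{beta/(T^{p^n}+lambda)}--\ref{monomsinFlambda}, so I expect the main work to be twofold: the cohomological translation in the second step, turning triviality of the pulled-back torsor into the membership statement $g\circ f \in \Phi(K(T)^I)$; and the bookkeeping in the third step ensuring that the substitution chosen to annihilate one partial-fraction term does not revive the ones already annihilated — which is exactly what the stability of $\Phi(K(T)^I)$ and of the basic families under $T \mapsto \alpha T^{p^{d}}$ provides. The one new ingredient beyond the cited lemmas is the identification of $\R_{K^{1/p}/K}(\alpha_p)$ with $\ker\Phi$ for the appropriate $\Phi$ and the routine verification that those lemmas carry over to it.
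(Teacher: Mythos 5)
Your proposal is correct and follows essentially the same route as the paper: reduce to showing that a representative $g$ of the torsor class can be pushed into $\Phi(K(T)^I)$ by a substitution $T \mapsto \alpha T^{p^d}$, decompose $g$ into monomials and terms $\beta T^{r'}/(T^{p^N}+\mu)$ via partial fractions over the separably closed field, and kill the terms one at a time using Lemmas \ref{betaT^r/(T^p^n+mu)general} and \ref{monomsinFlambda} together with the stability of both $\Phi(K(T)^I)$ and the set of basic terms under such substitutions. The only divergence is in the $\R_{K^{1/p}/K}(\alpha_p)$ case, where the paper shortcuts the whole machinery by noting that the image of $\Phi$ there is all of $K(T^p)$ (so $T \mapsto T^p$ works for any $g$), whereas you propose rerunning the lemmas -- which also works but is unnecessary.
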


\begin{proof}
The assertion is a rational one: We may replace $X, Y$ by $K(T)$. We first treat the case $V = \mathscr{V}$. We may functorially identify ${\rm{H}}^1(K(T), \mathscr{V})$ with the quotient $$K(T)/F_{\vec{\lambda}}(K(T), \dots, K(T))$$ in the notation of Lemma \ref{beta/(T^{p^n}+lambda)}. Thus the proposition for $V = \mathscr{V}$ may be rephrased as the assertion that, for any $G(T) \in K(T)$, there is a $K$-endomorphism $\phi$ of $K(T)$ such that $\phi(G(T)) \in F_{\vec{\lambda}}(K(T), \dots, K(T))$. In fact, we will show that one may take $\phi$ to be of the form $T \mapsto \alpha T^{p^d}$ for suitable $\alpha \in K^{\times}, d \geq 0$. Because $K$ is separably closed, every monic polynomial in $K[T]$ may be written as a product of polynomials of the form $T^{p^n}+\mu$ with $\mu \in K$. Thus we may write every element of $K(T)$ as a sum of terms lying in $S$, where $S$ denotes the set consisting of monomials and of rational functions of the form $\beta T^r/(T^{p^n}+\mu)$ with $\beta, \mu \in K$ and $0 \leq r < p^n$. Note that the set $S$ is mapped into itself under the $K$-endomorphism $T \mapsto \alpha T^{p^d}$ of $K(T)$, as is the set $F_{\vec{\lambda}}(K(T), \dots, K(T))$. Write $G(T)$ as a sum of elements of $S$. We will show that there exist suitable $\alpha, d$ for $G$ by induction on the number $m$ of terms in this sum. The base case $m = 0$ is trivial, so assume that $m > 0$. So we may write $G(T) = s + H(T)$, where $s \in S$ and $H$ is a sum of $m-1$ terms of $S$. Lemmas \ref{betaT^r/(T^p^n+mu)general} and \ref{monomsinFlambda} imply that there exist $\alpha, d$ such that $s$ is killed (modulo $F_{\vec{\lambda}}(K(T), \dots, K(T))$) by $T \mapsto \alpha T^{p^d}$. Because $S$ is mapped into itself by this map, it follows by induction that $G$ may be killed by a map of the form $T \mapsto \alpha'T^{p^{d'}}$. This completes the proof of the proposition for $V = \mathscr{V}$.

The case $V = \R_{K^{1/p}/K}(\alpha_p)$ is easier. By \cite[Prop.\,7.4]{rosasuniv}, this group is described by the equation
\[
F(T) := \sum_{f \in I}\left(\prod_{i=1}^r\lambda_i^{f(i)}\right)X_f^p = 0.
\]
Thus, as above, it is enough to construct, for any $G(T) \in K(T)$, a $K$-endomorphism $\phi$ of $K(T)$ such that $\phi(G) \in H(K(T), \dots, K(T))$. In fact, because the $\lambda_i$ form a $p$-basis for $K$, one may take $\phi(T) := T^p$.
\end{proof}

\section{The groups $\mathscr{V}_{n,\vec{\lambda}}$}
\label{Vnlambdasection}

In this section we study certain important permawound unipotent groups that will play a fundamental role in our applications. The groups in question are described in the following definition.

\begin{definition}
\label{Vndef}
Let $K$ be a field of finite degree of imperfection $r$, and let $\vec{\lambda} := [\lambda_1, \dots, \lambda_r]$ be a $p$-basis. For $n \geq 0$, let $I_n$ denote the set of functions $\{1, \dots, r\} \rightarrow \{0, \dots, p^n-1\}$. For $n > 0$, define a $p$-polynomial $F_{n, \vec{\lambda}} \in K[X, X_f \mid f \in I_n, f \not\equiv 0\pmod{p}]$ by the formula
\begin{equation}
\label{Vndefeqn}
F_{n, \vec{\lambda}} = -X + X^p + \sum_{\substack{f \in I_n\\f\not\equiv0\pmod{p}}} \left(\prod_{i=1}^r\lambda_i^{f(i)}\right) X_f^{p^n}.
\end{equation}
Then we define the $K$-group $\mathscr{V}_{n, \vec{\lambda}}$ to be the vanishing locus of $F_{n, \vec{\lambda}}$. When $K$ has degree of imperfection $1$, if $\vec{\lambda} =[\lambda]$, then we also denote $\mathscr{V}_{n,\vec{\lambda}}$ by $V_{n,\lambda}$.
\end{definition}

Note that $\mathscr{V}_{1, \vec{\lambda}} = \mathscr{V}$ when $K$ is separably closed. By replacing $X$ in the expression $F_{n,\vec{\lambda}}$ with the universal reduced $p$-polynomial $$\sum_{g \in I_{n-1}} \left(\prod_{i=1}^r \lambda_i^{g(i)}\right)Y_g^{p^{n-1}},$$ one sees that the principal part of $F_{n, \vec{\lambda}}$ is reduced and universal, hence $V_{n, \vec{\lambda}}$ is wound and permawound by \cite[Prop.\,2.4, Th.\,6.10]{rosasuniv}.

For every $n > 0$, we define a surjection $\phi_n\colon \mathscr{V}_{n+1,\vec{\lambda}} \twoheadrightarrow \mathscr{V}_{n,\vec{\lambda}}$ via the map which sends $(X, (X_f)_{f \in I_{n+1}, f\not\equiv0\pmod{p}})$ to $(X, (Z_g)_{g \in I_{n}, g\not\equiv0\pmod{p}})$, where
\begin{equation}
\label{quotofVnpowpfeqn1}
Z_g := \sum_{\substack{f\in I_{n+1}\\f\equiv g \pmod{p^n}}} \left( \prod_{i=1}^r\lambda_i^{(f(i)-g(i))/p^n} \right)X_f^p.
\end{equation}
Note in particular that this yields an exact sequence
\begin{equation}
\label{quotofVnpowpfeqn2}
0 \longrightarrow (\R_{K^{1/p}/K}(\alpha_p))^{p^{nr}-p^{(n-1)r}} \longrightarrow \mathscr{V}_{n+1,\vec{\lambda}} \xlongrightarrow{\phi_n} \mathscr{V}_{n,\vec{\lambda}} \longrightarrow 0.
\end{equation}

One has the following lemma.

\begin{lemma}
\label{charpullbackphi}
For any $\chi \in \Hom(\mathscr{V}_{n,\vec{\lambda}}, \Ga)$, $\phi_n^*(\chi)$ is a $K$-linear combination of $p$th powers of elements of $\Hom(\mathscr{V}_{n+1,\vec{\lambda}}, \Ga)$.
\end{lemma}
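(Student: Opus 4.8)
The plan is to reduce the claim to the explicit formula for $\phi_n$ together with the defining equation of $\mathscr{V}_{n+1,\vec\lambda}$. First I would invoke the standard fact (for the unipotent groups cut out by $p$-polynomials, cf. \cite{rosasuniv}) that every element of $\Hom(\mathscr{V}_{n,\vec\lambda},\Ga)$ is represented by a $p$-polynomial in the coordinate functions, so that I may write $\chi = a_0 X + \sum_{j\geq1}a_j X^{p^j} + \sum_g\sum_{j\geq0}b_{g,j}X_g^{p^j}$ with all coefficients in $K$ (here $X$ and the $X_g$, $g\in I_n$, $g\not\equiv0\pmod p$, are the coordinates of $\mathscr{V}_{n,\vec\lambda}$). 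Since $\phi_n^*$ is $K$-linear and carries $p$th powers of characters to $p$th powers of characters, and since the $K$-linear combinations of $p$th powers of characters of $\mathscr{V}_{n+1,\vec\lambda}$ form a $K$-subspace of $\Hom(\mathscr{V}_{n+1,\vec\lambda},\Ga)$, it is enough to treat each monomial of $\chi$ separately.

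Next I would dispose of every term except $a_0X$. For $j\geq1$ one has $\phi_n^*(a_jX^{p^j}) = a_jX^{p^j} = a_j(X^{p^{j-1}})^p$, using $\phi_n^*X = X$; and using $\phi_n^*X_g = Z_g := \sum_{f\equiv g\,(p^n)}\left(\prod_i\lambda_i^{(f(i)-g(i))/p^n}\right)X_f^p$ one gets, for $j\geq1$, $\phi_n^*(b_{g,j}X_g^{p^j}) = b_{g,j}(Z_g^{p^{j-1}})^p$, with $Z_g^{p^{j-1}}$ again a $p$-polynomial in the $X_f$, hence a character of $\mathscr{V}_{n+1,\vec\lambda}$; finally $\phi_n^*(b_{g,0}X_g) = b_{g,0}Z_g$ is visibly a $K$-linear combination of $p$th powers of the coordinate characters $X_f$. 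The one remaining term is $a_0X$, and here $\phi_n^*(a_0X) = a_0X$, now viewed as the coordinate $X$ on $\mathscr{V}_{n+1,\vec\lambda}$; but the relation $F_{n+1,\vec\lambda}=0$ reads
\[
X = X^p + \sum_{\substack{f\in I_{n+1}\\ f\not\equiv0\pmod p}}\left(\prod_{i=1}^r\lambda_i^{f(i)}\right)X_f^{p^{n+1}} = X^p + \sum_{\substack{f\in I_{n+1}\\ f\not\equiv0\pmod p}}\left(\prod_{i=1}^r\lambda_i^{f(i)}\right)(X_f^{p^n})^p
\]
on $\mathscr{V}_{n+1,\vec\lambda}$, exhibiting $a_0X$ as the $K$-linear combination $a_0X^p + \sum_f a_0\left(\prod_i\lambda_i^{f(i)}\right)(X_f^{p^n})^p$ of $p$th powers of the characters $X$ and $X_f^{p^n}$. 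Summing the contributions of all the terms then yields the lemma.

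The main obstacle — really the only non-formal input — is the opening reduction, namely that every character of $\mathscr{V}_{n,\vec\lambda}$ is represented by a $p$-polynomial in the coordinates (equivalently, that $\Hom(\mathscr{V}_{n,\vec\lambda},\Ga)$ is spanned over $K$ by the coordinate functions together with $p$th powers of characters). Granting this, the rest is a mechanical unwinding of the formula for $\phi_n$ and the defining equation of $\mathscr{V}_{n+1,\vec\lambda}$; the one genuinely useful observation is that the potentially problematic linear-in-$X$ part of $\chi$ becomes, on $\mathscr{V}_{n+1,\vec\lambda}$, a tautological sum of $p$th powers by virtue of that equation.
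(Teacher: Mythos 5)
Your proof is correct and follows essentially the same route as the paper's: reduce via the fact that every character of $\mathscr{V}_{n,\vec{\lambda}}$ is a $p$-polynomial in the coordinates, observe that the coordinates $X_g$ pull back to the $Z_g$, which are manifestly $K$-linear combinations of $p$th powers by the formula defining $\phi_n$, and dispose of the linear term in $X$ by the defining equation $X = X^p + \sum_f (\prod_i \lambda_i^{f(i)}) X_f^{p^{n+1}}$ (the paper applies the analogous equation on $\mathscr{V}_{n,\vec{\lambda}}$ before pulling back, which is the same observation). Your write-up is just a more detailed unwinding of the paper's argument.
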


\begin{proof}
Every element of $\Hom(\mathscr{V}_{n,\vec{\lambda}}, \Ga)$ is a $p$-polynomial in $X$ and the $Z_g$ by \cite[Prop.\,6.4]{rosmodulispaces}, so we only need to show that $X$ and $Z_g$ pull back along $\phi_n$ in the desired manner. For $X$, there is not even a need to pull back, as the equation for $\mathscr{V}_{n,\vec{\lambda}}$ shows that $X$ is already a $K$-linear combination of $p$th powers of elements of $\Hom(\mathscr{V}_{n,\vec{\lambda}}, \Ga)$. For $Z_g$, the assertion follows from the equation (\ref{quotofVnpowpfeqn1}) defining $\phi_n$. This proves the lemma.
\end{proof}

The significance of the groups $\mathscr{V}_{n,\vec{\lambda}}$ to the study of permawound groups arises from the following proposition.

\begin{proposition}
\label{quotofVnpow}
Let $K$ be a separably closed field of finite degree of imperfection $r$, and let $\vec{\lambda} := [\lambda_1, \dots, \lambda_r]$ be a $p$-basis for $K$. Let $U$ be a commutative, $p$-torsion, permawound $K$-group scheme. Then for some $n, m$, there is a surjective $K$-homomorphism $(\mathscr{V}_{n, \vec{\lambda}})^m \twoheadrightarrow U$.
\end{proposition}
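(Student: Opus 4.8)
Since all the hypotheses are insensitive to separable base change, we work over $K = K_s$ as in the statement (and assume $r \ge 1$, the perfect case being degenerate). The idea is to construct the surjection inductively along the filtration of $U$ furnished by the rigidity property of permawound groups, the inductive step being carried out by pulling extensions back along the tower of surjections $\cdots \xrightarrow{\phi_{n}} \mathscr{V}_{n,\vec{\lambda}} \xrightarrow{\phi_{n-1}} \cdots \to \mathscr{V}_{1,\vec{\lambda}} = \mathscr{V}$ so as to trivialize the relevant $\Ext^{1}$-classes.

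\emph{Reductions.} I would first peel off the split part: with $U_{\Split} \cong \Ga^{d}$ the maximal split $K$-subgroup, $\overline{U} := U/U_{\Split}$ is wound, and it is again permawound since quotients of permawound groups are permawound (immediate from Definition \ref{asunivdef}: a surjection $U \twoheadrightarrow \overline{U}$ turns any right-exact $\overline{U} \to E \to \Ga \to 1$ into a right-exact $U \to E \to \Ga \to 1$). Two elementary facts make the split part harmless. First, $\Ga$ is a quotient of $\mathscr{V} = \mathscr{V}_{1,\vec{\lambda}}$ --- forgetting the coordinate $X_{0}$ in (\ref{Vdef}) gives a surjection $\mathscr{V} \twoheadrightarrow \Ga^{p^{r}-1}$, and $r \ge 1$ --- hence of every $\mathscr{V}_{N,\vec{\lambda}}$ via $\mathscr{V}_{N,\vec{\lambda}} \twoheadrightarrow \mathscr{V}_{1,\vec{\lambda}}$; so a power of $\mathscr{V}_{N,\vec{\lambda}}$ times a power of $\Ga$ is still a quotient of a power of $\mathscr{V}_{N,\vec{\lambda}}$. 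Second, given a surjection $\mathscr{V}_{n,\vec{\lambda}}^{m} \twoheadrightarrow \overline{U}$ (the wound case), the fibre product $\widetilde{U} := U \times_{\overline{U}} \mathscr{V}_{n,\vec{\lambda}}^{m}$ is commutative and $p$-torsion, has $U$ as a quotient, and sits in $0 \to \Ga^{d} \to \widetilde{U} \to \mathscr{V}_{n,\vec{\lambda}}^{m} \to 1$. So once the wound case is known, it remains only to kill the class of this last extension in $\Ext^{1}(\mathscr{V}_{n,\vec{\lambda}},\Ga)^{dm}$ by pulling back along the $\phi$-tower, after which $\widetilde{U}$, hence $U$, is a quotient of $\Ga^{d}\times\mathscr{V}_{N,\vec{\lambda}}^{m}$ and so of a power of $\mathscr{V}_{N,\vec{\lambda}}$.

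\emph{The wound case and the inductive resolution.} For $U$ wound permawound, Theorem \ref{rigidityasuniv} presents it as an iterated extension whose successive quotients are each $\mathscr{V}$ or $\R := \R_{K^{1/p}/K}(\alpha_{p})$; note that since $U$ is smooth the \emph{top} quotient is forced to be $\mathscr{V}$, while lower non-smooth subgroups will have to be handled with some care (passing to maximal smooth subgroups where necessary). Inducting on the length of such a presentation, each stage $0 \to H' \to H \to Q \to 1$ with $Q \in \{\mathscr{V},\R\}$ is treated by: surjecting a group of the desired shape onto $Q$ (for $Q=\mathscr{V}$, identity; for $Q=\R$, using that $\R$ occurs as a subgroup of some $\mathscr{V}_{N,\vec{\lambda}}$ through (\ref{quotofVnpowpfeqn2})), pulling $H\to Q$ back along it, then pulling further back along the $\phi$-tower until the resulting extension class dies, and finally lifting the class defining $H'$ across the inductively-given surjection onto $H'$ and forming a pushout. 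The sequences (\ref{quotofVnpowpfeqn2}) are precisely the device exhibiting $\mathscr{V}_{n+1,\vec{\lambda}}$ as $\mathscr{V}_{n,\vec{\lambda}}$ with one further layer of copies of $\R$ piled on, and the point is that, after enough such layers, $\mathscr{V}_{N,\vec{\lambda}}$ becomes ``universal enough'' to dominate all the extensions that arise.

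\emph{The $\Ext$-vanishing engine and the main obstacle.} Everything thus reduces to: for $A \in \{\Ga,\R\}$ (and, by dévissage through (\ref{quotofVnpowpfeqn2}), for $A$ a power of any $\mathscr{V}_{n',\vec{\lambda}}$), the pullback $\Ext^{1}(\mathscr{V}_{n,\vec{\lambda}},A) \to \Ext^{1}(\mathscr{V}_{N,\vec{\lambda}},A)$ vanishes for $N \gg n$. The cokernel part of $\Ext^{1}(-,\R)$ is killed in a single step by Lemma \ref{charpullbackphi}: via the presentation $0 \to \R \to \Ga^{p^{r}} \xrightarrow{q} \Ga \to 0$ with $q = \sum_{f \in I_{1}}\bigl(\prod_{i}\lambda_{i}^{f(i)}\bigr)X_{f}^{p}$ (\cite[Prop.\,7.4]{rosasuniv}), the image of $q_{*}\colon \Hom(\mathscr{V}_{N,\vec{\lambda}},\Ga)^{p^{r}} \to \Hom(\mathscr{V}_{N,\vec{\lambda}},\Ga)$ is exactly the set of $K$-linear combinations of $p$-th powers of characters, so Lemma \ref{charpullbackphi} says precisely that $\phi_{n}^{*}$ carries $\Hom(\mathscr{V}_{n,\vec{\lambda}},\Ga)$ into $\im(q_{*})$ and hence annihilates the connecting map into $\Ext^{1}(-,\R)$; the same argument with the Witt-vector sequence $0 \to \Ga \xrightarrow{V} W_{2} \xrightarrow{R} \Ga \to 0$ kills the cokernel part of $\Ext^{1}(-,\Ga)$. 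What remains --- and what I expect to be the crux --- is to show that the self-maps of $\Ext^{1}(\mathscr{V}_{n,\vec{\lambda}},\Ga)$ induced by the $\phi_{n}$ become zero upon iteration, i.e.\ that the tower $(\mathscr{V}_{n,\vec{\lambda}})_{n}$ is asymptotically $\Ext^{1}(-,\Ga)$-acyclic with a \emph{uniform} bound on the required depth $N$; this should again be extracted from Lemma \ref{charpullbackphi} together with the explicit structure of the kernels $(\R)^{p^{nr}-p^{(n-1)r}}$ in (\ref{quotofVnpowpfeqn2}), by comparing an arbitrary such extension with the ``universal'' one built into the tower. Making this comparison precise, and correctly bookkeeping the non-smooth subquotients in the wound induction, is the technical heart of the argument.
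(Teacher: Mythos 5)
Your proposal correctly identifies two of the paper's ingredients (dévissage through the rigidity filtration of Theorem \ref{rigidityasuniv}, and the use of Lemma \ref{charpullbackphi} to kill the $\Ext^1(-,\R_{K^{1/p}/K}(\alpha_p))$ obstruction after one pullback along $\phi_n$), but the architecture you build around them has a step that fails and leaves the genuinely hard points unaddressed. The fatal step is the top-down induction along the filtration: for $Q = \R_{K^{1/p}/K}(\alpha_p)$ there is \emph{no} surjection from any power of $\mathscr{V}_{N,\vec{\lambda}}$ onto $Q$, since $Q$ is totally nonsmooth while the image of a smooth group under a homomorphism is smooth; and the intermediate terms $U_i$ of the filtration are themselves non-smooth in general, so ``passing to maximal smooth subgroups'' destroys the very filtration you are inducting on. The paper instead inducts on $\dim U$ using the \emph{bottom} piece $W := U_1 \subset U$ (isomorphic to $\mathscr{V}$ or $\R_{K^{1/p}/K}(\alpha_p)$, with $U/W$ wound and permawound): by induction one surjects $(\mathscr{V}_{n,\vec{\lambda}})^m \twoheadrightarrow U/W$, forms the fibre product $U' := U\times_{U/W}(\mathscr{V}_{n,\vec{\lambda}})^m$, and kills the extension $0 \to W \to U' \to (\mathscr{V}_{n,\vec{\lambda}})^m \to 0$. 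When $W \simeq \mathscr{V}$ this extension splits outright by \cite[Cor.\,8.3]{rosasuniv} --- a fact your plan neither invokes nor replaces. When $W \simeq \R_{K^{1/p}/K}(\alpha_p)$, one uses that $U'$ has trivial Verschiebung to write it as $\delta(\chi)$ for a character $\chi$ and then applies Lemma \ref{charpullbackphi} as you describe; but even after producing the lift $f\colon (\mathscr{V}_{n+1,\vec{\lambda}})^m \to U'$, surjectivity of $\pi\circ f$ onto $U$ is not automatic (the lift only visibly covers the quotient $(\mathscr{V}_{n,\vec{\lambda}})^m$, and $W$ receives no nonzero maps from smooth groups). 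The paper closes this by noting that $U/\im(\pi\circ f)$ is a smooth quotient of $\R_{K^{1/p}/K}(\alpha_p)$, hence a vector group by \cite[Prop.\,7.7]{rosasuniv}, which must vanish because $U$ is wound and $\im(\pi\circ f)$ is permawound.

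The other gap is the one you flag yourself as the ``crux'': iterated vanishing of $\phi_n^*$ on $\Ext^1(\mathscr{V}_{n,\vec{\lambda}},\Ga)$. You do not prove it, and in fact it is not needed: the paper disposes of the split part by citing that a commutative $p$-torsion permawound group is a \emph{direct product} (as a $K$-group) of a vector group and a wound permawound group \cite[Props.\,2.9, 5.4]{rosasuniv}, and since $\mathscr{V}$ surjects onto $\Ga^{p^r-1}$ with $r \geq 1$, the vector-group factor costs nothing. Your fibre-product reduction of the split part to an $\Ext^1(-,\Ga)$-killing statement thus replaces a one-line citation with an open problem. In short: the mechanism of Lemma \ref{charpullbackphi} is correctly understood, but the induction must run bottom-up on dimension rather than top-down on filtration length, and the $\mathscr{V}$-splitting, the final surjectivity check, and the product decomposition for the split part are all missing and are not optional.
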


\begin{proof}
Note first that $\mathscr{V} = \mathscr{V}_{1,\vec{\lambda}}$ admits a $\Ga^{p^r-1}$ quotient; for instance, take the map $\mathscr{V} \rightarrow \Ga^{p^r-1}$ sending $(X, X_f)$ to $(X_f)$. It follows that the proposition holds for $U$ a  vector group. In general, $U$ is the product of a vector group and a wound group that is necessarily permawound \cite[Props.\,2.9, 5.4]{rosasuniv}. We are therefore reduced to the case in which $U$ is wound.

We proceed by induction on ${\rm{dim}}(U)$. When $U = 1$ the assertion is trivial, so assume that $U$ is nontrivial. By Theorem \ref{rigidityasuniv}, $U$ contains a $K$-subgroup scheme $W$ isomorphic to either $\R_{K^{1/p}/K}(\alpha_p)$ or $\mathscr{V}$ and with wound quotient $\overline{U} := U/W$. By induction, there is a surjection $(\mathscr{V}_{n, \vec{\lambda}})^m \twoheadrightarrow \overline{U}$. Let $U'$ denote the $K$-group $U \times_{\overline{U}} (\mathscr{V}_{n,\vec{\lambda}})^m$ that naturally surjects onto $U$; call this surjection $\pi$. Then $U'$ sits in an exact sequence
\[
0 \longrightarrow W \longrightarrow U' \longrightarrow (\mathscr{V}_{n,\vec{\lambda}})^m \longrightarrow 0.
\]
If $W \simeq \mathscr{V}$, then $U' \simeq \mathscr{V} \times (\mathscr{V}_{n, \vec{\lambda}})^m$ \cite[Cor.\,8.3]{rosasuniv}, so we are done in this case using the maps $\phi_n$ defined above to obtain a surjection $\mathscr{V}_{n, \vec{\lambda}} \twoheadrightarrow \mathscr{V}_{1,\vec{\lambda}} = \mathscr{V}$. So assume for the rest of the proof that $W \simeq \R_{K^{1/p}/K}(\alpha_p)$.

Let $$G = \sum_{f \in I_1} \left(\prod_{i=1}^r \lambda_i^{f(i)}\right)Y_f^p,$$ so that $W$ sits in the exact sequence
\[
0 \longrightarrow W \xlongrightarrow{i} \Ga^{I_1} \xlongrightarrow{G} \Ga \longrightarrow 0.
\]
Then $U' \in \Ext^1((\mathscr{V}_{n,\vec{\lambda}})^m, W)$, and the extension $i_*(U') \in \Ext^1((\mathscr{V}_{n,\vec{\lambda}})^m, \Ga^{I_1})$ is a $K$-group scheme with trivial Verschiebung, because $U'$ is. It follows that $U'$ is of the form $\delta(\chi)$ for some $\chi \in \Hom((\mathscr{V}_{n,\vec{\lambda}})^m, \Ga)$, where $\delta\colon \Hom((\mathscr{V}_{n,\vec{\lambda}})^m, \Ga) \rightarrow \Ext^1((\mathscr{V}_{n,\vec{\lambda}})^m, W)$ is the connecting map \cite[Prop.\,2.8]{rosasuniv}. By Lemma \ref{charpullbackphi}, $(\phi_n^m)^*(\chi) \in \Hom((\mathscr{V}_{n+1,\vec{\lambda}})^m, \Ga)$ is a $K$-linear combination of $p$th powers of elements of $\Hom((\mathscr{V}_{n+1,\vec{\lambda}})^m, \Ga)$. It follows that $(\phi_n^m)^*(\chi) \in G_*(\Hom((\mathscr{V}_{n+1,\vec{\lambda}})^m, \Ga^{I_1}))$. Thus $(\phi_n^m)^*(U') \in \Ext^1((\mathscr{V}_{n+1,\vec{\lambda}})^m, W)$ is split. That is, we have a map $f\colon (\mathscr{V}_{n+1,\vec{\lambda}})^m \rightarrow U'$ whose composition with $U' \rightarrow (\mathscr{V}_{n,\vec{\lambda}})^m$ is $\phi_n^m$ and in particular surjective. We claim that $\pi\circ f\colon (\mathscr{V}_{n+1,\vec{\lambda}})^m \rightarrow U$ is surjective. To see this, let $V := \im(\pi\circ f) \subset U$. Because $\im(f)$ surjects onto the quotient $(\mathscr{V}_{n,\vec{\lambda}})^m$ of $U'$, it follows that $U/V$ is a quotient of $W \simeq \R_{K^{1/p}/K}(\alpha_p)$. It is also a quotient of $U$, hence smooth. It follows from \cite[Prop.\,7.7]{rosasuniv} that $U/V$ is a vector group. On the other hand, $V$ is a quotient of the permawound $K$-group $(\mathscr{V}_{n+1,\vec{\lambda}})^m$. It follows that an extension of it by a nontrivial vector group cannot be wound, so the vector group $U/V$ must be trivial. That is, $\pi \circ f$ is surjective.
\end{proof}

The following important result gives an alternative description of $\mathscr{V}$ when $K$ has degree of imperfection $1$.

\begin{proposition}$($$\cite[Ch.\,VI, Prop.\,5.3]{oesterle}$$)$
\label{eqnforweilrestgm}
Let $K$ be a field of degree of imperfection $1$, and let $\lambda \in K-K^p$. Then $\R_{K^{1/p}/K}(\Gm)/\Gm$ is isomorphic to the $K$-group scheme
\[
\left\{X_{p-1} = \sum_{j=0}^{p-1}\lambda^jX_j^p\right\} \subset \Ga^p.
\]
\end{proposition}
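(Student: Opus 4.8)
The plan is to construct the isomorphism explicitly by means of a logarithmic derivative. Set $L := K^{1/p}$ and $t := \lambda^{1/p} \in L$; since $\lambda \notin K^p$ and $K$ has degree of imperfection $1$, we have $L = K(t)$ with $[L:K] = p$, so that $1, t, \dots, t^{p-1}$ is a $K$-basis of $L$ and $\R_{L/K}(\Gm)$ is the open subscheme of $\R_{L/K}(\A^1) = \Ga^p$ (with coordinates $X_0, \dots, X_{p-1}$ encoding the universal element $x := \sum_j X_j t^j$) consisting of units, containing $\Gm$ as the subgroup of scalars $X_1 = \dots = X_{p-1} = 0$. Let $\partial$ be the unique $K$-derivation of $L$ with $\partial(t) = 1$ (so $\partial(t^j) = j t^{j-1}$; it exists and is unique because $\Omega^1_{L/K}$ is free of rank one on $dt$), and define $\mathrm{dlog}\colon \R_{L/K}(\Gm) \to \R_{L/K}(\Ga) = \Ga^p$ by $u \mapsto u^{-1}\partial(u)$; the Leibniz rule makes this a $K$-group homomorphism. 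Writing $\ell\colon \Ga^p \to \Ga$ for the coordinate $X_{p-1}$ (the coefficient of $t^{p-1}$) and noting that the $p$-power map $x \mapsto x^p$ on $\R_{L/K}(\Ga)$ has image in $\Ga$ and is given by $x \mapsto \sum_j \lambda^j X_j^p$, the target group of the proposition is precisely $G = \{x : \ell(x) = x^p\}$.

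First I would compute the kernel of $\mathrm{dlog}$. Since $\partial(x) = \sum_{j \geq 1} j X_j t^{j-1}$ and each integer $j$ with $1 \leq j \leq p-1$ is a unit modulo $p$, the scheme $\{\partial(x) = 0\}$ is the locus $X_1 = \dots = X_{p-1} = 0$, whence $\ker(\mathrm{dlog}) = \Gm$ scheme-theoretically. As $\mathrm{dlog}$ is a homomorphism of affine $K$-group schemes of finite type, its image $\mathrm{dlog}(\R_{L/K}(\Gm)) \subseteq \Ga^p$ is a closed $K$-subgroup, isomorphic to $\R_{L/K}(\Gm)/\Gm$ and of dimension $p-1$.

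The crux is to show that this image lies in $G$; equivalently, that $Z := \mathrm{dlog}^{-1}(G)$, which is a closed $K$-subgroup of $\R := \R_{L/K}(\Gm)$, is all of $\R$. It suffices to prove $Z_{\overline K} = \R_{\overline K}$, and since $\R_{\overline K}$ is smooth it is enough to check that $\R(\overline K) \subseteq Z(\overline K)$. Over $\overline K$ one has $\overline K \otimes_K L = \overline K[t]/(t^p - \lambda) = \overline K[t]/(t - \lambda^{1/p})^p$, and every unit there is, up to a scalar multiple, a product of a power of $t$ and of elements $1 + at$ with $a \in \overline K$: writing the unit as $\tilde u(t)$ for $\tilde u \in \overline K[t]$ of degree $< p$ with $\tilde u(\lambda^{1/p}) \neq 0$ and factoring $\tilde u$ into linear factors over $\overline K$, each factor is $t$ or a scalar multiple of some $1 + at$. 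Because $Z$ is a subgroup, it remains to verify that scalars, $t$, and the elements $1 + at$ lie in $Z$. For a scalar, $\mathrm{dlog} = 0 \in G$. For $t$: $\mathrm{dlog}(t) = t^{-1} = \lambda^{-1}t^{p-1}$, whose $t^{p-1}$-coefficient is $\lambda^{-1}$ and whose $p$-th power is $\lambda^{-p}\lambda^{p-1} = \lambda^{-1}$. For $1+at$: $\mathrm{dlog}(1+at) = a(1+at)^{-1} = a(1+at)^{p-1}/(1+a^p\lambda)$, whose $t^{p-1}$-coefficient is $a^p/(1+a^p\lambda)$ (the top coefficient of $(1+at)^{p-1}$ being $a^{p-1}$) and whose $p$-th power is $a^p(1+at)^{-p} = a^p/(1+a^p\lambda)$. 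In each case $\ell(\mathrm{dlog}(\cdot)) = \mathrm{dlog}(\cdot)^p$, so $Z = \R$ and $\mathrm{dlog}(\R) \subseteq G$.

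Finally I would finish with a dimension count. The hypersurface $G \subset \Ga^p$ is cut out by the $p$-polynomial $X_{p-1} - \sum_j \lambda^j X_j^p$, which has nonzero linear part $X_{p-1}$, so $G$ is smooth of dimension $p-1$; and $G$ is geometrically connected, because over $\overline K$ the linear change of coordinates replacing $X_{p-1}$ by $W := \sum_j \lambda^{j/p}X_j$ turns the defining equation into $X_{p-1} = W^p$, which together with $\sum_{j \leq p-2}\lambda^{j/p}X_j = W - \lambda^{(p-1)/p}W^p$ exhibits $X_0, \dots, X_{p-3}, W$ as coordinates identifying $G_{\overline K}$ with $\Ga^{p-1}$. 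Hence $\mathrm{dlog}$ realizes $\R/\Gm \cong \mathrm{dlog}(\R)$ as a closed $K$-subgroup of the smooth geometrically connected group $G$ of the same dimension $p-1$, so this subgroup is all of $G$, and $\mathrm{dlog}$ induces the desired isomorphism $\R_{K^{1/p}/K}(\Gm)/\Gm \xrightarrow{\,\sim\,} G$. I expect the main obstacle to be the verification that $\mathrm{dlog}$ lands in $G$; routing it through the subgroup $Z$ and the generators $t$ and $1 + at$ replaces an unpleasant direct expansion of $u^{p-1}\partial(u)$ in the basis $\{t^j\}$ by three one-line checks.
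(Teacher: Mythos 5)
The paper does not prove this statement at all: it is quoted verbatim from Oesterl\'e \cite[Ch.\,VI, Prop.\,5.3]{oesterle} and used as a black box, so there is no internal proof to compare yours against. Your logarithmic-derivative argument is, as far as I can check, a correct and self-contained proof. The key points all hold up: $\partial$ exists and is unique since $\Omega^1_{L/K}$ is free on $dt$; the scheme-theoretic kernel of $\mathrm{dlog}$ is exactly $\Gm$ because each $j$ with $1\le j\le p-1$ is a unit; the image of a homomorphism of affine $K$-group schemes of finite type is a closed subgroup isomorphic to the quotient by the kernel; the reduction of $Z=\R$ to $\overline{K}$-points is legitimate because $\R_{\overline K}$ is smooth, hence reduced with dense $\overline{K}$-points; every unit of $\overline{K}\otimes_K L=\overline{K}[t]/((t-\lambda^{1/p})^p)$ is indeed a product of a scalar, powers of $t$, and factors $1+at$ (each a unit because the product is); and the three verifications are correct, the only point a reader might pause on being that $1+a^p\lambda=(1+a\lambda^{1/p})^p\neq 0$, which justifies the division. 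The closing dimension-and-connectedness count ($G$ smooth of dimension $p-1$ by the nonzero linear part, geometrically connected by the coordinate change, and a smooth connected closed subgroup of full dimension must be everything) is also sound. Compared with simply citing Oesterl\'e, your route has the virtue of making the isomorphism explicit and of isolating the one genuinely computational step (that $\mathrm{dlog}$ lands in $G$) into three one-line checks on multiplicative generators, which is a clean way to avoid expanding $u^{p-1}\partial(u)$ in the basis $\{t^j\}$.
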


\begin{proposition}
\label{weilrestasuniv}
Let $K$ be a field of degree of imperfection $1$. Then for all $n \geq 0$, $\R_{K^{1/p^n}/K}(\Gm)/\Gm$ is permawound.
\end{proposition}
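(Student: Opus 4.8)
The plan is to induct on $n$. For $n = 0$ the group in question is trivial, which is permawound for vacuous reasons (any right-exact sequence $1 \to E \to \Ga \to 1$ forces $E \simeq \Ga$). For $n = 1$, Proposition \ref{eqnforweilrestgm} identifies $\R_{K^{1/p}/K}(\Gm)/\Gm$ with the zero locus of $-X_{p-1} + \sum_{j=0}^{p-1}\lambda^j X_j^p$ in $\Ga^p$ for any $\lambda \in K - K^p$; the invertible substitution $X_{p-1} = \lambda^{-1}W$ followed by a relabeling of the variables converts this into the defining equation of $\mathscr{V}_{1,[\lambda]}$, which is wound and permawound by \cite[Prop.\,2.4, Th.\,6.10]{rosasuniv}.

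For the inductive step, write $M := K^{1/p^{n-1}}$, again a field of degree of imperfection $1$, so that $K^{1/p^n} = M^{1/p}$. Weil restriction along a finite extension is exact on smooth commutative affine group schemes (it preserves surjectivity of homomorphisms of smooth groups, as one checks on geometric points using formal smoothness), so applying $\R_{M/K}$ to the exact sequence $1 \to \Gm \to \R_{M^{1/p}/M}(\Gm) \to \R_{M^{1/p}/M}(\Gm)/\Gm \to 1$ over $M$ and then passing to the quotient by the subtorus $\Gm \subset \R_{M/K}(\Gm)$ produces, via the transitivity $\R_{M/K} \circ \R_{M^{1/p}/M} = \R_{K^{1/p^n}/K}$, an exact sequence of $K$-group schemes
\[
1 \longrightarrow \R_{K^{1/p^{n-1}}/K}(\Gm)/\Gm \longrightarrow \R_{K^{1/p^n}/K}(\Gm)/\Gm \longrightarrow \R_{M/K}\bigl(\R_{M^{1/p}/M}(\Gm)/\Gm\bigr) \longrightarrow 1.
\]
The first term is permawound over $K$ by the inductive hypothesis (it is precisely the case $n-1$). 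For the third term, Proposition \ref{eqnforweilrestgm} applied over $M$ (with the $p$-basis element $\lambda^{1/p^{n-1}} \in M - M^p$) gives an explicit presentation of $\R_{M^{1/p}/M}(\Gm)/\Gm$ as a hypersurface in $\Ga^p_M$; writing its coordinates in the $K$-basis $\{\lambda^{k/p^{n-1}}\}$ of $M$ and taking Weil restriction realizes the third term as a hypersurface in affine space over $K$ which, after eliminating the coordinates that the Weil-restricted equations determine directly and making an invertible change of variables in the remaining distinguished coordinate, has defining equation $X = X^p + \sum_{1 \le m \le p^n - 1,\; p \nmid m} \lambda^m X_m^{p^n}$ — that is, the third term is isomorphic to $\mathscr{V}_{n,[\lambda]}$, which is permawound by \cite[Prop.\,2.4, Th.\,6.10]{rosasuniv}.

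It then remains to know that an extension of a permawound group by a permawound group is permawound — a standard closure property of permawound groups (and surely recorded in \cite{rosasuniv}); it follows by a diagram chase with the definition. Given $1 \to A \to G \to B \to 1$ with $A$, $B$ permawound and a right-exact sequence $G \xrightarrow{\psi} \mathcal E \to \Ga \to 1$ whose image $Q := \psi(G) = \ker(\mathcal E \to \Ga)$ is a quotient of $G$, the group $\mathcal E/\psi(A)$ is an extension of $\Ga$ by the quotient $Q/\psi(A)$ of $B$, hence by permawoundness of $B$ contains a $K$-subgroup $\overline H \simeq \Ga$; the preimage $H$ of $\overline H$ in $\mathcal E$ is then an extension of $\Ga$ by the quotient $\psi(A)$ of $A$, hence by permawoundness of $A$ contains a $K$-subgroup $\simeq \Ga$, so $\mathcal E \supseteq H$ contains $\Ga$. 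Applying this to the displayed exact sequence completes the induction. I expect the main obstacle to be the bookkeeping in the explicit Weil-restriction computation of $\R_{M/K}\bigl(\R_{M^{1/p}/M}(\Gm)/\Gm\bigr)$ for general $n$: I have checked the case $n = 2$ directly, where after the substitution $x_{p-1,p-1} \mapsto \lambda^{-1}x_{p-1,p-1}$ the surviving equation is exactly that of $\mathscr{V}_{2,[\lambda]}$, but the general case requires a careful induction on base-$p$ digits. An attractive alternative, which would make this point clean, is to isolate and prove the single-step isomorphism $\mathscr{V}_{n,[\lambda]} \simeq \R_{K^{1/p}/K}\bigl(\mathscr{V}_{n-1,[\lambda^{1/p}]}^{(K^{1/p})}\bigr)$ and then deduce the general identification by transitivity of Weil restriction.
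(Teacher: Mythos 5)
Your overall strategy is sound but genuinely different from the paper's, and it has one substantive unfinished step. The paper also inducts on $n$, but it uses the multiplication-by-$p$ isogeny sequence
\[
0 \longrightarrow \R_{K^{1/p^n}/K}(\mu_p)/\mu_p \longrightarrow \R_{K^{1/p^n}/K}(\Gm)/\Gm \xrightarrow{\;[p]\;} \R_{K^{1/p^{n-1}}/K}(\Gm)/\Gm \longrightarrow 0,
\]
so the inductive hypothesis is applied to the \emph{quotient}, and the kernel is a $p$-torsion group controlled by the $n=1$ case (where it coincides with $\R_{K^{1/p}/K}(\Gm)/\Gm$, permawound by Proposition \ref{eqnforweilrestgm} and the universality criterion of \cite[Th.\,6.10]{rosasuniv}) together with the closure properties of \cite[Props.\,5.6, 6.9]{rosasuniv}. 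You instead apply $\R_{M/K}$ to the defining sequence of $\R_{M^{1/p}/M}(\Gm)/\Gm$ over $M = K^{1/p^{n-1}}$, which puts the inductive hypothesis on the \emph{subgroup} and leaves you the quotient $\R_{M/K}\bigl(\R_{M^{1/p}/M}(\Gm)/\Gm\bigr)$ to analyze. Your exact sequence is correct ($\R_{M/K}$ of a smooth surjection is surjective, and left-exactness plus transitivity of Weil restriction give the rest), your $n=1$ identification with $\mathscr{V}_{1,\lambda}$ checks out, and both routes ultimately rest on closure of permawoundness under extensions.

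The gap is exactly where you flagged it: you need $\R_{M/K}\bigl(\R_{M^{1/p}/M}(\Gm)/\Gm\bigr)$ to be permawound for every $n$, and you have only verified the identification with $\mathscr{V}_{n,\lambda}$ when $n=2$. This cannot be waved away by ``Weil restriction preserves permawoundness,'' which is not among the available closure properties for inseparable extensions and is not obviously true. So either the single-step isomorphism $\R_{K^{1/p}/K}\bigl(\mathscr{V}_{n-1,\lambda^{1/p}}\bigr) \simeq \mathscr{V}_{n,\lambda}$ must actually be proved (your reduction to this statement by transitivity is the right move), or you must at least carry the variable elimination far enough to exhibit the quotient as the zero locus of a single $p$-polynomial with nonzero linear part and universal principal part, so that \cite[Th.\,6.10]{rosasuniv} applies directly. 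Until one of these is done the proof is incomplete --- and this is precisely the computation that the paper's choice of exact sequence avoids, since there the inexplicit term is the $p$-torsion kernel, which reduces to the $n=1$ case. A smaller point: in your sketch of the extension property you form $\mathcal{E}/\psi(A)$, which requires $\psi(A)$ to be normal in $\mathcal{E}$; normality of $A$ in $G$ only gives $\psi(A) \trianglelefteq \psi(G)$, and normality is not transitive, so the diagram chase as written needs repair --- better to cite the closure property from \cite{rosasuniv} outright, as the paper does.
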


\begin{proof}
For $n = 0$ the assertion is immediate. We prove the result for general $n$ by induction. For $n \geq 1$, we have an exact sequence
\[
0 \longrightarrow \frac{\R_{K^{1/p^n}/K}(\mu_p)}{\mu_p} \longrightarrow \frac{\R_{K^{1/p^n}/K}(\Gm)}{\Gm} \xlongrightarrow{[p]} \frac{\R_{K^{1/p^{n-1}}/K}(\Gm)}{\Gm} \longrightarrow 0.
\]
In particular, the case $n=1$ implies that $\R_{K^{1/p^n}/K}(\mu_p)/\mu_p \simeq \R_{K^{1/p^n}/K}(\Gm)/\Gm$. The latter group is permawound by Proposition \ref{eqnforweilrestgm} and \cite[Th.\,6.10]{rosasuniv}. The proof in general now follows by induction and \cite[Props.\,5.6, 6.9]{rosasuniv}.
\end{proof}

\begin{proposition}
\label{unirimpasuniv}
If $K$ is a field of degree of imperfection $1$, then every unirational wound unipotent $K$-group scheme is permawound.
\end{proposition}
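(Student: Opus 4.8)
The plan is to reduce, in two steps, to a situation where the groups $\R_{K^{1/p^n}/K}(\Gm)/\Gm$ of Proposition \ref{weilrestasuniv} become available as building blocks. First I would observe that $U$ is automatically commutative: being unipotent it is solvable, and being wound it contains no copy of $\Ga$, so Theorem \ref{nilpotclass} with $r = 1$ gives $\mathscr{D}_1U = [U,U] = 1$. Second, I would reduce to the case $K = K_s$. The field $K_s$ still has degree of imperfection $1$; $U_{K_s}$ is still unirational and, since woundness is insensitive to separable extension \cite[Prop.\,B.3.2]{cgp}, still wound; and permawoundness is insensitive to separable extension — for a right-exact sequence $U \to E \to \Ga \to 1$ with $U$ smooth unipotent the group $E$ is an extension of smooth groups, hence smooth, so ``$E$ contains a copy of $\Ga$'' means exactly ``$E$ is not semiwound,'' a property insensitive to separable extension \cite[Prop.\,A.3]{rosasuniv}, and the formation of such sequences commutes with base change to $K_s$. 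So I may assume $K = K_s$ has degree of imperfection $1$ and $U$ is commutative.

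The core of the argument will be to realize $U$ as a quotient of a finite product of copies of groups $\R_{K^{1/p^n}/K}(\Gm)/\Gm$. Since $U$ is unirational (hence smooth), the argument of the proof of Theorem \ref{rigidityfindegofimpbody} (unirationality together with a Bertini-type theorem) shows that $U$ is covered by, hence generated by, rational curves through the origin; finitely many suffice since $U$ is noetherian, and by the partial fraction decomposition (Lemma \ref{partialfrac}) I may take each to be a pointed $K$-morphism $f_j\colon(\P^1_K\backslash\{x_j\},\infty)\to(U,0)$ with $x_j$ a closed point. By the exact sequence (\ref{genjacexactseq}) applied to $\overline X = \P^1_K$ (so $\Gamma(\P^1_K,\calO) = K$ and $\Jac(\P^1_K) = 0$), one has $\Jac_{[x_j]}(\P^1_K) \cong \R_{K(x_j)/K}(\Gm)/\Gm$, and Theorem \ref{albanese} factors $f_j$ through a $K$-homomorphism $\phi_j\colon\R_{K(x_j)/K}(\Gm)/\Gm\to U$; in particular $\im(f_j)\subset\im(\phi_j)$, so the subgroups $\im(\phi_j)$ generate $U$. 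Finally, since $K = K_s$ the residue field $K(x_j)$ is a finite primitive (Lemma \ref{resfldclpt}) purely inseparable extension of $K$, and over a field of degree of imperfection $1$ such an extension is $K$-isomorphic to $K^{1/p^{m_j}}$ with $p^{m_j} = [K(x_j):K]$ — writing $K(x_j) = K(c^{1/p^{m_j}})$ from the minimal polynomial and using that the purely inseparable extensions of such a $K$ are totally ordered by inclusion. Hence each $\im(\phi_j)$ is a quotient of the permawound group $\R_{K^{1/p^{m_j}}/K}(\Gm)/\Gm$ of Proposition \ref{weilrestasuniv}.

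To finish, I would invoke two stability properties of permawoundness. A quotient of a permawound group is permawound: precomposing a right-exact sequence $U' \to E \to \Ga \to 1$ with a surjection $U \twoheadrightarrow U'$ produces a right-exact sequence out of $U$, forcing $E$ to contain a copy of $\Ga$. And the class of permawound groups is closed under extensions, hence under finite products (\cite[Props.\,5.6,\,6.9]{rosasuniv}, exactly as used in the proof of Proposition \ref{weilrestasuniv}). Since the $\im(\phi_j)$ are permawound, pairwise commute (they lie in the commutative group $U$), and generate $U$, the multiplication map $\prod_j\im(\phi_j)\to U$ is a surjective $K$-homomorphism from a permawound group, so $U$ is permawound. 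I expect the main obstacle to be the middle paragraph: recognizing that the Albanese property of Theorem \ref{albanese} converts the curves generating $U$ into homomorphisms out of the generalized Jacobians $\R_{K(x_j)/K}(\Gm)/\Gm$, and that the degree-of-imperfection-$1$ hypothesis pins these down as precisely the groups proved permawound in Proposition \ref{weilrestasuniv}; the reductions to commutativity and to $K = K_s$, the passage to curves of the form $\P^1\backslash\{x_j\}$, and the closure properties of permawoundness are routine but must be assembled carefully.
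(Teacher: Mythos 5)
Your proof is correct and follows essentially the same route as the paper's: reduce to $K$ separably closed, get commutativity from Theorem \ref{nilpotclass}, use the Albanese property of generalized Jacobians of punctured $\P^1$'s to realize $U$ as a quotient of a product of the permawound groups $\R_{K^{1/p^n}/K}(\Gm)/\Gm$ of Proposition \ref{weilrestasuniv}, and conclude by closure of permawoundness under quotients and extensions. The only cosmetic difference is that you reach the reduced Weil restrictions via Lemma \ref{partialfrac} and Theorem \ref{albanese} with a reduced modulus, whereas the paper invokes the Albanese property with an arbitrary modulus and then kills the split unipotent kernel of $\R_{A/K}(\Gm) \rightarrow \R_{A_{\red}/K}(\Gm)$.
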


\begin{proof}
From the definition, one sees that permawoundness over $K_s$ implies the same over $K$, so we may assume that $K$ is separably closed. If $U_{\rm{split}} \subset U$ denotes the maximal split unipotent $K$-subgroup, then $U$ is permawound if and only if $U/U_{\rm{split}}$ is, as follows from the definition of permawoundness. We may therefore assume that $U$ is wound, in which case it is necessarily commutative by Theorem \ref{nilpotclass}. Because $U$ is unirational, it is generated by pointed maps $(X, \infty) \rightarrow (U, 0)$ with $X$ an open subscheme of $\P^1_K$ containing $\infty$. The generalized Jacobian of $X$ (see the discussion in \S\ref{deductionsection} and in particular the exact sequence (\ref{genjacexactseq})) is then a quotient of $\R_{A/K}(\Gm)$ for some finite $K$-algebra $A$. By \cite[\S10.3,Th.\,2]{neronmodels}, it follows that $U$ is generated by homomorphisms from such $K$-groups. If $A_{\red}$ is the reduced quotient of $A$, then the natural map $\R_{A/K}(\Gm) \rightarrow \R_{A_{\red}/K}(\Gm)$ has split unipotent kernel. Therefore, $U$ is generated by maps from such groups with $A$ reduced. Writing $A$ as a product of finite field extensions of $K$, therefore, and using the fact that $K$ is separably closed with degree of imperfection $1$, $U$ is generated by maps from $\R_{K^{1/p^n}/K}(\Gm)$ for varying $n$. But $U$ is unipotent, so in fact it is generated by maps from groups of the form $\R_{K^{1/p^n}/K}(\Gm)/\Gm$. Thus $U$ is a quotient of some finite product of such groups. Because these groups are permawound by Proposition \ref{weilrestasuniv}, so is $U$.
\end{proof}

Proposition \ref{eqnforweilrestgm} shows that $\mathscr{V}$ is unirational when $K$ has degree of imperfection $1$. We will also require the following result which implies the same thing for $\mathscr{V}_{n,\vec{\lambda}}$.

\begin{proposition}
\label{V_nunirdegimp1}
Let $K$ be a field of degree of imperfection $1$. Then any commutative permawound unipotent $K$-group scheme is unirational.
\end{proposition}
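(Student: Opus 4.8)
The plan is to reduce to the case when $K$ is separably closed and then run an induction along the rigidity filtration of permawound groups (Theorem \ref{rigidityasuniv}), using Proposition \ref{torsorsdieratlcurve} to lift maps from rational curves through the graded pieces. Since unirationality descends along separable extensions (Theorem \ref{unirationalitydescendsbody}) and permawoundness is insensitive to separable base change (one direction is immediate from Definition \ref{asunivdef}, the other is recorded in \cite{rosasuniv}), I may assume $K = K_s$; note $K$ still has degree of imperfection $1$. Write $U_{\Split} \trianglelefteq U$ for the maximal split $K$-subgroup; then $\overline{U} := U/U_{\Split}$ is semiwound, smooth and connected, hence wound, and being a quotient of the permawound group $U$ it is again permawound (if $\overline{U} \to E \to \Ga \to 1$ is right exact, then so is $U \to E \to \Ga \to 1$, by composing with $U \twoheadrightarrow \overline{U}$). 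A split unipotent $K$-group is rational as a variety (it is an iterated $\Ga$-torsor over affine bases, and $\Ga$-torsors over affine schemes are trivial), hence unirational. So it suffices to prove $\overline{U}$ is unirational: granting this, assume $\overline{U} \neq 1$ (otherwise $U = U_{\Split}$ is rational and we are done); then $\overline{U}$ is generated by finitely many pointed morphisms $g\colon X \to \overline{U}$ with $X \subsetneq \P^1_K$ open, hence affine (a complete curve in affine $\overline{U}$ is constant), and since $U \to \overline{U}$ is a $U_{\Split}$-torsor, its pullback along $g$ is a $U_{\Split}$-torsor over the affine $X$, hence trivial, so $g$ lifts — after translating, pointedly — to $\widetilde{g}\colon X \to U$. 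The $K$-subgroup $H \subseteq U$ generated by these $\widetilde{g}$ together with finitely many rational curves through the identity generating the unirational group $U_{\Split}$ is smooth and connected, surjects onto $\overline{U}$, and contains $U_{\Split}$, hence $H = U$; a smooth connected group generated by finitely many images of open subschemes of $\P^1_K$ through the identity is unirational (standard; cf. the proof of Theorem \ref{rigidityfindegofimpbody}), so $U$ is unirational.

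It remains to show that $\overline{U}$, a wound commutative permawound $K$-group over the separably closed field $K$ of degree of imperfection $1$, is unirational. By Theorem \ref{rigidityasuniv} there is a filtration $1 = \overline{U}_0 \trianglelefteq \overline{U}_1 \trianglelefteq \dots \trianglelefteq \overline{U}_m = \overline{U}$ with each $\overline{U}_i/\overline{U}_{i-1}$ isomorphic to $\mathscr{V}$ or to $\R_{K^{1/p}/K}(\alpha_p)$, and I claim each $Q_i := \overline{U}/\overline{U}_{m-i}$ is unirational, by induction on $i$. Every $Q_i$ is smooth and connected, being a quotient of $\overline{U}$. For $i = 1$, $Q_1 \cong \overline{U}_m/\overline{U}_{m-1}$ is smooth, hence not isomorphic to the totally nonsmooth group $\R_{K^{1/p}/K}(\alpha_p)$, so $Q_1 \cong \mathscr{V} \cong \R_{K^{1/p}/K}(\Gm)/\Gm$ by Proposition \ref{eqnforweilrestgm}; this is unirational, being a quotient of the rational variety $\R_{K^{1/p}/K}(\Gm)$, which is an open subscheme of $\R_{K^{1/p}/K}(\A^1_K) \cong \A^p_K$.

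For the inductive step, $Q_i \to Q_{i-1}$ is an extension with kernel $N_i := \overline{U}_{m-i+1}/\overline{U}_{m-i}$, isomorphic to $\mathscr{V}$ or to $\R_{K^{1/p}/K}(\alpha_p)$. Since $Q_{i-1}$ is unirational, it is generated by finitely many pointed morphisms $g\colon X \to Q_{i-1}$ with $X \subsetneq \P^1_K$ open; pulling back $Q_i \to Q_{i-1}$ along such a $g$ gives an $N_i$-torsor over $X$, which by Proposition \ref{torsorsdieratlcurve} becomes trivial after pulling back along a suitable dominant $Y \to X$ with $Y \subseteq \P^1_K$ open, so $g$ lifts (after a translation, pointedly) to $\widetilde{g}\colon Y \to Q_i$. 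Let $H \subseteq Q_i$ be the smooth connected $K$-subgroup generated by these $\widetilde{g}$; then $H$ surjects onto $Q_{i-1}$, so $Q_i = H + N_i$ and $Q_i/H$ is a quotient of $N_i$. If $N_i \cong \R_{K^{1/p}/K}(\alpha_p)$, then $Q_i/H$ has dimension $0$ and is a smooth connected quotient of $Q_i$, hence trivial, so $H = Q_i$. If $N_i \cong \mathscr{V}$, then $N_i$ is unirational, so I enlarge $H$ by finitely many rational curves through the identity generating $N_i \subseteq Q_i$; then $H \supseteq N_i$ and $H$ still surjects onto $Q_{i-1} = Q_i/N_i$, whence $H = Q_i$. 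In either case $Q_i$ is generated by finitely many images of open subschemes of $\P^1_K$ through the identity, hence unirational, completing the induction; taking $i = m$ gives that $\overline{U}$ is unirational.

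The genuinely load-bearing input, beyond general facts, is Proposition \ref{torsorsdieratlcurve}: it is exactly what allows the induction along the rigidity filtration to pass through the totally nonsmooth graded pieces $\R_{K^{1/p}/K}(\alpha_p)$ as well as the $\mathscr{V}$-pieces. The routine points I would still need to spell out are: that a smooth connected $K$-group generated by finitely many unirational subvarieties through the identity is unirational, and that a unirational group is generated by finitely many such curves (a Bertini-type argument as in the proof of Theorem \ref{rigidityfindegofimpbody}); the triviality of $U_{\Split}$-torsors over affine curves; and the bookkeeping with pointedness of the lifted curves. I expect the main subtlety in writing this out carefully to be the $N_i \cong \mathscr{V}$ case of the inductive step, where a dimension count alone does not force $H = Q_i$ and one must explicitly adjoin curves generating $N_i$.
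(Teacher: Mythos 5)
Your overall strategy is the paper's: reduce to $K$ separably closed, split off $U_{\Split}$, and induct along the rigidity filtration of Theorem \ref{rigidityasuniv}, using Proposition \ref{torsorsdieratlcurve} to lift generating rational curves through each graded piece. Most of the routine points you flag (triviality of split-unipotent torsors over affine curves, generation by finitely many pointed rational curves, the base case via Proposition \ref{eqnforweilrestgm}) are handled the same way in the paper, which phrases the induction on $\dim U$ with the sequence $0 \to W \to U \to U' \to 0$ rather than on the filtration index, but that is cosmetic.

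There is, however, one genuine gap: your disposal of the case $N_i \simeq \R_{K^{1/p}/K}(\alpha_p)$ rests on the claim that this group, hence $Q_i/H$, has dimension $0$. That is false. Although $\alpha_p$ is finite, its Weil restriction is not: writing $K^{1/p} = K(\lambda^{1/p})$, an element $x = \sum_{i=0}^{p-1} a_i \lambda^{i/p}$ satisfies $x^p = \sum_i \lambda^i a_i^p$, which already lies in the base ring, so the condition $x^p = 0$ imposes only \emph{one} equation on $\Ga^p$ and $\R_{K^{1/p}/K}(\alpha_p)$ has dimension $p-1 > 0$. It is totally nonsmooth but admits nontrivial smooth quotients (e.g.\ projection to a coordinate surjects onto $\Ga$ as fppf sheaves), so $Q_i/H$ could a priori be a nontrivial vector group and your dimension count does not close the argument. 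The paper's proof fills exactly this hole with three extra inputs: a smooth quotient of $\R_{K^{1/p}/K}(\alpha_p)$ is a vector group \cite[Prop.\,7.7]{rosasuniv}, so $Q_i/H \simeq \Ga^d$; the unirational wound group $H$ is permawound by Proposition \ref{unirimpasuniv}; and if $d>0$ the right-exact sequence $H \to Q_i \to \Ga \to 1$ would then force $Q_i$ to contain a copy of $\Ga$, contradicting woundness of $Q_i$ (which in your setup needs the remark that $Q_i$ is an iterated extension of the semiwound pieces $\mathscr{V}$ and $\R_{K^{1/p}/K}(\alpha_p)$, hence semiwound, hence wound). With that substitution your argument goes through; note also that the reliance on Proposition \ref{unirimpasuniv} is why the statement is confined to degree of imperfection $1$.
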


\begin{proof}
By Theorem \ref{unirationalitydescendsbody} (or even the commutative case which was known before) and \cite[Props.\,6.7, 6.9]{rosasuniv}, we may assume that $K$ is separably closed. Let $U$ be a commutative permawound $K$-group scheme. If $U_{\rm{split}} \subset U$ is the maximal split unipotent $K$-subgroup, and $\overline{U} := U/U_{\rm{split}}$, then left-multiplication makes $U$ into a $U_{\rm{split}}$-torsor over $\overline{U}$. Because $\overline{U}$ is affine and $U_{\rm{split}}$ is split, ${\rm{H}}^1(\overline{U}, U_{\rm{split}}) = 0$, so that $U = U_{\rm{split}} \times \overline{U}$ as $K$-schemes. Because $U$ is permawound, so is $\overline{U}$, so we are reduced to the case in which $U$ is semiwound. Then \cite[Prop.\,6.2]{rosasuniv} shows that $U$ must be wound.

Let $G \subset U$ be the maximal unirational $K$-subgroup scheme. We aim to show that $G = U$. We proceed by induction on ${\rm{dim}}(U)$, the $0$-dimensional case being a triviality. So assume that $U \neq 0$. By Theorem \ref{rigidityasuniv}, there is an exact sequence
\[
0 \longrightarrow W \longrightarrow U \longrightarrow U' \longrightarrow 0
\]
with $W \simeq \mathscr{V}$ or $\R_{K^{1/p}/K}(\alpha_p)$ and $U'$ wound (and permawound). Then $U'$ is unirational by induction. Proposition \ref{torsorsdieratlcurve} then implies that there are morphisms $X_i \rightarrow U$ from open subschemes of $\P^1_K$ whose images generate a subgroup which surjects onto the quotient $U'$ of $U$. It follows that $G \twoheadrightarrow U'$. If $W \simeq \mathscr{V}$, then $W$ is unirational by Proposition \ref{eqnforweilrestgm}, so $G = U$. On the other hand, suppose that $W \simeq \R_{K^{1/p}/K}(\alpha_p)$. Then $U/G$ is a smooth quotient of $W$, therefore a vector group by \cite[Prop.\,7.7]{rosasuniv}. Thus we have for some $d \geq 0$ an exact sequence
\[
G \longrightarrow U \longrightarrow \Ga^d \longrightarrow 0.
\]
The group $G$ is permawound by Proposition \ref{unirimpasuniv}, so if $d > 0$, then it follows that $U$ contains a copy of $\Ga$, a contradiction, so we must have $d = 0$. That is, $G = U$. This completes the induction and the proof of the proposition.
\end{proof}

\begin{lemma}
\label{largergpVnlambdadegimp1}
Let $K$ be a field of degree of imperfection $1$. Then $\mathscr{V}_{n,\lambda}$, with equation given by the vanishing of $F_{n,\vec{\lambda}}$ $($see $($\ref{Vndefeqn}$)$$)$ is a subgroup of the group
\[
X = X^{p^{n-1}} + \sum_{j=0}^{n-2}\sum_{\substack{0 < \ell < p^n\\p\nmid \ell}}\lambda^{\ell p^j}X_{\ell}^{p^{n+j}}.
\]
\end{lemma}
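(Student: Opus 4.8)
The plan is to exhibit the displayed equation as a polynomial consequence of the single defining equation of $\mathscr{V}_{n,\lambda}$. Specializing Definition \ref{Vndef} to degree of imperfection $1$, so that $r = 1$ and $\vec{\lambda} = [\lambda]$, the group $\mathscr{V}_{n,\lambda}$ is the vanishing locus in $\Ga \times \Ga^{\{\ell\}}$ of the $p$-polynomial
\[
F := F_{n,\vec{\lambda}} = -X + X^p + \sum_{\substack{0 < \ell < p^n\\ p \nmid \ell}} \lambda^{\ell} X_\ell^{p^n},
\]
while the group in the statement is the vanishing locus of the $p$-polynomial
\[
P := -X + X^{p^{n-1}} + \sum_{j=0}^{n-2} \sum_{\substack{0 < \ell < p^n\\ p\nmid\ell}} \lambda^{\ell p^j} X_\ell^{p^{n+j}}
\]
in the \emph{same} polynomial ring $K[X, X_\ell]$ (the constraints $p \nmid \ell$, $0 < \ell < p^n$ are exactly what guarantee the two variable sets coincide). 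Both $F$ and $P$ are additive polynomials, so each cuts out a $K$-subgroup scheme of the common ambient $\Ga^N$; hence it suffices to prove the ideal containment $P \in (F)$ inside $K[X, X_\ell]$, since the resulting inclusion of vanishing loci is then automatically a closed immersion of $K$-group schemes.

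The key point is that, because the $p$-power map is additive in characteristic $p$ (and $(-1)^{p^j} = -1$), raising $F$ to the $p^j$-th power gives, for every $j \ge 0$,
\[
F^{p^j} = -X^{p^j} + X^{p^{j+1}} + \sum_{\substack{0<\ell<p^n\\p\nmid\ell}} \lambda^{\ell p^j} X_\ell^{p^{n+j}}.
\]
Summing this identity over $j = 0, 1, \dots, n-2$, the terms $X^{p^{j+1}}$ and $-X^{p^{j+1}}$ telescope and one is left with exactly $\sum_{j=0}^{n-2} F^{p^j} = P$. On the other hand $\sum_{j=0}^{n-2} F^{p^j} = F \cdot \bigl(\sum_{j=0}^{n-2} F^{p^j - 1}\bigr) \in (F)$, so $P \in (F)$, which is what we wanted. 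When $n = 1$ the sum is empty, $P = -X + X = 0$, and the statement is the tautological inclusion $\mathscr{V}_{1,\lambda} \subset \Ga^N$.

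I do not expect any real obstacle here; the only things to watch are the exponent bookkeeping in the telescoping sum and the remark that $P$ genuinely lies in the same polynomial ring as $F$. Equivalently, one can argue on $R$-points for a $K$-algebra $R$: by induction on $0 \le k \le n-2$, every point $(x, (x_\ell))$ of $\mathscr{V}_{n,\lambda}$ satisfies $x = x^{p^{k+1}} + \sum_{j=0}^{k}\sum_{\ell} \lambda^{\ell p^j} x_\ell^{p^{n+j}}$, the inductive step being to raise the defining relation $x = x^p + \sum_\ell \lambda^\ell x_\ell^{p^n}$ to the $p^{k+1}$-st power and substitute into the formula for $k$; the case $k = n-2$ is precisely the assertion.
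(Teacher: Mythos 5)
Your proof is correct and is essentially the paper's argument in closed form: the paper proves by induction on $m$ that $X = X^{p^m} + \sum_{j=0}^{m-1}\sum_{\ell}\lambda^{\ell p^j}X_{\ell}^{p^{n+j}}$ holds on $\mathscr{V}_{n,\lambda}$ by substituting the $p^{m-1}$-st power of the defining relation at each step, and unrolling that induction is exactly your telescoping identity $P = \sum_{j=0}^{n-2}F^{p^j} \in (F)$. The ``equivalent'' pointwise induction you sketch at the end is, up to reindexing, the paper's proof verbatim.
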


\begin{proof}
We prove the following more general result by induction on $m$. For all $m \geq 0$, $\mathscr{V}_{n,\lambda}$ is a subgroup of
\[
X = X^{p^m} + \sum_{j=0}^{m-1}\sum_{\substack{0 < \ell < p^n\\p\nmid \ell}}\lambda^{\ell p^j}X_{\ell}^{p^{n+j}}.
\]
The case $m = n-1$ yields the lemma. For $m = 0$ the assertion is immediate. Now suppose that $m > 0$ and that the lemma holds for $m-1$. Then we have for $(X, (X_\ell)_{\ell}) \in \mathscr{V}_{n,\lambda}$,
\begin{align*}
X & = X^{p^{m-1}} + \sum_{j=0}^{m-2}\sum_{\substack{0 < \ell < p^n\\p\nmid \ell}}\lambda^{\ell p^j}X_{\ell}^{p^{n+j}} \\
&= \left(X^p + \sum_{\substack{0 < \ell < p^n\\p\nmid \ell}}\lambda^\ell X_{\ell}^{p^n}\right)^{p^{m-1}} + \sum_{j=0}^{m-2}\sum_{\substack{0 < \ell < p^n\\p\nmid \ell}}\lambda^{\ell p^j}X_{\ell}^{p^{n+j}} \hspace{.2 in} \mbox{by (\ref{Vndefeqn})} \\
&= X^{p^m} + \sum_{j=0}^{m-1}\sum_{\substack{0 < \ell < p^n\\p\nmid \ell}}\lambda^{\ell p^j}X_{\ell}^{p^{n+j}}. \qedhere
\end{align*}
\end{proof}

Let $\lambda_1, \dots, \lambda_r$ be a $p$-basis for $K$. We seek to leverage our understanding of the groups $\mathscr{V}_{n,\vec{\lambda}}$ in the degree of imperfection $1$ case to say something about the general finite degree of imperfection case. The key is the construction of a suitable multi-additive pairing as in the following proposition.

\begin{proposition}
\label{multiadd}
Let $\vec{\lambda} := [\lambda_1, \dots, \lambda_r]^T$ be a $p$-basis for the imperfect field $K$, and let $\mathscr{V}_{n,\lambda_i}$ denote the group over $\F_p(\lambda_i)$ introduced in Definition $\ref{Vndef}$. There is a multi-additive pairing
\[
b\colon \prod_{i=1}^r (\mathscr{V}_{n,\lambda_i})_K \rightarrow \mathscr{V}_{n,\vec{\lambda}}
\]
such that the image of $b$ generates the group $\mathscr{V}_{n,\vec{\lambda}}$.
\end{proposition}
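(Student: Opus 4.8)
The plan is to write $b$ down explicitly on coordinates, verify by direct computation that it lands in $\mathscr V_{n,\vec\lambda}$ and is multi-additive, and then establish the generation statement by reducing to the separably closed case and inducting on $r$.

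\emph{Setup and a key identity.} I would first base-change each $\mathscr V_{n,\lambda_i}$ to $K$ and name its coordinate functions $Y^{(i)}$ (the variable called $X$ in Definition \ref{Vndef}) and $(Y^{(i)}_\ell)$ with $\ell$ ranging over the integers $1\le \ell<p^n$, $p\nmid\ell$, so that $Y^{(i)}-(Y^{(i)})^p=\sum_\ell\lambda_i^\ell(Y^{(i)}_\ell)^{p^n}$. Since $F_{n,\lambda_i}$ is a $p$-polynomial, $(\mathscr V_{n,\lambda_i})_K$ sits inside $\Ga^{1+(p^n-p^{n-1})}$ with the \emph{coordinatewise} group law, so every coordinate function is a homomorphism to $\Ga$; this is what will make $b$ multi-additive. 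Writing $\mu_i:=\lambda_i^{1/p^n}$ and $w_i:=\sum_\ell\mu_i^\ell Y^{(i)}_\ell$, we have $Y^{(i)}-(Y^{(i)})^p=w_i^{p^n}$; iterating this relation $n$ times and using additivity of the $p^n$-power map yields the identity $Y^{(i)}=Q_i^{p^n}$ on $(\mathscr V_{n,\lambda_i})_K$, with $Q_i:=Y^{(i)}+\sum_{j=0}^{n-1}w_i^{p^j}$. In words: the coordinate $Y^{(i)}$ restricts to a $p^n$-th power on the group.

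\emph{Construction of $b$.} I take the ``$X$''-coordinate of $b$ to be $X:=\prod_{i=1}^r Y^{(i)}$. Using $(Y^{(i)})^p=Y^{(i)}-w_i^{p^n}$ and then, in the complementary factors, $Y^{(i)}=Q_i^{p^n}$, one expands
\[
X-X^p=\prod_i Y^{(i)}-\prod_i\bigl(Y^{(i)}-w_i^{p^n}\bigr)=\sum_{\emptyset\ne S\subseteq\{1,\dots,r\}}(-1)^{|S|+1}\Bigl(\prod_{i\in S}w_i\cdot\prod_{i\notin S}Q_i\Bigr)^{p^n}.
\]
Expanding each inner factor as a $K$-combination of monomials of the shape $\bigl(\prod_i\mu_i^{e_i}\bigr)\cdot(\text{monomial in the }Y\text{'s})$ and raising to the $p^n$-th power turns $\prod_i\mu_i^{e_i}$ into the honest element $\prod_i\lambda_i^{e_i}\in K$; then rewriting $\lambda_i^{e_i}=(\lambda_i^{\lfloor e_i/p^n\rfloor})^{p^n}\lambda_i^{e_i\bmod p^n}$ puts the right-hand side in the form $\sum_{f\in I_n,\ f\not\equiv0\,(p)}\bigl(\prod_i\lambda_i^{f(i)}\bigr)X_f^{p^n}$ for uniquely determined polynomials $X_f$ over $K$ in the coordinates of $\prod_i(\mathscr V_{n,\lambda_i})_K$. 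I would check that the indices $f$ actually occurring lie in $I_n$ with $f\not\equiv 0\pmod p$ (for $i\in S$ the exponent comes from $w_i$, hence is prime to $p$ and $<p^n$, and $S\ne\emptyset$). Setting $b:=(X,(X_f))$, the pair satisfies the defining equation of $\mathscr V_{n,\vec\lambda}$ by construction, and $X$ and each $X_f$ are $K$-combinations of products of functions each additive in a single factor, so $b$ is a multi-additive pairing into $\mathscr V_{n,\vec\lambda}$. (For $r=1$ this $b$ is the identity.)

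\emph{Generation.} Forming the generated subgroup commutes with field extension, so I may assume $K=K_s$. A differential computation shows $\mathscr V_{n,\vec\lambda}$ is generated by the $1$-dimensional subgroups $C_f:=\{\,X-X^p=(\prod_i\lambda_i^{f(i)})X_f^{p^n},\ X_g=0\ (g\ne f)\,\}$ for $f\in I_n$, $f\not\equiv0\pmod p$, since their Lie algebras are the distinct coordinate lines and span $\mathrm{Lie}(\mathscr V_{n,\vec\lambda})$. Now I induct on $r$, the case $r=1$ being trivial. For $r\ge2$, fixing the $j$-th argument of $b$ at $(1,\vec0)\in(\mathscr V_{n,\lambda_j})_K$ kills $w_j$ and makes $Q_j=1$, so $b$ restricts to the analogous pairing on the remaining $r-1$ factors, with image inside $\{X_f=0:f(j)\ne0\}\cong\mathscr V_{n,\vec\lambda^{(j)}}$, $\vec\lambda^{(j)}:=(\lambda_i)_{i\ne j}$. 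By induction $\langle\mathrm{im}\,b\rangle$ contains each $\mathscr V_{n,\vec\lambda^{(j)}}$, hence their sum $H_0$, which a dimension-plus-connectedness argument identifies with $\{X_f=0:f\text{ of full support}\}$, with $\mathscr V_{n,\vec\lambda}/H_0\cong\Ga^F$ for $F:=\{f\in I_n:f\text{ full support},\ f\not\equiv0\,(p)\}$. It then remains to show $\langle\mathrm{im}\,b\rangle$ surjects onto this $\Ga^F$, i.e. that the functions $X_f\circ b$ ($f\in F$) admit no nontrivial $K_s$-linear relation among their Frobenius twists; I would prove this by examining leading monomials — the top term of $X_f\circ b$ is $\pm\prod_i(Y^{(i)}_{\ell_i})^{p^{j_i}}$ where $f(i)=\ell_i p^{j_i}$ with $p\nmid\ell_i$, and these are non-proportional modulo Frobenius except within ``$p$-power chains'' $f,pf,p^2f,\dots$, which need a short separate argument.

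\emph{Main obstacle.} The construction of $b$ rests on the non-obvious point that $Y^{(i)}$ becomes a $p^n$-th power upon restriction to $\mathscr V_{n,\lambda_i}$; once that is in hand, producing the $X_f$ is bookkeeping with the $p$-basis. The genuinely delicate part is the generation claim, and within it the last step — surjectivity of $\langle\mathrm{im}\,b\rangle$ onto $\Ga^F$, equivalently the absence of twisted-linear relations among the $X_f\circ b$ — which I expect to be the hardest to nail down.
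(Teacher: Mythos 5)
Your construction of $b$ is the same as the paper's: the $X$-coordinate is $\prod_i Y^{(i)}$, and the remaining coordinates are produced by expanding $X-X^p$ and sorting monomials by their $\lambda$-exponent vector modulo $p^n$. Your identity $Y^{(i)}=Q_i^{p^n}$ is precisely the paper's Lemma \ref{largergpVnlambdadegimp1} (combined with the defining equation of $\mathscr{V}_{n,\lambda_i}$), stated over $K^{1/p^n}$ instead of $K$; the resulting formulas for the $X_f$ agree with the paper's (\ref{Zfdefeqn}), and the multi-additivity argument is identical. Where you genuinely diverge is the generation step. The paper proves directly that no nonzero $p$-polynomial in \emph{all} the coordinates $Z_f$ kills $\mathrm{im}(b)$, in two stages (first the $f$ with some $f(i)=0$, detected by which variables $X_i$ occur; then the full-support $f$), and then converts a failure of generation into such a relation via the \'etale projection $(Z,(Z_f))\mapsto(Z_f)$. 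You instead induct on $r$: specializing the $j$-th argument to $(1,\vec 0)$ recovers the $(r-1)$-variable pairing inside $\{X_f=0: f(j)\neq 0\}$, which disposes of the non-full-support coordinates, and you are left with the no-relations claim for the full-support $X_f\circ b$ only. This is a legitimate and arguably cleaner organization (note that the inductive statement must be phrased for $p$-independent tuples rather than full $p$-bases, and the reduction to $K_s$ and the Lie-algebra remark about the curves $C_f$ are both unnecessary). The one step you flag as unresolved --- possible cancellation within ``$p$-power chains'' $f, pf, p^2f,\dots$ --- is in fact vacuous: the index set consists of $f\not\equiv 0\pmod p$, so no two distinct indices are related by multiplication by a power of $p$. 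Concretely, the monomials of $X_f^{p^d}\circ b$ are $\prod_i X_{i,\ell_i}^{p^{v_p(f(i))+d}}$ with $\ell_i\equiv f(i)/p^{v_p(f(i))}\pmod{p^{n-v_p(f(i))}}$; if $X_f^{p^d}$ and $X_{f'}^{p^{d'}}$ share a monomial then $v_p(f(i))+d=v_p(f'(i))+d'$ for all $i$, and since each of $f,f'$ has some coordinate prime to $p$ one gets $d=d'$, hence $v_p(f(i))=v_p(f'(i))$ for all $i$ and then $f=f'$ from the congruences. So your leading-monomial argument closes with no separate chain analysis, and (using that the $X_{i,\ell}$ are algebraically independent on $\prod_i\mathscr{V}_{n,\lambda_i}$, as in \cite[Lem.\,6.3]{rosmodulispaces}) your proof is complete and correct.
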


\begin{proof}
For ease of notation, we write $\mathscr{V}_{n,\lambda_i}$ as the group
\[
X_i = X_i^p + \sum_{\substack{0 < j < p^n\\p\nmid j}}\lambda_i^jX_{i,j}^{p^n}
\]
and $\mathscr{V}_{n,\vec{\lambda}}$ as
\[
Z = Z^p + \sum_{\substack{f \in I_n\\p\nmid f}} \left(\prod_{i=1}^r\lambda_i^{f(i)}\right)Z_f^{p^n}.
\]
(Same equations, but we have renamed the variables.) For an integer $s > 0$, let $v_p(s)$ denote the maximal power of $p$ dividing $s$. Then we define $b(\prod_{i=1}^r (X_i, (X_{i,j})_j)) := (Z, (Z_f)_f)$, where
\[
Z := \prod_{i=1}^rX_i
\]
\begin{equation}
\label{Zfdefeqn}
Z_f := \left(\prod_{f(i)=0}X_i\right)\left(\prod_{f(i) \neq 0}\sum_{\substack{0 < \ell < p^n\\ \ell \equiv f(i)/p^{v_p(f(i))}
\pmod{p^{n-v_p(f(i))}}}}\lambda_i^{(\ell - f(i)/p^{v_p(f(i))})/p^{n-v_p(f(i))}}X_{i,\ell}^{p^{v_p(f(i))}}  \right).
\end{equation}
This is a multi-additive map. Next we check that it lands in $\mathscr{V}_{n,\vec{\lambda}}$. We compute
\begin{align*}
Z - Z^p &= \prod_{i=1}^r X_i - \prod_{i=1}^r X_i^p \\
&= \prod_{i=1}^r\left(X_i^p + \sum_{\substack{0 < j < p^n\\p\nmid j}}\lambda_i^jX_{i,j}^{p^n}\right) - \prod_{i=1}^rX_i^p\\
&= \sum_{\emptyset \neq S \subset \{1, \dots, r\}} \left(\prod_{i \in S} \sum_{\substack{0 < j < p^n\\p\nmid j}}\lambda_i^jX_{i,j}^{p^n} \right)\left(\prod_{i \notin S}X_i^p\right) \hspace{.2 in} \mbox{by expanding the product}
\end{align*}
\begin{align*}
&= \sum_{\emptyset \neq S \subset \{1, \dots, r\}} \left(\prod_{i \in S} \sum_{\substack{0 < j < p^n\\p\nmid j}}\lambda_i^jX_{i,j}^{p^n} \right) \left( \prod_{i\notin S} \left[X_i^{p^n} + \sum_{j=0}^{n-2}\sum_{\substack{0 < \ell < p^n\\p\nmid \ell}} \lambda_i^{\ell p^{j+1}}X_{i,\ell}^{p^{n+j+1}}\right] \right) \hspace{.1 in} \mbox{by Lemma \ref{largergpVnlambdadegimp1}}.
\end{align*}
Because the sum is over {\em nonempty} $S$, we see that expanding out the above product yields a sum of terms of the form $$\prod_{i=1}^r\lambda_i^{f(i)}G_i((X_{i,j})_{j})^{p^n}$$ with $G_i$ a $p$-polynomial in $(X_{i,j})_j$ and $f \in I_n$ such that $p\nmid f$. Thus we obtain a pairing into $\mathscr{V}_{n,\vec{\lambda}}$, and in fact, collecting the terms corresponding to a given $f$ exactly yields the formula for $Z_f$ given above. Thus the pairing lands inside $\mathscr{V}_{n,\vec{\lambda}}$, and it only remains to verify that its image generates this group.

We claim that there is no nonzero $p$-polynomial $H \in K[Y_f\mid f \in I_n, p\nmid f]$ such that $H((Z_f)_f)$ vanishes when pulled back along $b$. To prove this, let us first note that, given two polynomials $F, G \in K[T_i, T_{i,j}]_{i,j}$ whose degrees in each $T_i$ are $< p$, then $F(X_i, X_{i,j}) = G(X_i, X_{i,j})$ only if $F = G$ as polynomials \cite[Lem.\,6.3]{rosmodulispaces}. Now suppose that $H$ is a $p$-polynomial as before which vanishes when pulled back along $b$. We wish to show that $H = 0$. First suppose that some $Z_f$ with the property that $f(i) = 0$ for some $i$ appears in $H$. Let $M$ denote the sum of those terms of $H$ of minimal degree $p^d$ involving some such $Z_f$, and let $Q := H - M$. Write
\[
M = \sum_{\substack{f\\ \mbox{some } f(i)= 0}} c_fZ_f^{p^d}.
\]
An easy induction using the equation (\ref{Vndefeqn}) for $\mathscr{V}_{n,\lambda_i}$ shows that $X_i^{p^m} - X_i$ may be expressed as a $p$-polynomial in the $X_{i,j}$ for all $m \geq 0$. Using the formula (\ref{Zfdefeqn}) for $Z_f$, it follows that we have
\begin{align}
\label{multiaddeqn1}
M((Z_f)_f) &= \sum_{\substack{f\\ \mbox{some } f(i)= 0}} c_f\left(\prod_{f(i)= 0}X_i\right) \nonumber \\
& \times \left(\prod_{f(i) \neq 0}\sum_{\substack{0 < \ell < p^n\\ \ell \equiv f(i)/p^{v_p(f(i))}
\pmod{p^{n-v_p(f(i))}}}}\lambda_i^{(\ell - f(i)/p^{v_p(f(i))})/p^{n-v_p(f(i))}}X_{i,\ell}^{p^{v_p(f(i))}}  \right)^{p^d} \nonumber \\
+ R((X_{i,j})_{i,j})
\end{align}
for some polynomial $R$ in the $X_{i,j}$. We may similarly write $Q((Z_f)_f)$ as a $p$-polynomial such that any $X_i$ only ever appears with exponent $1$. Further, all terms of $Q((Z_f)_f)$ involving any of the $X_i$ are of strictly larger degree than the first sum above because each term of $Q$ involving some $Z_f$ with some $f(i) = 0$ has higher degree than any of the terms of $M$ by our choice of $M$. It follows that if $H((Z_f)_f) = 0$, then $M((Z_f)_f) = 0$. But any of the monomials involving any of the $X_i$ appearing in the expression (\ref{multiaddeqn1}) above for $M((Z_f)_f)$ uniquely determines the $f$ (in the sum over $f$) from which it arises: The $X_i$ which appear determine those $i$ for which $f(i) = 0$, and for each other $i$, the exponent $p^{v_p(f(i))}$ of $X_{i,\ell}$ determines $v_p(f(i))$, and then $\ell$ determines $f(i)$ because $f(i) \equiv \ell p^{v_p(f(i))}\pmod{p^n}$, so $f(i)$ is determined modulo $p^n$, hence determined. It follows that there cannot be any cancellation among different terms in the sum, so $M$ must be identically $0$. Therefore $H$ cannot involve any terms $Z_f$ with $f(i) = 0$ for some $i$.

Thus $H$ involves only $Z_f$ for $f$ which never vanish. But in that case, we may write $H = M + Q$ again, this time with $M$ the sum of terms of minimal degree. Then writing out $M((Z_f)_f)$, we conclude as above that each monomial uniquely determines the $f$ from which it arises, so that there cannot be cancellation between terms. It follows that one must have $M = 0$, so we conclude that $H = 0$, as claimed.

Now that we have the claim, it is easy to show that $b$ generates $\mathscr{V}_{n,\vec{\lambda}}$. Indeed, suppose that it did not, and let $G \subset \mathscr{V}_{n,\vec{\lambda}}$ be the subgroup it does generate. Because $\mathscr{V}_{n,\vec{\lambda}}$ is smooth and connected, we must have ${\rm{dim}}(G) < {\rm{dim}}(\mathscr{V}_{n,\vec{\lambda}})$. In particular, for the \'etale morphism $\phi\colon \mathscr{V}_{n,\vec{\lambda}} \rightarrow W := \Ga^{p^{nr}-p^{(n-1)r}}$, $(Z, (Z_f)_f) \mapsto (Z_f)_f$, the restriction $\phi|G$ is not surjective. It follows that $W/\phi(G)$ admits a $\Ga$ quotient. That is, there is a nonzero $K$-homomorphism $W \rightarrow \Ga$ killing $\phi(G)$ and in particular the image under $\phi$ of $b$. That is, there is a nonzero $p$-polynomial $H$ in the $Z_f$ killing the image of $b$. But that is exactly what the claim above says cannot happen. Thus $G = \mathscr{V}_{n,\vec{\lambda}}$, and the proof of the proposition is complete.
\end{proof}

\section{Applications to permawound groups}
\label{appstoasunivsec}

In this section we apply the rigidity theorem \ref{rigidity} to the study of permawound groups, which were introduced in \cite{rosasuniv}. (The definition and some of the fundamental properties of permawound groups were recalled in \S\ref{asunivsection}.) In particular, we will prove that permawound groups are unirational, and that wound permawound groups are commutative.

We begin by proving that permawound groups admit few homomorphisms into other groups. We will only require a very special case of this, but we prove it in general as the result is of interest in its own right.

\begin{proposition}
\label{homsfinite}
Let $K$ be a field. For any wound unipotent $K$-groups $U$ and $V$ with $U$ commutative and permawound, $\Hom(U, V)$ is finite.
\end{proposition}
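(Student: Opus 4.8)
The plan is to peel off, by dévissage on both variables, until only one group remains—namely $\mathscr{V}_{n,\vec{\lambda}}$—and then to reinterpret $\Hom(\mathscr{V}_{n,\vec{\lambda}},V)$ inside a moduli space of morphisms and kill it with the rigidity theorem via the multi-additive presentation of Proposition~\ref{multiadd}.

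First I would reduce. By Galois descent $\Hom_K(U,V)=\Hom_{K_s}(U_{K_s},V_{K_s})^{\Gal(K_s/K)}$, and since woundness, commutativity and permawoundness are all insensitive to separable extension, we may assume $K=K_s$; we may moreover assume $K$ of finite degree of imperfection (the case of infinite degree of imperfection being handled separately, using that over such a field a wound permawound group is trivial, so that $\Hom(U,V)=\Hom(U_{\Split},V)=0$). Filtering $V$ by commutative wound subquotients (\cite[Prop.\,B.3.3]{cgp}) and using the left-exactness of $\Hom(U,-)$ (so $|\Hom(U,V)|\le|\Hom(U,V')|\cdot|\Hom(U,V'')|$), we may take $V$ commutative wound. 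Since $U_{\Split}$ is split and $V$ is wound, $\Hom(U_{\Split},V)=0$, so $\Hom(U,V)=\Hom(U/U_{\Split},V)$, and $U/U_{\Split}$ is commutative, wound, permawound; by the structure theory of permawound groups (\cite{rosasuniv}) one reduces to the case where $U$ is additionally $p$-torsion, whereupon Proposition~\ref{quotofVnpow} gives a surjection $(\mathscr{V}_{n,\vec{\lambda}})^m\twoheadrightarrow U$ and hence an injection $\Hom(U,V)\hookrightarrow\Hom(\mathscr{V}_{n,\vec{\lambda}},V)^m$. Thus it suffices to prove $\Hom(\mathscr{V}_{n,\vec{\lambda}},V)$ is finite, with $V$ commutative wound and $K=K_s$ of finite degree of imperfection $r$.

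Next the moduli-theoretic reformulation. By Theorem~\ref{modspacesexist}, $M:=\calMor((\mathscr{V}_{n,\vec{\lambda}},0),(V,0))^+$ is a smooth semiwound unipotent $K$-group, and the homomorphism condition cuts out a closed $K$-subgroup scheme $\calHom(\mathscr{V}_{n,\vec{\lambda}},V)\subset M$: it is the fibre over $0$ of the morphism $M\to\calMor((\mathscr{V}_{n,\vec{\lambda}}\times\mathscr{V}_{n,\vec{\lambda}},0),(V,0))^+$ classifying $(x,y)\mapsto\Phi(x+y)-\Phi(x)-\Phi(y)$ for the universal $\Phi$, and it is a subgroup because $V$ is commutative. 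Over $K=K_s$ a smooth connected group has Zariski-dense rational points, while $G^{\sm}(K_s)=G(K_s)$ always; hence $\Hom(\mathscr{V}_{n,\vec{\lambda}},V)=\calHom(\mathscr{V}_{n,\vec{\lambda}},V)(K_s)$ is finite if and only if $W:=\bigl(\calHom(\mathscr{V}_{n,\vec{\lambda}},V)^{\sm}\bigr)^0$ is trivial, and $W$—a smooth connected subgroup of the semiwound group $M$—is wound. Assume $W\neq1$; then the inclusion $W\hookrightarrow\calHom(\mathscr{V}_{n,\vec{\lambda}},V)$ is exactly a nonzero bi-additive pairing $\beta\colon\mathscr{V}_{n,\vec{\lambda}}\times W\to V$, bi-additive in the $W$-variable because $W$ lies inside $\calHom$ with its pointwise group law.

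Now the rigidity step. Proposition~\ref{multiadd} supplies a multi-additive $b\colon\prod_{i=1}^r(\mathscr{V}_{n,\lambda_i})_K\to\mathscr{V}_{n,\vec{\lambda}}$ whose image generates the target. Precomposing the first slot of $\beta$ with $b$ yields a nonzero multi-additive map
\[
\widetilde{\beta}\colon (\mathscr{V}_{n,\lambda_1})_K\times\dots\times(\mathscr{V}_{n,\lambda_r})_K\times W\longrightarrow V
\]
(nonzero since $\im(b)$ generates $\mathscr{V}_{n,\vec{\lambda}}$ and $\beta\neq0$), all of whose one-factor restrictions are the constant map $0$. Each $(\mathscr{V}_{n,\lambda_i})_K$ is unirational: over the degree-of-imperfection-one field $\F_p(\lambda_i)$ the group $\mathscr{V}_{n,\lambda_i}$ is commutative, wound, and permawound, hence unirational by Proposition~\ref{V_nunirdegimp1}, and unirationality is stable under base change. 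Granting that $W$ too is unirational, $\widetilde{\beta}$ is a morphism from a product of $r+1>r$ unirational $K$-schemes into the solvable affine $K$-group $V$, which contains no copy of $\Ga$, sending each slice to the identity; Theorem~\ref{rigidityfindegofimp} then forces $\widetilde{\beta}=0$, hence $\beta=0$, contradicting $W\neq1$. The main obstacle is precisely this last point—proving that the extra factor $W$, the maximal smooth connected subgroup of a moduli space of homomorphisms, is unirational (equivalently, can be covered by rational curves, so that the general rigidity theorem~\ref{rigidity} applies with enough factors even without Theorem~\ref{rigidityfindegofimp}). I would attack it by dévissaging $W$ down to the commutative, $p$-torsion, wound case and then realizing $W$ via the genericity embedding (Theorem~\ref{genericity}) together with Proposition~\ref{quotofVnpow} in terms of the groups $\mathscr{V}_{m,\vec{\mu}}$, which are unirational by the argument just used for the $(\mathscr{V}_{n,\lambda_i})_K$; the reduction to finite degree of imperfection and the treatment of the $\R_{K^{1/p}/K}(\alpha_p)$–subquotients arising in the structure theory are comparatively routine.
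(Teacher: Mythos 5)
Your reductions and the moduli-space reformulation are fine as far as they go, but the argument hinges entirely on the unirationality of $W := \bigl(\calHom(\mathscr{V}_{n,\vec{\lambda}},V)^{\sm}\bigr)^0$, and this is a genuine gap, not a routine loose end. Without it you have only $r$ unirational factors, which is exactly not enough for Theorem \ref{rigidityfindegofimp} (or for Theorem \ref{rigiditytheorem}, since Proposition \ref{degimpleqdegimperf} only gives $r' \leq r$, and you need $n > r'$ with $n = r$). The route you sketch for closing it does not work: the genericity theorem \ref{genericity} \emph{embeds} a commutative $p$-torsion semiwound group into a permawound one, and unirationality does not pass to subgroups (the whole point of Corollary \ref{maxunirinvsepext} is that $G_{\rm{uni}}$ is generally proper in $G$); Proposition \ref{quotofVnpow} produces surjections onto \emph{permawound} groups from powers of $\mathscr{V}_{n,\vec{\lambda}}$, and $W$, being merely some smooth connected subgroup of a moduli scheme, has no reason to be permawound. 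Nor is unirationality automatic for wound unipotent groups --- most are not unirational --- so you cannot wave this away. There is also a near-circularity lurking here: the statement you are trying to prove is that $W = 1$, and the property of $W$ you need as input is one you have no handle on precisely because you do not yet know $W$ is small. (Smaller issues: $U$ is wound by hypothesis, so the passage through $U_{\Split}$ is vacuous; and the reduction of $U$ to the $p$-torsion case via $[p^{n-1}]U$ needs a word about why the relevant subquotients stay permawound and why non-wound quotients can be handled, though this is repairable.)

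For contrast, the paper's proof avoids all of this by a double d\'evissage: after reducing to $K$ separably closed of finite degree of imperfection and to $V$ commutative, it replaces $V$ by the subgroup generated by all images of $U$ (a quotient of a power of $U$, hence permawound), then uses Theorem \ref{rigidityasuniv} to filter \emph{both} $U$ and $V$ with graded pieces $\R_{K^{1/p}/K}(\alpha_p)$ and $\mathscr{V}$, reducing everything to the finiteness of $\Hom(\R_{K^{1/p}/K}(\alpha_p),\mathscr{V})$ and $\End(\mathscr{V})$, which are quoted from \cite[Lem.\,7.9, Prop.\,9.7]{rosasuniv}. No moduli spaces and no rigidity theorem are needed; indeed Proposition \ref{homsfinite} is itself an \emph{input} to the later rigidity argument for permawound groups (Lemma \ref{rigformapsasuniv}), which is exactly where the paper deals with the ``extra non-unirational factor'' problem that your approach runs into --- and it deals with it by invoking the finiteness of $\Hom$ rather than unirationality.
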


\begin{proof}
By \cite[Props.\,6.7,6.9]{rosasuniv}, we may assume that $K$ is separably closed. If $K$ is perfect or of infinite degree of imperfection, then $U = 0$ \cite[Cor.\,B.2.7]{cgp}, \cite[Prop.\,6.3]{rosasuniv}, so there is nothing to prove. Thus we may assume that $K$ is imperfect of finite degree of imperfection $r$. One checks that, given an exact sequence
\[
1 \longrightarrow V' \longrightarrow V \longrightarrow V'' \longrightarrow 1
\]
with $V' \subset V$ central, if $\Hom(U, V')$ and $\Hom(U, V'')$ are finite, then so is $\Hom(U, V)$. By \cite[Prop.\,B.3.2]{cgp} and induction, therefore, we may assume that $V$ is commutative. If we let $V' \subset V$ denote the $K$-subgroup generated by all $K$-homomorphisms from $U$ into $V$, then $V'$ is a quotient of some power of $U$, hence permawound, and $\Hom(U, V) = \Hom(U, V')$. Thus we may assume that $V$ is permawound. By Theorem \ref{rigidityasuniv}, we may then assume that $V = \R_{K^{1/p}/K}(\alpha_p)$ or $\mathscr{V}$. In the former case, $\Hom(U, V) = 0$ because $\R_{K^{1/p}/K}(\alpha_p)$ is totally nonsmooth. Thus we now assume that $V = \mathscr{V}$. 

One readily verifies that, given an exact sequence
\[
0 \longrightarrow U' \longrightarrow U \longrightarrow U'' \longrightarrow 0,
\]
if $\Hom(U', \mathscr{V})$ and $\Hom(U'', \mathscr{V})$ are finite, then so is $\Hom(U, \mathscr{V})$. Therefore, applying Proposition \ref{rigidityasuniv} again, it suffices to show that $\Hom(\R_{K^{1/p}/K}(\alpha_p), \mathscr{V})$ and $\End(\mathscr{V})$ are finite. These finiteness statements follow from \cite[Lem.\,7.9, Prop.\,9.7]{rosasuniv}.
\end{proof}

The key to proving the commutativity of wound permawound groups is the following ``rigidity'' result (see Lemma \ref{rigidityproper}) for permawound groups.

\begin{lemma}
\label{rigformapsasuniv}
Let $U$ be a commutative, wound, permawound unipotent $K$-group scheme. Suppose given a wound unipotent $K$-group $W$ and a $K$-scheme map $f\colon U \times U \rightarrow W$ such that $f$ restricts to the constant map to $1$ on $0 \times U$ and on $U \times 0$. Then $f$ is the constant map to $1$.
\end{lemma}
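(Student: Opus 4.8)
\emph{Plan.} The strategy is to reduce, via the structural properties of permawound groups recalled in \S\ref{asunivsection}, to the rigidity theorem \ref{rigidity}. First come the reductions. Since $U\times U$ is smooth and connected and $f$ hits $1$, the image of $f$ lies in the identity component of the maximal smooth $K$-subgroup scheme of $W$, so we may assume $W$ is wound (as it is). Permawoundness and woundness are insensitive to separable base change, and if $K$ is perfect or of infinite degree of imperfection then $U=0$ by \cite[Cor.\,B.2.7]{cgp}, \cite[Prop.\,6.3]{rosasuniv} and there is nothing to prove; so we may assume $K$ is separably closed of finite degree of imperfection $r$. Next, the class $\mathscr{F}$ of wound unipotent $K$-groups $W$ for which the conclusion of the lemma holds (for \emph{every} commutative wound permawound $U$) is closed under passage to $K$-subgroup schemes and under extensions: for an extension $1\to W'\to W\to W''\to 1$ with $W',W''\in\mathscr{F}$, composing $f$ with $W\to W''$ yields an admissible map into $W''\in\mathscr{F}$, which is therefore constant, so $f$ factors through $W'$, and then we invoke $W'\in\mathscr{F}$. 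Finally $\R_{K^{1/p}/K}(\alpha_p)\in\mathscr{F}$ trivially, since it is totally nonsmooth while $U\times U$ is smooth, so the only morphism $U\times U\to\R_{K^{1/p}/K}(\alpha_p)$ is constant. By Proposition \ref{containsallwound} it therefore suffices to treat $W=\mathscr{V}$, which in particular is commutative.

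Second, I reduce the lemma to a biadditivity statement. Writing $\mathscr{V}$ additively, it suffices to show that $f$ is biadditive, i.e.\ that $f(u_1,-)$ and $f(-,u_2)$ are $K$-group homomorphisms for all $u_1,u_2$. Indeed, granting this, the assignment $u_1\mapsto f(u_1,-)$ is a homomorphism from $U$ into the finite group scheme $\Hom(U,\mathscr{V})$ — finite by Proposition \ref{homsfinite}, which applies as $U$ is commutative permawound wound and $\mathscr{V}$ is wound — and since $U$ is connected (\cite[Prop.\,6.2]{rosasuniv}) this homomorphism is trivial, forcing $f=0$.

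Third — and this is the heart of the matter — I must establish biadditivity. By symmetry it is enough to show $f(u_1,-)$ is additive, i.e.\ that the difference morphism $k\colon U\times U\times U\to\mathscr{V}$ given by $k(u_1,u_2,u_2'):=f(u_1,u_2+u_2')-f(u_1,u_2)-f(u_1,u_2')$ is the constant map to $0$; one checks immediately that $k$ restricts to the constant map $0$ as soon as any one of its three arguments is set to $0$, which is exactly the hypothesis of the rigidity theorem with $n=3$ factors. Since $U$ is unirational (Theorem \ref{asunivunirrcomm}(i)), Theorem \ref{rigidityfindegofimpbody} finishes the argument at once when $3>r$. For general $r$ I would run the inductive device from the proof of Proposition \ref{rigalmostcompleteratl}: restrict $k$ to products of rational curves $\P^1\setminus\{x_i\}\hookrightarrow U$ through the origin — by Lemma \ref{resfldclpt}, and since woundness of $U$ forces each $x_i$ to be non-rational, these may be chosen with $\imp(K(x_i)/K)\le 1$ — translate via geometric class field theory (Theorem \ref{albanese}) into a multi-additive map on generalized Jacobians, and extend scalars along $K(\lambda^{1/p^\infty})$ for a suitable $\lambda$ so that the degree of imprimitivity of the total boundary divisor drops below the number of factors, at which point the rigidity theorem \ref{rigidity} applies and one descends back down. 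The same scheme, applied to the mixed second difference $U^4\to\mathscr{V}$, takes care of the remaining part of biadditivity.

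The main obstacle is the last step: organizing the induction on the degree of imperfection so that, after restricting the difference morphisms to products of rational curves inside $U$, one can genuinely arrange the boundary divisors to have degree of imprimitivity smaller than the number of factors — equivalently, controlling the behaviour of $\imp$ under the extension $K(\lambda^{1/p^\infty})/K$ and verifying that the quotient groups one is driven into along the way remain semiwound, so that both the inductive hypothesis and Proposition \ref{homsfinite} continue to apply. By contrast, the reductions to $K$ separably closed of finite degree of imperfection and to $W=\mathscr{V}$, and the deduction of the lemma from biadditivity, are entirely formal.
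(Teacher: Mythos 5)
Your formal reductions are sound: passing to $K$ separably closed of finite degree of imperfection $r$, reducing to $W=\mathscr{V}$ via Proposition \ref{containsallwound} (a step the paper does not take -- it keeps $W$ an arbitrary wound unipotent group throughout), and deducing the lemma from biadditivity via the finiteness of $\Hom(U,\mathscr{V})$ (Proposition \ref{homsfinite}) together with connectedness of $U$. But the heart of the argument -- killing the difference morphism $k\colon U^3\to\mathscr{V}$ -- is only actually carried out when $3>r$, and for $r\geq 3$ you offer a sketch that you yourself flag as the ``main obstacle.'' That obstacle is real: restricting to rational curves through the origin gives you only $n=3$ (or $4$) factors whose boundary divisors each have $\imp\leq 1$, but the degree of imprimitivity of the \emph{union} of the boundary divisors can be as large as $r$, and no amount of base change along $K(\lambda^{1/p^\infty})$ obviously brings it below $3$. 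The paper circumvents this entirely with a device absent from your proposal: it first reduces, by dimension induction on $U$ using Proposition \ref{quotofVnpow} and Proposition \ref{homsfinite}, to the case $U=\mathscr{V}_{n,\vec{\lambda}}$, and then invokes the multiadditive generating pairing $b\colon\prod_{i=1}^r(\mathscr{V}_{n,\lambda_i})_K\to\mathscr{V}_{n,\vec{\lambda}}$ of Proposition \ref{multiadd}. Precomposing the mixed second difference of $f$ with $b$ in each slot replaces each copy of $U$ by $r$ factors, each a unirational group (Proposition \ref{V_nunirdegimp1}), yielding a map on $2r$ unirational factors that vanishes on every coordinate hyperplane; since $2r>r$, Theorem \ref{rigidityfindegofimpbody} applies directly, with no induction on $r$ needed. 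This ``variable-splitting'' via Proposition \ref{multiadd} is the key idea your proof is missing.

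A second, independent problem: you invoke Theorem \ref{asunivunirrcomm}(i) to assert that $U$ is unirational. In the paper's logical order that theorem is a \emph{consequence} of the present lemma (Lemma \ref{rigformapsasuniv} $\Rightarrow$ Corollary \ref{asunivcomm} $\Rightarrow$ Corollary \ref{asunivsepext} $\Rightarrow$ Theorem \ref{asunivunirrcommbody}), so citing it here is circular. The unirationality that the paper actually uses at this stage is only that of the specific groups $\mathscr{V}_{n,\lambda_i}$ over the degree-of-imperfection-one subfields $\F_p(\lambda_i)$, which is available independently (Proposition \ref{V_nunirdegimp1}); unirationality of a general commutative wound permawound $U$ is not, and your argument needs it because you never reduce to $U=\mathscr{V}_{n,\vec{\lambda}}$.
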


\begin{proof}
By \cite[Props.\,6.7,6.9]{rosasuniv}, we may assume that $K$ is separably closed. If $K$ is perfect or of infinite degree of imperfection, then $U = 0$ \cite[Cor.\,B.2.7]{cgp}, \cite[Prop.\,6.3]{rosasuniv}, so there is nothing to prove. Thus we may assume that $K$ is imperfect of finite degree of imperfection $r$. Fix a choice of $p$-basis $\vec{\lambda} := [\lambda_1, \dots, \lambda_r]$ for $K$. We first prove the lemma when $U = \mathscr{V}_{n,\vec{\lambda}}$.

By Proposition \ref{multiadd}, there is a multi-additive map
\[
b\colon X := \prod_{i=1}^r(\mathscr{V}_{n,\lambda_i})_K \rightarrow \mathscr{V}_{n,\vec{\lambda}}
\]
that generates $\mathscr{V}_{n,\vec{\lambda}}$. Note in particular that $b$ vanishes when any of the coordinates is set to $0$. Consider the map $F\colon (\mathscr{V}_{n,\vec{\lambda}})^2 \times X^2 \rightarrow W$ defined by the formula
\[
F(y_1, y_2, x_1, x_2) := f(y_1+b(x_1), y_2+b(x_2))\cdot f(y_1, y_2+b(x_2))^{-1}\cdot f(y_1, y_2)\cdot f(y_1+b(x_1), y_2)^{-1}.
\]
Then $F$ vanishes whenever any of the coordinates on $x_1$ is set to $0$, and similarly for $x_2$. Because each $\mathscr{V}_{n,\lambda_i}$ is unirational by Proposition \ref{V_nunirdegimp1}, it follows from Theorem \ref{rigidityfindegofimpbody} that, for any $y_1, y_2 \in \mathscr{V}_{n,\vec{\lambda}}(K)$, the map $F_{y_1, y_2}\colon X^2 \rightarrow W$ obtained by restricting $F$ to the fiber above $(y_1, y_2)$ vanishes. This says that, for any $x_1 \in X$, the map $G_{x_1}\colon (\mathscr{V}_{n,\vec{\lambda}})^2 \rightarrow W$ defined by
\[
(y_1, y_2) \mapsto f(y_1+b(x_1), y_2)\cdot f(y_1, y_2)^{-1}
\]
is invariant under translation by $b(X)$ on $y_2$. Because $b(X)$ generates $\mathscr{V}_{n,\vec{\lambda}}$, it follows that $G_{x_1}$ is independent of $y_2$. Setting $y_2 = 0$, therefore, it is the constant map to $1 \in W$. Because $x_1 \in X$ was arbitrary, this says that $f$ is invariant under translation by $b(X)$ in the first coordinate. This implies that $f$ is independent of the first coordinate. Since $f|0 \times \mathscr{V}_{n,\vec{\lambda}} = 1$, it follows that $f \equiv 1$. This completes the proof of the proposition when $U = \mathscr{V}_{n,\vec{\lambda}}$.

We now prove the proposition in general by dimension induction on $U$. If $U = 0$ then there is nothing to do. Otherwise, there is an exact sequence
\[
0 \longrightarrow U' \longrightarrow U \longrightarrow U'' \longrightarrow 0
\]
with $U' \subset U$ nontrivial, central, $p$-torsion, and smooth and connected, and $U''$ wound \cite[Prop.\,B.3.2]{cgp}. Then $U'$ is permawound \cite[Props.\,5.5,6.9]{rosasuniv}, so by Proposition \ref{quotofVnpow}, there is a nonzero $K$-homomorphism $\psi\colon \mathscr{V}_{n,\vec{\lambda}} \rightarrow U'$. Let $V := \psi(\mathscr{V}_{n,\vec{\lambda}}) \subset U$. Then $\overline{U} := U/V$ is permawound, and it is wound because $U$ is wound and $V$ -- as a quotient of an permawound group -- is permawound.

We first note that any morphism of pointed $K$-schemes $g\colon (V, 0) \rightarrow (W, 1)$ is a homomorphism. Indeed, $V$ is a quotient of $\mathscr{V}_{n,\vec{\lambda}}$, so it suffices to verify this with $V$ replaced by this group. Let $H\colon (\mathscr{V}_{n,\vec{\lambda}})^2 \rightarrow W$ denote the map $(v_1, v_2) \mapsto g(v_1+v_2)g(v_2)^{-1}g(v_1)^{-1}$. Then $H$ vanishes when either coordinate is restricted to $0$, hence is trivial by the already-treated case of the lemma. Thus $g$ is a homomorphism.

Now suppose given a morphism $f\colon U^2 \rightarrow W$ as in the proposition. Consider the map $F\colon U^2 \times V \rightarrow W$ defined by the formula
\[
(u_1, u_2, v) \mapsto f(u_1+v, u_2)\cdot f(u_1, u_2)^{-1}.
\]
For any $u_1, u_2 \in U(K)$, let $F_{u_1, u_2}\colon V \rightarrow W$ denote the map obtained by restricting $F$ to the fiber above $(u_1, u_2)$. Then $F_{u_1, u_2}(0) = 1$, so $F_{u_1, u_2}$ is a homomorphism. Because $U^2(K)$ is Zariski dense in $U^2$, Proposition \ref{homsfinite} implies that there is a Zariski dense set of $(u_1, u_2)$ such that $F_{u_1, u_2}$ equals a fixed homomorphism $\phi\colon V \rightarrow W$. It follows that $F_{u_1, u_2} = \phi$ for all $u_1, u_2$. Setting $u_2 = 0$, we find that $F_{u_1, u_2} = 1$ for all $u_1, u_2 \in U(K)$. Therefore $F \equiv 1$. That is, $f$ is invariant under $V$-translation in the first coordinate. By a similar argument, it is also $V$-invariant in the second coordinate. It follows that $f$ descends to a map $\overline{U}^2 \rightarrow W$, and this latter map vanishes by induction. This completes the proof of the lemma.
\end{proof}

\begin{corollary}
\label{mapfromasuniv=hom}
Let $U$ and $W$ be wound unipotent $K$-group schemes with $U$ commutative and permawound. Then any morphism $f\colon (U, 0) \rightarrow (W, 1)$ of pointed $K$-schemes is a homomorphism.
\end{corollary}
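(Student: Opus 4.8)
The plan is to apply Lemma \ref{rigformapsasuniv} to the standard ``defect of homomorphism'' morphism attached to $f$. Concretely, I would define the $K$-scheme morphism
\[
g\colon U \times U \longrightarrow W, \qquad g(u_1, u_2) := f(u_1 + u_2)\cdot f(u_2)^{-1}\cdot f(u_1)^{-1},
\]
using the (additively written, since $U$ is commutative) group law of $U$ together with the multiplication and inversion maps of $W$. This is a genuine morphism of $K$-schemes, being built from $f$, the multiplication map of $U$, and the multiplication and inversion maps of $W$; note that no commutativity of $W$ is needed for $g$ to make sense.

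Next I would observe that $g$ restricts to the constant map $1_W$ on each of the coordinate axes $\{0\} \times U$ and $U \times \{0\}$. Indeed, since $f(0) = 1$ by hypothesis, one computes $g(0, u_2) = f(u_2)\cdot f(u_2)^{-1} \cdot f(0)^{-1} = 1$ and $g(u_1, 0) = f(u_1)\cdot f(0)^{-1}\cdot f(u_1)^{-1} = 1$. Thus $g$ satisfies exactly the hypotheses of Lemma \ref{rigformapsasuniv}, using that $U$ is commutative, wound, and permawound and that $W$ is wound unipotent. The lemma then forces $g$ to be the constant map to $1_W$.

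Finally, the identity $g \equiv 1_W$ says precisely that $f(u_1 + u_2) = f(u_1)\cdot f(u_2)$ as morphisms $U \times U \to W$, i.e. that $f \circ m_U = m_W \circ (f \times f)$; combined with $f(0) = 1$, this is exactly the statement that $f$ is a homomorphism of $K$-group schemes. The real content lies entirely in Lemma \ref{rigformapsasuniv}; once that is available the corollary is immediate, so I do not anticipate any obstacle beyond correctly writing down the defect morphism and verifying its vanishing on the two axes.
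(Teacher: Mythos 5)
Your proposal is correct and is exactly the paper's argument: the paper's entire proof is to apply Lemma \ref{rigformapsasuniv} to the map $(u_1,u_2) \mapsto f(u_1+u_2)f(u_2)^{-1}f(u_1)^{-1}$, which is precisely your defect morphism $g$. The verification that $g$ vanishes on the two axes and the conclusion that $g \equiv 1_W$ forces $f$ to be a homomorphism are carried out just as you describe.
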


\begin{proof}
Apply Lemma \ref{rigformapsasuniv} to the map $U^2 \rightarrow W$, $(u_1, u_2) \mapsto f(u_1+u_2)f(u_2)^{-1}f(u_1)^{-1}$.
\end{proof}

We may now prove the commutativity of wound permawound $K$-groups.

\begin{corollary}
\label{asunivcomm}
Every wound permawound unipotent $K$-group scheme is commutative.
\end{corollary}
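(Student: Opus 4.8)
The plan is to prove the corollary by induction on $\dim U$, bootstrapping from the rigidity statement for commutative permawound groups proved in Lemma \ref{rigformapsasuniv}. First I would make the standard reductions. Commutativity of $U$ says precisely that the commutator morphism $c\colon U\times U\to U$, $(x,y)\mapsto [x,y]$, is the constant map to $1$, and this may be tested after base change to $K_s$; since woundness and permawoundness are both insensitive to separable field extension, we may assume $K=K_s$, exactly as in the proof of Lemma \ref{rigformapsasuniv} (via \cite[Props.\,6.7,\,6.9]{rosasuniv}). If $K$ is perfect or of infinite degree of imperfection then $U=0$ by \cite[Cor.\,B.2.7]{cgp} and \cite[Prop.\,6.3]{rosasuniv}, so there is nothing to prove; thus we may assume that $K$ is separably closed of finite degree of imperfection.

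Now I would induct on $\dim U$, the case $\dim U = 0$ being trivial, so assume $U \neq 0$. By \cite[Prop.\,B.3.2]{cgp} there is a nontrivial smooth connected central $K$-subgroup $U'\subseteq U$ with $U/U'$ wound (this is the form in which that result was used in the proof of Lemma \ref{rigformapsasuniv}). Being a smooth connected $K$-subgroup of the wound unipotent group $U$, the group $U'$ is itself wound unipotent; and $U/U'$ is wound, permawound (a quotient of a permawound group is permawound, directly from the definition), and of dimension $< \dim U$, so by the inductive hypothesis $U/U'$ is commutative. Hence $\mathscr{D}U = [U,U] \subseteq U'$.

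The crucial point is that, since $U'$ is central in $U$, the commutator morphism $c$ — whose image now lies in $U'$ — is bi-additive: the functorial commutator identities $[x_1x_2,y] = x_1[x_2,y]x_1^{-1}[x_1,y]$ and $[x,y_1y_2] = [x,y_1]y_1[x,y_2]y_1^{-1}$, together with the fact that all the values lie in the central (hence commutative) subgroup $U'$, show that $c$ is additive in each argument, and that $[x',y] = [x,y'] = 1$ whenever $x'$ or $y'$ lies in $U'$. Thus $c$ factors through a bi-additive morphism $\overline{c}\colon (U/U')\times (U/U')\to U'$, which restricts to the constant map $1$ on $(U/U')\times 0$ and on $0\times (U/U')$. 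Now I would apply Lemma \ref{rigformapsasuniv} with the commutative wound permawound group $U/U'$ in the role of ``$U$'' and the wound unipotent group $U'$ in the role of ``$W$'': it yields $\overline{c} \equiv 1$, hence $c \equiv 1$, i.e.\ $[U,U] = 1$, so $U$ is commutative. This completes the induction.

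The part requiring the most care — more bookkeeping than genuine obstacle — is to carry out these commutator manipulations at the level of morphisms of schemes rather than merely on rational points: the factorization of $c$ through $(U/U')^2$ and its bi-additivity have to be verified as statements about morphisms, which is legitimate because the relevant commutator identities hold functorially in $R$-points.
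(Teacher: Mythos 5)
Your proposal is correct and follows essentially the same route as the paper: dimension induction, a nontrivial central smooth connected $U' \subseteq U$ with wound quotient via \cite[Prop.\,B.3.2]{cgp}, commutativity of $U'' = U/U'$ by induction (it is wound and permawound), descent of the commutator map to $U'' \times U'' \to U'$, and then Lemma \ref{rigformapsasuniv}. The only differences are cosmetic: the preliminary reduction to $K = K_s$ and the verification of bi-additivity are both unnecessary, since Lemma \ref{rigformapsasuniv} is stated over an arbitrary field and requires only a pointed $K$-scheme map vanishing on the two axes.
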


\begin{proof}
We proceed by dimension induction. If $U = 1$ then the assertion is trivial, so assume that $U \neq 1$. By \cite[Prop.\,B.3.2]{cgp}, there is an exact sequence of smooth connected unipotent $K$-group schemes
\[
1 \longrightarrow U' \longrightarrow U \longrightarrow U'' \longrightarrow 1
\]
with $U' \subset U$ nontrivial, central, and $U''$ wound. Then $U'$ is also wound, and it is permawound \cite[Props.\,5.5, 6.9]{rosasuniv}, as is $U''$. By induction, $U''$ is commutative. Because $U'$ is central and $U''$ is commutative, the commutator map $U \times U \rightarrow U$ descends to a map $f\colon U'' \times U'' \rightarrow U'$. This map vanishes on both $0 \times U''$ and $U'' \times 0$, so by Lemma \ref{rigformapsasuniv} it is trivial. That is, $U$ is commutative.
\end{proof}

\begin{corollary}
\label{asunivsepext}
Let $L/K$ be a separable field extension of the same degree of imperfection as $K$, and let $U$ be a smooth unipotent $K$-group scheme. Then $U$ is permawound over $K$ if and only if it is so over $L$.
\end{corollary}

\begin{proof}
If $K$ is perfect then so is $L$, so the assertion follows from \cite[Prop.\,5.2]{rosasuniv}. Thus we may assume that $K$ (hence also $L$) is imperfect. Since permawound groups over imperfect fields are connected \cite[Prop.\,6.2]{rosasuniv}, we may further assume that $U$ is connected. Let $U_{\rm{split}} \subset U$ denote the maximal split unipotent $K$-subgroup. Then $U_w := U/U_{\rm{split}}$ is wound, and $U$ is permawound (over $K$ or $L$) if and only if $U_w$ is. Thus we may assume that $U$ is wound over $K$, hence also over $L$. Then if $U$ is permawound over either $K$ or $L$, then it is commutative by Corollary \ref{asunivcomm}. The corollary therefore follows from \cite[Props.\,6.7, 6.9]{rosasuniv}.
\end{proof}

We are now ready to complete the proof of Theorem \ref{asunivunirrcomm}.

\begin{theorem}$($Theorem $\ref{asunivunirrcomm}$$)$
\label{asunivunirrcommbody}
Let $K$ be an imperfect field.
\begin{itemize}
\item[(i)] Every permawound unipotent $K$-group is unirational.
\item[(ii)] Every wound permawound unipotent $K$-group is commutative.
\end{itemize}
\end{theorem}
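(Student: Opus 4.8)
For (ii) there is nothing to prove: it is exactly Corollary~\ref{asunivcomm}. The plan is therefore to deduce~(i). I would first reduce the ground field. Since $K_s/K$ is separable with the same degree of imperfection as $K$, the group $U_{K_s}$ is permawound by Corollary~\ref{asunivsepext}, and unirationality of $U_{K_s}$ descends to $U$ by Theorem~\ref{unirationalitydescendsbody}; so we may assume $K = K_s$. As $K$ is imperfect, $U$ is connected \cite[Prop.\,6.2]{rosasuniv}, so the maximal split subgroup $U_{\Split} \trianglelefteq U$ is defined; the torsor $U \to U/U_{\Split}$ is trivial (the base is affine and $U_{\Split}$ is split), so $U \simeq U_{\Split} \times (U/U_{\Split})$ as $K$-schemes, with $U_{\Split}$ rational and $U/U_{\Split}$ wound and permawound. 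Hence we may assume $U$ is wound, and then $U$ is commutative by~(ii). Finally, if $K$ has infinite degree of imperfection then $U = 0$ \cite[Prop.\,6.3]{rosasuniv}; so from now on $K$ is separably closed of finite degree of imperfection~$r$, and $U$ is commutative, wound, and permawound.

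The heart of the matter is that each group $\mathscr{V}_{n,\vec{\lambda}}$ of Definition~\ref{Vndef} is unirational over $K$. Indeed, $\mathscr{V}_{n,\lambda_i}$ is a commutative permawound group over $\F_p(\lambda_i)$, a field of degree of imperfection~$1$, hence is unirational over $\F_p(\lambda_i)$ by Proposition~\ref{V_nunirdegimp1}; unirationality is inherited by the base change to $K$ and by finite products, so $Z := \prod_{i=1}^r (\mathscr{V}_{n,\lambda_i})_K$ is a unirational, geometrically integral $K$-scheme. By Proposition~\ref{multiadd} the pairing $b\colon Z \to \mathscr{V}_{n,\vec{\lambda}}$ is multi-additive (so $0$ lies in its image) and its image generates $\mathscr{V}_{n,\vec{\lambda}}$. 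Since the subgroup of a $K$-group scheme generated by an irreducible unirational subvariety containing the identity equals, for suitable $k$, the image under multiplication of the $k$-fold self-product of that subvariety, and is therefore unirational, we conclude that $\mathscr{V}_{n,\vec{\lambda}}$ is unirational. Combining this with Proposition~\ref{quotofVnpow}: every commutative, $p$-torsion, permawound $K$-group is a quotient of some $(\mathscr{V}_{n,\vec{\lambda}})^m$, hence unirational.

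It remains to pass from the $p$-torsion case to the general commutative wound permawound $U$, and this gluing is the step I expect to be the main obstacle. I would induct on $\dim U$, the case $U = 0$ being trivial. Write $[p]U \subseteq U$ for the image of multiplication by~$p$: it is a quotient of $U$, hence permawound and commutative, and it is a proper subgroup since a nonzero smooth connected unipotent $K$-group is killed by a power of $[p]$. By induction $[p]U$ is unirational, while $U/[p]U$ is commutative, $p$-torsion, and permawound, hence unirational by the previous paragraph. Now $U \to U/[p]U$ is a torsor under the commutative wound permawound group $[p]U$, which by Theorem~\ref{rigidityasuniv} admits a filtration whose successive quotients are isomorphic to $\mathscr{V}$ or $\R_{K^{1/p}/K}(\alpha_p)$; feeding this filtration into the (abelian, hence long exact) cohomology sequences and applying Proposition~\ref{torsorsdieratlcurve} at each stage shows that any $[p]U$-torsor over a dense open subscheme of $\P^1_K$ becomes trivial after pullback along a dominant morphism from another dense open subscheme of $\P^1_K$. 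Since $U/[p]U$, being unirational, is generated by pointed morphisms from dense open subschemes of $\P^1_K$, each such morphism lifts, after a suitable such base change, to a pointed morphism into $U$ from a dense open subscheme of $\P^1_K$; the images of these lifts, together with $[p]U$, generate a subgroup of $U$ that surjects onto $U/[p]U$ and contains $\ker(U \to U/[p]U) = [p]U$, hence equals $U$. As each generator is unirational, $U$ is generated by unirational subvarieties through the identity, and is therefore unirational. The delicate point throughout is the torsor d\'evissage over rational curves, where Proposition~\ref{torsorsdieratlcurve} and the rigidity filtration carry the load.
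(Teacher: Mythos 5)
Your proof is correct, and the heavy lifting is done by exactly the ingredients the paper uses (Propositions \ref{multiadd}, \ref{V_nunirdegimp1}, \ref{quotofVnpow}, \ref{torsorsdieratlcurve}, and the filtration of Theorem \ref{rigidityasuniv}); the preliminary reductions to $K$ separably closed of finite degree of imperfection with $U$ connected, wound, and commutative are identical to the paper's. Where you genuinely differ is the organization of the induction. The paper peels off a single ``atom'' $V \simeq \mathscr{V}$ or $\R_{K^{1/p}/K}(\alpha_p)$ sitting inside a central $p$-torsion subgroup $U'$, applies Proposition \ref{torsorsdieratlcurve} directly to the $V$-torsor $U \rightarrow U/V$ to lift rational curves, and then shows separately that $V$ lies in the maximal unirational subgroup by proving $U'$ itself is unirational (again via Propositions \ref{quotofVnpow}, \ref{multiadd}, \ref{V_nunirdegimp1}). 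You instead quotient by $[p]U$, dispose of the $p$-torsion quotient $U/[p]U$ wholesale via Proposition \ref{quotofVnpow} and the unirationality of $\mathscr{V}_{n,\vec{\lambda}}$, and extend Proposition \ref{torsorsdieratlcurve} from the two atoms to an arbitrary commutative wound permawound group by a torsor d\'evissage along the rigidity filtration. Both routes work, and yours buys a cleaner induction variable: $\dim [p]U < \dim U$ is automatic for $U \neq 0$, whereas the paper's step ``$U/V$ is unirational by induction'' deserves a moment's thought when $V \simeq \R_{K^{1/p}/K}(\alpha_p)$ is finite, since then $\dim(U/V) = \dim U$. The price is the extra d\'evissage lemma for $[p]U$-torsors over open subschemes of $\P^1_K$, which is routine because $[p]U$ is commutative, so each stage of the filtration gives an exact sequence of abelian fppf sheaves and the class of the torsor can be killed one graded piece at a time. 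Two small points you should make explicit in a final write-up: each lift $\tilde{f}\colon Y \rightarrow U$ of a pointed generator of $U/[p]U$ must be translated by $\tilde{f}(y_0)^{-1} \in [p]U(K)$ to become a pointed map into $U$ (harmless, since $[p]U$ is already among your generators), and $[p]U$ --- being a smooth connected subgroup of the wound group $U$ and simultaneously a quotient of $U$ under the homomorphism $[p]$ (here commutativity from part (ii) is used) --- is indeed wound and permawound, so that Theorem \ref{rigidityasuniv} applies to it.
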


\begin{proof}
Assertion (ii) is Corollary \ref{asunivcomm}, so it only remains to prove (i). By Corollary \ref{asunivsepext} and Theorem \ref{unirationalitydescendsbody}, we may assume that $K$ is separably closed. Let $U$ be an permawound unipotent $K$-group. Because $K$ is imperfect, $U$ is connected \cite[Prop.\,6.2]{rosasuniv}. We prove that $U$ is unirational by dimension induction. The $0$-dimensional case is trivial, so assume that $U \neq 1$. First assume that $U$ is not wound, and let $1 \neq U_{\rm{split}} \trianglelefteq U$ denote the maximal split unipotent $K$-subgroup scheme. As usual, the left-multiplication action makes $U$ into a $U_{\rm{split}}$-torsor over $U_w$. Because $U_w$ is affine and $U_{\rm{split}}$ is split, this torsor is trivial, so $U = U_{\rm{split}} \times U_w$ as $K$-schemes. Because $U_w$ is also permawound, we are done by induction in this case.

Now assume that $U$ is wound and nontrivial. By \cite[Prop.\,B.3.2]{cgp}, there is an exact sequence of smooth connected unipotent $K$-groups
\[
1 \longrightarrow U' \longrightarrow U \longrightarrow U'' \longrightarrow 1
\]
with $U' \subset U$ nontrivial, central, and $p$-torsion, and $U''$ wound. Then $U'$ is wound and permawound \cite[Props.\,5.5, 6.9]{rosasuniv}. By Theorem \ref{rigidityasuniv}, $U'$ contains a $K$-subgroup scheme $V$ that is isomorphic to either $\R_{K^{1/p}/K}(\alpha_p)$ or $\mathscr{V}$. Let $\overline{U} := U/V$. Then $\overline{U}$ is unirational by induction. Let $G \trianglelefteq U$ denote the maximal unirational $K$-subgroup scheme. By Proposition \ref{torsorsdieratlcurve}, $\overline{U}$ is generated by the images of maps from open subschemes of $\P^1_K$ into $U$. It follows that $G \twoheadrightarrow \overline{U}$. It therefore only remains to prove that $G$ contains $V$, for which it suffices to prove that $U'$ is unirational. In fact, $\mathscr{V}_{n,\vec{\lambda}}$ is unirational for all $n > 0$ by Propositions \ref{multiadd} and \ref{V_nunirdegimp1}, and $U$ is a quotient of some $(\mathscr{V}_{n,\vec{\lambda}})^m$ by Proposition \ref{quotofVnpow}.
\end{proof}

\noindent \address
\vspace{.3 in}

\noindent \email

\end{document}